\newcommand{\tgreen}[1]{}
\newcommand{\tred}[1]{}
\newcommand{\tblue}[1]{}
\let\productionversion\relax
\let\withoutdoublestaruseless\relax
\let\withoutnextpaper\relax
\IfFileExists{algo-paper-reduced-production-defs.tex}
{\input{algo-paper-reduced-production-defs}}{}
\newif\ifserialtitle
\serialtitletrue

\newif\ifojmo\ojmofalse
\ifojmo
\documentclass[OJMO,NoWriteXVIII,Unicode]{cedram}
\usepackage{comment}


\DeclareMathOperator    \cl                     {cl}
\newcommand\ctscl{\clsemi\relax}
\newcommand\ContinuitySet{\texorpdfstring{A}{}}
\newcommand{\clsemi}[1]{\mathop{\mathrm{clsemi}_{\ContinuitySet}}\ifx#1\relax\else(#1)\fi}   
\newcommand\xmapstonormalsize[1]{\xmapsto{\mbox{\normalsize$\;#1\;$}}}

\DeclareMathOperator    \dom                     {dom}

\DeclareMathOperator    \im                     {im}
\DeclareMathOperator    \intr                   {int}

\DeclareMathOperator    \maxdom                   {Max}
\pdfstringdefDisableCommands{\def\maxdom{Max}}

\DeclareMathOperator    \relint         {rel\,int}

\DeclareMathOperator    \verts          {vert}

\DeclareMathOperator    \Aff {Aff}  
\DeclareMathOperator    \graph          {Gr}
\pdfstringdefDisableCommands{\def\graph{Gr}}

\newcommand\Rf{R_{\texorpdfstring{f}{\unichar{"1D453}}}}
\newcommand{\semi}[1]{\mathop{\mathrm{i{}semi}}\ifx#1/\tred{error:use$\Gamma$}\else\ifx#1\relax\else(#1)\fi\fi}
\newcommand\xmapstosemi{\xmapstonormalsize{\semi\relax}}
\newcommand\xmapstojoinsemi{\xmapstonormalsize{\joinsemi\relax}}
\newcommand{\join}[1]{\ifx#1/\texorpdfstring{\vee}{\unichar{"1D5B}}\else\mathop{\mathrm{join}}(#1)\fi}
\newcommand{\IsJoin}[1]{{#1}^{\join/}}
\def\fin/{\texorpdfstring{\mathrm{fin}}{\unichar{"1DA0}\unichar{"2071}\unichar{"207F}}}    
\def\red/{\texorpdfstring{\mathrm{red}}{\unichar{"2B3}\unichar{"1D49}\unichar{"1D48}}}    
\newcommand{\restrict}[1]{\ifx#1/\texorpdfstring{\subseteq}{\unichar{"1D9C}\unichar{"331}%
  }\else\mathop{\mathrm{restrict}}(#1)\fi}
\newcommand{\extend}[1]{\ifx#1/\tred{ERROR: Bad macro use}\else\mathop{\mathrm{\text{\upshape2-}extend}_{\ContinuitySet}}\ifx#1\relax\else(#1)\fi\fi}
\newcommand{\IsExtend}[1]{\overline{#1}{}\texorpdfstring{}{\unichar{"305}}}
\newcommand{\joinextend}[1]{\mathop{\mathrm{extend}_{\ContinuitySet}}\ifx#1\relax\else(#1)\fi}
\newcommand{\IsJoinExtend}[1]{\IsExtend{#1}{}^{\join/}}
\newcommand\xmapstoextend{\xmapstonormalsize{\joinextend\relax}}
\newcommand{\IsInv}[1]{{#1}^{\mathrm{inv}}}
\newcommand{\joinsemi}[1]{\mathop{\mathrm{jsemi}}\ifx#1/\tred{error:use$\Gamma$}\else\ifx#1\relax\else(#1)\fi\fi}
\DeclareRobustCommand{\mergeplus}[1]{\ifx#1/\ifmmode\boxtimes\else kaleidoscopic\fi\else\mathop{\mathrm{kaleido}}(#1)\fi}
\newcommand{\IsLimit}[1]{\bar{#1}}
\pdfstringdefDisableCommands{\def\bar#1{#1{\unichar{"304}}}}
\newcommand{\limit}[1]{\mathop{\mathrm{lim}}\ifx#1/\else(#1)\fi}
\newcommand{\symlim}[1]{\mathop{\mathrm{symlim}}\ifx#1/\else(#1)\fi}
\newcommand{\arblim}[1]{\mathop{\mathrm{arblim}}\ifx#1/\else(#1)\fi}
\DeclareRobustCommand{\CC}{{\texorpdfstring{\BeginAccSupp{method=hex,unicode,ActualText=D835 D49E}\mathcal C\EndAccSupp{}}{\unichar{"1D49E}}}}
\newcommand{\Comp}[1][\tred{ERROR}]{C_{#1}}   
\newcommand{\Uncomp}[1][\tred{ERROR}]{U_{#1}} 
\newcommand{\moves}{\mathop{\mathrm{moves}}\nolimits}
\newcommand{\tMoves}{\moves\unisubscript+}
\newcommand{\rMoves}{\moves\unisubscript-}
\newcommand{\MovesOfGraph}{\moves}
\newcommand{\tMovesOfGraph}{\tMoves}
\newcommand{\rMovesOfGraph}{\rMoves}
\newcommand\joinmoves{\mathop{\mathrm{jmoves}}\nolimits}
\newcommand{\FullMoveGroup}{\Gamma(\R)}
\newcommand{\FullMoveSemigroup}{\Gamma^{\restrict/}(\R)} 



\providecommand\symcal[1]{\mathcal{#1}}
\providecommand{\R}{\bb R}

\providecommand{\Z}{\bb Z}
\providecommand{\N}{\bb N}
\providecommand{\C}{\bb C}
\newcommand{\T}{{\symcal T}}
\pdfstringdefDisableCommands{\def\T{\unichar{"1D4AF}}}

\newcommand\tildePi{\tilde\Pi}
\pdfstringdefDisableCommands{\def\tildePi{\Pi\unichar{"303}}}
\newcommand\tildepi{\tilde\pi}
\pdfstringdefDisableCommands{\def\tildepi{\pi\unichar{"303}}}
\newcommand\RZ{}            

\newcommand\UOOU{\Omega^0|_U{}}
\newcommand\UOOUk{\UOOU^{k}}




\newcommand\st{\mid}
\newcommand\bigst{\mathrel{\big|}}
\newcommand\Bigst{\mathrel{\Big|}}
\newcommand\biggst{\mathrel{\bigg|}}



\renewcommand{\C}{\symcal C}

\newcommand{\Omegainit}{\Omega^{\texorpdfstring{0}{\unichar{"2070}}}}
\newcommand\Omegaresp[1]{\Gamma^{\texorpdfstring{\mathrm{resp}}{\unichar{"2B3}\unichar{"1D49}\unichar{"2E2}\unichar{"1D56}}}\ifx#1/\else(#1)\fi} 

\newcommand{\setcond}[2]{\left\{\, #1 \st #2 \,\right\}}

\def\st{\mid}

\newcommand\InlineFrac[2]{#1/#2}  
\newcommand\ColVec[3][\relax]
{
  \ifx#1\relax
  \bgroup\let\frac=\InlineFrac\begin{psmallmatrixbig}#2\vphantom{/}\\#3\vphantom{/}\end{psmallmatrixbig}\egroup
  \else
  \bgroup\let\frac=\InlineFrac\begin{psmallmatrixbig}\ifx#200\else#2/#1\fi\\\ifx#300\else#3/#1\fi\end{psmallmatrixbig}\egroup
  \fi
}

\renewcommand{\P}{\mathcal{P}}
\pdfstringdefDisableCommands{\def\P{\unichar{"1D4AB}}}

%

\newcommand{\B}{B}  

\newcommand\MOVEDIAG[1]{\vcenter{\hbox{\includegraphics[height=.25\linewidth]{graphics-for-algo-paper/#1}}}}
\newcommand\VOP[1]{\MOVEDIAG{move_#1}}
\newcommand\COMPOSE[2]{\VOP{#1} \circ \VOP{#2} &= \VOP{#1_o_#2}}

\newcommand\Bounded[1]{{\mathcal B({#1})}}
\newcommand\BoundedContinuous[1]{{\mathcal C_{\mathrm b}({#1})}}


\else
\documentclass[11pt,svgnames]{amsart}
\IfFileExists{mk-pdf-copypastable.sty}
{\usepackage{mk-pdf-copypastable}}
{\usepackage{../mk-pdf-copypastable}}
\usepackage[shortlabels]{enumitem}
\usepackage[type={CC},modifier={by-sa},version={4.0},imagewidth=2.5cm]{doclicense}

\numberwithin{equation}{section}

\usepackage{mathtools}
\usepackage{pdfpages}
\usepackage{booktabs}
\usepackage{amssymb}
\usepackage{xcolor}
\usepackage{graphicx}
\usepackage{upquote}
\usepackage{epstopdf}
\usepackage{url}
\usepackage[square,numbers,sort&compress]{natbib}

\usepackage{multirow}
\usepackage{bigdelim}

\usepackage{tikz}
\usepackage{pst-node}
\usepackage{tikz-cd}
\definecolor{mediumspringgreen}{rgb}{0.0, 0.98039215, 0.60392156}

\usepackage{pgfplots} 

\usetikzlibrary{positioning}
\usetikzlibrary{spy}
\usetikzlibrary{calc}

\def\visible<#1>{}  

\usepackage[utf8]{inputenc}

\usepackage[english]{babel}
\usepackage{amsfonts}
\usepackage{amsmath}
\usepackage{latexsym}
\usepackage{stmaryrd}
\usepackage{color}
\usepackage{subfigure}
\usepackage[shortlabels]{enumitem}      

\usepackage{setspace}

\usepackage[unicode,psdextra]{hyperref}
\usepackage[depth=3,open,openlevel=1]{bookmark}  

\usepackage{ifpdf}
\usepackage[norelsize,ruled,vlined]{algorithm2e}
\usepackage{comment}

\ifx\productionversion\undefined
\makeatletter
\renewcommand\section{\clearpage\@startsection{section}{2}%
  \z@{1.5\linespacing\@plus.7\linespacing}{1.5em}%
  {\centering \LARGE\bf Section }}%
\makeatother
\fi

\newtheorem{theorem}{Theorem}[section]

\makeatletter
\newcommand\MkNewTheorem[2]{%
  \newtheorem{#1}{#2}[section]
  \expandafter\def\csname c@#1\endcsname{\c@theorem}
  \expandafter\def\csname p@#1\endcsname{\p@theorem}
  \expandafter\def\csname the#1\endcsname{\thetheorem}
  \expandafter\def\csname #1name\endcsname{#2}
}

\MkNewTheorem{corollary}{Corollary}
\MkNewTheorem{lemma}{Lemma}
\MkNewTheorem{proposition}{Proposition}
\MkNewTheorem{prop}{Proposition}
\MkNewTheorem{claim}{Claim}
\MkNewTheorem{observation}{Observation}
\MkNewTheorem{obs}{Observation}
\MkNewTheorem{conjecture}{Conjecture}
\MkNewTheorem{openquestion}{Open question}

\theoremstyle{definition}
\MkNewTheorem{example}{Example}
\MkNewTheorem{exercise}{Exercise}
\MkNewTheorem{notation}{Notation}
\MkNewTheorem{assumption}{Assumption}
\MkNewTheorem{definition}{Definition}
\MkNewTheorem{remark}{Remark}
\MkNewTheorem{goal}{Goal}


\makeatletter
\let\OurMathBbAux=\symbb
\DeclareRobustCommand\OurMathBb{\OurMathBbAux}
\let\mathbb=\OurMathBb
\let\symbb=\OurMathBb
\let\bfseries=\undefined
\DeclareRobustCommand\bfseries
{\not@math@alphabet\bfseries\mathbf
  \boldmath\fontseries\bfdefault\selectfont\let\OurMathBbAux=\mathbf}
\def\@thm#1#2#3{%
  \ifhmode\unskip\unskip\par\fi
  \normalfont
  \trivlist
  \let\thmheadnl\relax
  \let\thm@swap\@gobble
  \thm@notefont{\fontseries\mddefault\upshape\unboldmath}
  \thm@headpunct{.}
  \thm@headsep 5\p@ plus\p@ minus\p@\relax
  \thm@space@setup
  #1
  \@topsep \thm@preskip               
  \@topsepadd \thm@postskip           
  \def\@tempa{#2}\ifx\@empty\@tempa
    \def\@tempa{\@oparg{\@begintheorem{#3}{}}[]}%
  \else
    \refstepcounter{#2}%
    \def\@tempa{\@oparg{\@begintheorem{#3}{\csname the#2\endcsname}}[]}%
  \fi
  \@tempa
}
\makeatother

\makeatletter
\renewcommand{\pod}[1]
{\allowbreak\mathchoice{\mkern18mu}{\mkern8mu}{\mkern8mu}{\mkern8mu}(#1)}
\makeatother

\usepackage[normalem]{ulem}


\fi

\ifx\withrefcheck\relax
\usepackage{refcheck}
\fi

\usepackage{tikz}
\DeclareRobustCommand\sage[1]{\texttt{#1}}

\DeclareRobustCommand\Myunderscore{\scalebox{0.66}{\textunderscore}}
\pdfstringdefDisableCommands{%
  \def\Myunderscore{\textunderscore}%
}
\newcommand\underscore{\Myunderscore\allowbreak}
\let\_=\underscore

\pgfkeyssetvalue{/sagefunc/bccz_counterexample}{\href{\githubsearchurl?q=\%22def+bccz_counterexample(\%22}{\sage{bccz\underscore{}counterexample}}}
\pgfkeyssetvalue{/sagefunc/bcdsp_arbitrary_slope}{\href{\githubsearchurl?q=\%22def+bcdsp_arbitrary_slope(\%22}{\sage{bcdsp\underscore{}arbitrary\underscore{}slope}}}
\pgfkeyssetvalue{/sagefunc/bhk_gmi_irrational}{\href{\githubsearchurl?q=\%22def+bhk_gmi_irrational(\%22}{\sage{bhk\underscore{}gmi\underscore{}irrational}}}
\pgfkeyssetvalue{/sagefunc/bhk_irrational}{\href{\githubsearchurl?q=\%22def+bhk_irrational(\%22}{\sage{bhk\underscore{}irrational}}}
\pgfkeyssetvalue{/sagefunc/bhk_slant_irrational}{\href{\githubsearchurl?q=\%22def+bhk_slant_irrational(\%22}{\sage{bhk\underscore{}slant\underscore{}irrational}}}
\pgfkeyssetvalue{/sagefunc/chen_4_slope}{\href{\githubsearchurl?q=\%22def+chen_4_slope(\%22}{\sage{chen\underscore{}4\underscore{}slope}}}
\pgfkeyssetvalue{/sagefunc/dg_2_step_mir}{\href{\githubsearchurl?q=\%22def+dg_2_step_mir(\%22}{\sage{dg\underscore{}2\underscore{}step\underscore{}mir}}}
\pgfkeyssetvalue{/sagefunc/dg_2_step_mir_limit}{\href{\githubsearchurl?q=\%22def+dg_2_step_mir_limit(\%22}{\sage{dg\underscore{}2\underscore{}step\underscore{}mir\underscore{}limit}}}
\pgfkeyssetvalue{/sagefunc/drlm_2_slope_limit}{\href{\githubsearchurl?q=\%22def+drlm_2_slope_limit(\%22}{\sage{drlm\underscore{}2\underscore{}slope\underscore{}limit}}}
\pgfkeyssetvalue{/sagefunc/drlm_3_slope_limit}{\href{\githubsearchurl?q=\%22def+drlm_3_slope_limit(\%22}{\sage{drlm\underscore{}3\underscore{}slope\underscore{}limit}}}
\pgfkeyssetvalue{/sagefunc/drlm_backward_3_slope}{\href{\githubsearchurl?q=\%22def+drlm_backward_3_slope(\%22}{\sage{drlm\underscore{}backward\underscore{}3\underscore{}slope}}}
\pgfkeyssetvalue{/sagefunc/gj_2_slope}{\href{\githubsearchurl?q=\%22def+gj_2_slope(\%22}{\sage{gj\underscore{}2\underscore{}slope}}}
\pgfkeyssetvalue{/sagefunc/gj_2_slope_repeat}{\href{\githubsearchurl?q=\%22def+gj_2_slope_repeat(\%22}{\sage{gj\underscore{}2\underscore{}slope\underscore{}repeat}}}
\pgfkeyssetvalue{/sagefunc/gj_forward_3_slope}{\href{\githubsearchurl?q=\%22def+gj_forward_3_slope(\%22}{\sage{gj\underscore{}forward\underscore{}3\underscore{}slope}}}
\pgfkeyssetvalue{/sagefunc/gmic}{\href{\githubsearchurl?q=\%22def+gmic(\%22}{\sage{gmic}}}
\pgfkeyssetvalue{/sagefunc/hildebrand_2_sided_discont_1_slope_1}{\href{\githubsearchurl?q=\%22def+hildebrand_2_sided_discont_1_slope_1(\%22}{\sage{hildebrand\underscore{}2\underscore{}sided\underscore{}discont\underscore{}1\underscore{}slope\underscore{}1}}}
\pgfkeyssetvalue{/sagefunc/hildebrand_2_sided_discont_2_slope_1}{\href{\githubsearchurl?q=\%22def+hildebrand_2_sided_discont_2_slope_1(\%22}{\sage{hildebrand\underscore{}2\underscore{}sided\underscore{}discont\underscore{}2\underscore{}slope\underscore{}1}}}
\pgfkeyssetvalue{/sagefunc/hildebrand_5_slope_22_1}{\href{\githubsearchurl?q=\%22def+hildebrand_5_slope_22_1(\%22}{\sage{hildebrand\underscore{}5\underscore{}slope\underscore{}22\underscore{}1}}}
\pgfkeyssetvalue{/sagefunc/hildebrand_5_slope_24_1}{\href{\githubsearchurl?q=\%22def+hildebrand_5_slope_24_1(\%22}{\sage{hildebrand\underscore{}5\underscore{}slope\underscore{}24\underscore{}1}}}
\pgfkeyssetvalue{/sagefunc/hildebrand_5_slope_28_1}{\href{\githubsearchurl?q=\%22def+hildebrand_5_slope_28_1(\%22}{\sage{hildebrand\underscore{}5\underscore{}slope\underscore{}28\underscore{}1}}}
\pgfkeyssetvalue{/sagefunc/hildebrand_discont_3_slope_1}{\href{\githubsearchurl?q=\%22def+hildebrand_discont_3_slope_1(\%22}{\sage{hildebrand\underscore{}discont\underscore{}3\underscore{}slope\underscore{}1}}}
\pgfkeyssetvalue{/sagefunc/kf_n_step_mir}{\href{\githubsearchurl?q=\%22def+kf_n_step_mir(\%22}{\sage{kf\underscore{}n\underscore{}step\underscore{}mir}}}
\pgfkeyssetvalue{/sagefunc/kzh_10_slope_1}{\href{\githubsearchurl?q=\%22def+kzh_10_slope_1(\%22}{\sage{kzh\underscore{}10\underscore{}slope\underscore{}1}}}
\pgfkeyssetvalue{/sagefunc/kzh_28_slope_1}{\href{\githubsearchurl?q=\%22def+kzh_28_slope_1(\%22}{\sage{kzh\underscore{}28\underscore{}slope\underscore{}1}}}
\pgfkeyssetvalue{/sagefunc/kzh_28_slope_2}{\href{\githubsearchurl?q=\%22def+kzh_28_slope_2(\%22}{\sage{kzh\underscore{}28\underscore{}slope\underscore{}2}}}
\pgfkeyssetvalue{/sagefunc/kzh_3_slope_param_extreme_1}{\href{\githubsearchurl?q=\%22def+kzh_3_slope_param_extreme_1(\%22}{\sage{kzh\underscore{}3\underscore{}slope\underscore{}param\underscore{}extreme\underscore{}1}}}
\pgfkeyssetvalue{/sagefunc/kzh_3_slope_param_extreme_2}{\href{\githubsearchurl?q=\%22def+kzh_3_slope_param_extreme_2(\%22}{\sage{kzh\underscore{}3\underscore{}slope\underscore{}param\underscore{}extreme\underscore{}2}}}
\pgfkeyssetvalue{/sagefunc/kzh_4_slope_param_extreme_1}{\href{\githubsearchurl?q=\%22def+kzh_4_slope_param_extreme_1(\%22}{\sage{kzh\underscore{}4\underscore{}slope\underscore{}param\underscore{}extreme\underscore{}1}}}
\pgfkeyssetvalue{/sagefunc/kzh_5_slope_fulldim_1}{\href{\githubsearchurl?q=\%22def+kzh_5_slope_fulldim_1(\%22}{\sage{kzh\underscore{}5\underscore{}slope\underscore{}fulldim\underscore{}1}}}
\pgfkeyssetvalue{/sagefunc/kzh_5_slope_fulldim_2}{\href{\githubsearchurl?q=\%22def+kzh_5_slope_fulldim_2(\%22}{\sage{kzh\underscore{}5\underscore{}slope\underscore{}fulldim\underscore{}2}}}
\pgfkeyssetvalue{/sagefunc/kzh_5_slope_fulldim_3}{\href{\githubsearchurl?q=\%22def+kzh_5_slope_fulldim_3(\%22}{\sage{kzh\underscore{}5\underscore{}slope\underscore{}fulldim\underscore{}3}}}
\pgfkeyssetvalue{/sagefunc/kzh_5_slope_fulldim_4}{\href{\githubsearchurl?q=\%22def+kzh_5_slope_fulldim_4(\%22}{\sage{kzh\underscore{}5\underscore{}slope\underscore{}fulldim\underscore{}4}}}
\pgfkeyssetvalue{/sagefunc/kzh_5_slope_fulldim_5}{\href{\githubsearchurl?q=\%22def+kzh_5_slope_fulldim_5(\%22}{\sage{kzh\underscore{}5\underscore{}slope\underscore{}fulldim\underscore{}5}}}
\pgfkeyssetvalue{/sagefunc/kzh_5_slope_fulldim_covers_1}{\href{\githubsearchurl?q=\%22def+kzh_5_slope_fulldim_covers_1(\%22}{\sage{kzh\underscore{}5\underscore{}slope\underscore{}fulldim\underscore{}covers\underscore{}1}}}
\pgfkeyssetvalue{/sagefunc/kzh_5_slope_fulldim_covers_2}{\href{\githubsearchurl?q=\%22def+kzh_5_slope_fulldim_covers_2(\%22}{\sage{kzh\underscore{}5\underscore{}slope\underscore{}fulldim\underscore{}covers\underscore{}2}}}
\pgfkeyssetvalue{/sagefunc/kzh_5_slope_fulldim_covers_3}{\href{\githubsearchurl?q=\%22def+kzh_5_slope_fulldim_covers_3(\%22}{\sage{kzh\underscore{}5\underscore{}slope\underscore{}fulldim\underscore{}covers\underscore{}3}}}
\pgfkeyssetvalue{/sagefunc/kzh_5_slope_fulldim_covers_4}{\href{\githubsearchurl?q=\%22def+kzh_5_slope_fulldim_covers_4(\%22}{\sage{kzh\underscore{}5\underscore{}slope\underscore{}fulldim\underscore{}covers\underscore{}4}}}
\pgfkeyssetvalue{/sagefunc/kzh_5_slope_fulldim_covers_5}{\href{\githubsearchurl?q=\%22def+kzh_5_slope_fulldim_covers_5(\%22}{\sage{kzh\underscore{}5\underscore{}slope\underscore{}fulldim\underscore{}covers\underscore{}5}}}
\pgfkeyssetvalue{/sagefunc/kzh_5_slope_fulldim_covers_6}{\href{\githubsearchurl?q=\%22def+kzh_5_slope_fulldim_covers_6(\%22}{\sage{kzh\underscore{}5\underscore{}slope\underscore{}fulldim\underscore{}covers\underscore{}6}}}
\pgfkeyssetvalue{/sagefunc/kzh_5_slope_q22_f10_1}{\href{\githubsearchurl?q=\%22def+kzh_5_slope_q22_f10_1(\%22}{\sage{kzh\underscore{}5\underscore{}slope\underscore{}q22\underscore{}f10\underscore{}1}}}
\pgfkeyssetvalue{/sagefunc/kzh_5_slope_q22_f10_2}{\href{\githubsearchurl?q=\%22def+kzh_5_slope_q22_f10_2(\%22}{\sage{kzh\underscore{}5\underscore{}slope\underscore{}q22\underscore{}f10\underscore{}2}}}
\pgfkeyssetvalue{/sagefunc/kzh_5_slope_q22_f10_3}{\href{\githubsearchurl?q=\%22def+kzh_5_slope_q22_f10_3(\%22}{\sage{kzh\underscore{}5\underscore{}slope\underscore{}q22\underscore{}f10\underscore{}3}}}
\pgfkeyssetvalue{/sagefunc/kzh_5_slope_q22_f10_4}{\href{\githubsearchurl?q=\%22def+kzh_5_slope_q22_f10_4(\%22}{\sage{kzh\underscore{}5\underscore{}slope\underscore{}q22\underscore{}f10\underscore{}4}}}
\pgfkeyssetvalue{/sagefunc/kzh_5_slope_q22_f2_1}{\href{\githubsearchurl?q=\%22def+kzh_5_slope_q22_f2_1(\%22}{\sage{kzh\underscore{}5\underscore{}slope\underscore{}q22\underscore{}f2\underscore{}1}}}
\pgfkeyssetvalue{/sagefunc/kzh_6_slope_1}{\href{\githubsearchurl?q=\%22def+kzh_6_slope_1(\%22}{\sage{kzh\underscore{}6\underscore{}slope\underscore{}1}}}
\pgfkeyssetvalue{/sagefunc/kzh_6_slope_fulldim_covers_1}{\href{\githubsearchurl?q=\%22def+kzh_6_slope_fulldim_covers_1(\%22}{\sage{kzh\underscore{}6\underscore{}slope\underscore{}fulldim\underscore{}covers\underscore{}1}}}
\pgfkeyssetvalue{/sagefunc/kzh_6_slope_fulldim_covers_2}{\href{\githubsearchurl?q=\%22def+kzh_6_slope_fulldim_covers_2(\%22}{\sage{kzh\underscore{}6\underscore{}slope\underscore{}fulldim\underscore{}covers\underscore{}2}}}
\pgfkeyssetvalue{/sagefunc/kzh_6_slope_fulldim_covers_3}{\href{\githubsearchurl?q=\%22def+kzh_6_slope_fulldim_covers_3(\%22}{\sage{kzh\underscore{}6\underscore{}slope\underscore{}fulldim\underscore{}covers\underscore{}3}}}
\pgfkeyssetvalue{/sagefunc/kzh_6_slope_fulldim_covers_4}{\href{\githubsearchurl?q=\%22def+kzh_6_slope_fulldim_covers_4(\%22}{\sage{kzh\underscore{}6\underscore{}slope\underscore{}fulldim\underscore{}covers\underscore{}4}}}
\pgfkeyssetvalue{/sagefunc/kzh_6_slope_fulldim_covers_5}{\href{\githubsearchurl?q=\%22def+kzh_6_slope_fulldim_covers_5(\%22}{\sage{kzh\underscore{}6\underscore{}slope\underscore{}fulldim\underscore{}covers\underscore{}5}}}
\pgfkeyssetvalue{/sagefunc/kzh_7_slope_1}{\href{\githubsearchurl?q=\%22def+kzh_7_slope_1(\%22}{\sage{kzh\underscore{}7\underscore{}slope\underscore{}1}}}
\pgfkeyssetvalue{/sagefunc/kzh_7_slope_2}{\href{\githubsearchurl?q=\%22def+kzh_7_slope_2(\%22}{\sage{kzh\underscore{}7\underscore{}slope\underscore{}2}}}
\pgfkeyssetvalue{/sagefunc/kzh_7_slope_3}{\href{\githubsearchurl?q=\%22def+kzh_7_slope_3(\%22}{\sage{kzh\underscore{}7\underscore{}slope\underscore{}3}}}
\pgfkeyssetvalue{/sagefunc/kzh_7_slope_4}{\href{\githubsearchurl?q=\%22def+kzh_7_slope_4(\%22}{\sage{kzh\underscore{}7\underscore{}slope\underscore{}4}}}
\pgfkeyssetvalue{/sagefunc/ll_strong_fractional}{\href{\githubsearchurl?q=\%22def+ll_strong_fractional(\%22}{\sage{ll\underscore{}strong\underscore{}fractional}}}
\pgfkeyssetvalue{/sagefunc/mlr_cpl3_a_2_slope}{\href{\githubsearchurl?q=\%22def+mlr_cpl3_a_2_slope(\%22}{\sage{mlr\underscore{}cpl3\underscore{}a\underscore{}2\underscore{}slope}}}
\pgfkeyssetvalue{/sagefunc/mlr_cpl3_b_3_slope}{\href{\githubsearchurl?q=\%22def+mlr_cpl3_b_3_slope(\%22}{\sage{mlr\underscore{}cpl3\underscore{}b\underscore{}3\underscore{}slope}}}
\pgfkeyssetvalue{/sagefunc/mlr_cpl3_c_3_slope}{\href{\githubsearchurl?q=\%22def+mlr_cpl3_c_3_slope(\%22}{\sage{mlr\underscore{}cpl3\underscore{}c\underscore{}3\underscore{}slope}}}
\pgfkeyssetvalue{/sagefunc/mlr_cpl3_d_3_slope}{\href{\githubsearchurl?q=\%22def+mlr_cpl3_d_3_slope(\%22}{\sage{mlr\underscore{}cpl3\underscore{}d\underscore{}3\underscore{}slope}}}
\pgfkeyssetvalue{/sagefunc/mlr_cpl3_f_2_or_3_slope}{\href{\githubsearchurl?q=\%22def+mlr_cpl3_f_2_or_3_slope(\%22}{\sage{mlr\underscore{}cpl3\underscore{}f\underscore{}2\underscore{}or\underscore{}3\underscore{}slope}}}
\pgfkeyssetvalue{/sagefunc/mlr_cpl3_g_3_slope}{\href{\githubsearchurl?q=\%22def+mlr_cpl3_g_3_slope(\%22}{\sage{mlr\underscore{}cpl3\underscore{}g\underscore{}3\underscore{}slope}}}
\pgfkeyssetvalue{/sagefunc/mlr_cpl3_h_2_slope}{\href{\githubsearchurl?q=\%22def+mlr_cpl3_h_2_slope(\%22}{\sage{mlr\underscore{}cpl3\underscore{}h\underscore{}2\underscore{}slope}}}
\pgfkeyssetvalue{/sagefunc/mlr_cpl3_k_2_slope}{\href{\githubsearchurl?q=\%22def+mlr_cpl3_k_2_slope(\%22}{\sage{mlr\underscore{}cpl3\underscore{}k\underscore{}2\underscore{}slope}}}
\pgfkeyssetvalue{/sagefunc/mlr_cpl3_l_2_slope}{\href{\githubsearchurl?q=\%22def+mlr_cpl3_l_2_slope(\%22}{\sage{mlr\underscore{}cpl3\underscore{}l\underscore{}2\underscore{}slope}}}
\pgfkeyssetvalue{/sagefunc/mlr_cpl3_n_3_slope}{\href{\githubsearchurl?q=\%22def+mlr_cpl3_n_3_slope(\%22}{\sage{mlr\underscore{}cpl3\underscore{}n\underscore{}3\underscore{}slope}}}
\pgfkeyssetvalue{/sagefunc/mlr_cpl3_o_2_slope}{\href{\githubsearchurl?q=\%22def+mlr_cpl3_o_2_slope(\%22}{\sage{mlr\underscore{}cpl3\underscore{}o\underscore{}2\underscore{}slope}}}
\pgfkeyssetvalue{/sagefunc/mlr_cpl3_p_2_slope}{\href{\githubsearchurl?q=\%22def+mlr_cpl3_p_2_slope(\%22}{\sage{mlr\underscore{}cpl3\underscore{}p\underscore{}2\underscore{}slope}}}
\pgfkeyssetvalue{/sagefunc/mlr_cpl3_q_2_slope}{\href{\githubsearchurl?q=\%22def+mlr_cpl3_q_2_slope(\%22}{\sage{mlr\underscore{}cpl3\underscore{}q\underscore{}2\underscore{}slope}}}
\pgfkeyssetvalue{/sagefunc/mlr_cpl3_r_2_slope}{\href{\githubsearchurl?q=\%22def+mlr_cpl3_r_2_slope(\%22}{\sage{mlr\underscore{}cpl3\underscore{}r\underscore{}2\underscore{}slope}}}
\pgfkeyssetvalue{/sagefunc/rlm_dpl1_extreme_3a}{\href{\githubsearchurl?q=\%22def+rlm_dpl1_extreme_3a(\%22}{\sage{rlm\underscore{}dpl1\underscore{}extreme\underscore{}3a}}}
\pgfkeyssetvalue{/sagefunc/automorphism}{\href{\githubsearchurl?q=\%22def+automorphism(\%22}{\sage{automorphism}}}
\pgfkeyssetvalue{/sagefunc/multiplicative_homomorphism}{\href{\githubsearchurl?q=\%22def+multiplicative_homomorphism(\%22}{\sage{multiplicative\underscore{}homomorphism}}}
\pgfkeyssetvalue{/sagefunc/projected_sequential_merge}{\href{\githubsearchurl?q=\%22def+projected_sequential_merge(\%22}{\sage{projected\underscore{}sequential\underscore{}merge}}}
\pgfkeyssetvalue{/sagefunc/restrict_to_finite_group}{\href{\githubsearchurl?q=\%22def+restrict_to_finite_group(\%22}{\sage{restrict\underscore{}to\underscore{}finite\underscore{}group}}}
\pgfkeyssetvalue{/sagefunc/interpolate_to_infinite_group}{\href{\githubsearchurl?q=\%22def+interpolate_to_infinite_group(\%22}{\sage{interpolate\underscore{}to\underscore{}infinite\underscore{}group}}}
\pgfkeyssetvalue{/sagefunc/two_slope_fill_in}{\href{\githubsearchurl?q=\%22def+two_slope_fill_in(\%22}{\sage{two\underscore{}slope\underscore{}fill\underscore{}in}}}
\pgfkeyssetvalue{/sagefunc/generate_example_e_for_psi_n}{\href{\githubsearchurl?q=\%22def+generate_example_e_for_psi_n(\%22}{\sage{generate\underscore{}example\underscore{}e\underscore{}for\underscore{}psi\underscore{}n}}}
\pgfkeyssetvalue{/sagefunc/chen_3_slope_not_extreme}{\href{\githubsearchurl?q=\%22def+chen_3_slope_not_extreme(\%22}{\sage{chen\underscore{}3\underscore{}slope\underscore{}not\underscore{}extreme}}}
\pgfkeyssetvalue{/sagefunc/psi_n_in_bccz_counterexample_construction}{\href{\githubsearchurl?q=\%22def+psi_n_in_bccz_counterexample_construction(\%22}{\sage{psi\underscore{}n\underscore{}in\underscore{}bccz\underscore{}counterexample\underscore{}construction}}}
\pgfkeyssetvalue{/sagefunc/gomory_fractional}{\href{\githubsearchurl?q=\%22def+gomory_fractional(\%22}{\sage{gomory\underscore{}fractional}}}
\pgfkeyssetvalue{/sagefunc/not_minimal_2}{\href{\githubsearchurl?q=\%22def+not_minimal_2(\%22}{\sage{not\underscore{}minimal\underscore{}2}}}
\pgfkeyssetvalue{/sagefunc/not_extreme_1}{\href{\githubsearchurl?q=\%22def+not_extreme_1(\%22}{\sage{not\underscore{}extreme\underscore{}1}}}
\pgfkeyssetvalue{/sagefunc/kzh_2q_example_1}{\href{\githubsearchurl?q=\%22def+kzh_2q_example_1(\%22}{\sage{kzh\underscore{}2q\underscore{}example\underscore{}1}}}
\pgfkeyssetvalue{/sagefunc/zhou_two_sided_discontinuous_cannot_assume_any_continuity}{\href{\githubsearchurl?q=\%22def+zhou_two_sided_discontinuous_cannot_assume_any_continuity(\%22}{\sage{zhou\underscore{}two\underscore{}sided\underscore{}discontinuous\underscore{}cannot\underscore{}assume\underscore{}any\underscore{}continuity}}}
\pgfkeyssetvalue{/sagefunc/extremality_test}{\href{\githubsearchurl?q=\%22def+extremality_test(\%22}{\sage{extremality\underscore{}test}}}
\pgfkeyssetvalue{/sagefunc/plot_2d_diagram}{\href{\githubsearchurl?q=\%22def+plot_2d_diagram(\%22}{\sage{plot\underscore{}2d\underscore{}diagram}}}
\pgfkeyssetvalue{/sagefunc/nice_field_values}{\href{\githubsearchurl?q=\%22def+nice_field_values(\%22}{\sage{nice\underscore{}field\underscore{}values}}}
\pgfkeyssetvalue{/sagefunc/ParametricRealFieldElement}{\href{\githubsearchurl?q=\%22def+ParametricRealFieldElement(\%22}{\sage{ParametricRealFieldElement}}}
\pgfkeyssetvalue{/sagefunc/ParametricRealField}{\href{\githubsearchurl?q=\%22def+ParametricRealField(\%22}{\sage{ParametricRealField}}}
\pgfkeyssetvalue{/sagefunc/kzh_minimal_has_only_crazy_perturbation_1}{\href{\githubsearchurl?q=\%22def+kzh_minimal_has_only_crazy_perturbation_1(\%22}{\sage{kzh\underscore{}minimal\underscore{}has\underscore{}only\underscore{}crazy\underscore{}perturbation\underscore{}1}}}
\pgfkeyssetvalue{/sagefunc/bcds_discontinuous_everywhere}{\href{\githubsearchurl?q=\%22def+bcds_discontinuous_everywhere(\%22}{\sage{bcds\underscore{}discontinuous\underscore{}everywhere}}}

\DeclareRobustCommand\sage[1]{\textsf{\upshape #1}}
\DeclareRobustCommand\sagefunc[1]{\pgfkeys{/sagefunc/#1}}
\DeclareRobustCommand\sagefuncgraph[1]{\raisebox{-0.08ex}{\includegraphics[height=2ex,width=2.5em]{funcgraphs/#1}}}
\DeclareRobustCommand\sagefuncwithgraph[1]{\sagefunc{#1} \sagefuncgraph{#1}}

\DeclareRobustCommand\sagefuncwithgraphgomoryfractional{\sagefunc{gomory_fractional}\ \smash{\raisebox{-0.08ex}{\includegraphics[height=2.65ex,width=2.5em]{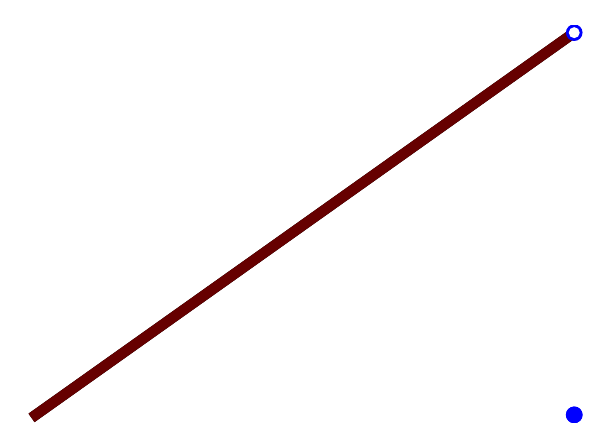}}}}


\providecommand\ISBN{ISBN }

\ifserialtitle
\title[Equivariant Perturbation VII]{Equivariant Perturbation in \\Gomory and Johnson's Infinite Group
  Problem\\[1ex] VII. Inverse Semigroup Theory, Closures, Decomposition of Perturbations}
\else
\title[On perturbation spaces of minimal valid functions]{On perturbation spaces of minimal valid functions:
  Inverse semigroup theory and equivariant decomposition theorem}
\fi

\author{Robert Hildebrand}
\address{\ifojmo\else Robert Hildebrand: \fi Grado Dept.\ of Industrial and Systems Engineering,
  Virginia Tech}
\email{rhil@vt.edu}

\author{Matthias K\"oppe}
\address{\ifojmo\else Matthias K\"oppe: \fi Dept.\ of Mathematics, University of California, Davis}
\email{mkoeppe@math.ucdavis.edu}

\author{Yuan Zhou}
\address{\ifojmo\else Yuan Zhou: \fi Dept.\ of Mathematics, University of Kentucky}
\email{yuan.zhou@uky.edu}

\thanks{%
  An extended abstract of 13 pages %
  \ifserialtitle{}titled \emph{On perturbation spaces of minimal valid functions:
    Inverse semigroup theory and equivariant decomposition theorem}
  \else
  with the same title
  \fi
  appeared in:
  Integer Programming and Combinatorial Optimization. IPCO 2019 (A.~Lodi and V.~Nagarajan, eds.),
  Lecture Notes in Computer Science, vol.~11480, Springer, Cham, 2019,
  pp.~247--260, \href {https://doi.org/10.1007/978-3-030-17953-3_19}
  {\path{https://doi.org/10.1007/978-3-030-17953-3_19}},
  \ISBN{978-3-030-17952-6} \cite{hildebrand-koeppe-zhou:algo-paper-abstract-ipco}.
  A~preliminary version of parts of the development in this paper appeared in the third
  author's 2017 Ph.D.~thesis~\cite{zhou:dissertation}.
  The authors gratefully acknowledge partial support from the National
  Science Foundation through grants DMS-0914873 (R.~Hildebrand, M.~K\"oppe)
  and DMS-1320051 (M.~K\"oppe, Y.~Zhou).  Part of this work was done while
  R.~Hildebrand and M.~K\"oppe were visiting the Simons Institute for the
  Theory of Computing in Fall 2017. It was partially supported by the
  DIMACS/Simons Collaboration on Bridging Continuous and Discrete Optimization
  through NSF grant CCF-1740425.}

\date{$\relax$Revision: 3403 $ - \ $Date: 2020-01-03 11:25:49 -0500 (Fri, 03 Jan 2020) $ $\!\!\!}

\newcommand{\old}[1]{{}}

\providecommand{\tgreen}[1]{\textsf{\textcolor {ForestGreen} {#1}}}
\providecommand{\tred}[1]{\texttt{\textcolor {red} {#1}}}
\providecommand{\tblue}[1]{\textcolor {blue} {#1}}

\ifx\withoutdoublestaruseless\undefined
\newenvironment{doublestaruseless}{\bgroup\ifmmode\else\small\fi\color{brown}}{\egroup}
\newcommand\useless[1]{\doublestaruseless #1\enddoublestaruseless}
\else
\excludecomment{doublestaruseless}
\newcommand\useless[1]{}
\fi

\ifx\withoutnextpaper\undefined

\else
\excludecomment{nextpaper}
\fi

\begin{document}

\begin{abstract}
\ifserialtitle
  In this self-contained paper, we
\else
  We 
\fi
  present a theory of the piecewise linear
  minimal valid functions for the 1-row Gomory--Johnson infinite group
  problem.  The non-extreme minimal valid functions are those that admit
  effective perturbations.  We give a precise description of the space of
  these perturbations as a direct sum of certain finite- and
  infinite-dimensional subspaces.  The infinite-dimensional subspaces have
  partial symmetries; to describe them, we develop a theory of inverse
  semigroups of partial bijections, interacting with the functional equations
  satisfied by the perturbations.  Our paper provides the foundation for
  grid-free algorithms for the Gomory--Johnson model, in particular for
  testing extremality of piecewise linear functions whose breakpoints are
  rational numbers with huge denominators.
\end{abstract}

\maketitle
\insert\footins{
  \normalfont\footnotesize
  \interlinepenalty\interfootnotelinepenalty
  \splittopskip\footnotesep \splitmaxdepth \dp\strutbox
  \floatingpenalty10000 \hsize\columnwidth
  \doclicenseThis\par}

\section{Introduction}

\subsection{Finite group relaxations $\Rf(P, \Z)$ of integer programs and hierarchies of
  valid inequalities
}
A powerful method to derive cutting planes for unstructured integer linear
optimization problems is to study relaxations with more structure and
convenient properties.  The pioneering relaxation in this line of research on
general-purpose cutting planes is
Gomory's \emph{finite group relaxation} \cite{gom}, whose convex hull is known
as the \emph{corner polyhedron}.  

The relaxations are structured around the
simplex method, applied to the continuous relaxation, and are therefore
suitable for generating cuts in a linear-programming-based cutting-plane
procedure.  The group relaxation is obtained by forgetting about the
nonnegativity of all basic variables, retaining only their integrality.
Viewed in the space of nonbasic variables, the equations of the simplex
tableau are replaced by congruences modulo the abelian group ($\Z$-module)
generated by the columns of the basis matrix.  Quotienting out by this group,
one obtains a ``group equation,'' which gives the relaxation its name.
Further relaxations are obtained by picking just one or a few rows of the
system, or more generally by condensing the system by means of group
homomorphisms; see \cite{gom} for its remarks on the use of (additive) group
characters.

In the present paper, we restrict ourselves to 1-row (``cyclic'') group
relaxations, which after aggregation of non-basic variables with identical
coefficients can be brought to the form
\begin{gather}
  \sum_{p \in P} y(p) \, p \in f + \Z  \label{GP-nonmaster} \\
  y(p) \in \Z_+ \ \ \textrm{for all $p \in P$}  \notag
\end{gather}
where $P$ is a finite subset of an additive group $G = \frac1q\Z \supset \Z$
and $f \in G\setminus\Z$, so $f + \Z$ is a coset of the subgroup $\Z$ in $G$.
This is called Gomory's \emph{finite (cyclic) group problem}.
We denote the convex hull of $y$ by $\Rf(P, \Z)$; it is a polyhedron of blocking
type. Therefore every nontrivial valid linear inequality can be written in the form
$\langle \pi, y\rangle := \sum_{p\in P} \pi(p) y(p) \geq 1$; then we call $\pi$
a \emph{valid function}.  If $\pi' \leq \pi$ are two valid functions, then the
valid inequality $\langle \pi, y\rangle \geq 1$ is a conic combination of
$\langle \pi', y\rangle \geq 1$ and nonnegativity inequalities $y(p) \geq 0$. Thus it suffices
to consider the \emph{minimal (valid) functions}~$\pi$, defined by the
property
\begin{equation}\tag{M}
  \begin{aligned}[t]
    \text{if $\pi'$ is valid and $\pi' \leq \pi$} \\
    \text{then $\pi' = \pi$.}
  \end{aligned}
  \label{eq:minimal-definition}
\end{equation}
A stronger notion is that of \emph{extreme functions} $\pi$, defined by the property
\begin{equation}\tag{E}
   \begin{aligned}[t]
     \text{if $\pi^+$ and $\pi^-$ are minimal and $\pi = \tfrac12 (\pi^++\pi^-)$}\\
     \text{then $\pi = \pi^+ = \pi^-$}. 
     \label{eq:facet-definition-like-extreme-function}
   \end{aligned}
\end{equation}
Extreme functions correspond to facet-defining inequalities for $\Rf(P,\Z)$.
Following the traditions of polyhedral combinatorics, we are interested in
describing families of extreme functions and making them available for
cutting-plane algorithms.

\subsection{Master problems $\Rf(G, \Z)$ and the subadditive characterization of minimal functions}
\label{s:intro:subadditive}
Gomory's approach was to consider \emph{master problems} for this purpose.
The sets of solutions $y$ to 1-row group relaxations $\Rf(P)$ for subsets $P$ of the
same group~$G$ inject into the \emph{master group relaxation}
\begin{gather}
  \sum_{p \in G} y(p) \, p \in f + \Z  \label{GP} \\
  y\colon G\to \Z_+ \textrm{ has finite support} \notag
\end{gather}
by setting $y(p) = 0$ for $p\notin P$.  We denote its convex hull by
$\Rf(G, \Z)$.  This is an infinite-dimensional set. By Gomory's master theorem
\cite[Theorem 13]{gom}, every extreme function $\pi'$ for $\Rf(P, \Z)$ is
obtained from some extreme function $\pi$ for a master problem $\Rf(G, \Z)$
with $P \subseteq G$ by restricting the function, $\pi' = \pi|_P$.  Moreover,
Gomory \cite{gom} gave a characterization of the minimal functions for the
master problem $\Rf(G, \Z)$ by the following functional inequalities and
equations:
\begin{subequations}\label{eq:minimal}
  \begin{align}
    &\pi(x) \geq 0 &&\text{for } x \in G, \label{eq:minimal:nonneg}\\
    &\pi(x+z) = \pi(x) && \text{for $x \in G$, $z\in \Z$} && \text{(periodicity)}\\
    &\pi(0)=0, \ \pi(f)=1, \label{eq:minimal:01}\\
    &\Delta\pi(x,y) \geq 0
    &&  \text{for } x,y \in G
                   && \text{(subadditivity),}\label{eq:minimal:subadd}\\
    &\Delta\pi(x, f-x) = 0
    && \text{for } x\in G 
                   && \text{(symmetry condition),} \label{eq:minimal:symm}
  \end{align}
\end{subequations}
where $\Delta\pi(x,y) = \pi(x)+\pi(y) -\pi(x+y)$ is the \emph{subadditivity
  slack} function.  By quotienting out by~$\Z$, this system describes a
polyhedron in $\R^{G/\Z}$.  Extreme functions are then simply the vertices of this
polyhedron; thus some of the subadditivity inequalities $\Delta\pi(x,y) \geq
0$ are tight, i.e., additivity holds.

\subsection{Continuous interpolations of extreme functions and the infinite
  group problem $\Rf(\R, \Z)$}

Gomory and Johnson, in their seminal papers \cite{infinite,infinite2}, noted
that many extreme functions for finite master group problems follow simple
patterns that become apparent in the piecewise linear interpolations of these functions.  The
simplest pattern is that of the well-known two-slope function giving the Gomory mixed
integer cut (\sagefuncwithgraph{gmic}), which can be found in all finite group
problems; see \autoref{fig:gmic} (left).\footnote{A function name shown 
  in sans serif font refers to the software
  \cite{cutgeneratingfunctionology:lastest-release}, 
  which includes the 
  Electronic Compendium of Extreme Functions \cite{zhou:extreme-notes}.}
Gomory and Johnson
initiated a program to study such functions of a real variable systematically.  The technical
framework is that of the \emph{infinite group problem}, in which the group $G$
in \eqref{GP} is enlarged from $\frac1q\Z$ to $\R$.  Gomory and Johnson proved
that the characterization~\eqref{eq:minimal} of minimal functions still
holds in this setting. 
\begin{figure}[t]
  \centering
  \begin{tabular}{@{}l@{\qquad}l@{}}
    \includegraphics[width=.48\linewidth]{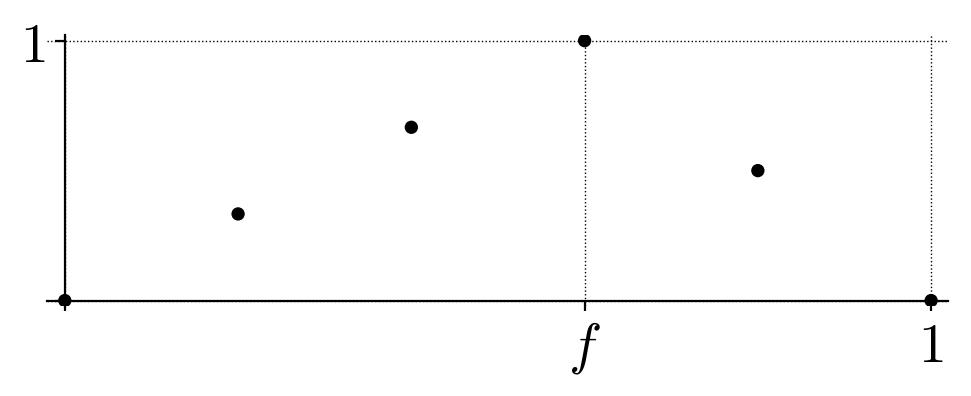} 
    & \includegraphics[width=.48\linewidth]{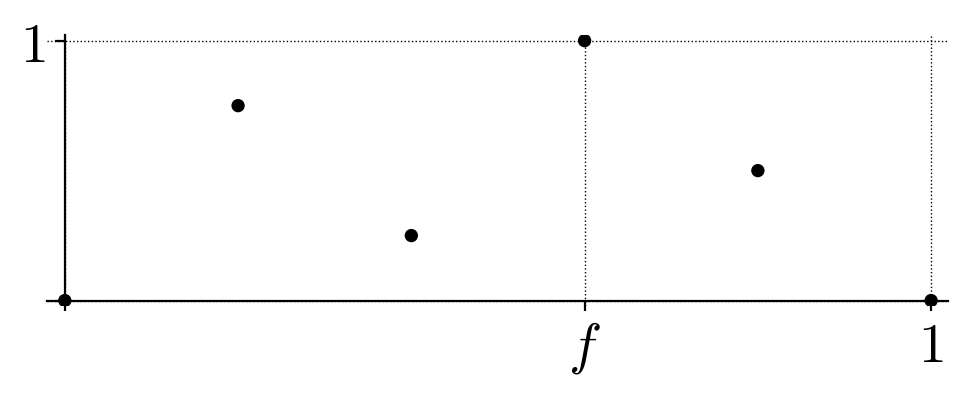}
    \\
    \includegraphics[width=.48\linewidth]{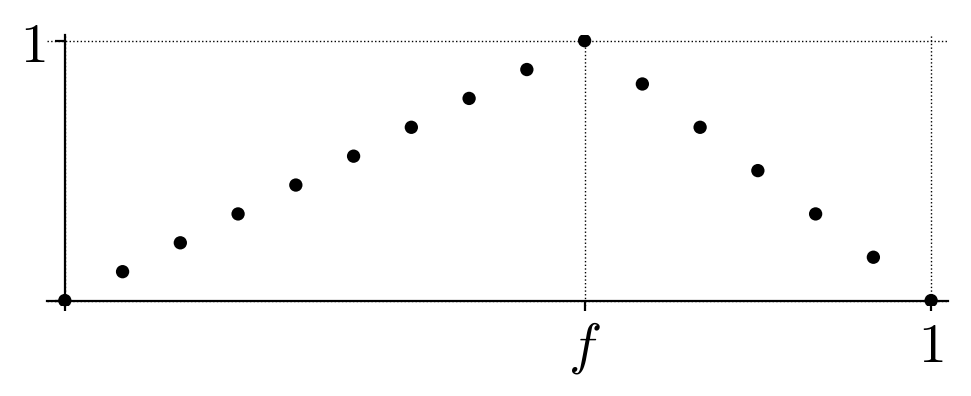} 
    & \includegraphics[width=.48\linewidth]{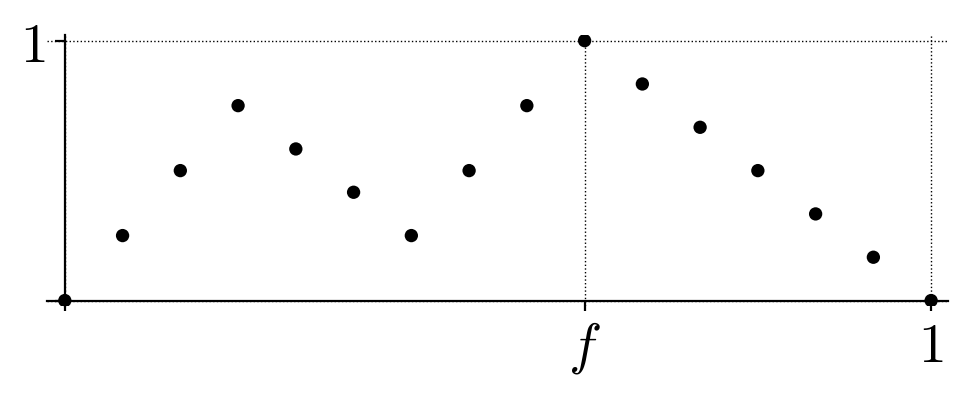}
    \\
    \includegraphics[width=.48\linewidth]{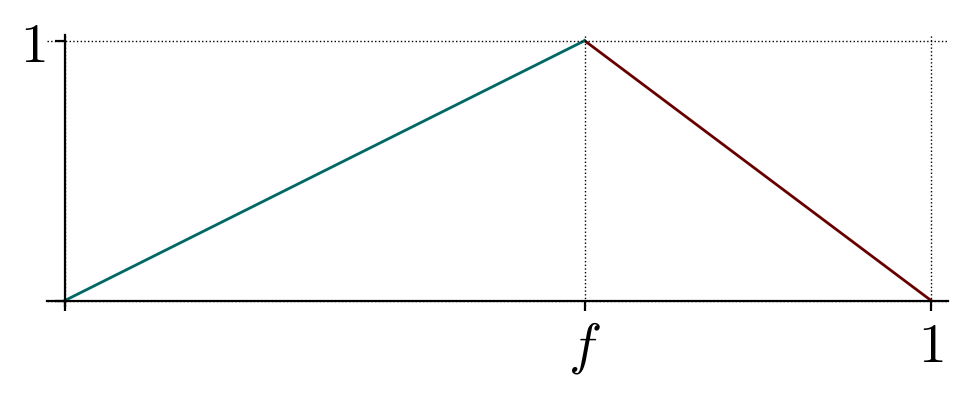} 
    & \includegraphics[width=.48\linewidth]{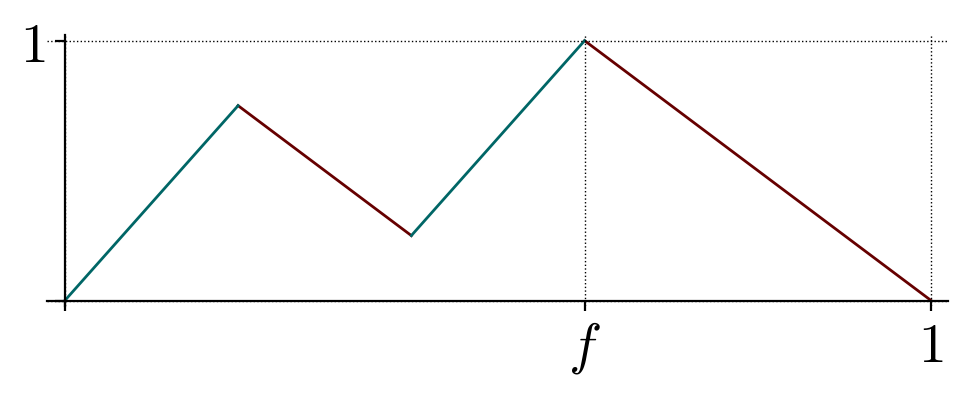}
  \end{tabular}    
  \caption[Extreme functions for finite master group problems following simple
    patterns that become apparent in the piecewise linear interpolations]
    {Extreme functions for finite master group problems following simple
    patterns that become apparent in the piecewise linear interpolations.
    \emph{Left,} the Gomory mixed-integer cut (\sagefunc{gmic}).
    \emph{Right,} another two-slope extreme function.}
  \label{fig:gmic}
\end{figure}

For an extreme function~$\pi|_{G}$ for a finite master problem $\Rf(G, \Z)$,
the piecewise linear interpolation $\pi =
\sagefunc{interpolate_to_infinite_group}(\pi|_G)$ is 
a minimal function, but not necessarily extreme.  (A partial converse is true;
the restriction of a continuous piecewise linear extreme function~$\pi$ for $\Rf(\R,
\Z)$ to a group $G$ that includes all breakpoints of~$\pi$ is extreme for
$\Rf(G, \Z)$.)
There is a possible viewpoint on the extreme functions for the infinite group problems as
``robust'' cut-generating functions that ignore the
specific number-theoretic properties of a particular group problem
$\Rf(\frac1q\Z,\Z)$.  As a matter of fact, in a numerical implementation, the
value $q$ and exact rational value of~$f$ would not be readily available.

A natural algorithmic focus lies on piecewise linear valid functions, though a
part of the literature \cite{bccz08222222,bcdsp:arbitrary-slopes} also studies more complicated
functions.  (When we discuss piecewise linear functions in this paper, we
include the discontinuous case with possible jumps at breakpoints, which
includes important examples such as the Gomory fractional cut,
\sagefuncwithgraphgomoryfractional.)

For $\Z$-periodic piecewise linear functions, the
characterization~\eqref{eq:minimal} of minimal functions gives a simple
algorithm for testing minimality, based on enumerating the vertices of a certain polyhedral complex;
see \cite[section 2.2]{basu-hildebrand-koeppe:equivariant} and \cite[section
5]{hong-koeppe-zhou:software-paper}.  For testing the extremality of a
piecewise linear minimal function, however, in contrast to the finite group
case, we cannot directly use polyhedral methods any more.  Since the quotient
$\R/\Z$ is not finite, we have to use infinite-dimensional methods of
functional equations and inequalities.  The most important lemma from this
theory is the
Gomory--Johnson interval lemma, variants of which has been used in virtually
all proofs of extremality in the literature.

\subsection{The space $\tildePi^{\texorpdfstring{\pi}{}}$ of effective perturbations $\tildepi$ of a minimal valid function}
\label{s:intro:effective-perturbations}

Recall that by definition~\eqref{eq:facet-definition-like-extreme-function}, a
minimal valid function $\pi$ is extreme if it cannot be written as a convex
combination of two other minimal valid functions $\pi^+$, $\pi^-$.  A fruitful
approach to extremality testing, introduced by Basu et al.\@{} \ifserialtitle in Part I of the present series of
papers \fi\cite{basu-hildebrand-koeppe:equivariant}, has been to consider the 
difference function (\emph{perturbation}) $\bar\pi = \pi^+ - \pi$, which
allows us to write $\pi^1 = \pi + \bar\pi$ and $\pi^2 = \pi - \bar\pi$.  
(Recently, Di Summa~\cite{DiSumma-2018:piecewise-smooth-piecewise-linear} obtained a
breakthrough result on the question of piecewise linearity of 
extreme functions using this approach.)
It is
convenient to build a space from the notion of perturbation functions.
Following \ifserialtitle Part~V \fi\cite[section 6]{hong-koeppe-zhou:software-paper}, we define the space 
\begin{equation}
  \label{eq:effective-perturbations}
\tildePi^{\pi}\RZ = \left\{\,\tilde{\pi} \colon \R \to \R \st \exists \, \epsilon>0 \text{ s.t.\ } \pi^{\pm} = \pi \pm \epsilon\tilde{\pi} \text{ are minimal valid}\,\right\}
\end{equation}
of
\emph{effective perturbation functions} for the minimal valid
function~$\pi$
. This is a vector space
(\autoref{lemma:effective-perturbation-vector-space}), a subspace of the space
of bounded functions.  The function~$\pi$ is
extreme if and only if the space $\tildePi^{\pi}\RZ$ is trivial.

If additivity ($\Delta\pi(x,y) = 0$) holds for some $(x,y)$, then by convexity
also $\Delta\tilde\pi(x,y) = 0$ holds for every effective perturbation
$\tilde\pi \in \tildePi^{\pi}\RZ$.  This is also true for additivity in
the limit \cite[Lemma 2.7]{basu-hildebrand-koeppe:equivariant}; see also
\cite[Lemma 6.1]{hong-koeppe-zhou:software-paper}.  Because $\pi$ is assumed
to be piecewise linear, the infinite system of functional equations describing
additivity and limit-additivity of~$\tilde\pi$ can be structured
(``combinatorialized'') according to a certain polyhedral complex
\cite{basu-hildebrand-koeppe:equivariant,hong-koeppe-zhou:software-paper}.

\subsection{Finite-dimensional and equivariant perturbations}
\label{s:intro:finite-dim-and-equivariant-perturbations}

\ifserialtitle In Part I of the present series, \fi 
Basu et
al.~\cite{basu-hildebrand-koeppe:equivariant} gave the first algorithm to
decide extremality of a piecewise linear function with rational breakpoints
in some
``grid'' (group) $G = \frac1q\Z$.  

In a
first step, one tests whether there exists a nontrivial perturbation for~$\pi$
in the
finite-dimensional subspace of $\tildePi^{\pi}\RZ$ that consists of the
functions $\sagefunc{interpolate_to_infinite_group}(\tilde\pi|_G)$, where
$\tilde\pi|_G$ is an effective perturbation function for the restriction
$\pi|_G$ to the finite group problem $\Rf(G,\Z)$.  

Otherwise, one may assume that $\tilde\pi|_G = 0$.  Under this assumption, the
interval lemma forces $\tilde\pi|_C = 0$ for certain \emph{directly covered}
intervals~$C$.  Basu et al.'s crucial observation was that if there are any
remaining \emph{uncovered} intervals, then one-dimensional families of
additivity equations impose a type of symmetry of the perturbation function.
By analyzing the required symmetry, one can construct a perturbation function
and prove nonextremality of~$\pi$.

Consider the additivity equations
\begin{equation}\label{eq:translation-equation}
  \Delta\tilde\pi(x, t) = \tilde\pi(x) + \tilde\pi(t) - \tilde\pi(x+t) = 0, 
  \quad \text{for $x \in D$}, 
\end{equation}
where $D$ is an interval and $t \in \frac1q\Z$ is a grid point.  Because
$\tilde\pi(t) = 0$, this simplifies to 
\begin{subequations}\label{eq:translation-reflection-equations-assuming-0}
\begin{equation}
  \tilde\pi(x) = \tilde\pi(x+t)\quad \text{for $x \in D$}.\label{eq:translation-equation-assuming-0}
\end{equation}
We then say that $\tilde\pi$ is \emph{invariant} under the action of the \emph{translation}
$\tau_t\colon x \mapsto x+t$ (restricted to the interval~$D$).
Likewise, a second type of one-dimensional families of additivity equations
simplifies to
\begin{equation}
  \tilde\pi(x) = - \tilde\pi(r-x)\quad \text{for $x \in
    D$}.\label{eq:reflection-equation-assuming-0} 
\end{equation}
\end{subequations}
Here a negative sign comes in.  We call $\rho_r\colon x\mapsto r-x$ a
\emph{reflection}.  By assigning a \emph{character} $\chi(\tau_t) = +1$ and
$\chi(\rho_r) = -1$ to the translations and reflections, we can unify
equations~\eqref{eq:translation-reflection-equations-assuming-0} as
\begin{equation}
  \tilde\pi(x) = \chi(\gamma)\, \tilde\pi(\gamma(x)) \quad \text{for $x \in D$},
\end{equation}
where $\gamma$ is either a translation or a reflection.  We then say that
$\tilde\pi$ is \emph{equivariant} under the action of $\gamma$. 

By analyzing the group~$\Gamma \subset \Aff(\R)$ generated by all relevant
translations and reflections, Basu et al. constructed a universal template
function $\psi\colon \R\to\R$, a continuous piecewise linear function with
breakpoints in $\frac1{4q}\Z$, which is equivariant under the action of the
group~$\Gamma$.  Taking
\begin{equation}
  \label{eq:masking-to-uncovered}
  \tilde\pi(x) =
  \begin{cases}
    \psi(x) & \text{for $x$ in uncovered intervals},\\
    0 & \text{for $x$ in covered intervals}
  \end{cases}
\end{equation}
then gives an effective perturbation function. 
(A revised construction in Basu et al.'s survey
\cite[section~8.2]{igp_survey_part_2} gives a continuous piecewise linear
function $\tilde\pi$ with breakpoints in $\frac1{3q}\Z$.)

\subsection{Contributions of the present paper}

\ifserialtitle
It has been a long-term research project to develop a complete, \emph{grid-free}
algorithmic theory and software implementation for piecewise linear minimal
valid functions, extending the reach of
the grid-based extremality test introduced in 
Part I of the series
\cite{basu-hildebrand-koeppe:equivariant}, which we described
in~\autoref{s:intro:finite-dim-and-equivariant-perturbations} above.  
While Parts II--IV develop a grid-based theory for 2-row relaxations, 
Part V of our series \cite{hong-koeppe-zhou:software-paper} returned to the one-row
case. It
introduced our software \cite{cutgeneratingfunctionology:lastest-release} and
prepared the grid-free theory with several results.
Part VI of the series \cite{koeppe-zhou:crazy-perturbation} discussed the case of piecewise
linear functions that are discontinuous on both sides of the origin and have
irrational breakpoints.  The present paper, part VII of the series, 
and a computational companion paper, part VIII of the series, 
are the culmination of the project for the case of piecewise linear functions
of one variable.
\fi

\subsubsection{Method: Inverse semigroups as the language of partial symmetries}

Group actions are the standard language to describe symmetries of mathematical
objects.  The use of group actions was fruitful in \ifserialtitle Part~I of
our series \else Basu et al.'s paper \cite{basu-hildebrand-koeppe:equivariant} \fi to
obtain the first algorithm for testing extremality.  However, group actions do
not provide a complete theory of the effective perturbations.  This becomes
most apparent in \cite[section~5]{basu-hildebrand-koeppe:equivariant}, where
Basu et al.~introduce a family of extreme functions with irrational
breakpoints, \sagefuncwithgraph{bhk_irrational}.  Here the group~$\Gamma$
generated by the translations and 
reflections only gives the correct result when a certain non-group-theoretic
reachability condition \cite[Assumption 5.1, Lemma
5.2]{basu-hildebrand-koeppe:equivariant} is satisfied.

The underlying reason is that the restriction of the translations and
reflections to the interval domains~$D$ in
\eqref{eq:translation-reflection-equations-assuming-0} is not considered in
the reflection group.  Indeed, what the translations and reflections describe
is not a full symmetry of the perturbation function, but only a \emph{partial
  symmetry} within the uncovered intervals.
\begin{figure}[t]
  \centering
  \Huge
  \hspace*{-1em}
  $
  \begin{aligned}
    \COMPOSE{tau1+}{tau2-} \\
    \COMPOSE{rhoab+}{tau2-} \\
    \COMPOSE{tau1+}{rhoab-}
  \end{aligned}
  $
  \caption{Operations of the inverse semigroup I: Composition \tred{Labels overlap in
  the figure. Fix this.}}
  \label{fig:semigroup-operations-composition}
\end{figure}

\begin{figure}[t]
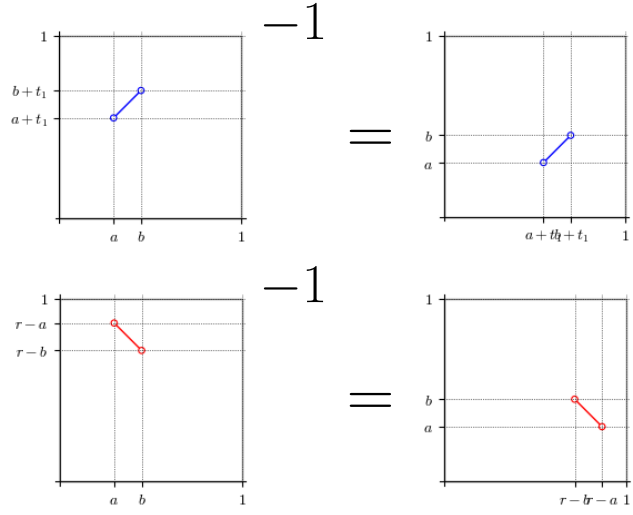

  \centering
  \Huge
  $
  \begin{aligned}
    \VOP{tau1+}^{-1} &= \VOP{tau1-} \\
    \VOP{rhoab+}^{-1} &= \VOP{rhoab-}
  \end{aligned}
  $
  \caption{Operations of the inverse semigroup II: Inverse}
  \label{fig:semigroup-operations-inverse}
\end{figure}

\begin{figure}[t]
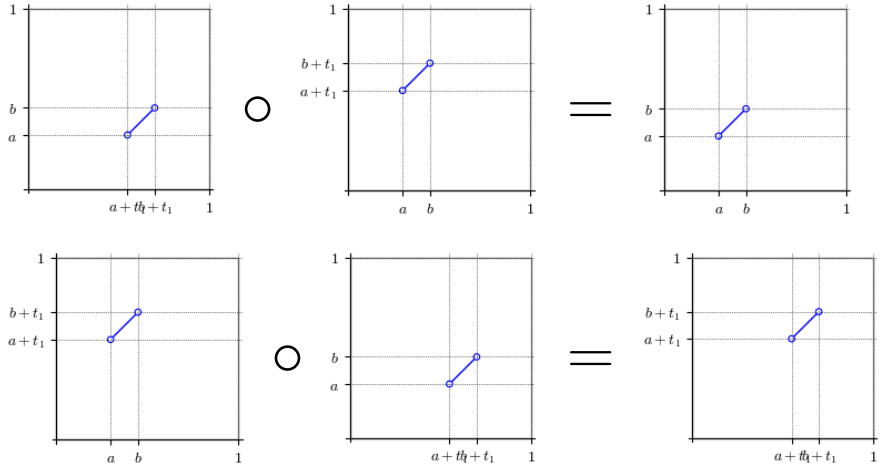

  \centering
  \Huge
  \hspace*{-1em}
  $
  \begin{aligned}
    \COMPOSE{tau1-}{tau1+} \\
    \COMPOSE{tau1+}{tau1-}
  \end{aligned}
  $
  \caption[Operations of the inverse semigroup III: Composition with inverse]
  {Operations of the inverse semigroup III: Composition with inverse.
    The partial identities $\tau_0|_D$ are the idempotents of the inverse semigroup.}
  \label{fig:semigroup-operations-composition-inverse}
\end{figure}


The correct language to describe partial symmetries is the less well-known
theory of inverse-semigroup actions.  An \emph{inverse
  semigroup}~$(\Gamma, \circ, \cdot^{-1})$, following \cite[page
7]{lawson-1998-inverse-semigroups}, is a semigroup, i.e., a set $\Gamma$
closed under an associative operation~$\circ$, satisfying the additional axiom that
\begin{equation}\tag{$\exists!$ inverse}
  \begin{gathered}[t]
    \text{for every $\omega \in \Gamma$, there exists a unique element
      $\omega^{-1}\in \Gamma$}\\
    \text{such that $ \omega = \omega \circ \omega^{-1} \circ \omega$
      and $ \omega^{-1} = \omega^{-1} \circ \omega \circ \omega^{-1}$}.
  \end{gathered}
\end{equation}
The equations in the axiom describe the familiar properties of a
pseudoinverse, but due to the required uniqueness, we will simply refer to
$\omega^{-1}$ as the \emph{inverse} of $\omega$.  In his
monograph~\cite{lawson-1998-inverse-semigroups}, Lawson points out that
\begin{quote}
  the relationship between inverse semigroups and partial symmetries is a
  generalization of the relation between groups and symmetries.
\end{quote}

Concretely, inverse semigroups arise as semigroups of partial bijections of a
set, where the operation $\circ$ is the composition and $\cdot^{-1}$ is the
inverse of a partial bijection.  We define the restrictions of the previously
defined translations and
reflections to open intervals~$D$.  We denote them by $\tau_t|_D$ and
$\rho_r|_D$ and consider them 
as partial bijections of $\R$ to itself, 
with domains $\dom(\tau_t|_D) = D = \dom(\rho_r|_D)$ and images
$\im(\tau_t|_D) = \tau_t(D) = D + t$ and $\im(\rho_r|_D) = \rho_r(D) = r -
D$.  We refer to these partial bijections as \emph{moves}. 
The composition of two moves $\gamma_1|_{D_1}$ and $\gamma_2|_{D_2}$ is
defined as 
\begin{equation}\label{eq:composition}
  \gamma_2|_{D_2}\circ\gamma_1|_{D_1}  = \gamma_2\circ\gamma_1|_{D_1 \cap \gamma_1^{-1}(D_2)};
\end{equation}
see \autoref{fig:semigroup-operations-composition}. 
The domain of the composition is either an open interval or the empty set. 
(By definition, there are exactly two empty moves: the empty translation
$\tau|_\emptyset$ and the empty reflection~$\rho|_\emptyset$.)
Note that the inverse of a move $\gamma|_D$, given by  $(\gamma|_D)^{-1} =
\gamma^{-1}|_{\gamma(D)}$, is not an inverse in a group-theoretic sense:
The compositions
\begin{equation}
  \label{eq:inverse}
  \gamma|_D \circ (\gamma|_D)^{-1} = \tau_0|_{\gamma(D)}
  \quad\text{and}\quad
  (\gamma|_D)^{-1} \circ \gamma|_D = \tau_0|_D
\end{equation}
are only \emph{partial identities} (restrictions of the identity $\tau_0$ to
intervals) and therefore not neutral elements but merely idempotents
(\autoref{fig:semigroup-operations-composition-inverse}).\medbreak

We develop methods that center around inverse semigroups of moves and their
generating sets.  We study the set of moves that are respected by the
effective perturbations of a given minimal function~$\pi$.  We analyze the
closure properties (axioms) that it satisfies: algebraically, it is an inverse
semigroup; but additional order-theoretic and analytic closure properties come
in.  Starting from an initial set (\emph{move ensemble})~$\Omegainit$, we can
then form the closure
with respect to the axioms.  We call it the \emph{moves closure} of $\Omegainit$
(or \emph{closed move semigroup} generated by~$\Omegainit$) and denote it by
$\clsemi{\Omegainit}$.

In the first part of the paper, we develop these methods in full generality,
without using any specific properties of the Gomory--Johnson model.  
Then we turn to the study of piecewise linear functions; here we make the
assumption of continuity from at least one side of the origin.

For all piecewise linear functions with rational breakpoints, we will show
that $\clsemi{\Omegainit}$ has a simple structure: Its graph consists of a finite
union of line segments and rectangles.  (We say that it is \emph{finitely
  presented}.)  It will become clear that we can compute $\clsemi{\Omegainit}$ in
finitely many steps using a completion-type algorithm, using only the
algebraic and order-theoretic axioms, by manipulating finite presentations of
generating systems.  However, this algorithm is not the focus of the present
paper: We defer all computational questions to the forthcoming companion paper
\cite{hildebrand-koeppe-zhou:algocomp-paper}.

Instead, an important point of our paper is that finitely presented closures
$\clsemi{\Omegainit}$ arise in a more general context, through the interplay of
the order-theoretic, algebraic, and analytic closure properties.  Move
ensembles whose graphs are connected open sets extend to open rectangles
already in the joined semigroup (\autoref{cor:open-sets-of-moves}).  
Our key theorem using the analytic properties is 
\autoref{thm:dense-boxes-in-joinlim}: Rectangles appear in the closure
whenever there is a convergent sequence of moves.  
\ifserialtitle
(In part I of our series, we have observed a glimpse of this phenomenon
already, in a specific arithmetic context
.)
\else
(In Basu et al.'s paper~\cite{basu-hildebrand-koeppe:equivariant}, 
a glimpse of this phenomenon was already observed in a specific arithmetic context.)
\fi
Empirically, for all
families of piecewise linear minimal valid functions in the literature
(see~\cite{zhou:extreme-notes} for an electronic compendium), even if the
breakpoints are irrational, the closure has a finite presentation.  This
includes the function \sagefuncwithgraph{bhk_irrational}, which we mentioned
above.  Again, we defer questions regarding the computation of this closure,
which then needs to use the additional axioms, to our forthcoming
paper~\cite{hildebrand-koeppe-zhou:algocomp-paper}.

\subsubsection{Result: Precise description of the space of equivariant perturbations}

Under the above assumptions, the finite presentation of $\clsemi{\Omegainit}$
allows us to read off a precise description of the space of
equivariant perturbations as a direct sum decomposition of vector subspaces
(\autoref{thm:decomposition-perturbation}).  

One component in the decomposition is a finite-dimensional space, consisting
of (possibly discontinuous) piecewise linear functions.  In contrast to the
grid-based algorithm, the set of breakpoints of these functions is not fixed,
but it is computed by our algorithm.  The finite-dimensional space is then
described by a system of finitely many linear equations
(\autoref{lemma:finite-dim-system}).

Then, for each of the finitely many \emph{uncovered components} (defined in
\autoref{sec:perturbation_space}), there is a component that is an
infinite-dimensional space isomorphic to the space
of Lipschitz functions on a compact interval that vanish on the boundary.
More specifically, our algorithm computes an open interval~$D$, the
\emph{fundamental domain}, on which we take the space of Lipschitz
functions~$\tilde\pi$ that vanish on the boundary~$\partial D$.  Additionally
there are finitely many moves $\gamma_j|_D$ with pairwise disjoint images
$\gamma_j(D)$ that together extend the functions equivariantly to the whole
uncovered component.  Outside of the component, the functions in this space
are zero.  This is \autoref{thm:equiv-perturbation-characterization}.

\smallbreak

This description of the space strengthens previous results. 
The method of \ifserialtitle Part~I \fi\cite{basu-hildebrand-koeppe:equivariant}, described in
\autoref{s:intro:finite-dim-and-equivariant-perturbations}, guarantees to
construct a piecewise linear effective perturbation if the space is nontrivial;
but it does not provide a complete description of the space. 
A theorem regarding direct sum decomposition appeared in \cite[Theorem
3.14]{igp_survey}, but it is limited to the grid case.

We remark that the precise description of the perturbation space of a minimal
function~$\pi$ enables us to strengthen (\emph{lift}) it by constructing a
direction in the space of effective perturbations.  By our theorem, the
problem of finding such a direction decomposes into subproblems; one finite-dimensional,
the others independent variational problems over Lipschitz functions.

\subsubsection{Computational implications: Grid-free algorithms, natural
  proofs}

We only sketch the computational implications of the present paper because we
will elaborate on them in our companion paper.  The inverse semigroup theory
lays the foundation for grid-free algorithms for minimal valid functions,
including automated extremality tests, which are detached from the finite
group problem.  A grid-free test is faster for functions whose breakpoints are
rational numbers with huge denominators; and it enables computations for
functions with irrational breakpoints.  More importantly, the grid-free
algorithms can give natural extremality proofs, similar to the general proof
pattern of extremality proofs in the published literature.  In this way, the
grid-free algorithms enable automated extremality proofs for smoothly
parameterized families of extreme functions, as described in
\cite{koeppe-zhou:param-abstract-isco}.

\subsection{Structure of the paper}
In sections
\ref{sec:translation_reflection_moves}--\ref{sec:semigroup}, we
introduce moves as partial bijections of~$\R$.  We study \emph{ensembles} (sets) of
such moves, which can be equipped with both an order-theoretic structure
(restriction and continuation) and an algebraic structure (inverse
semigroups).  In \autoref{s:equivariant} we describe how move
ensembles and semigroups describe partial symmetries of a function by a system
of functional equations.  Move ensembles for bounded functions have additional
properties, which we explore in \autoref{sec:directly-covered-intervals}.
Then, in \autoref{sec:moves-closure}, we study closure properties that capture
the additional properties of move ensembles for continuous functions.  This
development culminates in the notion of \emph{closed move semigroups} in
\autoref{s:ctscl-definition}.

We then apply this theory to compute the effective perturbation space of a
piecewise linear minimal valid function~$\pi$.
In~\autoref{sec:construct_initial_moves}, we introduce the \emph{initial
  additive move ensemble} $\Omegainit$, which describes functional equations
satisfied by every effective perturbation of~$\pi$.  For piecewise linear
functions~$\pi$, it is related to the \emph{additive faces} of a polyhedral
complex (\autoref{s:piecewise}).  Finally, in
\autoref{sec:perturbation_space}, working with a finite presentation
of the closed move semigroup $\clsemi{\Omegainit}$ generated by~$\Omegainit$, we
prove the main theorem of the paper, the decomposition theorem for the space
of effective perturbations of~$\pi$.

We end the paper in
\autoref{s:conclusion} with a discussion of the limitations of our approach
and an outlook on the computational companion paper
\cite{hildebrand-koeppe-zhou:algocomp-paper}.
See the next pages for a detailed table of contents.

\bigbreak

\clearpage
\hypertarget{toc}{}%
\bookmark[level=1,dest=toc,italic]{Contents}   
{
\tableofcontents}
\clearpage
\listoffigures

\clearpage
\bookmark[level=1,dest=notation,italic]{Notation for move ensembles and semigroups}
\let\reforpageref=\pageref
\begin{table}[t]\hypertarget{notation}{}
  \caption{Notation for move ensembles and semigroups}
  \label{tab:notation}
  \centering
    \begin{tabular}{p{.11\linewidth}p{.77\linewidth}c@{\,}}
      \toprule
      $\FullMoveGroup$
      & Group of unrestricted translations and reflections of~$\R$
      &\reforpageref{s:FullMoveGroup}
      \\
      \quad $\tau_t$, $\rho_r$
      & \quad  
        translation, reflection
      \\
      \quad $\gamma$
      & \quad some element
      \\[0.7ex] 
      $\FullMoveSemigroup$
      & Inverse semigroup of translations,
        reflections with domains
      & \reforpageref{def:FullMoveSemigroup}
      \\
      \quad $\tau_t|_D$
      &\quad translation restricted to open interval $D$
      \\
      \quad $\rho_r|_D$
      &\quad reflection restricted to open interval $D$
      \\
      \quad $\gamma|_D$
      &\quad unrestricted move restricted to open interval $D$
      \\[0.7ex] 
      $\Omega$
      & A move ensemble: a subset of $\FullMoveSemigroup$
      & \reforpageref{sec:ensembles}
      \\
      $\Omega^{\mathrm{inv}}$
      & \ldots\ satisfying \eqref{axiom:inverse}
      & \reforpageref{sec:semigroup:def}
      \\
      \noindent\hphantom{$\Omega^{\mergeplus/}$,} $\Gamma$
      & A move semigroup: an inverse subsemigroup of $\FullMoveSemigroup$
      & \reforpageref{sec:semigroup:def}
      \\
      \noindent\rlap{$\Omega^{\restrict/}$,}\hphantom{$\Omega^{\mergeplus/}$,} $\Gamma^{\restrict/}$
      & A move ensemble, or semigroup, satisfying \eqref{axiom:restrict}
      & \reforpageref{sec:restriction-closed}
      \\
      \noindent\rlap{$\Omega^{\join/}$,}\hphantom{$\Omega^{\mergeplus/}$,} $\Gamma^{\join/}$
      & 
        \ldots\ satisfying \eqref{axiom:restrict},
        \eqref{axiom:join}
      & \reforpageref{s:join-def}
      \\
      $\Omega^{\mergeplus/}$, $\Gamma^{\mergeplus/}$
      & 
        \ldots\ satisfying
        \eqref{axiom:restrict}, \eqref{axiom:join},
        \eqref{axiom:translation-reflection}
      & \reforpageref{s:kaleido-def}
      \\
      \noindent\rlap{$\IsLimit{\Omega}$,}\hphantom{$\Omega^{\mergeplus/}$,} $\IsLimit{\Gamma}$
      & 
        \ldots\ satisfying limit axiom \eqref{axiom:limits} 
        or \eqref{axiom:arblim}
      & \reforpageref{sec:limit-def}
      \\
      \noindent\rlap{$\IsJoinExtend{\Omega}$,}\hphantom{$\Omega^{\mergeplus/}$,} $\IsJoinExtend{\Gamma}$
      & \ldots\ satisying \eqref{axiom:joinextend}
      \\[0.7ex]
      $\Omega^{\fin/}$
      & A finite move ensemble \hphantom{ponents}
        \rdelim\}{2}{20pt}[~finite presentation]
      & \multirow{2}{*}{\reforpageref{s:omega-fin-notation}}
      \\
      $\C$
      & Connected covered components
      & 
      \\
      $\Omega^{\red/}$
      & A reduced finite move ensemble
      & \reforpageref{s:omega-red-notation}
      \\[0.7ex]
      $\posetL$, $\posetM$
      & Families of move ensembles
                          & \reforpageref{s:ctscl-definition}
      \\
      \bottomrule
      \vspace{4ex}
    \end{tabular}
\end{table}


\bookmark[level=1,dest=axioms,italic]{List of axioms for move ensembles}
\newcommand\rdelimbraceparbox[3]{\rdelim\}{#1}{20pt}[~\parbox{#2}{\begin{spacing}{1}\raggedright #3 \end{spacing}}]}
\begin{table}[t]\hypertarget{axioms}{}
  \caption{List of axioms for move ensembles}
  \label{tab:axioms}
  \centering
  \begin{spacing}{1.7}
    \begin{tabular}{@{\,}l@{\ }p{.24\linewidth}@{\,}p{.24\linewidth}@{ }p{.2\linewidth}l@{\,}}
      \toprule
      \eqref{axiom:composition}
      & \rdelimbraceparbox{2}{7em}{\emph{move semigroup}\\[0.5ex]$\Gamma = \semi{\Omega}$}
      & \rdelimbraceparbox{4}{6.5em}{\emph{joined semigroup}\\[1ex]$\Gamma^{\join/} = \joinsemi{\Omega}$}
      & \rdelimbraceparbox{7}{6.5em}{\emph{closed move
        semigroup}\\[1ex]$\IsExtend{\Gamma}^{\join/} = \clsemi{\Omega}$}
      & \pageref{axiom:composition} \\
      \eqref{axiom:inverse}
      &&&
      & \pageref{axiom:inverse} \\
      \eqref{axiom:restrict}
      & \rdelimbraceparbox{2}{7em}{\emph{joined ensemble}\\[0.5ex]$\Omega^{\join/} = \join{\Omega}$}
      & &
      & \pageref{axiom:restrict} \\
      \eqref{axiom:join}
      &&&
      & \pageref{axiom:join} \\
      \eqref{axiom:translation-reflection}
      & \multicolumn{1}{l}{\emph{kaleidoscopic ens.}} & $\Omega^\mergeplus/$
      & & \pageref{axiom:translation-reflection} \\
      \eqref{axiom:limits},  \eqref{axiom:arblim}
      & \multicolumn{1}{l}{\emph{limits-closed ens.}} & $\bar\Omega = \arblim{\Omega}$
      & & \pageref{axiom:limits} \\
      \eqref{axiom:joinextend}
      & \multicolumn{1}{l}{\emph{extended ensemble}} & \multicolumn{1}{@{}l}{\rlap{$\IsJoinExtend{\Omega} = \joinextend{\Omega}$}}
      & & \pageref{axiom:joinextend}\\
    \bottomrule
  \end{tabular}
\end{spacing}
\end{table}

\clearpage

\section{Translation and reflection moves. Their algebraic and
  order-theoretic structure}
\label{sec:translation_reflection_moves}

\tred{Goal of generality for the initial sections: (1) Avoid 'piecewise
  linear' and 'finitely many intervals'.  (2) Introduce notation to replace
  (0,1) so that everything can be used for other models, not just GJ.}

\subsection{Group $\FullMoveGroup$ of unrestricted translations $\tau_{\texorpdfstring{t}{\unichar{"1D64}}}$ and
  reflections $\rho_{\texorpdfstring{r}{\unichar{"1D63}}}$, character $\chi$}
\label{s:FullMoveGroup}
\begin{definition}
  For a point $r \in \R$, define the (unrestricted) \emph{reflection}
  $\rho_r\colon \R\to \R$, $x \mapsto r-x$.  For a vector $t \in \R$, define
  the (unrestricted) \emph{translation}
  $\tau_t\colon \R \to \R$, $x \mapsto x + t$.  
\end{definition}
The set
$ \FullMoveGroup =
\{\, \rho_r, \tau_t \st r \in \R,\; t \in \R\,\}$  of all translations and reflections, with the operations
of function composition $\circ$ and inverse $\cdot^{-1}$, has the structure of
a group.  
It is a subgroup of
the group $\Aff(\R)$ of regular affine transformations of~$\R$.

To denote an element that can be either a translation or a
reflection, we will usually use the letter $\gamma$.
To recover whether an element $\gamma$ is a translation or a reflection, 
we assign a \emph{character} $\chi(\rho_r) = -1$ to every reflection and
$\chi(\tau_t) = +1$ to every translation.
The map $\gamma\mapsto\chi(\gamma)$
is a group character, i.e., a homomorphism, so compositions of elements follow
the rule 
\begin{equation}
  \chi(\gamma_1 \circ \gamma_2) = \chi(\gamma_1) \cdot \chi(\gamma_2).\label{eq:group-character}
\end{equation}

\subsection{Restricted moves
  $\gamma\texorpdfstring{|_D}{|\unichar{"208D}\unichar{"1D64}\unichar{"2009}\unichar{"0326}\unichar{"1D65}\unichar{"208E}} \in{} \FullMoveSemigroup$ as partial bijections of~$\R$}

As we mentioned in the introduction, compared to
\cite{basu-hildebrand-koeppe:equivariant}, where finitely generated subgroups
of $\FullMoveGroup$ were used for the grid-based extremality test algorithm, in this
paper we develop a more detailed theory using restricted moves with domains.
Our terminology is based on the monograph
\cite{lawson-1998-inverse-semigroups} on inverse semigroups.

We begin by restricting translations and reflections $\gamma \in \FullMoveGroup
$ to open
interval domains $D \subseteq \R$.
\begin{definition}\label{def:FullMoveSemigroup}
  \label{def:move}
  Let $\gamma \in \FullMoveGroup
  $ be a translation or reflection, and let $D \subseteq \R$ be an open interval
  or the empty set.
  \begin{enumerate}[\rm(a)]
  \item The \emph{move} $\gamma|_D$ is the partial function with domain $D$
    and image $\gamma(D)$, defined by $\gamma|_D(x) = \gamma(x)$ for
    $x \in D$.
  \item The \emph{character} of $\gamma|_D$ is the character of $\gamma$.
  \item Two moves $\gamma_1|_{D_1}, \gamma_2|_{D_2}$ with open interval
    domains $D_1, D_2$ are \emph{equal} if
    $\gamma_1 = \gamma_2$ and $D_1 = D_2$.  A move with an open interval
    domain is not \emph{equal} to a move with an empty domain.
    We identify all translations with empty domain and denote this object by
    $\tau|_\emptyset$. Likewise, we identify all reflections with empty domain
    and denote this object by $\rho|_\emptyset$.  The empty translation and the
    empty reflection are not \emph{equal}; they are distinct objects with
    $\chi(\tau|_\emptyset) = +1$ and $\chi(\rho|_\emptyset) = -1$.
  \item The set of all moves is denoted by $\FullMoveSemigroup$.
  \end{enumerate}
\end{definition}

\subsubsection{Remark on the relation to pseudogroups}

Inverse semigroups of partial homeomorphisms between open subsets of a
topological space are known as \emph{pseudogroups}
\cite[section 1.2]{lawson-1998-inverse-semigroups}.  However, our theory differs in the
following ways: (1) We only allow open intervals (and the empty set) as
domains of the partial functions, rather than arbitrary open subsets. The
reason for our choice will become clear in \autoref{s:equivariant}, where we
will use moves to describe systems of functional equations.
(2) Less importantly, we have two empty moves, one for each possible
character, rather than a unique empty move.

\subsection{Graphs of moves}
\label{s:graphs-of-moves}

We find it convenient to describe the graphs of moves. The \emph{graph} of
$\gamma|_D$ is the set \[\graph(\gamma|_D) = \{\, (x,y) \in \R\times\R \st x \in D,\;
  \gamma(x) = y \,\}.\]   

Figures showing the graphs have already appeared in
\autoref{fig:semigroup-operations-composition} and
\autoref{fig:semigroup-operations-inverse}.  To emphasize that the domains of all
nonempty moves are open intervals, we decorate the endpoints of the moves by
hollow circles, indicating that the endpoints are not part of the graphs.

\subsection{Restriction partial order $\subseteq$ on moves}
\label{s:restriction-partial-order}

The set of all moves comes with a natural partial order.
\begin{definition}
  $\gamma_1|_{D_1}$ is a \emph{restriction} of $\gamma_2|_{D_2}$, denoted $\gamma_1|_{D_1}
  \subseteq \gamma_2|_{D_2}$, if $D_1 \subseteq D_2$, $\chi(\gamma_1) =
  \chi(\gamma_2)$, and $\gamma_1(x) = \gamma_2(x)$ for $x \in D_1$.
\end{definition}
Thus, in this partial order, translations and reflections are incomparable.  We have
$\tau|_\emptyset \subseteq \tau_t|_D$ for all translations and likewise
$\rho|_\emptyset \subseteq \rho_r|_D$. 

\begin{definition}
Given $\gamma|_{D}$ and an open interval (or empty set) $D' \subseteq D$, the
\emph{restriction} of $\gamma|_{D}$ to $D'$ is the move
$(\gamma|_D)\big|_{D'} =\gamma|_{D'}$.  Given an open interval (or empty set)
$I' \subseteq \gamma(D)$, the \emph{corestriction} of $\gamma|_{D}$ to $I'$ is
the move ${}_{I'}\big| (\gamma|_D) = \gamma|_{D\cap \gamma^{-1}(I')}$. 
\end{definition}

\subsection{Inverse semigroup structure $(\FullMoveSemigroup, \circ, \texorpdfstring{\cdot^{-1}}{\unichar{"2219}\unichar{"207B}\unichar{"B9}})$}

Let $\gamma_1|_{D_1}$ and $\gamma_2|_{D_2}$ be two moves. As noted in the
introduction, their composition
$\gamma_2|_{D_2}\circ\gamma_1|_{D_1} $ is defined as
$\gamma_2\circ\gamma_1|_{D_1 \cap \gamma_1^{-1}(D_2)}$
(\autoref{fig:semigroup-operations-composition}).  The domain of this partial
bijection is either an open interval or the empty set; so it is again a move.
It is clear that the composition operation $\circ$ is associative. Hence the
moves form a semigroup~$(\FullMoveSemigroup, \circ)$.

As we have noted already, a move $\gamma|_D$ also has a (unique) inverse given by
$(\gamma|_D)^{-1} = \gamma^{-1}|_{\gamma(D)}$
(\autoref{fig:semigroup-operations-inverse}) satisfying the
laws~\eqref{eq:inverse}
(\autoref{fig:semigroup-operations-composition-inverse}).  Hence the moves
form an inverse semigroup $(\FullMoveSemigroup, \circ, \cdot^{-1})$.  Its
idempotent elements are exactly the 
\emph{partial identities}, which are restrictions of the identity translation
$\tau_0$ to open intervals~$D$ together with the empty
translation~$\tau|_\emptyset$.  (The empty reflection is not idempotent; we
have $\rho|_\emptyset \circ \rho|_\emptyset = \tau|_\emptyset$.)

The inverse semigroup structure interacts with the restriction partial order
(\autoref{s:restriction-partial-order}) as follows \cite[Proposition
1.1.4]{lawson-1998-inverse-semigroups}.  If
$\gamma|_{D'} \subseteq \gamma|_{D}$, then
$\gamma|_{D'}^{-1} \subseteq \gamma|_{D}^{-1}$; moreover, this restriction can
be expressed as a composition with an idempotent:
$\gamma|_{D'} = (\gamma|_{D})\big|_{D'} = \gamma|_{D} \circ \tau_0|_{D'}$.
Finally, if $\gamma_i|_{D'_i} \subseteq \gamma_i|_{D_i}$ for $i = 1,2$, then
$\gamma_2|_{D'_2} \circ \gamma_1|_{D'_1} \subseteq \gamma_2|_{D_2} \circ
\gamma_1|_{D_1}$.

\section{Ensembles $\Omega$ of moves. Their order-theoretic
  structure}
\label{sec:ensembles}

Now we consider \emph{move ensembles} $\Omega$, i.e., arbitrary subsets of the inverse
semigroup~$\FullMoveSemigroup$.  We denote elements
by $\gamma|_D$, where $\gamma
\in \FullMoveGroup$ is an unrestricted move and $D$ is the domain
.

\subsection{Order-theoretic structure}

\subsubsection{Restriction-closed move ensembles $\Omega^{\restrict/}$}
\label{sec:restriction-closed}

\begin{definition}
  A move ensemble~$\Omega^{\restrict/}$ is \emph{restriction-closed} if it
  satisfies the following axiom.
  \begin{equation}
    \tag{restrict}\label{axiom:restrict}
    \begin{gathered}[t]
      \text{If $\gamma|_{D} \in \Omega^{\restrict/}$ and $D' \subseteq D$ is an
        open interval or the empty set,}\\
      \text{then $\gamma|_{D'} \in \Omega^{\restrict/}$.}
    \end{gathered}
  \end{equation}
  For a move ensemble~$\Omega$, the restriction closure $\restrict{\Omega}$ is
  the smallest restriction-closed move ensemble containing~$\Omega$.  (It
  consists of all restrictions of moves of $\Omega$.)
\end{definition}

\begin{remark}
  Throughout the paper, a superscript like $\restrict/$ in
  $\Omega^{\restrict/}$ indicates an axiom that the set $\Omega^{\restrict/}$
  satisfies. See \autoref{tab:notation} for an overview of notation used in
  the paper.
\end{remark}

\begin{example}
  The inverse semigroup $\FullMoveSemigroup$ of all restricted translations
  and reflections is a restriction-closed move ensemble.
\end{example}

\subsubsection{Join-closed move ensembles $\IsJoin{\Omega}$}

\label{s:join-def}
\begin{definition}
  A move ensemble $\IsJoin{\Omega}$ is \emph{(completely) join-closed} if it
  satisfies \eqref{axiom:restrict} and the following condition.
  \begin{equation}
    \tag{continuation}\label{axiom:join}
    \begin{gathered}[t]
      \text{If there is a family
        $ \Omega_{\mathfrak I} = \{\, \gamma|_I \st I \in \mathfrak{I} \,\} \subseteq
        \IsJoin{\Omega}$}\\
      \text{such that $D = \textstyle\bigcup_{I \in \mathfrak{I}} I$ is an interval,}\\
      \text{then $\gamma|_D \in \IsJoin{\Omega}$.} 
    \end{gathered}
  \end{equation}
\end{definition}

\tblue{Note that \eqref{axiom:restrict} is part of the definition of
  ``join-closed''.  To avoid confusion of the axiom by itself and the axiom
  combined with \eqref{axiom:restrict}, we have renamed the axiom
  ``continuation''. That seems to be used in part of pseudogroup literature.
  Then whenever we say join, it means continuation and restriction together.}

\begin{definition}
  We define the \emph{joined ensemble} $\join{\Omega}$ of~$\Omega$ as the
  smallest set of moves containing $\Omega$ that satisfies
  \eqref{axiom:join} and \eqref{axiom:restrict}.
\end{definition}

\begin{lemma}
  \label{lemma:joined-ensemble-explicitly}
  For a move ensemble~$\Omega$, the joined ensemble $\join{\Omega}$ consists
  of the following moves.
  \begin{equation}
    \label{eq:join-generation}
    \begin{gathered}[t]
      \Bigl\{\,
      \gamma|_{D} \Bigst \text{$D \subseteq \bigcup_{I \in
          \mathfrak{I}} I$, where $\gamma|_I \in \Omega$ for $I \in
        \mathfrak{I}$,  $D$ empty or open interval}
      \,\Bigr\}.
    \end{gathered}
  \end{equation}
\end{lemma}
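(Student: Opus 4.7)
The plan is to prove the two inclusions $R \subseteq \join{\Omega}$ and $\join{\Omega} \subseteq R$, where $R$ denotes the set on the right-hand side of \eqref{eq:join-generation}.

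For $R \subseteq \join{\Omega}$, I would take an arbitrary $\gamma|_D \in R$ with witnessing family $\mathfrak{I}$, so that $\gamma|_I \in \Omega$ for every $I \in \mathfrak{I}$ and $D \subseteq \bigcup_{I \in \mathfrak{I}} I$. Since $\Omega \subseteq \join{\Omega}$, each $\gamma|_I$ lies in $\join{\Omega}$. For each $I \in \mathfrak{I}$, the set $I \cap D$ is an open interval or empty, so by \eqref{axiom:restrict} applied inside $\join{\Omega}$ we obtain $\gamma|_{I \cap D} \in \join{\Omega}$. Because $D$ is an (open) interval contained in $\bigcup_I I$, we have the identity $D = \bigcup_{I \in \mathfrak{I}} (I \cap D)$. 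Applying \eqref{axiom:join} to the family $\{\gamma|_{I \cap D} : I \in \mathfrak{I}\}$ yields $\gamma|_D \in \join{\Omega}$.

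For $\join{\Omega} \subseteq R$, I would argue by the minimality in the definition of $\join{\Omega}$: it suffices to check that $R$ contains $\Omega$ and satisfies both \eqref{axiom:restrict} and \eqref{axiom:join}. The containment $\Omega \subseteq R$ is immediate, since any $\gamma|_D \in \Omega$ is witnessed in $R$ by the singleton family $\mathfrak{I} = \{D\}$. For \eqref{axiom:restrict}, if $\gamma|_D \in R$ is witnessed by $\mathfrak{I}$ and $D' \subseteq D$ is an open interval or empty, then the same family $\mathfrak{I}$ witnesses $\gamma|_{D'} \in R$, because $D' \subseteq D \subseteq \bigcup_I I$ and the character of $\gamma$ is preserved. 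For \eqref{axiom:join}, if $\{\gamma|_{D_\alpha}\}_{\alpha}$ is a family in $R$ whose union $D = \bigcup_\alpha D_\alpha$ is an interval, and each $D_\alpha$ has witness family $\mathfrak{I}_\alpha$, then the amalgamated family $\mathfrak{I} = \bigsqcup_\alpha \mathfrak{I}_\alpha$ witnesses $\gamma|_D \in R$, since $D \subseteq \bigcup_\alpha \bigcup_{I \in \mathfrak{I}_\alpha} I = \bigcup_{I \in \mathfrak{I}} I$.

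The argument is essentially bookkeeping, so there is no substantial obstacle. The only subtlety worth noting is the handling of the empty-domain edge case, which must be kept consistent with the convention from \autoref{def:FullMoveSemigroup} that $\tau|_\emptyset$ and $\rho|_\emptyset$ are two distinct objects with distinct characters; this is unproblematic as long as we keep track of $\chi(\gamma)$ throughout, as is done implicitly above by fixing $\gamma$ across the witnessing family.
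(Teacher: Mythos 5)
Your proof is correct and takes the same two-inclusion approach as the paper's (very terse) proof: showing that the right-hand side set satisfies \eqref{axiom:restrict} and \eqref{axiom:join} and contains $\Omega$, hence contains $\join{\Omega}$ by minimality, and conversely that any $\gamma|_D$ in the right-hand side is reached inside $\join{\Omega}$ by restricting each witnessing $\gamma|_I$ to $\gamma|_{I\cap D}$ and then joining. You simply spell out the bookkeeping the paper leaves implicit.
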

\begin{proof}
  This set clearly satisfies \eqref{axiom:join} and
  \eqref{axiom:restrict}. On the other hand, $\join{\Omega}$ needs to contain
  this set.
\end{proof}

\tred{Add lemmas/remarks regarding moves in joined ensemble here? 
  Such as: (because of infinite join) endpoints of moves of $\join{\Omega}$ are limits of endpoints of
  moves of $\Omega$. 
}

\subsubsection{Presentation by the set\/ $\maxdom(\Omega^{\join/})$ of maximal elements}

\begin{definition}
  For a move ensemble $\Omega$, let $\maxdom(\Omega)$ denote the set of maximal
  elements of~$\Omega$ in the restriction partial order.
\end{definition}

\begin{lemma}\label{lemma:joined-is-presented-by-max}
  A join-closed move ensemble $\Omega^{\vee}$ is equal to the restriction closure and to the
  joined ensemble of its maximal elements in the restriction partial order:
  \begin{equation}
    \Omega^{\vee}  = \restrict{\maxdom(\Omega^{\vee})} = \join{\maxdom(\Omega^{\vee})}
  \end{equation}
\end{lemma}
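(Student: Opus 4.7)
The plan is to show the three inclusions
\[
\join{\maxdom(\Omega^{\vee})} \subseteq \Omega^{\vee} \subseteq \restrict{\maxdom(\Omega^{\vee})} \subseteq \join{\maxdom(\Omega^{\vee})},
\]
from which the two equalities follow, together with the observation $\maxdom(\Omega^\vee) \subseteq \Omega^\vee$. The first inclusion is immediate: $\maxdom(\Omega^{\vee}) \subseteq \Omega^{\vee}$, and $\Omega^{\vee}$ is itself join-closed, so $\join{\maxdom(\Omega^{\vee})} \subseteq \Omega^{\vee}$. The third inclusion is also immediate from the explicit description in \autoref{lemma:joined-ensemble-explicitly}, since a restriction of a single move $\gamma|_I$ is trivially covered by the singleton family $\mathfrak{I}=\{I\}$. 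So the substantive work is the middle inclusion.

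The key step is to show that every element $\gamma|_D \in \Omega^{\vee}$ is a restriction of some maximal element of~$\Omega^\vee$. This is where \eqref{axiom:join} pulls its weight. Consider the set
\[
\S = \{\, \gamma|_{D'} \in \Omega^{\vee} \st \gamma|_D \subseteq \gamma|_{D'} \,\},
\]
partially ordered by restriction. Note that since $\gamma$ is fixed, elements of~$\S$ differ only by their domain; the partial order on $\S$ is thus isomorphic to inclusion of the open intervals~$D'\supseteq D$. I will apply Zorn's lemma. Given a chain $\{\gamma|_{D_i}\}_{i\in\mathfrak{I}}\subseteq \S$, the union $D^* = \bigcup_{i\in\mathfrak{I}} D_i$ is a nested union of open intervals containing~$D$, hence itself an open interval. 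By \eqref{axiom:join} applied to the family $\{\gamma|_{D_i}\}$, we have $\gamma|_{D^*}\in \Omega^{\vee}$, and clearly $\gamma|_{D^*}\in \S$ is an upper bound for the chain. Zorn's lemma then yields a maximal element $\gamma|_{D^*}$ of~$\S$.

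It remains to observe that such a $\gamma|_{D^*}$ is in fact maximal in the restriction partial order on all of~$\Omega^\vee$, not just within~$\S$. This is because any move $\tilde\gamma|_{\tilde D} \in \Omega^\vee$ with $\gamma|_{D^*} \subseteq \tilde\gamma|_{\tilde D}$ forces $\tilde\gamma = \gamma$ (since the characters must agree and both moves agree on the nonempty interval~$D^*$), so $\tilde\gamma|_{\tilde D}\in \S$, and maximality within~$\S$ yields $\tilde D = D^*$. Hence $\gamma|_{D^*}\in \maxdom(\Omega^{\vee})$, and $\gamma|_D$ is a restriction of it, so $\gamma|_D \in \restrict{\maxdom(\Omega^{\vee})}$, as required.

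The only delicate point is the appeal to Zorn's lemma, which is painless here because the poset of open intervals containing a fixed nonempty open interval, ordered by inclusion, is closed under nested unions, and the continuation axiom transports this closure directly into~$\Omega^\vee$. The case $D=\emptyset$ requires only a cosmetic remark: if the empty move lies in $\Omega^\vee$, one either checks that some nonempty element of~$\Omega^\vee$ contains it (and applies the above), or observes that the empty move is itself in the restriction closure of any nonempty maximal element of the appropriate character, provided $\Omega^\vee$ contains any such element; otherwise $\Omega^\vee$ is contained in $\{\tau|_\emptyset,\rho|_\emptyset\}$ and the statement is trivial.
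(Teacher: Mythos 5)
Your proof is correct, and the overall structure (the cycle of three inclusions, with the restriction and join inclusions being the trivial ones) is sound. The substantive inclusion is the same one the paper establishes, but you reach it by a different route: you invoke Zorn's lemma on the poset $\S$ of extensions of $\gamma|_D$, using \eqref{axiom:join} only to verify that chains have upper bounds. The paper bypasses Zorn entirely: since \eqref{axiom:join} is a \emph{complete} join axiom (closure under joins of arbitrary families whose domains union to an interval, not just chains), one can apply it in a single step to the entire family $\mathfrak D = \{\,D' \supseteq D \st \gamma|_{D'} \in \Omega^\vee\,\}$, obtaining $\gamma|_{\bar D}$ with $\bar D = \bigcup\mathfrak D$ directly as the desired maximal extension. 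Your argument is thus more general --- it would survive if the join axiom were weakened to chains or directed families --- but for the axiom actually in force it is heavier machinery than necessary, and you in fact re-derive the key observation (a nested union of intervals containing $D$ is an interval, so the join lands in $\Omega^\vee$) inside the chain-bound check. Your care in noting that a maximal element of $\S$ is automatically maximal in $\Omega^\vee$ (since an extension of $\gamma|_{D^*}$ forces the same unrestricted move when $D^*$ is a nonempty open interval) is a point the paper leaves implicit, as is your handling of the empty-move case; both are correct and harmless additions.
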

\begin{proof}
  Let $\gamma|_D \in \Omega^{\vee}$.  Let
  $\mathfrak D = \{\, D' \supseteq D \st \gamma|_{D'} \in
  \Omega^{\vee} \,\}$.
  Let $\bar D = \bigcup \mathfrak D$, an interval.  Then
  $\bar D \in \Omega^{\vee}$ because $\Omega^{\vee}$ satisfies \eqref{axiom:join}. Moreover,
  $\gamma|_D \subseteq \gamma|_{\bar D} \in \maxdom(\Omega^{\vee})$
  and thus $\gamma|_D \in \restrict{\maxdom(\Omega^{\vee})}$.  The
  other inclusions are trivial.
\end{proof}

\subsection{Move ensembles as set-valued maps $\R{} \Rightarrow{}
  \R$. Domains, images, restrictions}
\label{s:restrictions}

\begin{definition}
  Let $\Omega$ be a move ensemble and $R$ be a
  disjoint union of open intervals, $R = \bigcup_{R'\in \mathfrak I} R'$. 
  The \emph{restriction} $\Omega|_R$ is the move
  ensemble consisting of the 
  restrictions $\gamma|_{D\cap R'}$ whenever $\gamma|_D \in \Omega$, $R' \in
  \mathfrak I$, and either $D = \emptyset$ or $D \cap R' \neq \emptyset$. 
  Similarly, we define 
  the \emph{corestriction} ${}_R|\Omega$ and 
  the \emph{double restriction} ${}_R|\Omega|_R$.
\end{definition}
In the restrictions, domains of moves are restricted to subintervals of~$R$.
Note that by our definition, the restrictions contain empty moves if and only if
$\Omega$ contains empty moves.  Therefore we have the following two convenient
properties:
\begin{lemma}\label{lemma:restriction-of-restrict/join-is-restrict/join}
  For a move ensemble $\Omega^{\restrict/}$ that satisfies \eqref{axiom:restrict},
  the restrictions satisfy \eqref{axiom:restrict}, and
  we have
  \begin{align*}
    \Omega^{\restrict/}|_R &= \bigl\{\, \gamma|_D \in \Omega^{\restrict/} \bigst D \subseteq R \,\bigr\}, \\
    {}_R|\Omega^{\restrict/}\hphantom{|_R} &= \bigl\{\, \gamma|_D \in \Omega^{\restrict/} \bigst \gamma(D) \subseteq R \,\bigr\}, \\
    {}_R|\Omega^{\restrict/}|_R &= \bigl\{\, \gamma|_D \in \Omega^{\restrict/} \bigst D, \gamma(D) \subseteq R \,  \,\bigr\}.
  \end{align*}
  Likewise, restrictions also preserve \eqref{axiom:join}.
\end{lemma}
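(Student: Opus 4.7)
The plan is to prove the three set equalities by double inclusion, and then read off the axiom-preservation claims as direct consequences of the resulting characterizations. Two elementary facts drive everything: a nonempty domain $D$ of a move is by definition a connected open interval, and $R$ is a disjoint union $\bigcup_{R'\in\mathfrak I} R'$ of open intervals. Together these imply that any nonempty open interval $D$ contained in $R$ lies entirely in a single component $R'$ of $R$. This single-component observation is the one nontrivial ingredient; the rest is bookkeeping.

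For $\Omega^{\restrict/}|_R$, the inclusion $\subseteq$ is immediate from \eqref{axiom:restrict}: each generator $\gamma|_{D\cap R'}$ with $\gamma|_D\in\Omega^{\restrict/}$ is itself in $\Omega^{\restrict/}$, because $D\cap R'$ is either empty or an open subinterval of $D$, and evidently $D\cap R'\subseteq R$. For the reverse inclusion, given $\gamma|_D\in\Omega^{\restrict/}$ with $D\subseteq R$, one considers two cases: if $D=\emptyset$, then the empty-domain disjunct in the definition of $\Omega|_R$ produces $\gamma|_\emptyset$ for any choice of $R'$; if $D$ is a nonempty open interval, then by the single-component observation there is a unique $R'\in\mathfrak I$ with $D\subseteq R'$, and $\gamma|_D=\gamma|_{D\cap R'}$ appears verbatim in the defining list. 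The same pattern, carried out on the image side and using that $\gamma$ is an affine bijection so that $\gamma(D)$ is again an open interval whenever $D$ is, yields the stated formulas for ${}_R|\Omega^{\restrict/}$ and ${}_R|\Omega^{\restrict/}|_R$.

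To see that each restriction inherits \eqref{axiom:restrict}, I take $\gamma|_D$ in the restriction and an open interval or empty $D'\subseteq D$. The condition $D\subseteq R$ (respectively $\gamma(D)\subseteq R$, or both) is monotone in the domain, so it passes to $D'$; and $\gamma|_{D'}\in\Omega^{\restrict/}$ by \eqref{axiom:restrict} of $\Omega^{\restrict/}$ itself. The characterization just proved then places $\gamma|_{D'}$ in the restriction. For the ``likewise'' statement I assume that $\Omega^{\restrict/}$ also satisfies \eqref{axiom:join}, and take a family $\{\gamma|_I : I\in\mathfrak I'\}$ inside the restriction whose union $D=\bigcup_{I\in\mathfrak I'} I$ is an interval. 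Each $I$ is contained in $R$; since $D$ is connected and the components of $R$ are pairwise disjoint, $D$ lies in a single component $R'$, so $D\subseteq R$, and similarly $\gamma(D)\subseteq R$ in the corestriction cases. Applying \eqref{axiom:join} inside $\Omega^{\restrict/}$ gives $\gamma|_D\in\Omega^{\restrict/}$, whence $\gamma|_D$ belongs to the restriction by the characterization. The only real obstacle, as noted, is the single-component observation; once that is in place, the rest of the lemma is a routine unwinding of definitions.
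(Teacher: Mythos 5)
The paper states this lemma without a proof, treating it as a routine consequence of the definition of restriction and the axioms, so there is no argument in the paper to compare yours against. Your argument is correct: the double-inclusion verification of the three characterizations rests on the single-component observation (a nonempty open interval contained in a disjoint union of open intervals lies entirely in exactly one component), and both axiom-preservation claims follow cleanly from the resulting characterizations. One small simplification you could make: in the \eqref{axiom:join} step, the containment $D=\bigcup_{I\in\mathfrak I'} I\subseteq R$ already holds because each $I\subseteq R$, so the connectedness/single-component observation is not actually needed at that step; it is only needed in the $\supseteq$ directions of the set equalities.
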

\begin{lemma}\label{lemma:restriction-of-maxdom-of-join-is-max}
  Let $\Omega^{\max} = \maxdom(\Omega^{\join/})$, where $\Omega^{\join/}$ is a
  joined ensemble.  Then each of the restrictions $\Omega^{\max}|_R$,
  ${}_R|\Omega^{\max}$, and ${}_R|\Omega^{\max}|_R$ consists of the maximal
  elements of $\Omega^{\join/}|_R$, ${}_R|\Omega^{\join/}$, and
  ${}_R|\Omega^{\join/}|_R$, respectively.
\end{lemma}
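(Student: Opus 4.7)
The plan is to prove the three claims by first establishing the statement for the restriction $\Omega^{\max}|_R$, and then either repeating the argument or using the inverse semigroup structure to deduce the corestriction and double restriction cases. Throughout, I will rely on \autoref{lemma:restriction-of-restrict/join-is-restrict/join}, which lets me treat $\Omega^{\join/}|_R$ concretely as $\{\,\gamma|_D \in \Omega^{\join/} \st D \subseteq R\,\}$, and on \autoref{lemma:joined-is-presented-by-max}, which says every move of $\Omega^{\join/}$ sits inside a unique maximal extension in $\Omega^{\max}$ (obtained by taking the union of all domains $D'$ such that $\gamma|_{D'} \in \Omega^{\join/}$ extends the given move and invoking \eqref{axiom:join}).

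For the forward inclusion, take $\gamma|_D \in \Omega^{\max}|_R$. By definition, $\gamma|_{D^*} \in \Omega^{\max}$ for some maximal $\gamma|_{D^*}$ and $D = D^* \cap R'$ for some connected component $R'$ of $R$ (or $D = \emptyset$, which is trivially maximal in an ensemble containing no proper extension within $R$). Suppose for contradiction that $\gamma|_D \subsetneq \gamma|_{\tilde D}$ with $\gamma|_{\tilde D} \in \Omega^{\join/}|_R$. Since $\tilde D$ is an open interval properly containing $D$ and contained in $R$, it lies entirely in the connected component $R'$. Because $\tilde D$ overlaps $D \subseteq D^*$, the union $D^* \cup \tilde D$ is again an open interval, and \eqref{axiom:join} gives $\gamma|_{D^* \cup \tilde D} \in \Omega^{\join/}$. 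Maximality of $\gamma|_{D^*}$ forces $\tilde D \subseteq D^*$, so $\tilde D \subseteq D^* \cap R' = D$, contradicting $\tilde D \supsetneq D$. Hence $\gamma|_D$ is maximal in $\Omega^{\join/}|_R$.

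Conversely, let $\gamma|_{\hat D}$ be maximal in $\Omega^{\join/}|_R$, so $\hat D \subseteq R'$ for some connected component $R'$ of~$R$. Extend $\gamma|_{\hat D}$ to its unique maximal super-move $\gamma|_{D^*} \in \Omega^{\max}$ in $\Omega^{\join/}$, as produced by \autoref{lemma:joined-is-presented-by-max}. Then $D^* \cap R'$ is an open interval (or empty) containing $\hat D$, and $\gamma|_{D^* \cap R'}$ belongs to $\Omega^{\max}|_R$ hence, by the forward direction just proved, to $\Omega^{\join/}|_R$. Maximality of $\gamma|_{\hat D}$ in $\Omega^{\join/}|_R$ therefore forces $\hat D = D^* \cap R'$, exhibiting $\gamma|_{\hat D}$ as an element of $\Omega^{\max}|_R$.

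Finally, the corestriction and double-restriction statements follow by the same argument applied to domains of the form $\gamma(D) \subseteq R$ (respectively both $D, \gamma(D) \subseteq R$); alternatively, one may observe that the map $\gamma|_D \mapsto (\gamma|_D)^{-1} = \gamma^{-1}|_{\gamma(D)}$ is an order-preserving, inverse-semigroup antiisomorphism that sends $\Omega^{\join/}$ to a joined ensemble and exchanges the roles of $\Omega^{\join/}|_R$ and ${}_R|\Omega^{\join/}$, reducing the corestriction case to the restriction case; intersecting the two then handles ${}_R|\Omega^{\join/}|_R$. The main subtlety is verifying that a maximal element of the restricted ensemble cannot be extended beyond the connected component of $R$ containing its domain; this is exactly where the connectedness of open-interval domains, together with \eqref{axiom:join}, is essential and where a routine set-theoretic argument would miss the point.
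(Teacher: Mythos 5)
The paper states \autoref{lemma:restriction-of-maxdom-of-join-is-max} without proof, so there is nothing of the authors' to compare against; your argument is the natural one and is essentially correct. Your forward direction correctly exploits that $\Omega^{\join/}$ is \eqref{axiom:join}-closed and that the restriction stays inside a single connected component of~$R$, so that $D^* \cup \tilde D$ is again an interval and maximality of $\gamma|_{D^*}$ can be invoked; your backward direction correctly lifts a maximal restricted move to its unique maximal continuation in $\Omega^{\join/}$ (whose existence \autoref{lemma:joined-is-presented-by-max} supplies), intersects back with $R'$, and uses maximality in the restricted ensemble to close the loop.

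Two small points worth tightening. First, your parenthetical handling of $D=\emptyset$ is loose. For the forward direction it is fine: if $\gamma|_\emptyset \in \Omega^{\max}|_R$ then $\gamma|_\emptyset \in \Omega^{\max}$ (by the definition of restriction, $\Omega|_R$ contains empty moves iff $\Omega$ does), which means $\Omega^{\join/}$ has no nonempty move of that character, hence neither does $\Omega^{\join/}|_R$, and the empty move is maximal there. For the backward direction, however, the empty move can be maximal in $\Omega^{\join/}|_R$ (when no nonempty move has domain inside $R$) without being maximal in $\Omega^{\join/}$ itself; then it lies in $\maxdom(\Omega^{\join/}|_R)$ but not in $\Omega^{\max}|_R$. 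This is a genuine degenerate edge case that the lemma as stated does not accommodate; the paper is silently assuming it away (as does every use of the lemma, where the restricting set $R$ always contains domains of idempotents), but since you chose to mention the case you should either dispose of it properly or note that the statement implicitly excludes it.

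Second, your inversion-based shortcut for the corestriction and double restriction relies on the fact that $(\Omega^{\join/})^{-1}$ is again a joined ensemble. This is true — because each unrestricted $\gamma$ is a homeomorphism of $\R$, so taking inverses carries the family of domains covering an interval to a family of images covering an interval, and carries open subintervals of a domain to open subintervals of the corresponding image — but it deserves a sentence, since \eqref{axiom:restrict} and \eqref{axiom:join} are not formally symmetric under $\gamma|_D \mapsto (\gamma|_D)^{-1}$. Your first alternative, redoing the argument directly with $\gamma(D) \subseteq R$, avoids this and is perhaps the safer route. Once the inversion observation is supplied, both routes are valid and give the corestriction case; intersecting the two restriction operations then yields the double restriction, as you say.
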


We define the domain and image of a move ensemble~$\Omega$, as well as the
image of a set under the ensemble.  
\begin{definition}
  Let $\Omega$ be a move ensemble.
  The \emph{domain} of $\Omega$ is
  \begin{displaymath}
    \dom(\Omega) = \bigcup\bigl\{\, D \bigst \text{$\gamma|_D \in \Omega$ for some
      $\gamma$} \,\bigr\};
  \end{displaymath}
  the \emph{image} of $\Omega$ is 
  \begin{displaymath}
    \im(\Omega) = \bigcup\bigl\{\, \gamma(D) \bigst \text{$\gamma|_D \in \Omega$ for some
      $\gamma$} \,\bigr\}.
  \end{displaymath}
\end{definition}

\begin{definition}
  Let $\Omega$ be a move ensemble and $X \subseteq \R$ be a set.  Define
  \begin{displaymath}
    \Omega(X) = \bigl\{\, \gamma(x) \bigst \gamma|_D \in \Omega,\; x\in X\cap D
    \,\bigr\}.
  \end{displaymath}
\end{definition}

\begin{remark}
  In these notions, a move ensemble behaves like a set-valued map
  $\Omega\colon \R \Rightarrow \R$.
\end{remark}

\subsection{Graphs $\graph(\Omega), \graph\unisubscript+(\Omega), \graph\unisubscript-(\Omega)$ of move ensembles $\Omega$}
\label{s:graphs-of-ensembles}

We introduced graphs of moves in \autoref{s:graphs-of-moves}.
For a move ensemble $\Omega$ we define the \emph{translation moves graph}
\begin{align*}
  \graph_{+}(\Omega) &= \bigcup \bigl\{\, \graph(\gamma|_D) \bigst \gamma|_D \in \Omega
                       \text{ and } \chi(\gamma) = 1 \,\bigr\},\\
  \intertext{consisting of line segments
  with slopes $+1$, and the \emph{reflection moves graph}}
  \graph_{-}(\Omega) &=  \bigcup \bigl\{\, \graph(\gamma|_D) \bigst \gamma|_D \in
                       \Omega \text{ and } \chi(\gamma) = -1\,\bigr\}, 
\end{align*}
consisting of line segments with slopes $-1$. 
The \emph{graph} of $\Omega$ is 
\begin{displaymath}
  \graph(\Omega) = \graph_{+}(\Omega) \cup \graph_{-}(\Omega).
\end{displaymath}
We also define the \emph{character conflict graph},
\begin{displaymath}
  \graph_{\pm}(\Omega) = \graph_{+}(\Omega) \cap \graph_{-}(\Omega).
\end{displaymath}

The map $\Omega\mapsto (\graph_{+}(\Omega), \graph_{-}(\Omega))$ becomes an
injection if restricted to the join-closed move
ensembles $\Omega^{\vee}$.  Hence these pairs of graphs faithfully
represent all join-closed move ensembles.  (In our
figures showing these graphs, we superimpose the translation graph (blue) and
reflection graph (red).)

We can go back from graphs to ensembles using the following notation.
\begin{definition}
Let $O \subseteq \R^2$. We define the (join-closed) move ensembles 
  \begin{align*}
    \tMovesOfGraph(O) &= \bigl\{\, \tau_t|_D \bigst \graph(\tau_t|_D) \subset O,\;
                        D \text{ an  open interval or empty}\,\bigr\},\\
    \rMovesOfGraph(O) &= \bigl\{\, \rho_r|_D \bigst \graph(\rho_r|_D)  \subset
                        O,\; D \text{ an  open interval or empty}\,\bigr\},\\
    \MovesOfGraph(O) &= \bigl\{\, \gamma|_D \bigst  \graph(\gamma|_D)  \subset
                       O,\; D \text{ an  open interval or empty} \,\bigr\}.
  \end{align*}
Thus, $\MovesOfGraph(O)  = \tMovesOfGraph(O) \cup \rMovesOfGraph(O)$.
\end{definition}

\section{Inverse semigroups generated by move ensembles}
\label{sec:semigroup}

Now we turn to the study of inverse semigroups
generated by move ensembles.

\subsection{Move semigroups~$\Gamma$; move semigroups $\semi{\Omega}$
  generated by ensembles $\Omega$}
\label{sec:semigroup:def}

\begin{definition}
  A move ensemble $\Gamma$ is a \emph{move semigroup} (or, an \emph{inverse subsemigroup} of
  $\FullMoveSemigroup$) 
  if it satisfies the following axioms:
  \begin{align}
    \tag{composition}& \label{axiom:composition} \gamma'|_{D'} \circ \gamma|_D \in \Gamma \text{ for all }\gamma|_D, \gamma'|_{D'} \in \Gamma, \\
    \tag{inv}& \label{axiom:inverse} (\gamma|_D)^{-1} \in
               \Gamma  \text{ for all  } \gamma|_D \in
               \Gamma.
  \end{align}
\end{definition}

\begin{definition} 
  For a move ensemble $\Omega$, the \emph{move semigroup} $\semi{\Omega}$
  generated by $\Omega$ is the smallest move semigroup containing
  $\Omega$.
\end{definition}

\begin{definition}
  A move semigroup $\Gamma$ is \emph{finitely generated} if there exists a
  finite set $\Omega$ such that $\Gamma = \semi{\Omega}$.
\end{definition}

\begin{lemma}\label{lemma:inv-semigroup-words}
  Let \(\IsInv{\Omega}\) be a move ensemble satisfying
  \eqref{axiom:inverse}. 
  Then $\semi{\IsInv{\Omega}}$ is the set of all finite compositions
  $\gamma^k|_{D_k} \circ \dots \circ \gamma^1|_{D_1}$ of moves $
  \gamma^i|_{D_i} \in \IsInv{\Omega}$.
\end{lemma}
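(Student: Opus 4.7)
The plan is to prove the two inclusions between $\semi{\IsInv{\Omega}}$ and the set
\[
W = \bigl\{\, \gamma^k|_{D_k} \circ \dots \circ \gamma^1|_{D_1} \bigst k \geq 1,\; \gamma^i|_{D_i} \in \IsInv{\Omega} \,\bigr\}.
\]
The inclusion $W \subseteq \semi{\IsInv{\Omega}}$ is immediate: $\semi{\IsInv{\Omega}}$ contains $\IsInv{\Omega}$ and is closed under composition by \eqref{axiom:composition}, so by induction on $k$ every finite composition of elements of $\IsInv{\Omega}$ lies in $\semi{\IsInv{\Omega}}$.

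For the reverse inclusion, I would show that $W$ is itself a move semigroup containing $\IsInv{\Omega}$; then minimality of $\semi{\IsInv{\Omega}}$ forces $\semi{\IsInv{\Omega}} \subseteq W$. Containment $\IsInv{\Omega} \subseteq W$ is the case $k=1$. Closure under composition is trivial: concatenating two finite words of moves from $\IsInv{\Omega}$ yields another finite word. The only step that actually uses the hypothesis is closure under inverses. Here I would invoke the standard identity for partial bijections,
\[
\bigl(\gamma^k|_{D_k} \circ \dots \circ \gamma^1|_{D_1}\bigr)^{-1} = \bigl(\gamma^1|_{D_1}\bigr)^{-1} \circ \dots \circ \bigl(\gamma^k|_{D_k}\bigr)^{-1},
\]
which for our restricted translations and reflections follows directly from \eqref{eq:inverse} together with the definition~\eqref{eq:composition} of composition (or from the general theory of inverse semigroups, \cite[Proposition 1.1.4]{lawson-1998-inverse-semigroups}). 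Because $\IsInv{\Omega}$ satisfies \eqref{axiom:inverse}, each factor $(\gamma^i|_{D_i})^{-1}$ lies in $\IsInv{\Omega}$, and the right-hand side is therefore a member of $W$.

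There is no real obstacle; the lemma is essentially the standard characterization of the inverse subsemigroup generated by an inverse-closed set as its set of words. The only small point to get right is the identification of the inverse of a composition, which for moves boils down to verifying that if $\gamma_1|_{D_1}$ and $\gamma_2|_{D_2}$ have composition $\gamma_2\circ\gamma_1|_{D_1 \cap \gamma_1^{-1}(D_2)}$, then its inverse $\gamma_1^{-1}\circ\gamma_2^{-1}$ has domain $\gamma_2(D_2 \cap \gamma_1(D_1))$, matching what \eqref{eq:composition} and \eqref{eq:inverse} produce when applied to $(\gamma_1|_{D_1})^{-1}\circ(\gamma_2|_{D_2})^{-1}$. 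This is a one-line check and could be relegated to a brief computation or a citation of \cite{lawson-1998-inverse-semigroups}.
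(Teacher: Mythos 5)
The paper states \autoref{lemma:inv-semigroup-words} without proof, implicitly treating it as the standard characterization of the inverse subsemigroup generated by an inverse-closed set, and your argument is exactly that standard proof: $W \subseteq \semi{\IsInv{\Omega}}$ by induction on word length via \eqref{axiom:composition}, and conversely $W$ is itself a move semigroup (concatenation gives \eqref{axiom:composition}; the anti-homomorphism identity $(\gamma^k|_{D_k} \circ \dots \circ \gamma^1|_{D_1})^{-1} = (\gamma^1|_{D_1})^{-1} \circ \dots \circ (\gamma^k|_{D_k})^{-1}$ together with the hypothesis gives \eqref{axiom:inverse}), so minimality of $\semi{\IsInv{\Omega}}$ forces the reverse inclusion. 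The one computation you defer --- that for moves the domain of $\bigl(\gamma_1|_{D_1}\bigr)^{-1}\circ\bigl(\gamma_2|_{D_2}\bigr)^{-1}$ is $\gamma_2(D_2)\cap\gamma_2\gamma_1(D_1) = \gamma_2\gamma_1\bigl(D_1\cap\gamma_1^{-1}(D_2)\bigr)$, matching \eqref{eq:inverse} applied to \eqref{eq:composition} --- does check out, so the proof is complete and correct.
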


\begin{remark}
  Since the domains of moves in $\Omega$ are empty or open intervals, any move
  $\gamma|_D \in \semi{\Omega}$ also has a domain $D$ that is empty or an open
  interval.  If $\gamma|_D \in \Omega$, then the idempotent
  $(\gamma|_D)^{-1} \circ \gamma|_D^{} = \tau_0 |_D^{}$ is an element of $\semi{\Omega}$.  The
  inverse semigroup generated by the empty set is the empty set.
\end{remark}

\subsection{Move semigroups and joins; joined move semigroups
  $\joinsemi{\Omega}$ generated by ensembles $\Omega$}
\label{sec:semigroup:join}

Move semigroups generated by joined ensembles are not automatically
join-closed. 
On the other hand, joining does preserve the semigroup properties.

\begin{lemma}
\label{lemma:joinsemi-is-semi}
  Let $\Gamma$ be a move semigroup.  
  Then the joined ensemble $\join{\Gamma}$ is a move semigroup.
  In particular, for a move ensemble $\Omega$, we have 
  $$\join{\semi{\Omega}} = \semi{\join{\semi{\Omega}}}.$$
\end{lemma}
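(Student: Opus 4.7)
The plan is to verify the two semigroup axioms \eqref{axiom:composition} and \eqref{axiom:inverse} directly for $\join{\Gamma}$, using the explicit description of $\join{\Gamma}$ provided by \autoref{lemma:joined-ensemble-explicitly}: a move $\gamma|_D$ lies in $\join{\Gamma}$ iff $D$ is empty or an open interval and $D \subseteq \bigcup_{I \in \mathfrak I} I$ for some family of moves $\gamma|_I \in \Gamma$ (with the same unrestricted $\gamma$).

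First I would handle \eqref{axiom:inverse}. Let $\gamma|_D \in \join{\Gamma}$, witnessed by $\{\gamma|_I \mid I \in \mathfrak I\} \subseteq \Gamma$ with $D \subseteq \bigcup_I I$. Applying \eqref{axiom:inverse} for~$\Gamma$ to each $\gamma|_I$ gives $\gamma^{-1}|_{\gamma(I)} \in \Gamma$. Since every element of $\FullMoveGroup$ is a homeomorphism of $\R$, the image $\gamma(D)$ is empty or an open interval, and $\gamma(D) \subseteq \bigcup_I \gamma(I)$. By \autoref{lemma:joined-ensemble-explicitly}, $(\gamma|_D)^{-1} = \gamma^{-1}|_{\gamma(D)} \in \join{\Gamma}$.

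Next I would handle \eqref{axiom:composition}. Let $\gamma_1|_{D_1}, \gamma_2|_{D_2} \in \join{\Gamma}$, with witnessing families $\{\gamma_j|_{I_j} \mid I_j \in \mathfrak I_j\} \subseteq \Gamma$ satisfying $D_j \subseteq \bigcup_{I_j} I_j$ for $j = 1,2$. Set $E = D_1 \cap \gamma_1^{-1}(D_2)$; this is empty or an open interval, and it is the domain of the composition $\gamma_2|_{D_2}\circ\gamma_1|_{D_1}$. For each pair $(I_1,I_2) \in \mathfrak I_1 \times \mathfrak I_2$, the composition $\gamma_2|_{I_2}\circ\gamma_1|_{I_1} = (\gamma_2\circ\gamma_1)|_{I_1 \cap \gamma_1^{-1}(I_2)}$ lies in $\Gamma$ by \eqref{axiom:composition} for~$\Gamma$. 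Moreover, for any $x \in E$ we can choose $I_1 \in \mathfrak I_1$ with $x \in I_1$ and $I_2 \in \mathfrak I_2$ with $\gamma_1(x) \in I_2$, so $x \in I_1 \cap \gamma_1^{-1}(I_2)$; hence $E \subseteq \bigcup_{I_1,I_2}\bigl(I_1 \cap \gamma_1^{-1}(I_2)\bigr)$. Applying \autoref{lemma:joined-ensemble-explicitly} once more gives $(\gamma_2\circ\gamma_1)|_E \in \join{\Gamma}$, establishing \eqref{axiom:composition}.

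For the ``in particular'' statement, specialize to $\Gamma = \semi{\Omega}$: by the first part, $\join{\semi{\Omega}}$ is itself a move semigroup, and it obviously contains $\semi{\Omega}$. Since $\semi{\cdot}$ is by definition the smallest move semigroup containing its argument, and $\join{\semi{\Omega}}$ is already a move semigroup, we get $\semi{\join{\semi{\Omega}}} = \join{\semi{\Omega}}$.

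There is no serious obstacle here; the argument is essentially a bookkeeping verification. The only mildly delicate points are (i) noting that images and preimages of open intervals under translations and reflections are again open intervals so that the domain conditions in \autoref{lemma:joined-ensemble-explicitly} stay intact, and (ii) covering the composed domain $E$ by unioning over all pairs $(I_1,I_2)$ rather than trying to match them up.
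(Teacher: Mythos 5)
Your proof is correct and follows essentially the same route as the paper: both use the explicit description of $\join{\Gamma}$ from \autoref{lemma:joined-ensemble-explicitly} (equation~\eqref{eq:join-generation}), pair up witnessing intervals, and verify \eqref{axiom:composition} and \eqref{axiom:inverse} directly. The only cosmetic differences are the order in which you treat the two axioms and that you spell out the ``in particular'' step a bit more explicitly than the paper does.
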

\begin{proof}
Let $\gamma|_{D}, \gamma'|_{D'} \in  \join{\Gamma}$. 
We first show that $\join{\Gamma}$ satisfies the axiom \eqref{axiom:composition}.
By equation~\eqref{eq:join-generation}, 
there exist collections $\mathfrak{I}$ and $\mathfrak{I}'$ of open intervals, such that 
$D \subseteq \bigcup_{I \in \mathfrak{I}} I$, $D' \subseteq \bigcup_{I' \in \mathfrak{I}'} I'$, and $\gamma|_I, \gamma'|_{I'} \in \Gamma$ for all $I \in \mathfrak{I},  I' \in \mathfrak{I}'$.
We know that
\[\gamma'|_{I'} \circ \gamma|_{I} = (\gamma' \circ \gamma)|_{\gamma^{-1}(I') \cap I} \in \Gamma, \text{ for all } I \in \mathfrak{I} \text{ and } I' \in   \mathfrak{I}',\]
since $\Gamma$~satisfies \eqref{axiom:composition}, and that
\[\gamma^{-1}(D') \cap D \subseteq \gamma^{-1}\bigl(\bigcup_{I' \in \mathfrak{I}'} I'\bigr) \cap \bigl(\bigcup_{I \in \mathfrak{I}} I\bigr) =  \bigcup_{I \in \mathfrak I,\, I' \in \mathfrak I'} \bigl(\gamma^{-1}(I') \cap I\bigr).\]
Therefore, by equation~\eqref{eq:join-generation}, 
\(\gamma'|_{D'} \circ \gamma|_{D} = (\gamma' \circ \gamma)|_{\gamma^{-1}(D') \cap D} \in  \join{\Gamma}\).

We will now show that $\join{\Gamma}$ satisfies axiom~\eqref{axiom:inverse}.
We know that \((\gamma|_I)^{-1} = \gamma^{-1}|_{\gamma(I)} \in \Gamma \text{ for all } I \in \mathfrak{I},\) since $\Gamma$ satisfies \eqref{axiom:inverse}, and that \(\gamma(D) \subseteq \gamma(\bigcup_{I \in \mathfrak{I}} I) = \bigcup_{I \in \mathfrak{I}} \gamma(I)\).
Therefore,
$(\gamma|_D)^{-1} = \gamma^{-1}|_{\gamma(D)} \in  \join{\Gamma}$. 
We conclude that $ \join{\Gamma}$ is a move semigroup, so $\join{\Gamma} = \semi{\join{\Gamma}}$.
\end{proof}

\begin{definition}
  Let $\Omega$ be a move ensemble. 
  Then the \emph{joined move semigroup} of~$\Omega$ is defined as
  $$ \joinsemi{\Omega} = \join{\semi{\Omega}}.$$
\end{definition}

\subsection{Move semigroups $\MovesOfGraph(O)$, $\tMovesOfGraph(O)$,
  $\rMovesOfGraph(O)$ generated by connected open ensembles}
\label{sec:semigroup:open}

Finitely generated inverse semigroups, as defined in \autoref{sec:semigroup:def},
are not general enough for our purposes.  As we will see later, we need to
consider move ensembles $\Omega$ whose graphs are open connected sets.  They
generate inverse semigroups $\semi{\Omega}$ that are not finitely generated.
However, they have the following simple structure (see \autoref{fig:open-set-of-moves-sampled}).

\begin{theorem}
\label{thm:open-sets-of-moves}
  Let $\Omega$ be an ensemble of moves.
  Let $O \subseteq \R^2$ be a connected open set. Let $D = \dom(O) :=
  \dom(\MovesOfGraph(O))$ and $I = \im(O) := \im(\MovesOfGraph(O))$.
  \begin{enumerate}
  \item If $\graph_+(\Omega)$ contains $O$, then $\graph_+(\semi{\Omega})$ contains $(D \cup I) \times (D \cup I)$.
  \item If $\graph_-(\Omega)$ contains $O$, then $\graph_-(\semi{\Omega})$ contains $(D \times I) \cup (I \times D)$ and $\graph_+(\Omega)$ contains $(D \times D) \cup (I \times I)$.
  \item If $\graph_\pm(\Omega)$ contains $O$, then
    $\graph_\pm(\semi{\Omega})$ contains $(D\cup I) \times (D \cup I)$.  
  \end{enumerate}
\end{theorem}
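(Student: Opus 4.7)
The strategy for all three parts is to exploit the openness and connectedness of~$O$ to obtain a rich supply of local moves in~$\Omega$, and then use composition and inverse in $\semi{\Omega}$ to chain such local moves across the plane.

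First I would record a preliminary geometric observation: since $O$ is open in $\R^2$, the domain $D = \dom(\MovesOfGraph(O))$ and image $I = \im(\MovesOfGraph(O))$ coincide with the open projections $\proj_x(O)$ and $\proj_y(O)$, respectively, and (being projections of a connected open set) both are open intervals. Then I would use $O \subseteq \graph_+(\Omega)$ (or $\graph_-(\Omega)$ in Parts~2--3) together with the openness of $O$ to argue that every point of~$O$ has an arbitrarily small open neighborhood on which a restricted move of the prescribed character lies in $\semi{\Omega}$; such restrictions are obtained by composing with idempotents $\tau_0|_U \in \semi{\Omega}$ of the form $(\gamma|_V)^{-1} \circ \gamma|_V$, intersected with other domains.

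For Part~1, the key composition identity is
\[
\tau_{t_2}|_{V_2} \circ (\tau_{t_1}|_{V_1})^{-1} = \tau_{t_2 - t_1}|_{(V_1 \cap V_2) + t_1},
\]
so the difference of shifts is realized whenever two translation domains overlap. The set $T = \{\,y-x \st (x,y)\in O\,\}$ of achievable shifts is a nonempty open interval (image of the connected open $O$ under the open map $(x,y) \mapsto y - x$), hence $T - T$ is an open neighborhood of~$0$, and iterated compositions generate every real number as a shift. Chaining such compositions along paths in~$O$, I would show that for every $(a,b) \in (D \cup I)^2$ there is an open interval $U \ni a$ with $\tau_{b-a}|_U \in \semi{\Omega}$, giving $(D \cup I) \times (D \cup I) \subseteq \graph_+(\semi{\Omega})$.

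Part~2 uses the character arithmetic from \eqref{eq:group-character}: a composition of two reflections is a translation, and the composition of a translation with a reflection is a reflection. Starting with reflection moves filling $O \subseteq D \times I$ and their inverses (filling $I \times D$), pair-compositions produce translations filling $D \times D$ and $I \times I$, and triple compositions combined with the shift argument of Part~1 produce reflections filling $D \times I$ and $I \times D$. Part~3 then combines both characters: since $\Omega$ now contains moves of both types with graphs in~$O$, mixed compositions yield moves of both characters with graphs covering all of $(D \cup I) \times (D \cup I)$. The main technical obstacle, which I would handle by covering a connecting path in~$O$ with finitely many overlapping open balls and composing step-by-step, is the bookkeeping of how domains shrink under repeated composition, so that the resulting union of graphs actually fills the claimed rectangles rather than just a scattered collection of line segments.
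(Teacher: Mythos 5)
Your bottom-line plan---take two points $(x,y),(x',y')\in O$, join them by a compact path, cover the path by finitely many overlapping open boxes, and chain moves step by step---is exactly the paper's proof, and it is the part of your write-up that carries the argument. Two of your earlier observations, however, are incorrect and should be dropped. The claim that every point of $O$ has arbitrarily small open neighborhoods carrying a restricted move of $\semi{\Omega}$ is false: the only idempotents available in $\semi{\Omega}$ are $\tau_0|_V$ with $V$ a domain of a move in $\Omega$, or an intersection of such, and $\Omega$ is arbitrary (if every move of $\Omega$ has domain $\R$, then so does every nonempty move of $\semi{\Omega}$). Likewise the $T-T$ digression is not on the critical path: the goal is not to realize arbitrary shifts, but to produce a move with shift $b-a$ whose \emph{domain contains~$a$}, and only the path-chaining delivers that domain condition. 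Neither claim is actually needed; the paper simply picks a move of $\Omega$ through each intermediate point and accepts whatever domain it comes with.

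Two execution details worth naming. First, the covering must be by $\ell_\infty$-balls (axis-aligned boxes), as the paper does, not Euclidean balls: the alternating chain $x_i\mapsto y_i\mapsto x_{i+1}\mapsto y_{i+1}\mapsto\cdots$ needs the ``corner'' point $(x_{i+1},y_i)$ to lie in $O$ so that $\Omega$ supplies the move whose inverse bridges $y_i\to x_{i+1}$, and this is automatic for $\ell_\infty$-balls ($(x_i,y_i),(x_{i+1},y_{i+1})\in O_i$ forces $(x_{i+1},y_i)\in O_i$) but can fail for Euclidean balls. Second, your worry that the resulting union of graphs might be ``a scattered collection of line segments'' is misplaced: for each target point $(x,y')$ one constructs a single composed move whose graph passes through that very point, so the union covers the rectangle by construction; the only thing to check is that each intermediate composition is nonempty, which the path cover guarantees because the desired starting point is in every domain along the way.
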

\begin{figure}
  \Huge \centering
    \mbox{$
  \vcenter{\hbox{\includegraphics[height=.33\linewidth]{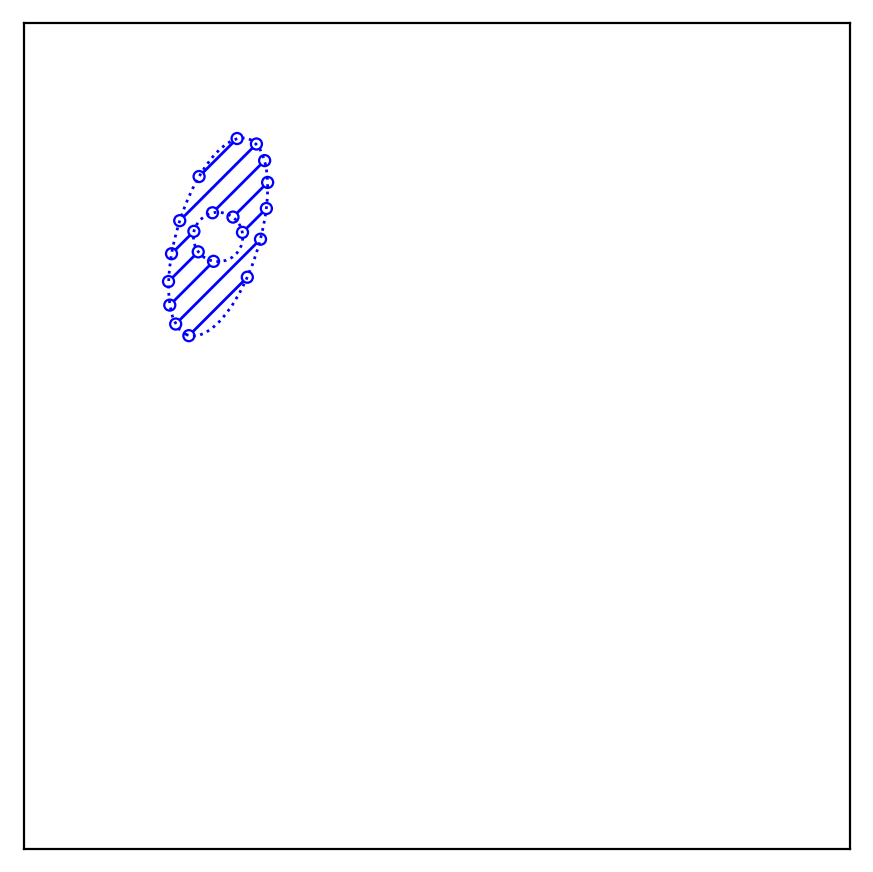}}}
  \xmapstojoinsemi
  \vcenter{\hbox{\includegraphics[height=.33\linewidth]{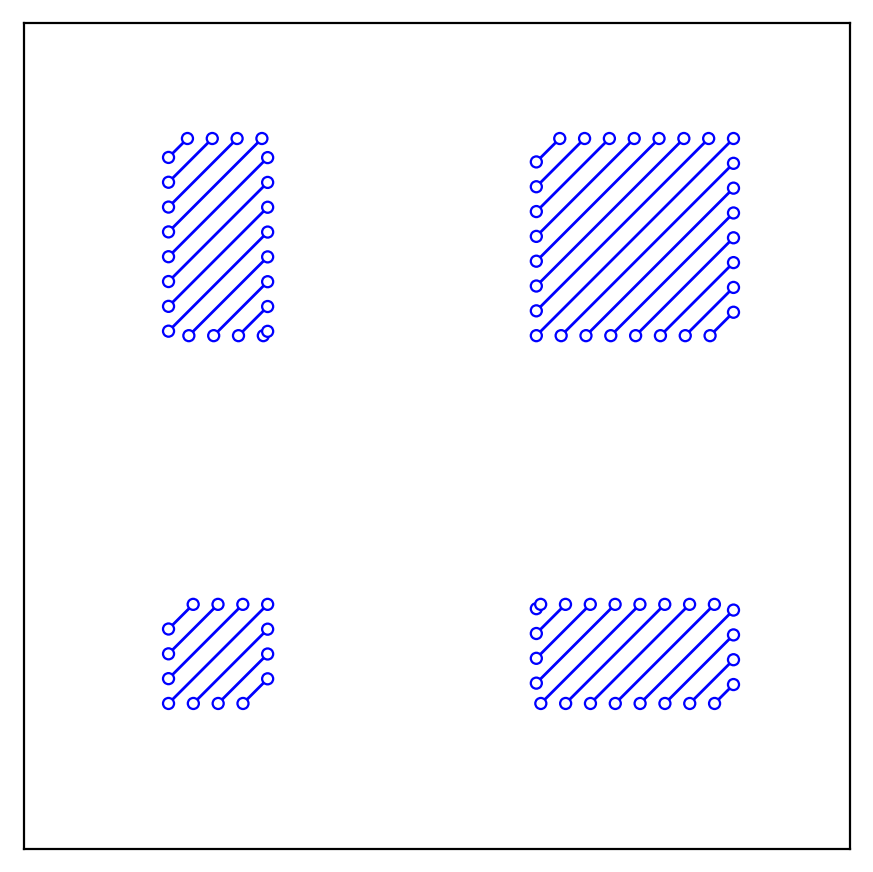}}}
  $}
      \mbox{$
  \vcenter{\hbox{\includegraphics[height=.33\linewidth]{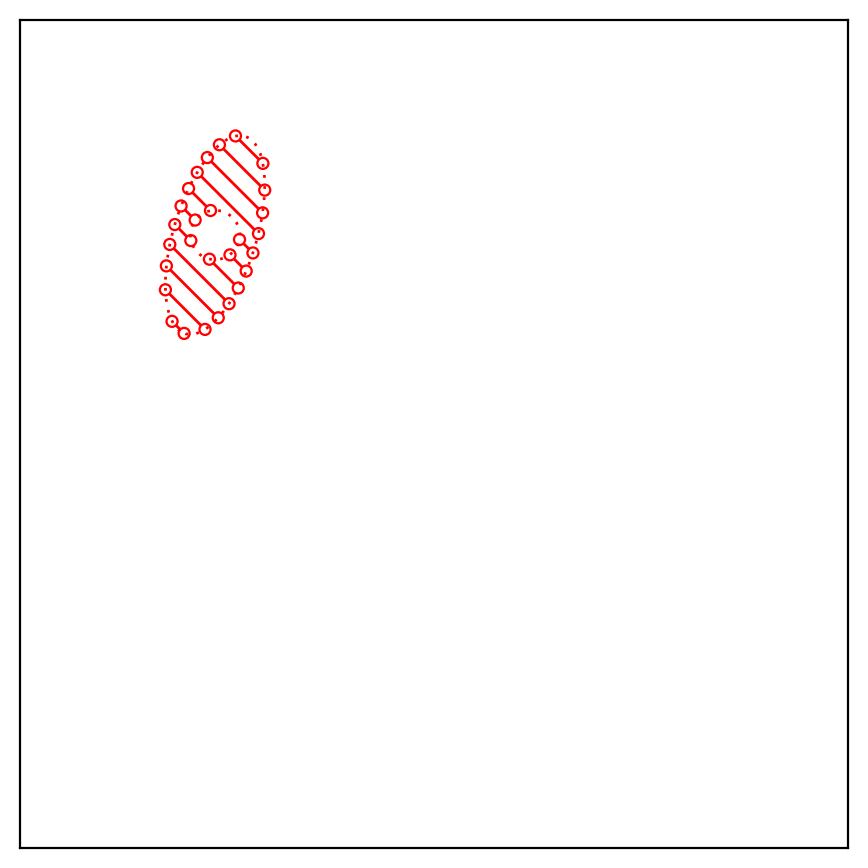}}}
  \xmapstojoinsemi
  \vcenter{\hbox{\includegraphics[height=.33\linewidth]{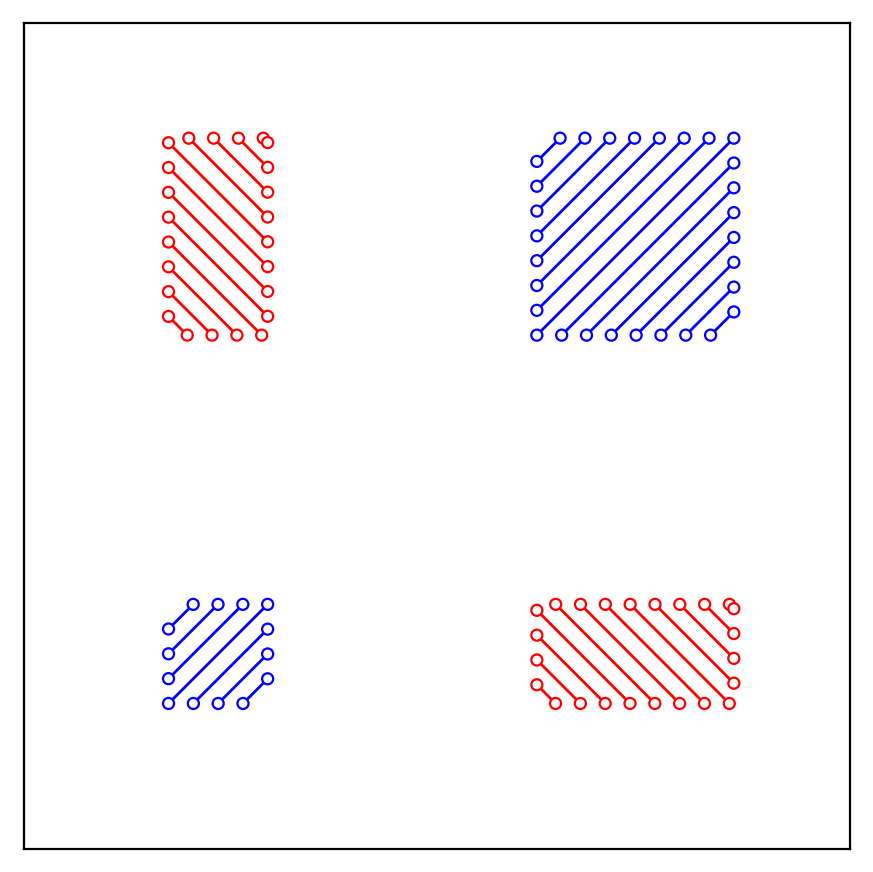}}}
  $}
      \mbox{$
  \vcenter{\hbox{\includegraphics[height=.33\linewidth]{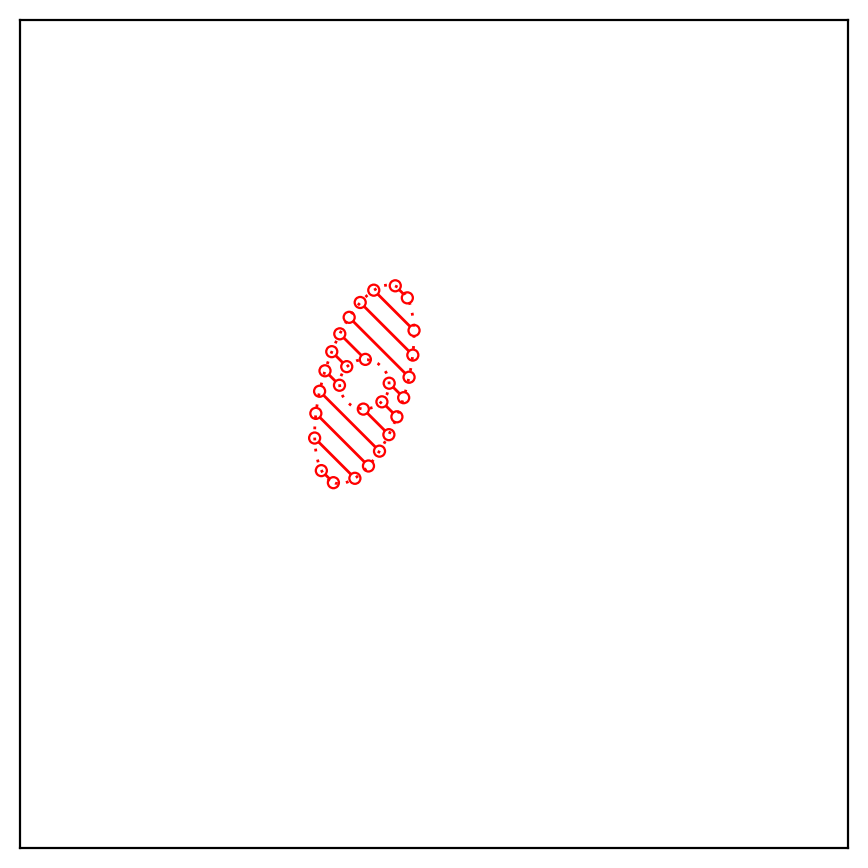}}}
  \xmapstojoinsemi
  \vcenter{\hbox{\includegraphics[height=.33\linewidth]{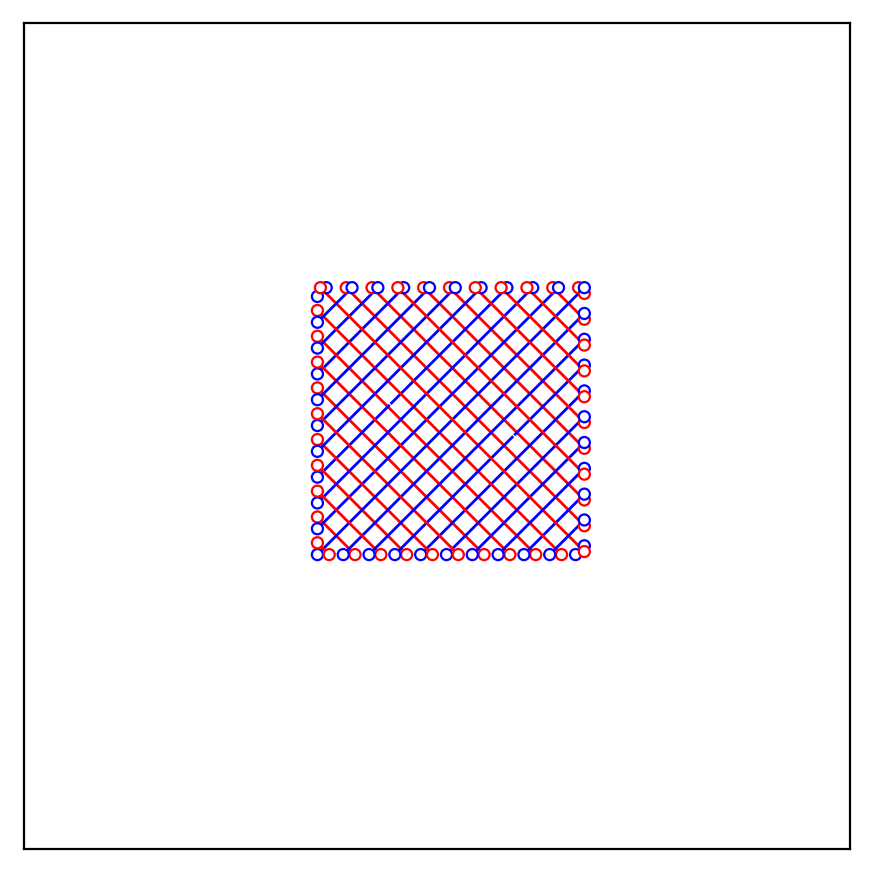}}}
  $}
      \mbox{$
  \vcenter{\hbox{\includegraphics[height=.33\linewidth]{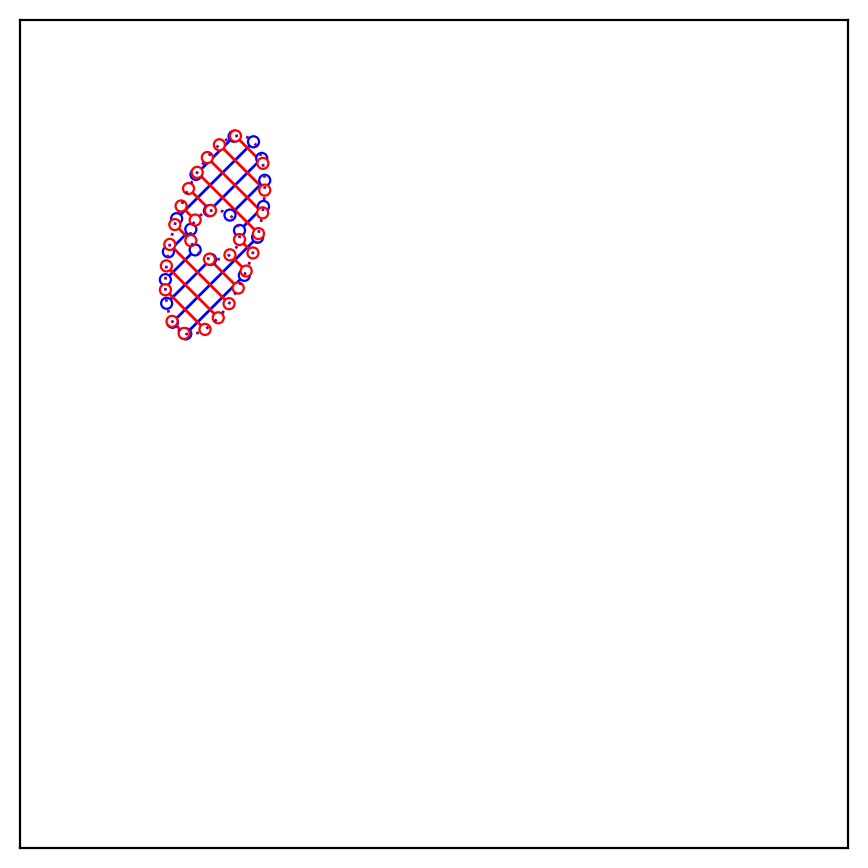}}}
  \xmapstojoinsemi
  \vcenter{\hbox{\includegraphics[height=.33\linewidth]{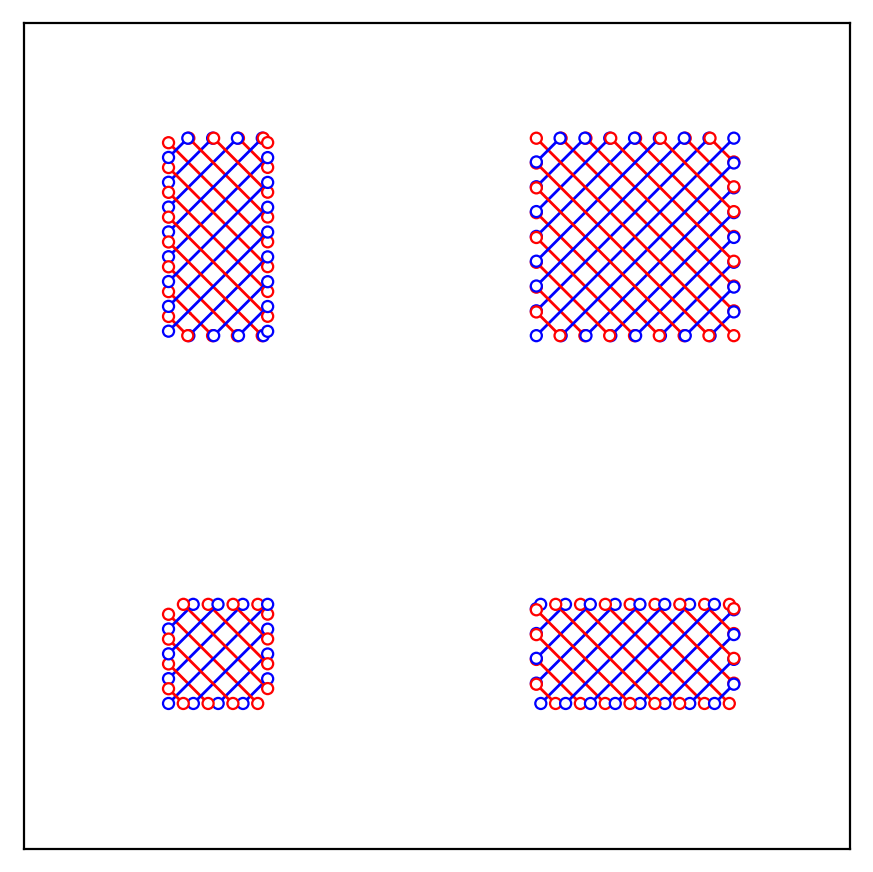}}}
  $}
  \caption[Illustrations for \autoref{thm:open-sets-of-moves} and
    \autoref{cor:open-sets-of-moves}]
    {Illustrations for \autoref{thm:open-sets-of-moves} and
    \autoref{cor:open-sets-of-moves}.
  Here, only a finite set of moves is considered.  If, however, an
  infinite set is used by considering all moves in the $O$-shaped set in the
  left plots, then the entire rectangles would be filled in on the right plots. }
  \label{fig:open-set-of-moves-sampled}
\end{figure}
\begin{proof}
Part 2. We show that (2a) $\graph_-(\semi{\Omega})$ contains $D \times I$ and (2b) $\graph_+(\semi{\Omega})$ contains $D \times D$; the other two containments of  $I \times D$ and  $I \times I$ follow from the fact that $\semi{\Omega}$ is closed under inverse.

Let $(x,y), (x',y') \in O$ be two arbitrary points in the connected open set $O$. Since there is a path between $(x,y)$ and $(x',y')$ contained in $O$, and the path is compact, it is covered by finitely many open $\ell_\infty$-balls $O_1, \dots, O_n \subseteq O$ with $(x_1,y_1):=(x,y) \in O_1$, 
$(x_2, y_2) \in O_1 \cap O_2, \dots, (x_n, y_n) \in O_{n-1} \cap O_n$ and $(x_{n+1},y_{n+1}):=(x',y') \in O_n$. 
 Since $(x_1, y_1), (x_2, y_1), (x_2, y_2), \allowbreak \dots, \allowbreak (x_n, y_n), \allowbreak (x_{n+1}, y_n), \allowbreak(x_{n+1}, y_{n+1})\in O$, 
 there exist
 $\rho_{r_1}|_{D_1},  \, \rho_{r_1'}|_{D_1'}, \, \rho_{r_2}|_{D_2}, \allowbreak \dots, \allowbreak \rho_{r_n}|_{D_n}, \, \allowbreak\rho_{r_n'}|_{D_n'} \allowbreak\text{ and } \allowbreak\rho_{r_{n+1}}|_{D_{n+1}} \in \Omega$
 such that 
 $\rho_{r_i}|_{D_i}(x_i) = y_i$ for $i=1,\dots, n+1$ and $\rho_{r_i'}|_{D_i'}(x_{i+1}) = y_i$ for $i=1,\dots, n$. Notice that the inverse restricted reflections ${(\rho_{r_i'}|_{D_i'})}^{-1} \in \semi{\Omega}$ with ${(\rho_{r_i'}|_{D_i'})}^{-1}(y_i) = x_{i+1}$ for $i=1,\dots, n$. 
 We have
 $$x_1 \xmapsto{\rho_{r_1}|_{D_1}} y_1\xmapsto{{(\rho_{r_1'}|_{D_1'})}^{-1} } x_2
  \mapsto \cdots \mapsto 
  y_n \xmapsto{{(\rho_{r_n'}|_{D_n'})}^{-1}} x_{n+1} \xmapsto{\rho_{r_{n+1}}|_{D_{n+1}}} y_{n+1}.$$ 
The composition of the $2n+1$ reflections 
\[\rho_r|_{D_r} :=\rho_{r_{n+1}}|_{D_{n+1}} \circ {(\rho_{r_n'}|_{D_n'})}^{-1}  \circ \rho_{r_n}|_{D_n} \circ \dots \circ (\rho_{r_1'}|_{D_1'})^{-1} \circ \rho_{r_1}|_{D_1}\] is a restricted reflection, satisfying that $\rho_r|_{D_r} \in \semi{\Omega}$ and $\rho_r|_{D_r}(x) = y'$. Therefore, (2a) holds.
The composition of the $2n$ reflections  
\[\tau_t|_{D_t} := {(\rho_{r_n'}|_{D_n'})}^{-1}  \circ \rho_{r_n}|_{D_n} \circ \dots \circ (\rho_{r_1'}|_{D_1'})^{-1} \circ \rho_{r_1}|_{D_1}\] 
is a restricted translation, satisfying that $\tau_t|_{D_t}  \in \semi{\Omega}$ and $\tau_t|_{D_t} (x) = x'$. Therefore, (2b) holds.

Part 1 follows exactly the same proof as part 2 using instead restricted translations 
$\tau_{t_1}|_{D_1},  \, \tau_{t_1'}|_{D_1'}, \, \tau_{t_2}|_{D_2},  \dots, \tau_{t_n}|_{D_n}, \, \tau_{t_n'}|_{D_n'}, \, \tau_{t_{n+1}}|_{D_{n+1}} \in \Omega$.

Part 3. Let $(x,y), (x',y') \in O$. By part 1 and 2, there exist restricted translation and reflection $\tau_t|_{D_t}, \rho_r|_{D_r} \in \semi{\Omega}$ such that $x \xmapsto{\tau_t|_{D_t}} y\xmapsto{\rho_r|_{D_r}} x'$. The composition $\rho_r|_{D_r} \circ \tau_t|_{D_t}$ is a restricted reflection in $\semi{\Omega}$. Therefore,  $\graph_-(\semi{\Omega})$ contains $D \times D$. By part 1, part 2 and the fact that $\semi{\Omega}$ is closed under inverse, we obtain that part 3 holds.
\end{proof}

The following corollary sharpens the result.

\begin{corollary}\label{cor:open-sets-of-moves}
Let $O \subseteq \R^2$ be a connected open set, with 
$D = \dom(O) = \dom(\MovesOfGraph(O))$ and $I = \im(O) = \im(\MovesOfGraph(O))$.
\begin{equation*}
\begin{array}{rr@{\:}ll}
$(1)$ \,& \joinsemi{\tMovesOfGraph(O)} = & \tMovesOfGraph\left((D \cup I)\times (D \cup I)\right). & \\
$(2a)$ \,& \joinsemi{\rMovesOfGraph(O)} =& \rMovesOfGraph((D \times I) \cup (I \times D)) \cup {}\\
       \,&     & \tMovesOfGraph((D \times D) \cup (I \times I)), &\text { if }D \cap I = \emptyset. \\
$(2b)$ \,& \joinsemi{\rMovesOfGraph(O)} =& \MovesOfGraph\left((D \cup I)\times (D \cup I)\right), &\text { if } D \cap I \neq \emptyset.\\
$(3)$ \,& \joinsemi{\MovesOfGraph(O)} =& \MovesOfGraph\left((D \cup I)\times (D \cup I)\right). &
\end{array}
\end{equation*}
\end{corollary}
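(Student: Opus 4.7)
The plan is to prove each of the four equalities by establishing both inclusions. The $\supseteq$ direction in each part follows from the pointwise coverage of the graph of $\semi{\Omega}$ provided by the relevant case of \autoref{thm:open-sets-of-moves}, together with the continuation axiom~\eqref{axiom:join}. For any move $\gamma|_E$ on the claimed right-hand side, each pair $(x,\gamma(x))$ with $x\in E$ lies in the asserted rectangle, and therefore some small move $\gamma|_{E_x}\in\semi{\Omega}$ exists with $x\in E_x$ (the translation amount or reflection point being forced by $(x,\gamma(x))$ to equal that of~$\gamma$). Joining the family $\{\gamma|_{E_x}\}_{x\in E}$ via~\eqref{axiom:join} yields $\gamma|_E\in\joinsemi{\Omega}$.

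The $\subseteq$ direction is verified by induction on composition length (\autoref{lemma:inv-semigroup-words}). Every generator in $\tMovesOfGraph(O)$ or $\rMovesOfGraph(O)$ has $E\subseteq D$ and $\gamma(E)\subseteq I$ by the definitions of $D$ and~$I$; inverses swap these containments; and the composition formula~\eqref{eq:composition} combined with the character rule~\eqref{eq:group-character} shows inductively that every move in $\semi{\Omega}$ has domain and image in $D\cup I$ and character equal to the product of its constituent characters. Joining preserves these containments. In parts~(1) and~(3) this matches the claimed right-hand side immediately.

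Part~(2a), where $D\cap I=\emptyset$, requires additional case analysis of the $\subseteq$ direction: a composition of reflection generators and their inverses is non-empty only when each intermediate image coincides with part of the next domain, forcing a strict $D$/$I$ alternation along a word. By parity, a word of odd length yields a reflection with graph in $(D\times I)\cup(I\times D)$, and a word of even length yields a translation with graph in $(D\times D)\cup(I\times I)$, exactly the split claimed.

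Part~(2b), where $D\cap I\neq\emptyset$, requires upgrading the $\supseteq$ step, since \autoref{thm:open-sets-of-moves}(2) alone yields reflections only on $(D\times I)\cup(I\times D)$ and translations only on $(D\times D)\cup(I\times I)$. Fixing $c\in D\cap I$, we compose pairs of moves through~$c$: for example, for $(a,b)\in D\times D$, the translation $a\to c$ in $\graph_{+}(\semi{\Omega})$ followed by the reflection $c\to b$ in $\graph_{-}(\semi{\Omega})$ produces a reflection realizing $(a,b)$, and three analogous two-step constructions through~$c$ cover the remaining quadrants of both $\graph_{+}(\semi{\Omega})$ and $\graph_{-}(\semi{\Omega})$. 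The main obstacle is packaging the parity/side-tracking argument of~(2a) cleanly; the rest is bookkeeping once \autoref{thm:open-sets-of-moves} is in hand.
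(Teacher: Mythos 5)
Your proof is correct, and both directions of each inclusion can be made to work along the lines you sketch; but the route is noticeably different from the paper's.

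For the $\subseteq$ direction, you run an induction on composition length, tracking domains, images, and characters of words in the generators and their inverses, with an extra alternation/parity argument in case~(2a). The paper dispatches all four cases at once with the slicker observation that the right-hand side of each equation is \emph{itself} a joined move semigroup containing the generating ensemble; since $\joinsemi{\cdot}$ is the smallest such semigroup, the inclusion is immediate. That one-liner does hide the same bookkeeping your induction makes explicit (verifying that the right-hand side is closed under~\eqref{axiom:composition}, \eqref{axiom:inverse}, \eqref{axiom:restrict}, and~\eqref{axiom:join}, using $D\cap I=\emptyset$ for the character split in~(2a)), so neither approach is strictly shorter in full detail; yours is closer to first principles, theirs is more modular.

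For the $\supseteq$ direction in~(2b), you fix $c\in D\cap I$ and build each missing quadrant of $\graph_\pm(\semi{\Omega})$ by a two-step composition through~$c$. The paper instead applies \autoref{thm:open-sets-of-moves}\,(2) to get that $\graph_\pm(\semi{\Omega})$ already covers $(D\cup I)\times(D\cap I)$, which is a connected open set with $\dom\cup\im = D\cup I$, and then re-applies \autoref{thm:open-sets-of-moves}\,(3) to that set to obtain the full square. Both routes succeed; yours is more concrete and arguably easier to visualize, theirs reuses the theorem as a black box.

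One small point worth making explicit in your $\supseteq$ argument: when you pass from the pointwise coverage of $\graph(\semi{\Omega})$ to membership of a whole move $\gamma|_E$ in $\joinsemi{\Omega}$ via~\eqref{axiom:join}, the family of small moves $\gamma|_{E_x}$ has union $\bigcup_x E_x\supseteq E$, which need not itself be an interval; you must first pass to the connected component containing~$E$ (an interval, still a union of the $E_x$), apply~\eqref{axiom:join} to that, and then restrict back to~$E$ using~\eqref{axiom:restrict}. This is routine but deserves a sentence.
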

\begin{proof}
By applying \autoref{thm:open-sets-of-moves}-(1), (2) and (3) to $\Omega=\tMovesOfGraph(O)$, $\Omega=\rMovesOfGraph(O)$ and $\Omega=\MovesOfGraph(O)$, we obtain that $\joinsemi{\Omega}$ on the left-hand side of the equation in (1), (2a) and (3) contains the move ensemble on the right-hand side, respectively. In case (2b) where $D \cap I \neq \emptyset$, by applying \autoref{thm:open-sets-of-moves}-(2) to $\Omega=\rMovesOfGraph(O)$, we have that $\joinsemi{\Omega}$ contains $\MovesOfGraph\left((D \cup I)\times (D \cap I)\right)$. It then follows from \autoref{thm:open-sets-of-moves}-(3) that  $\joinsemi{\Omega}$ contains 
the right-hand side of (2b).
Conversely, the right-hand side of the equation in each case is a joined move semigroup 
that contains~$\Omega$. Hence, the equality holds. 
\end{proof}

\begin{remark}\label{rem:lines+boxes-nonpurple}
  \autoref{thm:open-sets-of-moves} suggests to consider the following class of
  generating ensembles for inverse semigroups.  Take a finite ensemble
  $\Omega^{\fin/} = \{ \gamma^1|_{D_1}, \dots, \gamma^n|_{D_n} \} $ together with 
  a finite list of infinite ensembles of the form $\MovesOfGraph_+(D_{i}\times
  I_{i})$, $i = n+1, \dots, n+m$ and
  $\MovesOfGraph_-(D_{i}\times I_{i})$, $i = n+m+1, \dots, n+m+\ell$%
  , where $D_i$ and $I_i$ are open intervals.  However, we
  suppress the details of this.  In \autoref{sec:directly-covered-intervals}, an additional assumption
  will allow us to use a more convenient class of generating ensembles.
\end{remark}

\section{$\Omega$-equivariant functions}
\label{s:equivariant}

\subsection{Spaces of $\Omega$-equivariant functions}
\label{s:space-equivariant}

Move ensembles encode a system of functional equations as follows.

\begin{definition}
  Let $\Omega$ be a move ensemble and let $\theta\colon \R\to\R$ be a function.
  \begin{enumerate}[\rm(a)]
  \item 
    We say that $\theta$ is \emph{affinely $\Omega$-equivariant} (in short,
    $\theta$ \emph{respects} $\Omega$) provided that for every
    $\gamma|_D \in \Omega$ %
    there exists a constant $c_{\gamma|_D}^\theta$ such
    that
    \begin{equation}
      \label{eq:additivity-equation-for-move}
      \theta(\gamma|_D(x))=\chi(\gamma)\theta(x)+c^{\theta}_{\gamma|_D} \quad  \text{ for } x \in D,
    \end{equation} 
    where $\chi(\gamma)=\pm 1$ is the character of $\gamma$.
  \item 
    If all constants $c_{\gamma|_D}^\theta$ can be chosen to be zero, 
    then we say that $\theta$ is \emph{$\Omega$-equivariant} (or,
    \emph{equivariant under the action of~$\Omega$}).
  \end{enumerate}
\end{definition}

Throughout the paper, we will be working with affinely $\Omega$-equivariant
functions.  At the very end, in \autoref{sec:perturbation_space}, an
important space of $\Omega$-equivariant functions will appear.

\begin{remark}\label{rem:no-singleton-domains}It now becomes clear why singletons $\{x\}$ are not allowed as
  the domain~$D$ of a move.  The functional
  equation~\eqref{eq:additivity-equation-for-move} would degenerate to a
  single equation with an independent constant $c^\theta_{\gamma|_{\{x\}}}$.
  The equation and the constant can be eliminated from the system.
\end{remark}
Some trivial relations between the constants $c^{\theta}_{\gamma|_D}$ are
induced by the restriction partial order on moves
(\autoref{s:restriction-partial-order}).  If $\emptyset \neq D \subset D'$,
thus $\gamma|_D \subseteq \gamma|_{D'}$ and $D\neq \emptyset$, then
necessarily $c^{\theta}_{\gamma|_D} = c^{\theta}_{\gamma|_{D'}}$.

Thus it is natural to work with restriction-closed ensembles, as defined in
\autoref{sec:restriction-closed}.

\begin{lemma}
    For a space $\Theta$ of functions, we denote by $\Theta^\Omega$ the
    set of affinely $\Omega$-equivariant functions in~$\Theta$.
  If $\Theta$ is a vector space, then so is $\Theta^\Omega$.
\end{lemma}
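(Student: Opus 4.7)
The plan is to verify the three vector-subspace axioms directly from the defining equation \eqref{eq:additivity-equation-for-move}, observing that the map $\theta \mapsto (c^\theta_{\gamma|_D})_{\gamma|_D \in \Omega}$ behaves linearly. Since $\Theta^\Omega \subseteq \Theta$ and $\Theta$ is already a vector space, it suffices to show that $\Theta^\Omega$ is nonempty and closed under addition and scalar multiplication.

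First I would note that the zero function lies in $\Theta^\Omega$ by choosing $c^0_{\gamma|_D} = 0$ for every $\gamma|_D \in \Omega$, so the set is nonempty. Next, given $\theta_1, \theta_2 \in \Theta^\Omega$ with witnessing constants $c^{\theta_1}_{\gamma|_D}$ and $c^{\theta_2}_{\gamma|_D}$, I would compute, for each $\gamma|_D \in \Omega$ and $x \in D$,
\begin{align*}
(\theta_1+\theta_2)(\gamma(x))
&= \theta_1(\gamma(x)) + \theta_2(\gamma(x)) \\
&= \chi(\gamma)\theta_1(x) + c^{\theta_1}_{\gamma|_D} + \chi(\gamma)\theta_2(x) + c^{\theta_2}_{\gamma|_D} \\
&= \chi(\gamma)(\theta_1+\theta_2)(x) + \bigl(c^{\theta_1}_{\gamma|_D} + c^{\theta_2}_{\gamma|_D}\bigr),
\end{align*}
which exhibits $\theta_1+\theta_2$ as affinely $\Omega$-equivariant with constants $c^{\theta_1+\theta_2}_{\gamma|_D} := c^{\theta_1}_{\gamma|_D} + c^{\theta_2}_{\gamma|_D}$.

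Finally, for $\alpha \in \R$ and $\theta \in \Theta^\Omega$, a completely analogous one-line calculation gives $(\alpha\theta)(\gamma(x)) = \chi(\gamma)(\alpha\theta)(x) + \alpha c^\theta_{\gamma|_D}$ for $x \in D$, so $\alpha\theta \in \Theta^\Omega$ with constants $c^{\alpha\theta}_{\gamma|_D} := \alpha c^\theta_{\gamma|_D}$. There is really no obstacle here: the proof is entirely mechanical because the defining equation is affine-linear in $\theta$ and the character $\chi(\gamma)$ factor is the same on both sides. The only point worth emphasizing is that the witnessing constants themselves need not be unique in general (for instance, on an empty-domain move any constant works), but the argument above shows that one may always make a linear choice of constants compatible with the vector-space operations.
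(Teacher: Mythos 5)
Your proof is correct and takes essentially the same approach as the paper, which checks directly that for a linear combination $\theta = a_1\theta_1 + a_2\theta_2$ one may take $c^{\theta}_{\gamma|_D} = a_1 c^{\theta_1}_{\gamma|_D} + a_2 c^{\theta_2}_{\gamma|_D}$. Your additional observation that the constants need not be unique (e.g.\ on empty-domain moves) is accurate but not needed for the argument.
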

\begin{proof}
  Let $\theta_1, \theta_2 \in \Theta$ and $a_1, a_2\in \R$.  Let
  $\theta = a_1 \theta_1 + a_2 \theta_2$.  Then $\theta \in \Theta$.
  Moreover, let $c^{\theta_1}_{\gamma|_D}$ for $\gamma|_D \in \Omega$ and
  $c^{\theta_2}_{\gamma|_D}$ for $\gamma|_D \in \Omega$ be the families of
  constants that satisfy~\eqref{eq:additivity-equation-for-move} for
  $\theta_1$ and $\theta_2$, respectively.  Then
  $c^{\theta}_{\gamma|_D} = a_1 c^{\theta_1}_{\gamma|_D} + a_2
  c^{\theta_2}_{\gamma|_D}$
  for $\gamma|_D \in \Omega$ is a family of constants that
  satisfy~\eqref{eq:additivity-equation-for-move} for~$\theta$.
\end{proof}

\subsection{Join-closed semigroup $\Omegaresp/$ of moves respected by
  given functions}
\label{s:Omegaresp}

\begin{definition}\label{def:Omegaresp}
  For a function $\theta\colon \dom(\theta) \to \R$, we denote the ensemble of moves respected
  by~$\theta$ as
  \begin{displaymath}
    \Omegaresp{\theta} = \bigl\{\, \gamma|_D \in \FullMoveSemigroup
    \bigst D, \gamma(D) \subseteq \dom(\theta),\;
    \text{$\exists c^{\theta}_{\gamma|_D} \in \R$
      s.t.~\eqref{eq:additivity-equation-for-move} holds} \,\bigr\}.
  \end{displaymath}
  (Clearly $\Omegaresp{\theta}$ is the largest move ensemble that $\theta$
  respects.)  For a space $\Theta'$ of functions, we denote
  $\Omegaresp{\Theta'} = \bigcap_{\theta\in\Theta'} \Omegaresp{\theta}$.
\end{definition}

\begin{theorem}
  \label{thm:join-semigroup}
  \label{thm:additivity-equation-for-move}
  Let $\Omega$ be a move ensemble.
  If a function $\theta$ respects $\Omega$, then $\theta$
  respects the joined semigroup $\joinsemi{\Omega}$.  
\end{theorem}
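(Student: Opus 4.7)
The plan is to show that $\Omegaresp{\theta}$, the ensemble of all moves respected by~$\theta$ (\autoref{def:Omegaresp}), itself satisfies axioms \eqref{axiom:composition}, \eqref{axiom:inverse}, \eqref{axiom:restrict}, and \eqref{axiom:join}. Since $\Omega \subseteq \Omegaresp{\theta}$ by hypothesis, it will then follow that $\Omegaresp{\theta}$ contains $\semi{\Omega}$ and hence its joined extension $\join{\semi{\Omega}} = \joinsemi{\Omega}$, which is exactly the conclusion.

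I would verify the two semigroup axioms first. For \eqref{axiom:inverse}, starting from $\theta(\gamma(x)) = \chi(\gamma)\theta(x) + c$ for $x \in D$, I substitute $y = \gamma(x) \in \gamma(D)$, solve for $\theta(\gamma^{-1}(y))$, and use $\chi(\gamma^{-1}) = \chi(\gamma) \in \{\pm 1\}$ to obtain a valid equation with constant $c^{\theta}_{(\gamma|_D)^{-1}} = -\chi(\gamma)\,c$. For \eqref{axiom:composition}, I iterate the defining equation twice, using the group character relation~\eqref{eq:group-character}:
\[ \theta(\gamma_2\gamma_1(x)) = \chi(\gamma_2)\theta(\gamma_1(x)) + c_2 = \chi(\gamma_2\circ\gamma_1)\,\theta(x) + \bigl(\chi(\gamma_2)\,c_1 + c_2\bigr), \]
valid on $D_1 \cap \gamma_1^{-1}(D_2)$. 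Axiom \eqref{axiom:restrict} is immediate because the same constant $c^{\theta}_{\gamma|_D}$ works on every open subinterval $D' \subseteq D$.

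The main obstacle is \eqref{axiom:join}: given a family $\{\gamma|_I\}_{I \in \mathfrak{I}} \subseteq \Omegaresp{\theta}$ with individual constants $c_I$ and an open interval $D = \bigcup_{I \in \mathfrak{I}} I$, I must produce a single constant~$c$ valid on all of~$D$. The key observation is that whenever $I_1 \cap I_2 \neq \emptyset$, evaluating the two functional equations at any $x$ in the intersection forces $c_{I_1} = c_{I_2}$. To propagate this across $D$, I fix $x_0 \in D$ (lying in some $I_0 \in \mathfrak{I}$) and set
\[ S = \bigl\{\, x \in D \bigst x \in I \text{ for some } I \in \mathfrak{I} \text{ with } c_I = c_{I_0} \,\bigr\}. \]
The overlap observation makes $c_I$ unambiguous at each point, and shows both $S$ and $D \setminus S$ are open: any $I \in \mathfrak{I}$ meeting~$D$ is contained entirely in one of the two sets. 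Since $D$ is connected and $x_0 \in S$, we conclude $S = D$, so $c := c_{I_0}$ works uniformly. (Equivalently, extract a finite chain of pairwise overlapping intervals covering any compact $[x_1,x_2] \subset D$ via Heine--Borel.) Hence the equation $\theta(\gamma(x)) = \chi(\gamma)\theta(x) + c$ holds for all $x \in D$.

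Having shown that $\Omegaresp{\theta}$ satisfies all four axioms and contains~$\Omega$, it contains $\semi{\Omega}$ by the first two axioms (using the explicit description in \autoref{lemma:inv-semigroup-words}), and then contains $\join{\semi{\Omega}} = \joinsemi{\Omega}$ by the description of the joined ensemble in \autoref{lemma:joined-ensemble-explicitly}. Therefore $\theta$ respects~$\joinsemi{\Omega}$.
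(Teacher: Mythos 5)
Your proof is correct, and it takes a genuinely different (though closely related) route from the paper's. The paper unpacks $\joinsemi{\Omega}$ directly: it first proves a separate lemma that a function constant on each interval of an open cover is constant on the whole interval, then for $\gamma|_D \in \joinsemi{\Omega}$ writes $D$ as a union of domains of composition words in $\semi{\Omega}$, computes that $g(x)=\theta(\gamma(x))-\chi(\gamma)\theta(x)$ is constant on each piece, and applies the lemma. You instead establish that $\Omegaresp{\theta}$ \emph{itself} satisfies \eqref{axiom:composition}, \eqref{axiom:inverse}, \eqref{axiom:restrict}, \eqref{axiom:join}, and then invoke minimality of $\semi{\Omega}$ and $\join{\cdot}$. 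In effect you prove what the paper records as the corollary immediately following this theorem (``$\Omegaresp{\theta}$ is a join-closed move semigroup'') first, and derive the theorem from it. The technical heart --- the connectedness argument showing overlapping intervals force equal constants and the open/closed partition propagating a single constant across $D$ --- is the same observation the paper encapsulates in \autoref{lemma:constant-over-open-cover}, though you realize it via $S$ and $D\setminus S$ both being open while the paper uses an infimum argument. Your route is arguably cleaner: it avoids the explicit word-expansion of moves in $\semi{\Omega}$ because \eqref{axiom:composition} and \eqref{axiom:inverse} are verified once and for all, and the minimality of generated objects does the rest. One small wrinkle: you cite \autoref{lemma:inv-semigroup-words} for the containment $\semi{\Omega}\subseteq\Omegaresp{\theta}$, but that lemma assumes the generating ensemble already satisfies \eqref{axiom:inverse}, which $\Omega$ may not. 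The citation is unnecessary anyway --- once $\Omegaresp{\theta}$ is shown to be a move semigroup containing $\Omega$, it contains $\semi{\Omega}$ by the definition of $\semi{\Omega}$ as the smallest such semigroup, with no explicit word description needed.
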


To prove this, we use the following lemma. 

\begin{lemma}
Let $\mathfrak{I}$ be a collection of open intervals that cover the open interval $(l,u)$. If a function $g$ is constant over each interval $I$ from the collection $\mathfrak{I}$, then $g$ is constant over $(l,u)$.
\label{lemma:constant-over-open-cover}
\end{lemma}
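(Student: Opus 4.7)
The plan is to exploit the connectedness of the open interval $(l,u)$. I would fix a point $x_0\in(l,u)$, set $c:=g(x_0)$, and consider the sublevel set
\[
A \;=\; \{\, x \in (l,u) \mid g(x) = c \,\}.
\]
The goal reduces to showing that $A$ is nonempty and clopen in $(l,u)$, because then connectedness of $(l,u)$ forces $A=(l,u)$, which is precisely the claim.

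Nonemptiness is immediate since $x_0\in A$. For openness of $A$, take any $x\in A$ and pick $I\in\mathfrak I$ containing $x$ (it exists because $\mathfrak I$ covers $(l,u)$). Since $g$ is constant on $I$ and $g(x)=c$, that constant equals $c$, so $I\subseteq A$; thus $A$ is open. The symmetric argument shows that $(l,u)\setminus A$ is open: any $x\in(l,u)\setminus A$ sits inside some $I\in\mathfrak I$, and the constant value of $g$ on $I$ is $g(x)\neq c$, so $I\subseteq(l,u)\setminus A$. Hence $A$ is clopen in $(l,u)$, and connectedness finishes the proof.

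There is essentially no obstacle; the only observation that needs to be made explicit is that the constant value of $g$ on each cover element $I$ is determined by the value of $g$ at any single point of $I$, which is what permits both $A$ and its complement to be written as unions of elements of $\mathfrak I$. As an alternative avoiding connectedness abstractly, one could fix $a<b$ in $(l,u)$, use compactness of $[a,b]$ to extract a finite subcover of $\mathfrak I$, and chain through pairwise intersections to deduce $g(a)=g(b)$; but the clopen argument is shorter and more transparent.
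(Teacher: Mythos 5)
Your proof is correct, and it takes a genuinely different (though closely related) route from the paper's. You fix $c = g(x_0)$ and show the level set $A = \{x \in (l,u) \mid g(x) = c\}$ is nonempty, open, and has open complement, then invoke connectedness of the interval $(l,u)$ as a known topological fact to conclude $A = (l,u)$. The paper instead proves the needed fragment of connectedness inline: it fixes the midpoint $m$, sets $a = g(m)$, considers the set $J$ of $y < m$ with $g \equiv a$ on $[y,m]$, and shows $\inf J = l$ by a contradiction argument (if $\inf J > l$, pick a cover element around $\inf J$ to push further left), then argues symmetrically on the right of $m$. So the two proofs implement the same underlying idea — local constancy on an open cover plus connectedness — but yours abstracts it into a cited black box while the paper's is self-contained and elementary, essentially re-deriving the clopen-implies-everything principle via a supremum/infimum chase. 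Your version is shorter; the paper's is more elementary and avoids appealing to general point-set topology. Both are valid, and you even correctly anticipate the compactness/finite-subcover alternative as a third route.
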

\tred{In the FULL version, this one should be rewritten using posets -- for generality to
  $\R^k$. Note this lemma is pretty much an equivalent definition of 'connected set'.}
\begin{proof}
Let $m = \frac{l+u}{2}$ and $a = g(m)$. Consider the interval $J = \{\,y \in (l,m) \mid g(x)=a \text{ for all } x \in [y, m]\,\}$. Since $m$ is contained in some open interval $I \in \mathfrak{I}$ and $g(x)=a$ for $x \in I$, we know that $J$ is non-empty. Let $l'=\inf J$. We now show that $l = l'$. Suppose that $l \neq l'$. Then there exists an open interval $I \in \mathfrak{I}$ such that $l' \in I$, and $g$ is constant over $I$. Since $I \cap J \neq \emptyset$ and $g(x)=a$ for $x \in J$, we have that $g(x)=a$ for $x\in I$, a contradiction to $l'=\inf J$. Hence $g(x) = a$ for all $l < x \leq m$. Similarly, one shows that $g(x) = a$ for all $m \leq x < u$. Therefore, $g$ is constant over $(l,u)$.
\end{proof}

\begin{proof}[Proof of \autoref{thm:additivity-equation-for-move}]
Let $\gamma|_D \in \joinsemi{\Omega}$. 
Thus, there exists a collection $\mathfrak{I}$ of open intervals, such that  $D = \bigcup_{I \in \mathfrak{I}} I$ and $\gamma|_I \in\semi{\Omega}$ for each $I \in \mathfrak{I}$. 

Define $g(x) = \theta(\gamma(x))- \chi(\gamma)\theta(x)$ for $x \in D$.
We first show that $g$ is constant over each interval $I \in \mathfrak{I}$.
Let $I \in \mathfrak{I}$.  Since $\gamma|_I \in \semi{\Omega}$, we can write it in the form $\gamma|_{I} = \gamma_{k}|_{D_{k}}\circ \gamma_{k-1}|_{D_{k-1}} \circ\dots\circ\gamma_{1}|_{D_{1}}$, where $\gamma_{1}|_{D_{1}}, \gamma_{2}|_{D_{2}}, \dots, \gamma_{k}|_{D_{k}} \in \Omega$.
Let $x_0 \in I$ and denote $x_i = \gamma_i(x_{i-1})$ for $i=1,2,\dots,k$.  Then, $x_i \in D_{i+1}$ for $i=0,1,\dots,k-1$, and $x_k =\gamma|_{I}(x_0)= \gamma(x_0)$. 
Since $\theta$ respects $\Omega$, for $i=1,2,\dots,k$, we have that
\[\theta(x_i)=\chi(\gamma_i)\theta(x_{i-1})+c^{\theta}_ i,\] where the constants $ c^{\theta}_ i$ are  independent of the choice of $x_0 \in I$.  We also know that $\chi(\gamma) = \chi(\gamma_1) \chi(\gamma_2) \dots \chi(\gamma_k)$. Therefore, 
\begin{align*}
g(x_0) &= \theta(\gamma(x_0))- \chi(\gamma)\theta(x_0) \\
  &= \theta(x_k) - \chi(\gamma_k)\chi(\gamma_{k-1})\dots \chi(\gamma_1)\theta(x_0) = \sum_{j=1}^{k} {\left(
    \prod_{i=j+1}^{k}{\chi(\gamma_i)} \right) c_j^{\theta}}
\end{align*}
is constant  for $x_0 \in I$.

Then, it follows from \autoref{lemma:constant-over-open-cover} that $g$ is constant over $D$.
\end{proof}

\begin{corollary}
  For a function $\theta$, the ensemble $\Omegaresp{\theta}$ defined
  in~\autoref{def:Omegaresp} is a join-closed move semigroup. 
  The same holds for the
  ensemble $\Omegaresp{\Theta'}$, where $\Theta'$ is a space of functions.
\end{corollary}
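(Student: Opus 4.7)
The plan is to derive the corollary as a direct consequence of \autoref{thm:additivity-equation-for-move} combined with the \emph{maximality} built into \autoref{def:Omegaresp}. By the very definition of $\Omegaresp{\theta}$, the function $\theta$ respects $\Omegaresp{\theta}$; by \autoref{thm:additivity-equation-for-move}, $\theta$ therefore also respects the joined semigroup $\joinsemi{\Omegaresp{\theta}}$. But $\Omegaresp{\theta}$ is the largest move ensemble that $\theta$ respects, so this forces $\joinsemi{\Omegaresp{\theta}} \subseteq \Omegaresp{\theta}$. The reverse inclusion $\Omegaresp{\theta} \subseteq \joinsemi{\Omegaresp{\theta}}$ is trivial. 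Hence $\Omegaresp{\theta} = \joinsemi{\Omegaresp{\theta}} = \join{\semi{\Omegaresp{\theta}}}$, and by \autoref{lemma:joinsemi-is-semi} (together with the fact that joined sets satisfy \eqref{axiom:restrict} and \eqref{axiom:join} by construction) this is a join-closed move semigroup.

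For the extension to $\Omegaresp{\Theta'} = \bigcap_{\theta \in \Theta'} \Omegaresp{\theta}$, I would observe that for each $\theta \in \Theta'$ we have $\Omegaresp{\Theta'} \subseteq \Omegaresp{\theta}$, so each $\theta$ respects $\Omegaresp{\Theta'}$. Applying \autoref{thm:additivity-equation-for-move} to each $\theta$ individually, every $\theta \in \Theta'$ respects $\joinsemi{\Omegaresp{\Theta'}}$, giving $\joinsemi{\Omegaresp{\Theta'}} \subseteq \Omegaresp{\theta}$ for every $\theta \in \Theta'$. Intersecting over $\Theta'$ yields $\joinsemi{\Omegaresp{\Theta'}} \subseteq \Omegaresp{\Theta'}$, and combined with the trivial reverse inclusion we conclude $\Omegaresp{\Theta'} = \joinsemi{\Omegaresp{\Theta'}}$.

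There is no real obstacle here: the whole argument is a one-line application of the preceding theorem to $\Omega = \Omegaresp{\theta}$ (or $\Omega = \Omegaresp{\Theta'}$) and the maximality of the right-hand side. The only minor sanity check I would add is that $\Omegaresp{\theta}$ automatically satisfies \eqref{axiom:restrict} — restricting the domain of a move $\gamma|_D$ to $D' \subseteq D$ preserves the single functional equation \eqref{eq:additivity-equation-for-move} with the same constant $c^{\theta}_{\gamma|_D}$ — but this fact is already subsumed by the containment $\joinsemi{\Omegaresp{\theta}} \subseteq \Omegaresp{\theta}$ used above. Thus the corollary reduces to a bookkeeping argument, and the main content has already been delivered by \autoref{thm:additivity-equation-for-move}.
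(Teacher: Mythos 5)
Your proof is correct and is exactly the argument the paper leaves implicit (the corollary is stated immediately after \autoref{thm:additivity-equation-for-move} with no proof, and the remark ``Clearly $\Omegaresp{\theta}$ is the largest move ensemble that $\theta$ respects'' in \autoref{def:Omegaresp} is supplied precisely to make this maximality argument available). Applying the theorem to $\Omega = \Omegaresp{\theta}$ gives $\joinsemi{\Omegaresp{\theta}} \subseteq \Omegaresp{\theta}$ by maximality, the reverse inclusion is free since $\Omega \subseteq \semi{\Omega} \subseteq \join{\semi{\Omega}}$, and \autoref{lemma:joinsemi-is-semi} identifies the result as a join-closed move semigroup. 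Your handling of the $\Theta'$ case by intersecting is also fine; an equally quick alternative the paper's machinery supports is to note (as in the proof of \autoref{lemma:smallest-set-is-intersection}) that all the defining axioms are preserved under arbitrary intersection, so $\Omegaresp{\Theta'} = \bigcap_{\theta} \Omegaresp{\theta}$ inherits the property from the first part.
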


\section{Kaleidoscopic joined ensembles and bounded functions.  Finite
  presentations by moves and components}
\label{sec:directly-covered-intervals}

\subsection{Cauchy--Pexider functional equation $f(x) + g(y) = h(x+y)$}
\label{s:cauchy-pexider}

Recall from \autoref{s:space-equivariant} that move ensembles encode systems
of functional equations.  We now bring a first result on functional equations
to use.  The following result on the Cauchy--Pexider functional equation on
bounded domains appeared in \cite[Theorem 4.3]{igp_survey}.  Here we state it
for functions of a single real variable.  It is a variant of the
Gomory--Johnson interval lemma, which has been used throughout the extreme
functions literature. Note that it requires a weak assumption regarding the
function space. Boundedness is sufficient; see \cite{igp_survey} for a more
detailed discussion.
\begin{lemma}[Convex additivity domain lemma]
\label{lem:Pexider}
Let $f,g,h\colon \R\to\R$ be bounded functions and let $E \subseteq \R^2$ be
open, convex, and bounded.  Suppose that
$$
f(x) + g(y) = h(x+y) \quad \text{ for all } (x,y) \in E.
$$
Define the projections
\begin{displaymath}
  p_1(x,y) = x, \quad p_2(x,y) = y, \quad p_3(x,y) = x+y
\end{displaymath}
as functions  from $\R^2$ to $\R$.
Then $f,g,h$ are affine with the same slopes on the domains $p_1(E), p_2(E), p_3(E)$, respectively.
\end{lemma}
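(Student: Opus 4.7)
The plan is to reduce the assertion to the classical Cauchy functional equation via translation, exploit boundedness to get linearity of the Cauchy solution, and finally propagate the local affine behavior across the projections using the convexity of~$E$.

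First I would pick any interior base point $(a,b) \in E$ and a small $\delta>0$ such that the open rectangle $(a-\delta,a+\delta)\times(b-\delta,b+\delta)$ is contained in $E$. Setting
\[
\tilde f(s) := f(a+s)-f(a), \quad \tilde g(t) := g(b+t)-g(b), \quad \tilde h(u) := h(a+b+u)-h(a+b),
\]
the equation $f(x)+g(y)=h(x+y)$ evaluated at $(a+s,b+t)$ and at $(a,b)$ and then subtracted yields
\[
\tilde f(s)+\tilde g(t)=\tilde h(s+t) \quad \text{for $(s,t)\in(-\delta,\delta)^2$.}
\]
Putting $s=0$ gives $\tilde g=\tilde h$, and $t=0$ gives $\tilde f=\tilde h$; hence a single function $\phi:=\tilde h$ satisfies the Cauchy equation $\phi(s)+\phi(t)=\phi(s+t)$ on $(-\delta,\delta)\times(-\delta,\delta)$. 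Since $f,g,h$ are bounded, so is $\phi$ on a neighborhood of the origin, and the classical theorem on Cauchy's equation then forces $\phi(u)=cu$ for some constant $c \in \R$ on $(-2\delta, 2\delta)$. Translating back, there are constants $\alpha,\beta,\gamma$ with $f(x)=cx+\alpha$ on $(a-\delta,a+\delta)$, $g(y)=cy+\beta$ on $(b-\delta,b+\delta)$, and $h(z)=cz+\gamma$ on $(a+b-\delta,a+b+\delta)$, and the original equation forces $\alpha+\beta=\gamma$.

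Next I would globalize. Because $E$ is open and convex, its projections $p_1(E)$, $p_2(E)$, $p_3(E)$ are open intervals. Given any $x^{*}\in p_1(E)$, choose $y^{*}$ with $(x^{*},y^{*})\in E$ and join $(a,b)$ to $(x^{*},y^{*})$ by the line segment in $E$. A compactness argument covers this segment by finitely many open rectangles $R_1,\dots,R_N \subseteq E$ with consecutively overlapping neighborhoods, and in each $R_i$ the argument above produces a local slope $c_i$. Where two rectangles overlap, the local affine formulas for $f$ must agree on an interval, which pins down $c_i=c_{i+1}$; hence the common slope $c$ persists, and $f(x)=cx+\alpha$ on an open interval containing $x^{*}$. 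Thus $f$ is affine with slope $c$ on all of $p_1(E)$, and the analogous argument works for $g$ on $p_2(E)$ and $h$ on $p_3(E)$.

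The only delicate step is the passage from the Cauchy equation on a small rectangle to linearity of $\phi$; the boundedness hypothesis on $f,g,h$ is used precisely here, since without a regularity condition Cauchy's equation admits wild (non-measurable) solutions. Everything else—localization, propagation, matching of slopes—is routine once the convexity of $E$ is exploited to obtain overlapping local rectangles. I would invoke the survey reference \cite{igp_survey} for the Cauchy step rather than reprove it.
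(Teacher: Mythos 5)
The paper itself does not prove Lemma~\ref{lem:Pexider}; it only records the statement and refers the reader to \cite[Theorem~4.3]{igp_survey}, so there is no in-paper proof to match your argument against. Your proof is correct and is essentially the standard route (and, as far as I can tell, the one used in the cited reference as well): translate by a base point $(a,b)\in E$ to convert the Pexider equation into a single-function Cauchy equation $\phi(s)+\phi(t)=\phi(s+t)$ on a small square, invoke the classical bounded-solution theorem to get $\phi(u)=cu$ near the origin (this is the only place boundedness is used, and it is reasonable to cite rather than reprove it), undo the translation to get local affinity of $f$, $g$, $h$ with a common slope, and chain overlapping axis-parallel boxes along line segments in the convex open set $E$ to propagate a single slope and matching intercepts across $p_1(E)$, $p_2(E)$, $p_3(E)$.

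Two small points worth tidying. First, the local Cauchy theorem already gives $\phi(u)=cu$ on $(-\delta,\delta)$, and it is only $\tilde h$ — not the identified function $\phi$, which coincides with $\tilde f$ and $\tilde g$ only on $(-\delta,\delta)$ — that extends automatically to $(-2\delta,2\delta)$ via the functional equation; the downstream argument only uses the $(-\delta,\delta)$ range in any case. Second, a single application of the local argument inside a rectangle $R_i$ yields affinity only on the projection of an inscribed square, not on all of $p_1(R_i)$; you should either cover the segment directly by squares, or chain once more within each $R_i$ by sliding the base point, before matching slopes across consecutive rectangles. Neither issue affects the correctness of the overall sketch.
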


\subsection{Kaleidoscopic move ensembles}

When we are only interested in \emph{bounded} functions that respect a move
ensemble~$\Omega$, then it follows from \autoref{lem:Pexider} that we can
replace $\Omega$ by a move ensemble $\Omega^{\mergeplus/}$ with more
convenient properties.

\tred{Is \mergeplus/ 'all local information from the function space'? (does this
  notion make sense)}

\begin{lemma}
\label{lem:IL-Moves}
Let $\theta\colon \R\to\R$ be a bounded function.  
Let  $D, I \subseteq \R$ be open intervals.  The following are equivalent:
\begin{enumerate}
\item $\theta$ respects $\tMovesOfGraph(D \times I)$,
\item $\theta$ respects $\rMovesOfGraph(D \times I)$,
\item $\theta$ respects $\MovesOfGraph(D \times I)$,
\item $\theta$ is affine on $D$ and $I$ with the same slope.\label{item:IL-Moves:affine}
\end{enumerate}
\end{lemma}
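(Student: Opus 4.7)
The plan is to observe that the equivalences split into a routine half and a non-trivial half, and to handle the latter by two applications of the Cauchy--Pexider functional equation via \autoref{lem:Pexider}.

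First, I will establish (\ref{item:IL-Moves:affine}) $\Rightarrow$ (3) by a direct calculation: writing $\theta(x)=sx+a$ on $D$ and $\theta(y)=sy+b$ on $I$, the choices $c^{\theta}_{\tau_t|_{D'}} := st + b - a$ and $c^{\theta}_{\rho_r|_{D'}} := sr + a + b$ satisfy equation~\eqref{eq:additivity-equation-for-move} for every $\gamma|_{D'} \in \MovesOfGraph(D \times I)$. The implications (3) $\Rightarrow$ (1) and (3) $\Rightarrow$ (2) are then immediate from the containments $\tMovesOfGraph(D \times I), \rMovesOfGraph(D \times I) \subseteq \MovesOfGraph(D \times I)$.

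For (1) $\Rightarrow$ (\ref{item:IL-Moves:affine}), the key move is to set up a Cauchy--Pexider equation. For each parameter $t \in T := I - D$, I would introduce the open subinterval $D_t := \{x \in D : x + t \in I\}$ of $D$; then $\tau_t|_{D_t} \in \tMovesOfGraph(D \times I)$ yields a constant $c_t$ with $\theta(x + t) = \theta(x) + c_t$ for $x \in D_t$. Setting $f := \theta|_D$, $h := \theta|_I$, and $g(t) := c_t$, all three functions are bounded (since $|g(t)| \leq 2\sup|\theta|$), and the Cauchy--Pexider identity $f(x) + g(t) = h(x + t)$ holds on the open convex set $E := \{(x, t) : x \in D,\ x + t \in I\}$. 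Applying \autoref{lem:Pexider} on any bounded open convex subset $E_0 \subseteq E$ forces $f$, $g$, $h$ to be affine with a common slope on their three projections; exhausting $E$ by such $E_0$ and using connectedness of $D$ and $I$ propagates this to the required global conclusion (\ref{item:IL-Moves:affine}).

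For (2) $\Rightarrow$ (\ref{item:IL-Moves:affine}), the substitution $y := r - x$ transforms the reflection equation $\theta(r - x) = -\theta(x) + c_r$ into $\theta(x) + \theta(y) = g(x + y)$ with $g(r) := c_r$ on (an open convex subset of) $D \times I$; a second application of \autoref{lem:Pexider} delivers the common slope on $\theta|_D$ and $\theta|_I$.

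The main technical care lies in two places: (a) verifying that each $c_t$ and $c_r$ is well-defined as a function of a single scalar parameter (which follows from the consistency of the constants $c^{\theta}_{\gamma|_{D'}}$ under restriction of moves, as observed after \autoref{rem:no-singleton-domains}, together with the connectedness of $D_t$); and (b) discharging the boundedness hypothesis of \autoref{lem:Pexider} when $D$ or $I$ is unbounded, by localizing to bounded sub-intervals and patching the resulting local slopes via the connectedness of the intervals.
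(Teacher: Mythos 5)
Your proposal is correct and follows essentially the same approach as the paper's proof: the non-trivial direction is reduced to the Cauchy--Pexider lemma (\autoref{lem:Pexider}) by packaging the constants $c^{\theta}_{\gamma|_D}$ into an auxiliary bounded function, and the converse is a direct computation. The main structural difference is that the paper proves each of (1), (2), (3) $\Rightarrow$ (4) separately (sketching (2) and (3)) and then (4) $\Rightarrow$ (1), (2), (3) separately, while you organize the implications more economically as (4) $\Rightarrow$ (3) $\Rightarrow$ (1), (2) together with (1) $\Rightarrow$ (4) and (2) $\Rightarrow$ (4); you also spell out the reflection-to-Pexider substitution $y := r - x$ which the paper omits as ``similar,'' and you explicitly handle the boundedness requirement on $E$ (which the paper leaves implicit) by exhausting with bounded open convex subsets. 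These are presentational improvements, not a different method.
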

\begin{proof}
We first show that (1) implies (\ref{item:IL-Moves:affine}).
By assumption, the function $\theta$ satisfies
equation~\eqref{eq:additivity-equation-for-move} for all $\tau_t|_{D_t}$,
where $t \in \{\, y-x \st x \in D,\; y \in I\,\}$ and
$D_t = \{\, x \in D \st x+t \in I \,\}$. Thus, there exists a function
$c\colon I + (-D) \to \R$ such that
$$
\theta(x+t) = \theta(x) + c(t) \qquad\text{for all $(x, x+t) \in D \times I$}.
$$
The function $c$ is bounded because $\theta$ is.
Then, by Lemma~\ref{lem:Pexider} with $f = h = \theta$ and $g = c$, we have that
$\theta$ affine on $D$ and~$I$ with the same slope.
The proofs that each of (2) and (3) 
implies (\ref{item:IL-Moves:affine}) are similar; we omit them.

Now we show that (\ref{item:IL-Moves:affine}) implies (1).
Fix $t = y - x$ for some $x\in D, y \in I$.  
Since $\theta$ is affine on $D$ and $I$ with the same slope, there exist scalars $a,b,b'$ such that $\theta(x) = a \cdot x + b$ for all $x \in D$ and $\theta(x) = a \cdot x + b'$ for all $x \in I$.  But then for all $x \in D$ such that $x+t \in I$, we have that $\theta(x+t) - \theta(x) = a\cdot t$, which is constant.  Therefore, $\theta$ respects $\tau_t|_{D_t}$. 
Again the proofs that (\ref{item:IL-Moves:affine}) also implies (2) and (3) is
similar and we omit them.
\end{proof}

\label{s:kaleido-def}
Motivated by these results, we make the following definitions. 
\begin{definition}
  A move ensemble $\Omega^{\mergeplus/}$ is a \emph{\mergeplus/ joined ensemble} if it satisfies 
  \eqref{axiom:restrict}, \eqref{axiom:join}, and the following axiom:
  \begin{equation}
    \tag{kaleido} \label{axiom:translation-reflection}
    \begin{gathered}[t]
      \text{ for open intervals }D,I \subseteq \R\\
      \tMovesOfGraph(D\times I) \subseteq \Omega^{\mergeplus/}
      \text{ if and only if } \rMovesOfGraph(D\times I) \subseteq \Omega^{\mergeplus/}.
    \end{gathered}
  \end{equation}
\end{definition}

\subsection{Covered intervals, connected covered components}
\label{s:covered-intervals-components}

\begin{definition}
  For a \mergeplus/ joined ensemble $\Omega^{\mergeplus/}$ and an open interval $D$
  such that $\MovesOfGraph(D\times D) \subseteq \Omega^{\mergeplus/}$, we say
  that $D$ is a \emph{covered interval} in $\Omega^{\mergeplus/}$.
\end{definition}

Let $\Gamma^{\mergeplus/}$ be a \mergeplus/ joined move semigroup.  For two open intervals
$D_1, D_2$, if $$\MovesOfGraph\bigl((D_1 \cup D_2)\times(D_1 \cup D_2)\bigr) \subseteq
\Gamma^{\mergeplus/},$$ then we 
say that both $D_1$ and $D_2$ are covered intervals in the same
\emph{connected covered component} of $\Gamma^{\mergeplus/}$.  (Here the word
``connected'' does not refer to the topology of~$\R$, in contrast
to~\autoref{sec:semigroup:open}.) 
It follows from
\autoref{cor:open-sets-of-moves} that this is an equivalence relation.
However, we want to define the notion of a 
\emph{connected covered component} also for \mergeplus/ joined ensembles
$\Omega^{\mergeplus/}$ that are not semigroups.  In this case there is no
equivalence relation (transitivity fails), but we still use the word
``components'' in the following definition.
\begin{definition}
  Let $\Omega^{\mergeplus/}$ be a \mergeplus/ joined ensemble.  Let $C$ be an open
  set such that $\MovesOfGraph(C\times C) \subseteq \Omega^{\mergeplus/}$.
  Then $C$ is called a \emph{connected covered component} of
  $\Omega^{\mergeplus/}$.  Any two covered intervals $D_1, D_2 \subseteq C$ are said
  to be \emph{connected} by the component~$C$.
\end{definition}
The connected covered components of~$\Omega^{\mergeplus/}$ are partially
ordered by set inclusion.  The maximal elements in this partial order suffice
to describe all covered intervals.
\begin{corollary}
  \label{cor:affine-on-components}
  Let $\theta$ be a bounded function.  Suppose $\theta$ respects
  a \mergeplus/ joined ensemble $\Omega^{\mergeplus/}$.  Let $C$ be a connected
  covered component of~$\Omega^{\mergeplus/}$.
  Then $\theta$ is affine on all
  open intervals in $C$ with a common slope.
\end{corollary}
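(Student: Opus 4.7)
The plan is to reduce the statement directly to \autoref{lem:IL-Moves} by exploiting the defining property of a connected covered component, namely $\MovesOfGraph(C\times C) \subseteq \Omega^{\mergeplus/}$.

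First, I would show affineness on each individual open subinterval. Let $D \subseteq C$ be an open interval. Since $D\times D \subseteq C\times C$, we have $\MovesOfGraph(D\times D) \subseteq \MovesOfGraph(C\times C) \subseteq \Omega^{\mergeplus/}$. Because $\theta$ respects $\Omega^{\mergeplus/}$, it respects the sub-ensemble $\MovesOfGraph(D\times D)$. Applying \autoref{lem:IL-Moves} (with the roles of $D$ and $I$ both played by $D$) yields that $\theta$ is affine on $D$ with some slope $a_D$.

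Second, I would show that this slope is independent of the chosen subinterval. Let $D_1, D_2 \subseteq C$ be any two open intervals. Then $D_1 \times D_2 \subseteq C\times C$, hence $\MovesOfGraph(D_1\times D_2) \subseteq \Omega^{\mergeplus/}$ and $\theta$ respects this ensemble. A second application of \autoref{lem:IL-Moves}, this time with $D=D_1$ and $I=D_2$, gives that $\theta$ is affine on $D_1$ and on $D_2$ with a common slope, so $a_{D_1}=a_{D_2}$. Therefore the slope $a_D$ is the same for every open interval $D \subseteq C$, which is exactly the assertion of the corollary.

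There is essentially no obstacle: the technical work (invoking the Cauchy--Pexider / interval lemma) has already been absorbed into \autoref{lem:IL-Moves}, and the definition of ``connected covered component'' was engineered precisely so that $\MovesOfGraph(D_1\times D_2) \subseteq \Omega^{\mergeplus/}$ for arbitrary open subintervals $D_1, D_2 \subseteq C$. The only thing to be mildly careful about is that the hypothesis of \autoref{lem:IL-Moves} requires $\theta$ to be bounded, which is part of the corollary's assumption, and that we are allowed to take $D_1 = D_2$ in order to conclude affineness on a single interval.
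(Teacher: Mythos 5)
Your proof is correct and follows essentially the same route as the paper's: both reduce to \autoref{lem:IL-Moves} by observing that $D\times I \subseteq C\times C$ forces $\MovesOfGraph(D\times I) \subseteq \Omega^{\mergeplus/}$ for any open intervals $D,I \subseteq C$. The paper does this in a single stroke with arbitrary $D,I$, whereas you split it into two steps (affine on each $D$, then equal slopes across $D_1,D_2$), but the substance is identical.
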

\begin{proof}
  Let $D,I \subseteq C$ be open intervals.  Then
  $D \times I \subseteq C\times C$, and hence $\theta$ respects
  $\MovesOfGraph(D\times I)$.  By
  \autoref{lem:IL-Moves}\,(\ref{item:IL-Moves:affine}), $\theta$ is affine on
  $D$ and $I$ with the same slope.
\end{proof}

(Later in \autoref{sec:perturbation_space}, we will also consider so-called
connected uncovered components.)

\subsection{Presentations by moves $\Omega^{\fin/}$ and components $\CC=\{\texorpdfstring{C_1}{C\unichar{"2081}},\dots,\texorpdfstring{C_k}{C\unichar{"1D63}}\}$}
\tred{Could also be a separate section later}

Now we are prepared to define a convenient finite presentation for a large
class of \mergeplus/ joined ensembles, which we announced in
\autoref{rem:lines+boxes-nonpurple}.

\begin{definition}
  Take a finite list of connected covered components
  $\CC = \{\Comp[1], \dots, \Comp[k]\}$, where each $\Comp[i]$ is a
  finite union of disjoint open intervals.
  Define
  \begin{align*}
    \moves(\CC) &= \bigcup_{i=1}^k \moves(\Comp[i] \times \Comp[i]) \\
                &= \bigl\{\, \gamma|_D \in \FullMoveSemigroup \bigst D, \gamma(D) \subseteq \Comp[i] \text{ for some } i =1,
                  \dots, k  \,\bigr\}.  
  \end{align*}
\end{definition}
The graph $\graph(\moves(\CC))$ is a union of open rectangles.  See
\autoref{fig:connected-covered-components} for a visualization.  We plot
the components with different colors. 
\begin{figure}[t]
  \centering
  $\vcenter{\hbox{\includegraphics[height=.4\linewidth]{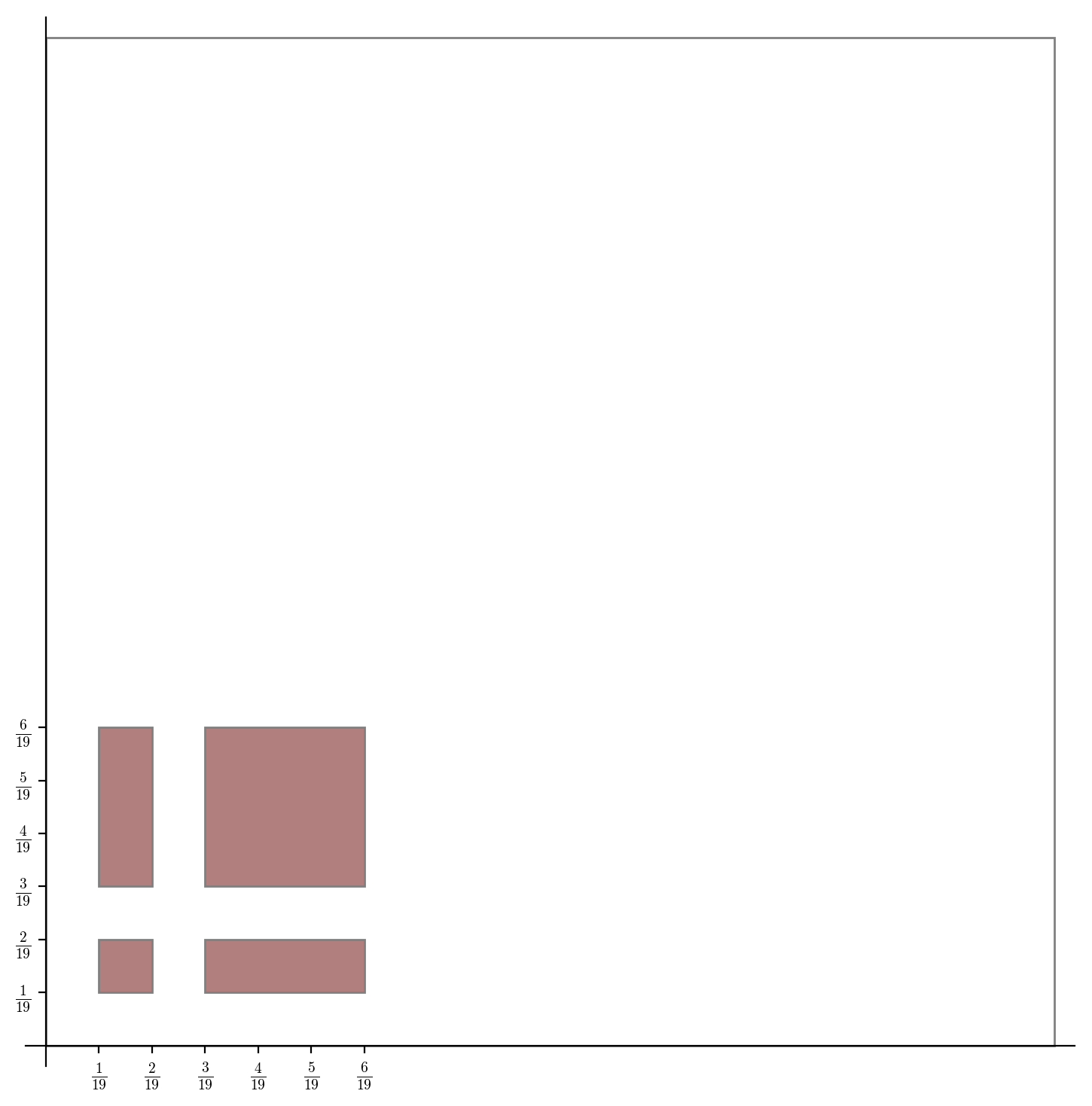}}}
  \qquad
  \vcenter{\hbox{\includegraphics[height=.4\linewidth]{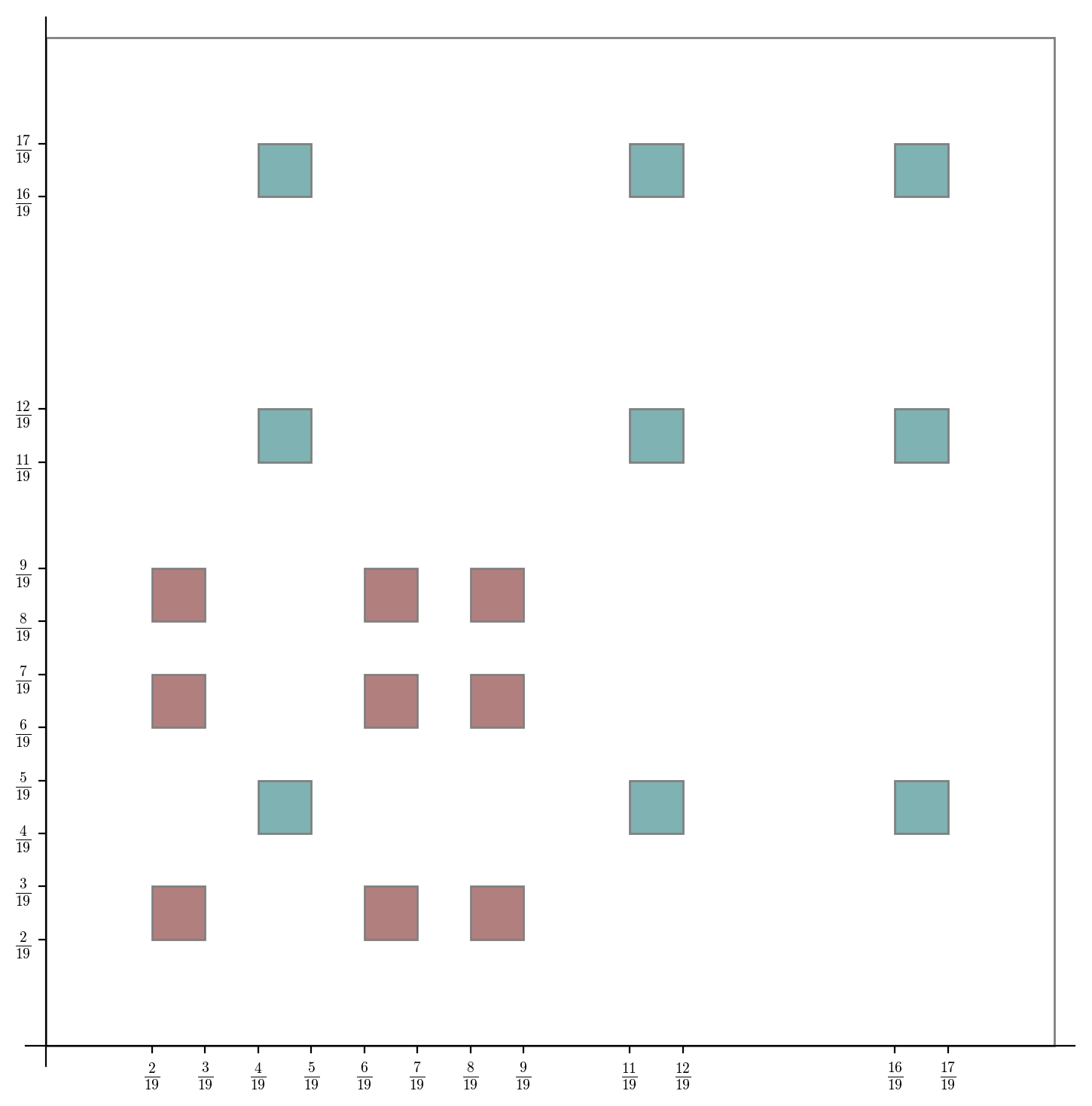}}}$
  \caption[Move ensemble $\moves(\C)$ from connected covered components $\C$]
  {Move ensemble $\moves(\C)$ from connected covered components $\C$. \emph{Left,}
    $\C = \{ C_1 \}$ (one component), where
    $C_1 = (\frac1{19}, \frac2{19}) \cup (\frac3{19}, \frac6{19})$, shown in \emph{red}.
    \emph{Right,} $\C = \{ C_1, C_2 \}$ (two components), where
    $C_1 = (\frac2{19}, \frac3{19}) \cup (\frac6{19}, \frac7{19}) \cup
    (\frac8{19}, \frac9{19})$
    is shown in \emph{red}
    and
    $C_2 = (\frac4{19}, \frac5{19}) \cup (\frac{11}{19}, \frac{12}{19}) \cup
    (\frac{16}{19}, \frac{17}{19})$
    is shown in \emph{cyan}.}
  \label{fig:connected-covered-components}
\end{figure}

Note that any ensemble of the form $\moves(\CC)$ or
$\Omega^{\fin/} \cup \moves(\CC)$, where $\Omega^{\fin/}$ is a finite move
ensemble, satisfies \eqref{axiom:restrict} and
\eqref{axiom:translation-reflection}, but is not necessarily join-closed.  To
make a \mergeplus/ joined ensemble, we use the
following.

\label{s:omega-fin-notation}
\begin{definition}
  For any finite move ensemble $\Omega^{\fin/}$ and a finite list $\C$
  of connected covered components, define 
  \begin{displaymath}
    \joinmoves(\Omega^{\fin/}, \CC) = \join{\Omega^{\fin/} \cup \moves(\CC)}.
  \end{displaymath}
  If $\Omega^{\fin/} = \emptyset$, we simply write $\joinmoves(\CC)$. 
\end{definition}

\begin{definition}
  The ordered pair $(\Omega^{\fin/}, \CC)$ is said to be a \emph{finite
    presentation} (by moves $\Omega^{\fin/}$ and components~$\CC$) of the
  \mergeplus/ joined ensemble $\joinmoves(\Omega^{\fin/},\CC)$.
\end{definition}

\begin{corollary}
\label{cor:affine-on-component-finite-presentation}
  Let $\theta$ be a bounded function.  Suppose $\theta$ respects
  a move ensemble $\Omega^{\mergeplus/}$ that has the finite presentation
  $(\Omega^{\fin/},\CC)$. 
  Then $\theta$ is affine on all
  intervals in $\C$ and shares a common slope on all intervals of each
  component $\Comp[i]$ of $\C$. 
\end{corollary}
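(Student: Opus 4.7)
The plan is to reduce the statement to Corollary \ref{cor:affine-on-components} by checking that each $\Comp[i]$ in the finite presentation is literally a connected covered component of $\Omega^{\mergeplus/}$ in the sense of \autoref{s:covered-intervals-components}.

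First, I would unfold the definition. By hypothesis, $\Omega^{\mergeplus/} = \joinmoves(\Omega^{\fin/}, \CC) = \join{\Omega^{\fin/} \cup \moves(\CC)}$. Since $\moves(\CC) = \bigcup_{i=1}^k \moves(\Comp[i] \times \Comp[i])$ and the joined ensemble only enlarges a set, we have in particular
\[
  \moves(\Comp[i] \times \Comp[i]) \subseteq \Omega^{\mergeplus/} \quad \text{for each } i = 1, \dots, k.
\]
This is exactly the condition in the definition of \emph{connected covered component}, so each $\Comp[i]$ is a connected covered component of $\Omega^{\mergeplus/}$.

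Next I would invoke \autoref{cor:affine-on-components}: since $\theta$ is bounded and respects $\Omega^{\mergeplus/}$, and since $\Comp[i]$ is a connected covered component, $\theta$ is affine on every open interval contained in $\Comp[i]$ with a single common slope. Applying this to each of the finitely many components $\Comp[1], \dots, \Comp[k]$ and to each of the finitely many open intervals whose disjoint union forms each~$\Comp[i]$ yields the conclusion.

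I do not expect any real obstacle: the statement is essentially a bookkeeping consequence of the definitions, once one observes that a finite presentation $(\Omega^{\fin/}, \CC)$ encodes each $\Comp[i]$ as a connected covered component by design. The only minor point to spell out is that $\Omega^{\mergeplus/}$ is indeed a \mergeplus/ joined ensemble (so that \autoref{cor:affine-on-components} applies); this follows because $\join{\,\cdot\,}$ enforces \eqref{axiom:restrict} and \eqref{axiom:join}, and the inclusion $\moves(\Comp[i] \times \Comp[i]) \subseteq \Omega^{\mergeplus/}$ automatically forces the biconditional in \eqref{axiom:translation-reflection} on rectangles contained in $\Comp[i] \times \Comp[i]$.
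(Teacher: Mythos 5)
Your proposal is correct and takes essentially the same approach as the paper, which simply states that the corollary is a restatement of Corollary~\ref{cor:affine-on-components}; you have just made the restatement explicit by unpacking $\joinmoves(\Omega^{\fin/},\CC)$ to exhibit each $\Comp[i]$ as a connected covered component. One caveat on your closing parenthetical: verifying the biconditional of \eqref{axiom:translation-reflection} only for rectangles inside some $\Comp[i]\times\Comp[i]$ does not establish the axiom, which quantifies over all open rectangles $D\times I$; but this verification is not needed, since the superscript $\mergeplus/$ in the hypothesis (and the paper's definition of a finite presentation) already posits that $\Omega^{\mergeplus/}$ is a kaleidoscopic joined ensemble, so Corollary~\ref{cor:affine-on-components} applies directly.
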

\begin{proof}
  This is a restatement of \autoref{cor:affine-on-components}.
\end{proof}

It is clear that these presentations are not unique, which motivates 
the next subsection.

\subsection{Finite presentation in reduced form $(\Omega^{\red/}, \CC)$}
\label{s:finite-presentation-normal-form}

\begin{figure}
  \centering
  {\Huge $
  \begin{aligned}
    \MOVEDIAG{reduce_moves_by_components_ex1a-completion-unreduced}
    &\mapsto
    \MOVEDIAG{reduce_moves_by_components_ex1a-completion-initial}
    \\
    \MOVEDIAG{reduce_moves_by_components_ex1b-completion-unreduced}
    &\mapsto
    \MOVEDIAG{reduce_moves_by_components_ex1b-completion-initial}
    \\
    \MOVEDIAG{reduce_moves_by_components_ex1c-completion-unreduced}
    &\mapsto
    \MOVEDIAG{reduce_moves_by_components_ex1c-completion-initial}
  \end{aligned}
  $}
  \caption[Finite presentation in reduced form. 
    \emph{Left,} finite presentations $(\Omega^{\fin/}, \CC)$ of \mergeplus/ joined ensembles~$\Omega^{\mergeplus/}$.
    \emph{Right,} finite presentations $(\Omega^{\red/}, \CC)$ in reduced form of the
    same ensembles]
    {Finite presentation in reduced form. 
    \emph{Left,} finite presentations $(\Omega^{\fin/}, \CC)$ of \mergeplus/ joined ensembles~$\Omega^{\mergeplus/}$.
    \emph{Right,} finite presentations $(\Omega^{\red/}, \CC)$ in reduced form of the same ensembles.
    (a) A move poking into a component is extended to become a maximal move
    of~$\Omega^{\mergeplus/}$. 
    (b) Two restrictions of the same move are extended to become a maximal
    move of~$\Omega^{\mergeplus/}$. 
    (c) A move that lies completely in a component is removed.}
  \label{fig:reduce_moves_by_components}
\end{figure}

\label{s:omega-red-notation}
\begin{definition}
  A finite presentation $(\Omega^{\red/}, \CC)$ of a \mergeplus/ joined ensemble
  $\Omega^{\mergeplus/}$ is said to be in \emph{(long) reduced form} if the following
  holds:
  \begin{equation}
    \tag{reduce}\label{axiom:reduced}
    \Omega^{\red/} \subseteq \maxdom(\Omega^{\mergeplus/}) \setminus \joinmoves(\CC),
  \end{equation}
  that is, each move $\gamma|_D\in\Omega^{\red/}$ is maximal in
  $\Omega^{\mergeplus/}$ with respect to the restriction partial
  order~$\subseteq$, 
  and the graph $\graph(\gamma|_D)$ is not covered by the
  union of open rectangles $C_i\times C_i$, $C_i \in \C$.
\end{definition}  

\begin{lemma}
  If a \mergeplus/ joined ensemble $\Omega^{\mergeplus/}$ has a finite presentation
  $(\Omega^{\fin/}, \CC)$, then there is a unique finite
  ensemble~$\Omega^{\red/}$ such that $(\Omega^{\red/}, \CC)$ is in reduced
  form and $\Omega^{\mergeplus/} = \joinmoves(\Omega^{\red/}, \CC)$.  
\end{lemma}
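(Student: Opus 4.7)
The plan is two-fold: I will construct an $\Omega^{\red/}$ satisfying \eqref{axiom:reduced} explicitly (existence), and then show by a maximality argument that this is the only candidate (uniqueness). A bridge used in both halves is the equivalence, for any move $\gamma|_D$, between the geometric condition that $\graph(\gamma|_D)$ is covered by $\bigcup_i \Comp[i]\times \Comp[i]$ and the algebraic condition that $\gamma|_D \in \joinmoves(\CC)$. The forward direction uses continuity of $\gamma$ and the open-interval structure of each $\Comp[i]$ to produce, around each $x \in D$, a small open interval $I_x$ with $\gamma|_{I_x} \in \moves(\CC)$; the reverse direction is immediate from \autoref{lemma:joined-ensemble-explicitly}.

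For existence, I would extend each move in $\Omega^{\fin/}$ to its maximal element in $\Omega^{\mergeplus/}$ and then discard those rendered redundant by $\CC$. Concretely, for each $\gamma|_D \in \Omega^{\fin/}$, the proof of \autoref{lemma:joined-is-presented-by-max} applied to the join-closed ensemble $\Omega^{\mergeplus/}$ produces a unique maximal open interval $\bar D \supseteq D$ with $\gamma|_{\bar D} \in \maxdom(\Omega^{\mergeplus/})$. I then define $\Omega^{\red/}$ to consist of exactly those $\gamma|_{\bar D}$ that do not lie in $\joinmoves(\CC)$. This set is finite, with at most $|\Omega^{\fin/}|$ elements, and satisfies \eqref{axiom:reduced} by construction. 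For the equality $\joinmoves(\Omega^{\red/}, \CC) = \Omega^{\mergeplus/}$, the inclusion $\subseteq$ follows because $\Omega^{\red/} \cup \moves(\CC) \subseteq \Omega^{\mergeplus/}$ and $\Omega^{\mergeplus/}$ is join-closed; the inclusion $\supseteq$ follows because every $\gamma|_D \in \Omega^{\fin/}$ is a restriction of either some $\gamma|_{\bar D} \in \Omega^{\red/}$ or of an element of $\joinmoves(\CC)$, placing $\Omega^{\fin/} \subseteq \joinmoves(\Omega^{\red/}, \CC)$ and hence $\joinmoves(\Omega^{\fin/}, \CC) \subseteq \joinmoves(\Omega^{\red/}, \CC)$.

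For uniqueness, suppose $(\Omega_1^{\red/}, \CC)$ and $(\Omega_2^{\red/}, \CC)$ are both finite presentations in reduced form generating the same $\Omega^{\mergeplus/}$, and pick $\gamma|_D \in \Omega_1^{\red/}$. Since $\gamma|_D \in \joinmoves(\Omega_2^{\red/}, \CC)$, \autoref{lemma:joined-ensemble-explicitly} yields a family $\{\gamma|_I\}_{I \in \mathfrak I} \subseteq \Omega_2^{\red/} \cup \moves(\CC)$ with $D \subseteq \bigcup_I I$; discarding the $I$'s disjoint from $D$ preserves the cover, so I may assume each remaining $I$ meets $D$. If every $\gamma|_I$ in the remaining family lay in $\moves(\CC)$, then $\gamma|_D \in \joinmoves(\CC)$, contradicting \eqref{axiom:reduced} for $\Omega_1^{\red/}$. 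Thus some $I_0$ has $\gamma|_{I_0} \in \Omega_2^{\red/}$ with $I_0 \cap D \neq \emptyset$, so $I_0 \cup D$ is an open interval and the join axiom places $\gamma|_{I_0 \cup D} \in \Omega^{\mergeplus/}$. Since both $\gamma|_D$ and $\gamma|_{I_0}$ are maximal in $\Omega^{\mergeplus/}$, both must coincide with $\gamma|_{I_0 \cup D}$, giving $\gamma|_D = \gamma|_{I_0} \in \Omega_2^{\red/}$. By symmetry $\Omega_1^{\red/} = \Omega_2^{\red/}$.

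The main obstacle I anticipate is largely bookkeeping: cleanly separating the two halves of the reduced form axiom---maximality of each move in $\Omega^{\mergeplus/}$ versus non-containment in $\joinmoves(\CC)$---and invoking each at the right moment. The bridge equivalence above is what makes it possible to freely switch between the set-theoretic description of $\joinmoves(\CC)$ and the geometric covering condition of~\eqref{axiom:reduced}; once it is in hand, the structure of joined semigroups forces the rest.
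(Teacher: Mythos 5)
The paper leaves this lemma without a proof (it is treated as immediate from the definition of reduced form, with \autoref{fig:reduce_moves_by_components} supplying only illustrative examples), so there is no paper proof to compare against. Your argument is correct and complete: the existence construction---extend each move of $\Omega^{\fin/}$ to its unique maximal continuation in $\Omega^{\mergeplus/}$ (as in the proof of \autoref{lemma:joined-is-presented-by-max}) and retain only those not absorbed by $\joinmoves(\CC)$---does produce a finite set generating $\Omega^{\mergeplus/}$ and satisfying \eqref{axiom:reduced}, and the uniqueness argument correctly combines \autoref{lemma:joined-ensemble-explicitly}, the join axiom, maximality on both sides, and the non-containment clause of \eqref{axiom:reduced}. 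One small observation: the ``bridge equivalence'' you isolate at the outset, between $\gamma|_D\in\joinmoves(\CC)$ and the geometric condition $\graph(\gamma|_D)\subseteq\bigcup_i C_i\times C_i$, is precisely what the ``that is'' clause of \eqref{axiom:reduced} asserts without proof; your two-line argument for it (continuity of $\gamma$ and openness of the $C_i$ in one direction, \autoref{lemma:joined-ensemble-explicitly} in the other) is sound and worth writing out, since it is what lets the uniqueness step conclude $\gamma|_D\in\joinmoves(\CC)$ from the coverage $D\subseteq\bigcup I$.
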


\autoref{fig:reduce_moves_by_components} illustrates the operation of going
from a finite presentation to a reduced presentation of the same ensemble.

\begin{remark}
  As the examples in \autoref{fig:reduce_moves_by_components} illustrate,
  the domains of moves in $\Omega^{\fin/}$ may be extended.
\end{remark}

\subsection{Finite presentations of generating ensembles of move semigroups}

Move ensembles have a crucial r\^ole as generating sets of move semigroups.
We now describe an operation that changes the generating ensemble, but preserves
the move semigroup that is generated by it.

\begin{lemma}[Extend component by move]
\label{lem:indirectly_covered_from_move}
Let $\CC$ be a list of connected components and let $\Omega$ be a move ensemble such that $\moves (\CC) \subseteq \Omega$.  If $\gamma|_D \in \Omega$ and $D \subseteq \Comp[i]$ for some $\Comp[i] \in \CC$, then $\moves(\CC') \subseteq \semi{\Omega}$, where $\Comp[i]' = \Comp[i] \cup \gamma(D)$ and all other components of $\CC'$ are the same as $\CC$.
\end{lemma}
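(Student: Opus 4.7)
The goal reduces to showing $\moves(C'_i \times C'_i) \subseteq \semi{\Omega}$ for the single extended component $C'_i = C_i \cup \gamma(D)$, since for every $j \neq i$ we have $C'_j = C_j$ and $\moves(C_j \times C_j) \subseteq \moves(\CC) \subseteq \Omega$. Three building blocks sit already in $\semi{\Omega}$: the entire ensemble $\moves(C_i \times C_i) \subseteq \Omega$; the move $\gamma|_D \in \Omega$; and its inverse $\gamma^{-1}|_{\gamma(D)} = (\gamma|_D)^{-1}$, which is in $\semi{\Omega}$ by axiom~\eqref{axiom:inverse}.

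The plan is, for an arbitrary $\delta|_E \in \moves(C'_i \times C'_i)$, to exhibit $\delta|_E$ as a short composition that uses $\gamma|_D$ and $\gamma^{-1}|_{\gamma(D)}$ as bridges between $C_i$ and $\gamma(D)$, with a single \emph{core} move drawn from $\moves(C_i \times C_i)$. A four-way case analysis on whether $E$ and $\delta(E)$ sit in $C_i$ or in $\gamma(D)$ handles each configuration. The case $E,\delta(E) \subseteq C_i$ is immediate. If $E \subseteq C_i$ and $\delta(E) \subseteq \gamma(D)$, set $\alpha := \gamma^{-1}\circ\delta$; then $\alpha(E) = \gamma^{-1}(\delta(E)) \subseteq D \subseteq C_i$, so $\alpha|_E \in \moves(C_i \times C_i)$, and the composition formula gives $\gamma|_D \circ \alpha|_E = \delta|_E$. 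The mirror case $E \subseteq \gamma(D),\ \delta(E) \subseteq C_i$ uses $\delta|_E = (\delta\circ\gamma)|_{\gamma^{-1}(E)} \circ \gamma^{-1}|_{\gamma(D)}$. Finally, if $E,\delta(E) \subseteq \gamma(D)$, conjugate through $D$: with $\alpha := \gamma^{-1}\circ\delta\circ\gamma$ one checks $\delta|_E = \gamma|_D \circ \alpha|_{\gamma^{-1}(E)} \circ \gamma^{-1}|_{\gamma(D)}$. Each identity reduces to a direct application of $f|_A \circ g|_B = (f\circ g)|_{B \cap g^{-1}(A)}$ from~\eqref{eq:composition}, the intersections collapsing to $E$ by the case hypotheses.

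The main point of care is that an open interval $E \subseteq C'_i$ can straddle $C_i$ and $\gamma(D)$ whenever these two sets overlap, in which case none of the four cases above applies to $E$ directly. The natural remedy is to cover such an $E$ by subintervals each of which fits one of the four cases and apply the bridging construction subinterval-by-subinterval; this is the main bookkeeping obstacle, and it is handled using the interplay between the restriction order on moves (see \autoref{s:restriction-partial-order}) and the inverse-semigroup composition~\eqref{eq:composition}, together with the fact that inverses of compositions stay in $\semi{\Omega}$.
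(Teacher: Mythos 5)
Your four move-level identities (the cases where $E$ and $\delta(E)$ each lie entirely in $C_i$ or in $\gamma(D)$) are correct: the domain intersections in \eqref{eq:composition} collapse to $E$ exactly as you claim, so each $\delta|_E$ is exhibited as a short composition of elements of $\moves(C_i\times C_i)$, of $\gamma|_D$, and of $(\gamma|_D)^{-1}$. This is a genuinely different presentation from the paper's own proof, which argues only at the level of points: it shows that from an arbitrary $x\in C_i$ one can reach an arbitrary $z\in\gamma(D)$ by \emph{both} a translation and a reflection lying in $\semi{\Omega}$, by first applying (or not) the reflection fixing $x$ and the translation $\tau_{y-x}$ (both supplied by $\moves(C_i\times C_i)$) to control the character before applying $\gamma|_D$. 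Your explicit compositions are in effect the move-level refinement of that point-level connectivity argument, and for the non-straddling cases your write-up is more precise than the paper's sketch.

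The straddling case, however, is a real gap, and the remedy you sketch does not close it. If $C_i\cap\gamma(D)\neq\emptyset$ but $\gamma(D)\not\subseteq C_i$, then $C'_i$ contains an open interval $E$ with $E\not\subseteq C_i$ and $E\not\subseteq\gamma(D)$, and no $\delta|_E$ with such a domain can be a finite composition of elements of $\Omega$ and their inverses. Indeed, by \eqref{eq:composition} the domain of any composition $\omega_k\circ\cdots\circ\omega_1$ is a subset of $\dom(\omega_1)$; and for $\Omega=\moves(\CC)\cup\{\gamma|_D\}$, the minimal ensemble meeting the hypotheses, every move in $\Omega$ or its inverse has domain inside a single interval of $C_i$ or inside $\gamma(D)$, none of which contain $E$. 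Covering $E$ by non-straddling subintervals and running the bridging construction on each piece does not help, because reassembling the pieces into a single move $\delta|_E$ is precisely the axiom \eqref{axiom:join}, which $\semi{\Omega}$ need not satisfy. Note that the paper's own proof, as written, has the same issue: it establishes $C'_i\times C'_i\subseteq\graph_\pm(\semi{\Omega})$, which yields $\moves(\CC')\subseteq\joinsemi{\Omega}$ but not $\moves(\CC')\subseteq\semi{\Omega}$. This is harmless in the paper's applications, where the lemma is always invoked with $\Omega$ already a joined move semigroup such as $\joinsemi{\Omegainit}$ or $\clsemi{\Omegainit}$ (so that $\semi{\Omega}=\Omega$ is join-closed and straddling domains cause no trouble). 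For your proposal to be correct as stated, either add the hypothesis $\gamma(D)\cap C_i=\emptyset$, or weaken the conclusion to $\moves(\CC')\subseteq\joinsemi{\Omega}$.
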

See \autoref{fig:extend_components_by_moves} for an illustration.

\begin{proof}
Let $x \in \Comp[i]$, $z \in \gamma(D)$, and $y = \gamma^{-1}(z) \in D$.  Since $x \in \Comp[i]$, $x$ is in the domain of moves $\tau_0$ and $\rho_0$ in $\Omega$.  Thus, we can both translate and reflect $x$ to $z$ by
$$x \xmapsto{\tau_{t_{y - x}} } y  \xmapsto{\gamma} z,$$
and
$$x \xmapsto{\rho_{r_{0}}} x \xmapsto{\tau_{t_{y - x}} } y  \xmapsto{\gamma} z.$$
Note that which one above is a translation or reflection depends on the character $\chi(\gamma)$.
\end{proof}

\begin{figure}
  \centering
  \Huge $
  \begin{aligned}
    \MOVEDIAG{extend_components_by_moves_ex1-completion-0} 
    &\xmapstosemi
    \MOVEDIAG{extend_components_by_moves_ex1-completion-final}
    \\
    \MOVEDIAG{extend_components_by_moves_ex2-completion-0} 
    &\xmapstosemi
    \MOVEDIAG{extend_components_by_moves_ex2-completion-final}
    \\
    \MOVEDIAG{extend_components_by_moves_ex4-completion-0} 
    &\xmapstosemi
    \MOVEDIAG{extend_components_by_moves_ex4-completion-final}
    \\
    \MOVEDIAG{extend_components_by_moves_ex5-completion-0} 
    &\xmapstosemi
    \MOVEDIAG{extend_components_by_moves_ex5-completion-final}
  \end{aligned}    
  $
  \caption{
    Extending components by moves,
    \autoref{lem:indirectly_covered_from_move}.
    \emph{Left,} reduced finite presentations of a kaleidoscopic joined
    ensemble $\Omega^{\mergeplus/}$. \emph{Right,} reduced finite
    presentations of $\semi{\Omega^{\mergeplus/}}$.}
  \label{fig:extend_components_by_moves}
\end{figure}

\section{Limit-closed ensembles and continuous functions. Closed move semigroups}
\label{sec:moves-closure}
\label{sec:order-defined}

Let $A \subseteq \R$ be an open set.  We now consider the space
$\BoundedContinuous{A}$ of bounded continuous functions on $A$.
For $\BoundedContinuous{A}$, some notions of convergence of moves are natural
to study.

\subsection{Limit-closed move ensembles $\IsLimit{\Omega}$; closures
  $\limit{\Omega}$,
  $\arblim{\Omega}$}
\label{sec:limit-def}

\subsubsection{Convergence of unrestricted moves}
\begin{definition}
  A sequence $\{ \gamma^i \}_{i\in\N} \subseteq \FullMoveGroup$ of
  unrestricted moves \emph{converges} 
  \begin{enumerate}[\rm(a)]
  \item to an unrestricted translation $\tau_t \in \FullMoveGroup$ if all
    but finitely many $\gamma^i$ are translations $\tau_{t^i}$ and $t^i\to t$.
  \item to an unrestricted reflection $\rho_r \in \FullMoveGroup$ if all
    but finitely many $\gamma^i$ are reflections $\rho_{r^i}$ and $r^i\to r$.
  \end{enumerate}
\end{definition}

\subsubsection{Limits closure}

\begin{definition}
  We define the \emph{limits closure} $\limit{\Omega}$
  of a moves
  ensemble~$\Omega$ to be the smallest (by set inclusion) moves
  ensemble~$\IsLimit{\Omega}$ containing~$\Omega$ that satisfies the following axiom.
  \tblue{(Do NOT include \eqref{axiom:restrict}, \eqref{axiom:join} here, \eqref{axiom:extend} and \eqref{axiom:translation-reflection} comes later.)}
  \begin{equation}
    \tag{lim} \label{axiom:limits}
    \begin{gathered}[t]
      \text{Let $D$ be an open interval.}\\
      \text{If $\gamma^i\to\gamma$ and $\gamma^i|_D \in \bar{\Omega}$ for all $i$,}
      \ \text{then $\gamma|_D \in \bar{\Omega}$.}
    \end{gathered}
  \end{equation}
\end{definition}

We note that the domain $D$ is fixed for all moves in the sequence.  Thus, the
limits closure will in general not satisfy~\eqref{axiom:join} and~\eqref{axiom:inverse}.  Instead we can consider the following axiom%
\useless{s}.

\begin{definition}
  Define $\arblim{\Omega}$ to be the smallest moves ensemble $\IsLimit{\Omega}$ containing $\Omega$
  that satisfies the following axiom.
  \begin{equation}
    \tag{arblim} \label{axiom:arblim}
    \begin{gathered}[t]
    \text{If $\gamma^i \to \gamma$, $l^i \to l$, $u^i \to u$ and $\gamma^i|_{(l^i, u^i)} \in \bar{\Omega}$ for all
      $i$,}\\ \text{then $\gamma|_{(l,u)} \in \bar{\Omega}$.}
  \end{gathered}
  \end{equation}
\end{definition}
\tred{Perhaps a-lim (and s-lim) looks nicer?}
For our purposes, when considered together with \eqref{axiom:join}, the
notions turn out to be equivalent.

\begin{theorem}\label{thm:lim-symlim-equiv-1round}
  Let $\Omega^{\join/}$ be a join-closed move ensemble.
  Then $$\join{\limit{\Omega^{\join/}}}
  \useless{{}= \join{\symlim{\Omega^{\join/}}}}
  = \join{\arblim{\Omega^{\join/}}}.$$
\end{theorem}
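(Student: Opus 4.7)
The plan proceeds by establishing the two containments. The forward direction $\join{\limit{\Omega^{\join/}}} \subseteq \join{\arblim{\Omega^{\join/}}}$ is immediate, since axiom \eqref{axiom:limits} is the special case of \eqref{axiom:arblim} with the constant sequences $l^i = l$ and $u^i = u$; hence $\limit{\Omega^{\join/}} \subseteq \arblim{\Omega^{\join/}}$, and joined closure preserves set inclusion.

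For the reverse direction, I would prove the following structural claim about every element of $\arblim{\Omega^{\join/}}$: whenever $\gamma|_{(l,u)} \in \arblim{\Omega^{\join/}}$, for every closed subinterval $[x-\delta, x+\delta] \subset (l,u)$ one has $\gamma|_{(x-\delta, x+\delta)} \in \limit{\Omega^{\join/}}$. Let $T$ be the ensemble of all moves with this uniform local-restriction property. By the minimality of $\arblim{\Omega^{\join/}}$ as the smallest ensemble containing $\Omega^{\join/}$ that satisfies \eqref{axiom:arblim}, it suffices to verify that $\Omega^{\join/} \subseteq T$ and that $T$ itself is closed under \eqref{axiom:arblim}. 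The first inclusion $\Omega^{\join/} \subseteq T$ follows from \eqref{axiom:restrict}, since any sub-interval restriction of a move in $\Omega^{\join/}$ lies in $\Omega^{\join/} \subseteq \limit{\Omega^{\join/}}$.

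To verify $T$ is closed under \eqref{axiom:arblim}, suppose $\gamma^i|_{(l^i,u^i)} \in T$ with $\gamma^i \to \gamma$, $l^i \to l$, $u^i \to u$, and fix any $[x-\delta, x+\delta] \subset (l,u)$. For all sufficiently large~$i$ we have $[x-\delta, x+\delta] \subset (l^i, u^i)$, so the defining property of $T$ applied to $\gamma^i|_{(l^i,u^i)}$ at the interval $(x-\delta, x+\delta)$ yields $\gamma^i|_{(x-\delta, x+\delta)} \in \limit{\Omega^{\join/}}$. Now axiom \eqref{axiom:limits}, which $\limit{\Omega^{\join/}}$ satisfies by construction, applies to this sequence at the constant domain $D = (x-\delta, x+\delta)$ and delivers $\gamma|_{(x-\delta, x+\delta)} \in \limit{\Omega^{\join/}}$; hence $\gamma|_{(l,u)} \in T$.

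Thus $\arblim{\Omega^{\join/}} \subseteq T$. For any $\gamma|_{(l,u)} \in \arblim{\Omega^{\join/}}$, the open intervals $(x-\delta, x+\delta)$ with $[x-\delta, x+\delta] \subset (l,u)$ form a cover of $(l,u)$ on each of which $\gamma$ lies in $\limit{\Omega^{\join/}}$, so \eqref{axiom:join} places $\gamma|_{(l,u)}$ in $\join{\limit{\Omega^{\join/}}}$. This shows $\arblim{\Omega^{\join/}} \subseteq \join{\limit{\Omega^{\join/}}}$; since the right-hand side is itself join-closed, applying joined closure gives the desired $\join{\arblim{\Omega^{\join/}}} \subseteq \join{\limit{\Omega^{\join/}}}$. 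The key delicacy is the choice of inductive invariant: a naive induction that only propagates membership in $\join{\limit{\Omega^{\join/}}}$ rather than the uniform local property fails, because the local intervals in a $\join$-decomposition of an approximating $\gamma^i$ may shrink as $i \to \infty$ and defeat any direct application of \eqref{axiom:limits} at a fixed domain.
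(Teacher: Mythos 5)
Your proof is correct, and at one point it is more careful than the paper's own argument. Both arguments share the interval-shrinking idea---approximate the interior of $(l,u)$ by compactly contained subintervals, apply \eqref{axiom:limits} at each fixed subdomain, then use \eqref{axiom:join}---but they enter the closure $\arblim{\Omega^{\join/}}$ differently. The paper takes $\gamma|_{(l,u)} \in \arblim{\Omega^{\join/}}$ and immediately writes that \eqref{axiom:arblim} produces a convergent sequence of moves in $\Omega^{\join/}$; this runs the axiom in the wrong direction (the axiom says a limit of moves already in the ensemble is in the ensemble, not that every element of the smallest such ensemble is an $\arblim$-limit of moves drawn from $\Omega^{\join/}$ itself), and so tacitly assumes that a single round of the $\arblim$ operation already yields the full closure. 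That is in fact true here, since \eqref{axiom:arblim} is sequential closure in a metrizable parameter space and sequential closure is idempotent there, but the paper neither states nor proves it. You sidestep the issue with the auxiliary ensemble $T$ of moves satisfying the uniform local-restriction property: verifying $\Omega^{\join/}\subseteq T$ and that $T$ is closed under \eqref{axiom:arblim}, minimality then gives $\arblim{\Omega^{\join/}}\subseteq T$ no matter how many rounds the closure would take. This is the canonical induction-on-closure pattern, and it is the more robust argument. Your closing diagnosis is also apt: since $\limit{\Omega^{\join/}}$ is in general not join-closed (the paper observes this just after introducing \eqref{axiom:limits}), trying to show directly that $\join{\limit{\Omega^{\join/}}}$ satisfies \eqref{axiom:arblim} stalls when the intervals in the join-decompositions of the approximants $\gamma^i$ depend on $i$, which is exactly the defect that the $i$-independent family of compact subintervals in the definition of $T$ repairs. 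Two cosmetic points: the invariant should quantify over arbitrary compact $[a,b]\subset(l,u)$ rather than only symmetric ones (equivalent, but worth noting); and the paper writes the argument explicitly for translations and then says ``the proof for reflections is similar,'' which you leave implicit.
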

\begin{proof}
It is clear that $\limit{\Omega^{\join/}}
\useless{{}\subseteq \symlim{\Omega^{\join/}}}
\subseteq \arblim{\Omega^{\join/}}$, so it suffices to show that
\begin{equation}
\label{eq:arblim-in-joinlim}
\arblim{\Omega^{\join/}} \subseteq \join{\limit{\Omega^{\join/}}}. 
\end{equation}  
Let $\tau_t|_{(l,u)} \in \arblim{\Omega^{\join/}}$. By \eqref{axiom:arblim},
there is a convergent sequence $\{\tau_{t^i}|_{(l^i, u^i)}\}_{i \in\N}$ of
moves  in $\Omega^{\join/}$ such that  $l^i \to l$, $u^i \to u$ and $t^i \to
t$. For every integer $j>\frac{2}{u-l}$, there exists a large integer $n_j$
such that for any $i\geq n_j$, we have $l_i < l+\frac{1}{j}$ and
$u-\frac{1}{j} < u_i$. Since $\Omega^{\join/}$ satisfies \eqref{axiom:join},
$\tau_{t^i} |_{D_j} \in \Omega^{\join/}$ for any $i\geq n_j$, where $D_j :=
(l+\frac{1}{j}, u-\frac{1}{j})$. Since $t_i \to t$, we have  $\tau_t|_{D_j}
\in \limit{\Omega^{\join/}}$ for every $j$, hence $\tau_t|_{(l,u)} \in
\join{\limit{\Omega^{\join/}}}$.  We showed that \eqref{eq:arblim-in-joinlim}
holds for translations. The proof for reflections is similar.
\end{proof}
\begin{theorem}
\label{thm:lim-symlim-equiv}
  Let $\Omega^{\join/}$ be a join-closed move ensemble.
  The following are equivalent.
  \begin{enumerate}
  \item $\Omega^{\join/}$ satisfies \eqref{axiom:limits}.
    \useless{\item $\Omega^{\join/}$ satisfies \eqref{axiom:symlim}.}
  \item $\Omega^{\join/}$ satisfies \eqref{axiom:arblim}.\label{th:lim-symlim-equiv:arblim}
\end{enumerate}
\end{theorem}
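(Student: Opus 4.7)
The plan is to derive this as an immediate corollary of \autoref{thm:lim-symlim-equiv-1round}, combined with one trivial direction.

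For $(\ref{th:lim-symlim-equiv:arblim}) \Rightarrow (1)$, the observation is that \eqref{axiom:limits} is literally the special case of \eqref{axiom:arblim} obtained by taking the constant sequences $l^i = l$ and $u^i = u$ for all $i$. So if $\Omega^{\join/}$ satisfies \eqref{axiom:arblim}, then it satisfies \eqref{axiom:limits} with no further argument.

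For $(1) \Rightarrow (\ref{th:lim-symlim-equiv:arblim})$, the strategy is to invoke \autoref{thm:lim-symlim-equiv-1round} and exploit the minimality built into the definitions of $\limit{\cdot}$ and $\arblim{\cdot}$. Assuming $\Omega^{\join/}$ satisfies \eqref{axiom:limits}, the minimal ensemble containing $\Omega^{\join/}$ that satisfies \eqref{axiom:limits} is $\Omega^{\join/}$ itself, so $\limit{\Omega^{\join/}} = \Omega^{\join/}$. Since $\Omega^{\join/}$ is by hypothesis join-closed, $\join{\Omega^{\join/}} = \Omega^{\join/}$ as well. Applying \autoref{thm:lim-symlim-equiv-1round} now gives
\[
\Omega^{\join/} \;\subseteq\; \arblim{\Omega^{\join/}} \;\subseteq\; \join{\arblim{\Omega^{\join/}}} \;=\; \join{\limit{\Omega^{\join/}}} \;=\; \join{\Omega^{\join/}} \;=\; \Omega^{\join/},
\]
so $\arblim{\Omega^{\join/}} = \Omega^{\join/}$. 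By the minimality in the definition of $\arblim{\cdot}$, this equality means precisely that $\Omega^{\join/}$ already satisfies \eqref{axiom:arblim}.

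There is no real obstacle: all the analytic content lives inside \autoref{thm:lim-symlim-equiv-1round}, where one shrinks $(l^i, u^i)$ to the common subinterval $(l+\tfrac{1}{j}, u-\tfrac{1}{j})$ using \eqref{axiom:restrict}, applies \eqref{axiom:limits} on the fixed domain, and then re-joins the nested intervals using \eqref{axiom:join} to recover $\gamma|_{(l,u)}$. The present theorem just repackages that equivalence of closures as an equivalence of axioms on an already-closed ensemble.
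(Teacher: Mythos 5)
Your proof is correct. The direction $(2)\Rightarrow(1)$ is indeed immediate by taking constant sequences, and the direction $(1)\Rightarrow(2)$ is carried through cleanly: from $\Omega^{\join/}$ satisfying \eqref{axiom:limits} you get $\limit{\Omega^{\join/}}=\Omega^{\join/}$, from join-closedness you get $\join{\Omega^{\join/}}=\Omega^{\join/}$, and then \autoref{thm:lim-symlim-equiv-1round} sandwiches $\arblim{\Omega^{\join/}}$ between $\Omega^{\join/}$ and itself. The final inference ($\arblim{\Omega^{\join/}}=\Omega^{\join/}$ implies $\Omega^{\join/}$ satisfies \eqref{axiom:arblim}) is sound, though your phrasing ``by the minimality'' is slightly misattributed: what you actually need is that $\arblim{\Omega}$, by its defining property, is a set satisfying \eqref{axiom:arblim}, so once it coincides with $\Omega^{\join/}$, so does $\Omega^{\join/}$. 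That is a wording quibble, not a gap.

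The paper itself gives no proof for this theorem beyond the remark that ``the proof is essentially the same and we omit it,'' i.e.\ it intends you to re-run the shrink-and-join argument of \autoref{thm:lim-symlim-equiv-1round} directly on the given ensemble. You instead treat \autoref{thm:lim-symlim-equiv-1round} as a black box and derive the present theorem formally from it plus the closure-operator algebra of $\limit{\cdot}$, $\arblim{\cdot}$, and $\join{\cdot}$. Both approaches draw all the analytic content from the same place, but your version makes the dependence explicit and avoids repeating the $\epsilon$-shrinking argument, which is a slightly cleaner packaging. The cost is that it relies on $\limit{\cdot}$ and $\arblim{\cdot}$ being well-defined closure operators (smallest sets containing $\Omega$ satisfying the axiom), which is true but is only explicitly justified later in the paper (\autoref{lemma:smallest-set-is-intersection}); a one-line note to that effect would tighten the argument.
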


The proof is essentially the same and we omit it.

\tred{Should we nevertheless introduce notation jlim?}

\subsubsection{Respecting limits}

\begin{lemma}[Limits]
\label{lem:limits-of-moves}
Let $D$  be an open interval and let $\theta$ be continuous on~$D$.  If there
exists a sequence $\gamma^i \to \gamma$ such that $\theta$ 
respects $\gamma^i|_D$ for all $i$, then $\theta$ also respects $\gamma|_D$. 
\end{lemma}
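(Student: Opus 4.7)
The proof is a direct pass-to-the-limit in the functional equations that witness $\theta$ respects each $\gamma^i|_D$. First, by the definition of convergence $\gamma^i \to \gamma$, all but finitely many $\gamma^i$ share the character of~$\gamma$; discarding those finitely many terms I may assume $\chi(\gamma^i) = \chi(\gamma)$ throughout. Concretely, either $\gamma^i = \tau_{t^i}$ with $t^i \to t$ and $\gamma = \tau_t$, or $\gamma^i = \rho_{r^i}$ with $r^i \to r$ and $\gamma = \rho_r$. In both cases, for every $x \in D$, the quantity $|\gamma^i(x) - \gamma(x)|$ is independent of~$x$, so $\gamma^i(x) \to \gamma(x)$ uniformly on~$D$.

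Next, I identify the limit of the constants. Let $c^i := c^{\theta}_{\gamma^i|_D}$ be the scalars that satisfy
\[\theta(\gamma^i(x)) = \chi(\gamma)\,\theta(x) + c^i \quad\text{for all } x \in D.\]
Fixing a reference point $x_0 \in D$ and evaluating at $x_0$ yields $c^i = \theta(\gamma^i(x_0)) - \chi(\gamma)\theta(x_0)$. Since $\gamma^i(x_0) \to \gamma(x_0)$ and $\theta$ is continuous at $\gamma(x_0)$, it follows that $c^i \to c := \theta(\gamma(x_0)) - \chi(\gamma)\theta(x_0)$.

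Finally, I take $i \to \infty$ in the displayed equation for an arbitrary $x \in D$. The right-hand side converges to $\chi(\gamma)\theta(x) + c$ by the previous step. The left-hand side converges to $\theta(\gamma(x))$, because $\gamma^i(x) \to \gamma(x)$ and $\theta$ is continuous at the limit. Therefore $\theta(\gamma(x)) = \chi(\gamma)\theta(x) + c$ for every $x \in D$, which is exactly what it means for $\theta$ to respect $\gamma|_D$ (with constant $c^\theta_{\gamma|_D} = c$).

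The one point that needs care is the continuity hypothesis. The argument above uses continuity of~$\theta$ at each image point $\gamma(x)$ with $x \in D$, not only at points of~$D$. In the setting of this section, $\theta \in \BoundedContinuous{A}$ for a fixed open set~$A$; the hypothesis $\gamma^i|_D \in \Omegaresp{\theta}$ already forces $\gamma^i(D) \subseteq A$, and the intended applications ensure $\gamma(D) \subseteq A$ as well, so the continuity invoked in the limit step is available. No boundedness or uniform convergence of~$\theta$ is needed, and indeed the proof does not require anything about the domains other than $D$ being open so that $x_0$ and each other test point $x$ lie in the interior.
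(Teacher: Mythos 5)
Your core limit argument is the right idea and, once the continuity issue is handled, is arguably a little cleaner than the paper's (you avoid the covering-by-neighborhoods step). But as written there is a genuine gap, and you correctly spot it yourself in the last paragraph: both the convergence $c^i \to c$ and the convergence $\theta(\gamma^i(x)) \to \theta(\gamma(x))$ invoke continuity of $\theta$ at the image points $\gamma(x_0)$ and $\gamma(x)$, which lie in $\gamma(D)$, not in~$D$. The lemma's hypothesis only gives continuity of $\theta$ on~$D$. Appealing to ``the intended applications'' and to $\theta\in\BoundedContinuous{A}$ does not close the gap, because the lemma is stated and used under the weaker hypothesis; moreover even the $\BoundedContinuous{A}$ framing would require you to show $\gamma(D)\subseteq A$, which is not automatic.

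The missing step is the observation the paper's proof leads with: continuity of $\theta$ on $\gamma^i(D)$ is \emph{derivable} from the hypotheses, because $\theta$ restricted to $\gamma^i(D)$ is just $\chi(\gamma)\,\theta\circ(\gamma^i)^{-1} + c^i$, which is continuous since $\theta$ is continuous on~$D$. Once you have that, for a fixed $x\in D$ the point $\gamma(x)$ lies in $\gamma^i(D)$ for all large~$i$ (because $(\gamma^i)^{-1}(\gamma(x))\to x$ and $D$ is open), so $\theta$ is continuous at $\gamma(x)$; now your evaluation at $x_0$ gives $c^i\to c$, and your pointwise limit at arbitrary $x\in D$ finishes the proof. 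With that insertion your argument is complete and takes a somewhat more direct route than the paper's, which instead builds the constant locally on neighborhoods $N_{\bar x}$ and then uses connectedness of~$D$ to glue the local constants together.
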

\begin{proof}
  We prove the lemma for a sequence $t_i \to t$ such that $\theta$ respects
  the translations $\tau_{t_i}|_D$ for all $i$.  We will show that $\theta$
  also respects $\tau_t|_D$.

Since $\theta$ is continuous on $D$, $\theta$ is also continuous on $\tau_{t_i}(D)$ for all $i$.  
Fix $\bar x \in D$.  Since $t_i \to  t$, and $\bar x \in \intr(D)$ since $D$
is open, there exists an $\hat i$ such that for all $i \geq \hat i$, we have
$\bar x + t \in D + t_i$.  Hence, for a neighborhood $N_{\bar x}$ of~$\bar x$,
$\theta$ is continuous in $N_{\bar x} + t$. 
Now, for all $x \in N_{\bar x}$, 
$$
\theta(x + t) - \theta(x) = \lim_{t_i \to t} \theta(x + t_i) - \theta(x) = \lim_{i \to \infty} c^\theta_{\tau_{t_i}|_D}.
$$
Since the limit on the right-hand side is independent of $x$, we define
$c^\theta_{\tau_{t}|_{N_{\bar x}}}$ to be this limit.  Thus, $\theta$ respects
$\tau_t|_{N_{\bar x}}$.

Now the connected open set~$D$ is covered by the open neighborhoods $N_{\bar
  x}$ of each $\bar x \in D$.  It follows that 
$c^\theta_{\tau_{t}|_{N_{\bar x}}} = c^\theta_{\tau_{t}|_{N_{\bar x'}}}$ for
all $\bar x, \bar x' \in D$.  Therefore, $\theta$ respects $\tau_t|_D$.
Moreover, $\theta$ is continuous on $\tau_t|_D$.

The proof for a sequence of reflections is the same.
\end{proof}

\subsubsection{Limit-closed move semigroups}

\begin{lemma}
\label{lemma:symlimsemi-is-semi}
Let $\Gamma$ be a move semigroup. Then $\arblim{\Gamma}$ is also a move
semigroup.
\end{lemma}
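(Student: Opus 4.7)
The plan is to verify the two defining axioms of a move semigroup, \eqref{axiom:composition} and \eqref{axiom:inverse}, by exploiting the description of $\arblim{\Gamma}$ as the \emph{smallest} move ensemble containing $\Gamma$ and closed under \eqref{axiom:arblim}. For each axiom I would introduce a subset $T \subseteq \arblim{\Gamma}$ consisting of moves with the desired closure property, observe that $\Gamma \subseteq T$ because $\Gamma$ is itself a semigroup, and then check that $T$ is closed under \eqref{axiom:arblim}. By minimality, $T = \arblim{\Gamma}$.

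For \eqref{axiom:inverse}, take $T = \setcond{\omega \in \arblim{\Gamma}}{\omega^{-1} \in \arblim{\Gamma}}$. Given a sequence $\omega^i = \gamma^i|_{(l^i,u^i)} \in T$ with $\gamma^i \to \gamma$, $l^i \to l$, $u^i \to u$, the limit $\omega = \gamma|_{(l,u)}$ lies in $\arblim{\Gamma}$ directly from \eqref{axiom:arblim}, and the inverses $(\omega^i)^{-1} = (\gamma^i)^{-1}|_{\gamma^i((l^i,u^i))}$ lie in $\arblim{\Gamma}$ by hypothesis. A small case analysis on translations versus reflections shows both that $(\gamma^i)^{-1} \to \gamma^{-1}$ and that the image endpoints $\gamma^i(l^i)$, $\gamma^i(u^i)$ converge to $\gamma(l)$, $\gamma(u)$. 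Applying \eqref{axiom:arblim} to the sequence of inverses then gives $\omega^{-1} \in \arblim{\Gamma}$, closing $T$ under \eqref{axiom:arblim} and yielding $T = \arblim{\Gamma}$.

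For \eqref{axiom:composition} I would proceed in two rounds. In Round~1, fix $\omega' = \gamma'|_{D'} \in \Gamma$ and set $T(\omega') = \setcond{\omega \in \arblim{\Gamma}}{\omega \circ \omega' \in \arblim{\Gamma} \text{ and } \omega' \circ \omega \in \arblim{\Gamma}}$. Then $\Gamma \subseteq T(\omega')$ by \eqref{axiom:composition} for $\Gamma$. For closure under \eqref{axiom:arblim}, the key computation, following \eqref{eq:composition}, is
\[
\omega^i \circ \omega' = (\gamma^i \circ \gamma')|_{D' \cap (\gamma')^{-1}((l^i,u^i))}.
\]
The unrestricted composition satisfies $\gamma^i \circ \gamma' \to \gamma \circ \gamma'$ by a direct case check; the domain is the intersection of the fixed open interval $D'$ with an interval whose endpoints depend linearly on $l^i$ and $u^i$ and therefore converge; and since $\max$ and $\min$ with the endpoints of $D'$ are continuous, the intersection's endpoints also converge. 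So \eqref{axiom:arblim} yields $\omega \circ \omega' \in \arblim{\Gamma}$, and $\omega' \circ \omega$ is handled symmetrically. In Round~2, for each $\omega \in \arblim{\Gamma}$ set $T(\omega) = \setcond{\omega'' \in \arblim{\Gamma}}{\omega'' \circ \omega \in \arblim{\Gamma} \text{ and } \omega \circ \omega'' \in \arblim{\Gamma}}$. Round~1 gives $\Gamma \subseteq T(\omega)$, and the same domain-tracking argument, now with $\omega \in \arblim{\Gamma}$ playing the fixed role previously played by $\omega' \in \Gamma$, shows $T(\omega)$ is closed under \eqref{axiom:arblim}, hence equals $\arblim{\Gamma}$.

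The main subtlety is the bookkeeping for domains of compositions: one must track that $D' \cap (\gamma')^{-1}((l^i,u^i))$ has endpoints converging, and handle the degenerate case in which the limit intersection collapses to a single point, so that the limit move is an empty move. Both are absorbed by the flexibility of \eqref{axiom:arblim}, which already admits $l = u$ in the limit. Beyond that, the whole argument reduces to routine four-way case checks (translation versus reflection on each side) showing that composition and inversion of unrestricted moves are continuous in the topology implicit in the definition of $\gamma^i \to \gamma$.
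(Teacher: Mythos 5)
Your proof is correct, and it takes a genuinely different route from the paper's. The paper leans on the implicit claim that $\arblim{\Gamma}$ is obtained from $\Gamma$ in one round — that is, every element of $\arblim{\Gamma}$ is the limit (in the sense of \eqref{axiom:arblim}) of a sequence of moves drawn directly from $\Gamma$. Granted that, the paper composes the two approximating sequences termwise (using that $\Gamma$ satisfies \eqref{axiom:composition}) and observes that the termwise compositions converge to the desired composition, which is then in $\arblim{\Gamma}$ by one application of \eqref{axiom:arblim}. This is shorter, but the one-round description is a nontrivial fact (it holds here because the moves sit inside a finite-dimensional metric space, so a diagonal argument closes the gap), and the paper does not spell it out; it also explicitly sets aside the case where the composition is an empty move. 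Your minimality argument — defining the set $T$ of ``good'' moves, showing $\Gamma \subseteq T$ and that $T$ is stable under \eqref{axiom:arblim}, then invoking that $\arblim{\Gamma}$ is the \emph{smallest} such set — avoids having to characterize $\arblim{\Gamma}$ at all. The price is the two-round bootstrap needed for the binary operation \eqref{axiom:composition}, fixing first a $\Gamma$-factor and then an $\arblim{\Gamma}$-factor; and in both rounds you still need the same continuity bookkeeping as the paper, namely that the endpoints of the intersected domain $D' \cap (\gamma')^{-1}((l^i,u^i))$ converge when $l^i, u^i, \gamma^i$ do (via continuity of $\max$, $\min$, and of the unrestricted-move operations). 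Your remark that \eqref{axiom:arblim} already accommodates a limit interval that collapses ($l \geq u$, giving the empty move) is a small but genuine improvement in rigor over the paper's proof, which simply excludes that case without comment; the observation that the empty move is itself in $\Gamma \subseteq \arblim{\Gamma}$ whenever the base-case compositions in $\Gamma$ degenerate closes that corner cleanly.
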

\begin{proof}
It is clear that  $\arblim{\Gamma}$ satisfies \eqref{axiom:inverse}, as $\Gamma$  satisfies \eqref{axiom:inverse}. We now show that $\arblim{\Gamma}$ satisfies \eqref{axiom:composition}.  Let $\gamma_1|_{D_1}, \gamma_2|_{D_2} \in  \arblim{\Gamma}$ such that  $\gamma_1|_{D_1} \circ \gamma_2|_{D_2}$ is not an empty move. $\gamma_1|_{D_1}$ and $\gamma_2|_{D_2}$ are the \eqref{axiom:arblim} of sequences of moves $\{\gamma_1^i|_{D_1^i}\}_{i \in \N}$ and  $\{\gamma_2^i|_{D_2^i}\}_{i \in \N}$ in $\Gamma$. Since $\Gamma$ satisfies \eqref{axiom:composition}, $\gamma^i|_{D^i} := (\gamma_1^i|_{D_1^i})\circ (\gamma_2^i|_{D_2^i}) \in \Gamma$ for every $i$. The \eqref{axiom:arblim} of the sequence $\{\gamma^i|_{D^i}\}_{i \in \N}$ is $\gamma_1|_{D_1} \circ \gamma_2|_{D_2}$. Thus, we obtain that  $\gamma_1|_{D_1} \circ \gamma_2|_{D_2} \in  \arblim{\Gamma}$. This show that  $\arblim{\Gamma}$ is a semigroup.
\end{proof}

\begin{lemma}
\label{lemma:joinlimjoinsemi-is-semi}
Let $\Gamma^\vee$ be a join-closed semigroup. Then $\join{\limit{\Gamma^{\join/}}}
\useless{{}= \join{\symlim{\Gamma^{\join/}}}}
= \join{\arblim{\Gamma^{\join/}}}$ is a semigroup.
\end{lemma}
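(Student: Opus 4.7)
The plan is to chain together three results that are already in hand. The equality $\join{\limit{\Gamma^{\join/}}} = \join{\arblim{\Gamma^{\join/}}}$ is precisely \autoref{thm:lim-symlim-equiv-1round} applied to the join-closed ensemble $\Gamma^{\join/}$, so only the second assertion---that this common ensemble is a semigroup---requires argument.

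For that, I would first invoke \autoref{lemma:symlimsemi-is-semi}: since $\Gamma^{\join/}$ is in particular a move semigroup (a join-closed semigroup is a semigroup), its $\arblim$-closure $\arblim{\Gamma^{\join/}}$ is again a move semigroup, i.e.\ it satisfies \eqref{axiom:composition} and \eqref{axiom:inverse}. Then I would apply \autoref{lemma:joinsemi-is-semi} to the semigroup $\Gamma := \arblim{\Gamma^{\join/}}$: this lemma says that the joined ensemble $\join{\Gamma}$ of a move semigroup $\Gamma$ is again a move semigroup. Therefore $\join{\arblim{\Gamma^{\join/}}}$ is a move semigroup, and via the equality from \autoref{thm:lim-symlim-equiv-1round} so is $\join{\limit{\Gamma^{\join/}}}$.

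There is no real obstacle: the proof is a two-line composition of previously established lemmas, where the only mild care is to notice that the order of the closure operations matters. One should apply $\arblim$ first (on a semigroup, which it preserves by \autoref{lemma:symlimsemi-is-semi}) and $\join$ second (on a semigroup, which it preserves by \autoref{lemma:joinsemi-is-semi}); the opposite order, applying $\arblim$ after $\join$, would require a separate argument that $\arblim$ preserves \eqref{axiom:join}, which we have not established and which is in fact the content of \autoref{thm:lim-symlim-equiv-1round}. So the key conceptual point to highlight in the write-up is that \autoref{thm:lim-symlim-equiv-1round} reduces the claim to verifying the semigroup property only for $\join{\arblim{\Gamma^{\join/}}}$, which is then immediate.
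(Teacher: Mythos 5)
Your proof is correct and is essentially the paper's own argument: the paper simply states that the lemma ``follows from \autoref{lemma:symlimsemi-is-semi}, \autoref{lemma:joinsemi-is-semi} and \autoref{thm:lim-symlim-equiv-1round},'' which is exactly the composition you spell out. Your closing remark about why one must apply $\arblim{/}$ before $\join{/}$ (and not the other way around) is a useful clarification that the paper leaves implicit.
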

\begin{proof}
It follows from \autoref{lemma:symlimsemi-is-semi}, \autoref{lemma:joinsemi-is-semi} and \autoref{thm:lim-symlim-equiv-1round}.
\end{proof}

\begin{theorem}[Limits imply components]
\label{thm:dense-boxes-in-joinlim}
Let $\Gamma^\vee$ be a join-closed move semigroup. Assume that $\gamma|_D$ is the
limit move (in the sense of $\limit/$ or \useless{$\symlim/$ or }$\arblim/$)
of a sequence $\{\gamma^i|_{D^i}\}_{i \in \N}$ of moves in $\Gamma^\vee$ with
$\gamma^i \neq \gamma$ for every $i$. Let $I = \gamma(D)$.  
Then
the following holds.
\begin{enumerate}
\item If $\gamma$ is a translation, then $\tMovesOfGraph((D \cup I)\times(D \cup I )) \subseteq \join{\limit{\Gamma^\vee}}$. 
\item If $\gamma$ is a reflection, then 
\[\rMovesOfGraph((D \times I) \cup (I \times D)), \tMovesOfGraph((D \times D) \cup (I \times I))  \subseteq \join{\limit{\Gamma^\vee}}.\]
\end{enumerate}
\end{theorem}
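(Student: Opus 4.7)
The plan is to exhibit, in each case, a convex open subset $O\subseteq\R^2$ contained in the appropriate moves graph of $\join{\limit{\Gamma^\vee}}$, and then to invoke \autoref{cor:open-sets-of-moves}. Throughout I use that $\join{\limit{\Gamma^\vee}}$ is a join-closed semigroup (\autoref{lemma:joinlimjoinsemi-is-semi}) and hence closed under composition, inverse, restriction, and continuation, and also that $\gamma|_D$ itself lies in $\join{\limit{\Gamma^\vee}}$ (in the $\arblim/$ case by \autoref{thm:lim-symlim-equiv-1round}). By passing to a subsequence I assume the $\gamma^i$ are pairwise distinct, and by restricting and later appealing to \eqref{axiom:join} I work with a common fixed domain $D$.

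\textbf{Case~1.} The compositions $\tau_{t^i}|_D\circ(\tau_{t^j}|_D)^{-1}$ and $(\tau_{t^j}|_D)^{-1}\circ\tau_{t^i}|_D$, both in $\Gamma^\vee$, equal $\tau_{t^i-t^j}$ on domains contained respectively in $I$ and in~$D$. Since $t^i\to t$ with $t^i\neq t$, the set $\{t^i-t^j\}$ accumulates at~$0$, and the additive subgroup $S\subseteq\R$ it generates is dense in~$\R$. For $\sigma\in S$ with $|\sigma|$ less than the length of~$I$, a monotone (same-signed) decomposition $\sigma=\sum_kq_k$ into elements of~$\{t^i-t^j\}$ composes in~$\Gamma^\vee$ to the translation $\tau_\sigma|_{I\cap(I-\sigma)}$, attaining the maximal sub-interval of~$I$ on which $\tau_\sigma$ can act. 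A direct application of $\arblim{\Gamma^\vee}\subseteq\join{\limit{\Gamma^\vee}}$, with $\sigma_n\in S$, $\sigma_n\to\sigma^*\in\R$, and with the maximal domains $I\cap(I-\sigma_n)$ converging endpoint-wise to $I\cap(I-\sigma^*)$, upgrades this to $\tau_{\sigma^*}|_{I\cap(I-\sigma^*)}\in\join{\limit{\Gamma^\vee}}$ for every $\sigma^*\in\R$. Composing with the limit move $\tau_t|_D$ then yields $\tau_{t+\sigma^*}|_{D\cap(D-\sigma^*)}\in\join{\limit{\Gamma^\vee}}$, and taking the union of the graphs of these moves over $\sigma^*\in(-\delta,\delta)$ (for $\delta$ smaller than the length of $D$) produces the convex open set
\[
O\;=\;\bigl\{(x,y)\in D\times I\st y-x\in(t-\delta,t+\delta)\bigr\}\;\subseteq\;\graph_+(\join{\limit{\Gamma^\vee}})
\]
with $\dom O=D$ and $\im O=I$. \autoref{cor:open-sets-of-moves}(1) now delivers $\tMovesOfGraph((D\cup I)^2)\subseteq\join{\limit{\Gamma^\vee}}$.

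\textbf{Case~2.} The reflection case proceeds in parallel. The compositions $\rho_{r^i}|_D\circ(\rho_{r^j}|_D)^{-1}$ and $(\rho_{r^i}|_D)^{-1}\circ\rho_{r^j}|_D$ produce $\tau_{r^i-r^j}$ on domains inside~$I$ and~$D$, generating the same kind of dense additive subgroup of~$\R$. Combining with the limit reflection via the identity $\tau_\sigma\circ\rho_r=\rho_{r+\sigma}$ yields $\rho_{r+\sigma^*}|_{D\cap(D+\sigma^*)}\in\join{\limit{\Gamma^\vee}}$ for every $\sigma^*\in\R$. The resulting convex open set
\[
O\;=\;\bigl\{(x,y)\in D\times I\st x+y\in(r-\delta,r+\delta)\bigr\}
\]
lies in $\graph_-(\join{\limit{\Gamma^\vee}})$ with $\dom O=D$ and $\im O=I$. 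If $D\cap I=\emptyset$, \autoref{cor:open-sets-of-moves}(2a) directly produces the two asserted inclusions; if $D\cap I\neq\emptyset$, \autoref{cor:open-sets-of-moves}(2b) gives the larger $\MovesOfGraph((D\cup I)^2)$, which again contains them.

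\textbf{Main obstacle.} The delicate point is that each $\tau_{\sigma_n}|_{I\cap(I-\sigma_n)}$ must actually be realized inside~$\Gamma^\vee$ with the maximal domain $I\cap(I-\sigma_n)$, despite repeated semigroup composition. Every elementary composition by $\tau_q$ shrinks the effective domain by exactly~$|q|$, so any decomposition $\sigma=\sum_kq_k$ carries total shrinkage at least~$|\sigma|$ by the triangle inequality; a monotone same-signed decomposition of~$\sigma$ in~$\{t^i-t^j\}$ achieves this bound, giving the maximal domain. The existence of such decompositions requires positive and negative elements of $\{t^i-t^j\}$ arbitrarily close to~$0$, which follows at once from $t^i\to t$, $t^i\neq t$. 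The analogous remarks apply to case~2.
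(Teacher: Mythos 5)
Your overall strategy is the same as the paper's: extract small translations $\tau_\delta$ by composing two members of the sequence, iterate with fixed sign to sweep out the rectangle while keeping the maximal domain, and pass to limits. The paper fills $D\times D$ directly and then composes with $\gamma|_D$; you instead assemble a convex open set $O$ and invoke \autoref{cor:open-sets-of-moves}, which is a valid alternate packaging.

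However, there is a genuine gap in the density step. You assert that every $\sigma\in S$ with $|\sigma|$ small admits a \emph{monotone same-signed decomposition} $\sigma=\sum_k q_k$ with all $q_k\in\{t^i-t^j\}$ of the same sign, and your ``Main obstacle'' paragraph offers only the observation that $\{t^i-t^j\}$ has arbitrarily small positive and negative elements. That observation is a \emph{necessary} condition, not a sufficient one. In fact the claim is false in general: taking $t^i=t+\sum_{k\ge i}d_k$ with $d_1=\tfrac12$, $d_2=1/\pi$, and $d_n=1/\pi^{\,n-1}$ for $n\ge 3$ (so $t^i\downarrow t$), the positive elements of $\{t^i-t^j\}$ are precisely the ``interval sums'' $\sum_{k=i}^{j-1}d_k$. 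Any positive sum of these that realizes $\sigma=d_1-d_2\approx 0.18$ cannot use an interval containing $d_1$ (each such interval already exceeds $\sigma$) and therefore lies in the subgroup generated by $\{d_2,d_3,\dots\}$, which does not contain $d_1-d_2$. So $\sigma\in S$, $\sigma>0$, $\sigma$ small, but no same-signed decomposition exists. Consequently the sentence ``for $\sigma\in S$\dots a monotone (same-signed) decomposition\dots composes in $\Gamma^\vee$ to $\tau_\sigma|_{I\cap(I-\sigma)}$'' does not hold for all such $\sigma$, and the subsequent $\arblim$ step is not licensed as written.

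The fix is exactly what the paper does: do not attempt to realize every $\sigma\in S$. For each $\epsilon>0$ pick \emph{one} small $\delta=\gamma^j-\gamma^i\in(0,\epsilon)$ with $\gamma^i,\gamma^j$ on the same side of $\gamma$, and compose $\tau_\delta|_{D\cap(D-\delta)}$ with itself $|k|$ times (or its inverse) to obtain $\tau_{k\delta}|_{D\cap(D-k\delta)}$ with the maximal domain — same-signedness is automatic since all summands equal $\pm\delta$. The set $\{k\delta : k\in\Z,\ \delta\in\{t^i-t^j\}\cap(0,\epsilon),\ \epsilon>0\}$ is dense, and the $\arblim$ (or $\limit/$ plus $\join/$) closure then fills the rectangle. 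If you restrict your $\sigma_n$ to this dense set rather than to all of $S$, the rest of your argument goes through.
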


\begin{proof}
Let $D = (l, u)$. If a sequence $\{\gamma^i|_{D^i}\}_{i \in \N}$ of moves in
$\Gamma^\vee$ with $\gamma^i \neq \gamma$  converges to $\gamma|_D$ in the
sense of \useless{$\symlim/$ or }%
$\arblim/$, then $\gamma^i|_{D^i \cap (l+\epsilon,
  u-\epsilon)} \to \gamma|_{(l+\epsilon, u-\epsilon)}$ in the sense of
$\limit/$ for any small $\epsilon>0$. Thus, it suffices to prove the statement
for a limit move $\gamma|_D$ in the sense of $\limit/$; the statement for  \useless{$\symlim/$ and }%
$\arblim/$ follows from \autoref{lemma:joinlimjoinsemi-is-semi} and continuation.

We first show that $\tMovesOfGraph(D \times D)\subseteq \join{\limit{\Gamma^\vee}}$. Let $\epsilon >0$ be an arbitrary small number. Since $\gamma|_D$ is a limit move, there exist $\gamma^i|_D, \gamma^j|_D \in \Gamma^\vee$ in the convergent sequence such that  the constants $\gamma -\gamma^i$ and $\gamma -\gamma^j$ have the same sign, and $0 < \gamma^j - \gamma^i <\epsilon$.  
Let $\delta =  \gamma^j - \gamma^i$ and $D^1 = (l,u)\cap (l-\delta, u-\delta)$. We notice that $(\gamma^i|_D)^{-1} \circ \gamma^j|_D=\tau_\delta|_{D^1}$ when $\gamma$ is a translation, and $(\gamma^j|_D)^{-1} \circ \gamma^i|_D=\tau_\delta|_{D^1}$ when $\gamma$ is a reflection. Therefore, $\tau_\delta|_{D^1}  \in \Gamma^\vee$. 
Let $D^k := (l,u)\cap (l-k\delta, u-k\delta)$ for $k \in \Z$. For $k \geq 1$,  $\tau_{k\delta}|_{D^k}$ is the $k$ times composition of $\tau_\delta |_{D^1}$, hence it is in $\Gamma^\vee$. For $k=-1$, $\tau_{-\delta}|_{D^{-1}} =  (\tau_\delta |_{D^1})^{-1} \in \Gamma^\vee$.  For $k \leq -2$, $\tau_{k\delta}|_{D^k}$ is the $-k$ times composition of $\tau_{-\delta}|_{D^{-1}}$, and hence is in $\Gamma^\vee$. Finally, for $k=0$, we have $(\tau_\delta |_{D^1}) \circ (\tau_{-\delta}|_{D^{-1}}),  (\tau_{-\delta}|_{D^{-1}}) \circ (\tau_\delta |_{D^1})\in \Gamma^\vee$, so their join $\tau_0|_{D^0}$ is also in  $\Gamma^\vee$. Therefore, for every $k \in \Z$ such that $D^k$ is not empty, we have $\tau_{k\delta}|_{D^k} \in \Gamma^\vee$. By letting $\epsilon \to 0$, we obtain that $\tMovesOfGraph(D \times D)\subseteq \join{\limit{\Gamma^\vee}}$.

Since $\gamma|_D \in \limit{\Gamma^\vee} \subseteq \join{\limit{\Gamma^\vee}}$ and $\join{\limit{\Gamma^\vee}}$ is a semigroup by \autoref{lemma:joinlimjoinsemi-is-semi}, we have that $\tMovesOfGraph(D \times I)  \subseteq \join{\limit{\Gamma^\vee}}$ when $\gamma$ is a translation, and $\rMovesOfGraph(D \times I)  \subseteq \join{\limit{\Gamma^\vee}}$ when $\gamma$ is a reflection.

The other two subsets follow from
applying the above argument to $(\gamma|_D)^{-1}$ instead of $\gamma|_D$.
\end{proof}

\subsection{Continuous domain extension $\joinextend{\Omega}$}

Next we introduce a topological version of axiom~\eqref{axiom:join}.

\subsubsection{Extended move ensembles $\IsJoinExtend{\Omega}$}

\begin{definition}
  Let $\Omega$ be a move ensemble with $\dom(\Omega), \im(\Omega) \subseteq
  A$, where $A \subseteq \R$ is an open set.  Then the \emph{extended move
    ensemble} $\joinextend{\Omega}$ of~$\Omega$ is defined to be the smallest
  set $\IsJoinExtend{\Omega}$ containing~$\Omega$ that satisfies the
  following axiom
  \begin{equation}\tag{extend${}_A$}\label{axiom:joinextend}
    \begin{gathered}[t]
      \text{Let $\gamma \in \FullMoveGroup$ and
        $D$ empty or an open interval.}\\
      \text{If there is an ensemble
        $\{ \gamma|_{D^i} \}_{i \in \mathfrak I} \subseteq
        \IsJoinExtend{\Omega}$ such that} \\
      \text{
        $D \subseteq \cl(\textstyle\bigcup_{i\in \mathfrak I} D^i) \cap A \cap
        \gamma^{-1}(A)$, then $\gamma|_D \in \IsJoinExtend{\Omega}$.}
    \end{gathered}
  \end{equation}
\end{definition}

\tred{Notation: Do we want the subscript? Otherwise, remove it from ALL
  notations consistently. (Yuan prefers to remove it.  Matthias prefers to
  keep it.)}
\begin{remark}
  An ensemble satisfying \eqref{axiom:joinextend} is join-closed.
\end{remark}

The most simple application of \eqref{axiom:joinextend} allows us to join two
adjacent moves across a point of continuity; see \autoref{fig:extend}. 
\begin{lemma}\label{lem:extend-pair}
  Let $\IsJoinExtend{\Omega}$ be a move ensemble that satisfies~\eqref{axiom:joinextend}. 
  Then we have:
  \begin{equation}
    \tag{\text{\upshape{2-}}extend${}_A$} \label{axiom:extend}
    \begin{gathered}[t]
      \text{If $\gamma|_{(l,m)}, \gamma|_{(m,u)} \in \IsJoinExtend{\Omega}$, where $l < m < u$,
        and $m, \gamma(m) \in A$,}\\
      \text{then $\gamma|_{(l,u)} \in \IsJoinExtend{\Omega}.$}
    \end{gathered}
  \end{equation}
\end{lemma}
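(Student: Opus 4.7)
The plan is to apply axiom~\eqref{axiom:joinextend} directly to the two-element family $\{\gamma|_{(l,m)}, \gamma|_{(m,u)}\}$ with target domain $D = (l,u)$. The axiom demands two things: (i) that $D$ be contained in the closure of the union $\bigcup_i D^i$, and (ii) that $D \subseteq A \cap \gamma^{-1}(A)$.

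For (i), the union is $(l,m) \cup (m,u)$, whose closure is $[l,u] \supseteq (l,u)$, so this is immediate. The only subtle point is (ii); here I would first argue that every move in $\IsJoinExtend{\Omega}$ has both its domain and its image in~$A$. This follows by a straightforward induction on the construction of $\IsJoinExtend{\Omega}$: the base ensemble $\Omega$ satisfies $\dom(\Omega), \im(\Omega) \subseteq A$ by hypothesis, and the inductive step of \eqref{axiom:joinextend} explicitly requires $D \subseteq A \cap \gamma^{-1}(A)$, which forces $D \subseteq A$ and $\gamma(D) \subseteq A$.

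Granted this, from $\gamma|_{(l,m)}, \gamma|_{(m,u)} \in \IsJoinExtend{\Omega}$ we obtain
\[
(l,m) \cup (m,u) \subseteq A \quad\text{and}\quad \gamma\bigl((l,m)\cup(m,u)\bigr) \subseteq A.
\]
Combining this with the assumption $m \in A$ and $\gamma(m) \in A$ yields $(l,u) \subseteq A$ and $\gamma((l,u)) \subseteq A$, i.e.\ $(l,u) \subseteq A \cap \gamma^{-1}(A)$. Both hypotheses of \eqref{axiom:joinextend} are therefore met, and the axiom produces $\gamma|_{(l,u)} \in \IsJoinExtend{\Omega}$.

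I expect no real obstacle in this argument; the only thing requiring a moment of care is the auxiliary observation that every $\gamma|_D \in \IsJoinExtend{\Omega}$ satisfies $D, \gamma(D) \subseteq A$, which is exactly the feature that lets the missing midpoint $m$ be patched across the gap. Without the hypothesis $m, \gamma(m) \in A$, the containment $(l,u) \subseteq A \cap \gamma^{-1}(A)$ would fail at $m$, and the axiom could not be invoked --- so this hypothesis is essential and is used in precisely one place.
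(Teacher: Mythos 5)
Your argument is correct and is, in essence, the argument the paper leaves implicit: Lemma~\ref{lem:extend-pair} is stated without proof, and the natural proof is exactly the direct application of \eqref{axiom:joinextend} to the two-element family that you carry out. You are also right to flag that the auxiliary observation $\dom(\IsJoinExtend{\Omega}), \im(\IsJoinExtend{\Omega}) \subseteq A$ is indispensable and is not a formal consequence of the lemma's stated hypothesis alone (``a move ensemble that satisfies \eqref{axiom:joinextend}''); it has to come from treating $\IsJoinExtend{\Omega}$ as $\joinextend{\Omega}$ for a base ensemble $\Omega$ with $\dom(\Omega), \im(\Omega) \subseteq A$, as the paper records in the lemma immediately following. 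Without that supplemental fact, the containment $(l,u) \subseteq A \cap \gamma^{-1}(A)$ cannot be deduced from $m, \gamma(m) \in A$ alone, so the point you raise is a genuine hypothesis the paper is implicitly assuming rather than a frill. In short: the proof is sound, and your explicit treatment of the $\dom, \im \subseteq A$ step is a small but worthwhile improvement in rigor over the paper's terse presentation.
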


\begin{figure}
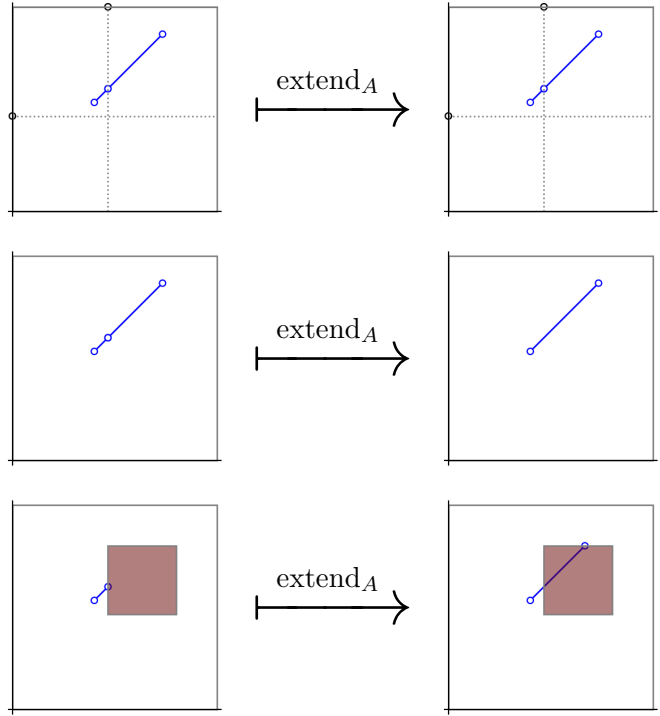

  \centering
  {\Huge $
  \begin{aligned}
    \MOVEDIAG{extend_moves_ex1a-completion-unreduced}
    &\xmapstoextend
    \MOVEDIAG{extend_moves_ex1a-completion-initial}
    \\
    \MOVEDIAG{extend_moves_ex1b-completion-unreduced}
    &\xmapstoextend
    \MOVEDIAG{extend_moves_ex1b-completion-initial}
    \\
    \MOVEDIAG{reduce_moves_by_components_ex2b-completion-unreduced}
    &\xmapstoextend
    \MOVEDIAG{reduce_moves_by_components_ex2b-completion-initial}
    \\
  \end{aligned}
  $}
\caption[Extended move ensembles $\joinextend{\Omega}$ of ensembles~$\Omega$]
{Extended move ensembles $\joinextend{\Omega}$ of ensembles~$\Omega$.
  Points not in the continuity set~$A$ are indicated by \emph{black circles at
    the top and left border}. 
}
\label{fig:extend}
\end{figure}

The following is clear from the definition.
\begin{lemma}
  Let $\Omega$ be a move ensemble with
  $\dom(\Omega) = \im(\Omega) \subseteq A$.  Let
  $\IsJoinExtend{\Omega} = \joinextend{\Omega}$.  Then
  $\dom(\IsJoinExtend{\Omega}) = \im(\IsJoinExtend{\Omega}) \subseteq A$.
\end{lemma}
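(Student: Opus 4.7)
The plan is to use the minimality of $\joinextend{\Omega}$ by comparing against the explicit ``upper-bound'' ensemble
\[
  S = \{\,\gamma|_D \in \FullMoveSemigroup \st D \subseteq A \text{ and } \gamma(D) \subseteq A\,\}.
\]
The hypothesis $\dom(\Omega), \im(\Omega) \subseteq A$ immediately gives $\Omega \subseteq S$. To verify that $S$ satisfies \eqref{axiom:joinextend}, suppose $\{\gamma|_{D^i}\}_{i\in\mathfrak I} \subseteq S$ and $D \subseteq \cl(\bigcup_i D^i) \cap A \cap \gamma^{-1}(A)$. The last two factors on the right force $D \subseteq A$ and $\gamma(D) \subseteq A$, so $\gamma|_D \in S$. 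By the minimality of $\joinextend{\Omega}$ among ensembles that contain $\Omega$ and satisfy \eqref{axiom:joinextend}, we conclude $\IsJoinExtend{\Omega} \subseteq S$, which yields the containments $\dom(\IsJoinExtend{\Omega}), \im(\IsJoinExtend{\Omega}) \subseteq A$.

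For the asserted equality $\dom(\IsJoinExtend{\Omega}) = \im(\IsJoinExtend{\Omega})$, the plan is to transfer the initial symmetry $\dom(\Omega) = \im(\Omega)$ through the construction by transfinite induction on the stages of closure under \eqref{axiom:joinextend}. The base case is the hypothesis. At a successor stage, any newly added move $\gamma|_D$ has $D \subseteq \cl(\bigcup_i D^i) \cap A$ where $\bigcup_i D^i$ is contained in the previous-stage common set $\dom = \im$, and symmetrically $\gamma(D) \subseteq \cl(\bigcup_i \gamma(D^i)) \cap A$ with $\bigcup_i \gamma(D^i)$ contained in the same set; because \eqref{axiom:joinextend} is applied in parallel to every admissible family, each new domain point is matched by a new image point and vice versa.

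The main obstacle will be making the propagation of $\dom = \im$ fully rigorous, since \eqref{axiom:joinextend} does not automatically produce inverse moves---the argument relies on the fact that admissible families must be ``present in both directions'' at each stage, and this is what the hypothesis $\dom(\Omega) = \im(\Omega)$ supplies. By contrast, the containment $\subseteq A$ is a one-step minimality argument. Given the paper's preceding remark that ``The following is clear from the definition,'' I expect both assertions to be folded into a single minimality argument against a carefully chosen $S$ that already encodes the symmetric role of $\dom$ and $\im$.
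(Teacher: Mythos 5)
Your containment argument via the ensemble $S$ is correct, and it is in substance what the paper's ``clear from the definition'' refers to: any $\gamma|_D$ produced by~\eqref{axiom:joinextend} has $D\subseteq A\cap\gamma^{-1}(A)$ on the nose, so both $D$ and $\gamma(D)$ lie in~$A$; this can also be read directly off the explicit description of $\joinextend{\Omega}$ in \autoref{rem:jextend-equiv-def}.

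Your unease about the equality $\dom(\IsJoinExtend{\Omega})=\im(\IsJoinExtend{\Omega})$, however, is well-founded, and the transfinite-induction sketch cannot be completed: that equality does \emph{not} follow from $\dom(\Omega)=\im(\Omega)\subseteq A$ alone. Take $A=(0,3)$ and $\Omega = \{\,\tau_1|_{(0,1)},\;\tau_1|_{(1,2)},\;\tau_{-2}|_{(2,3)}\,\}$. Then $\dom(\Omega)=\im(\Omega)=(0,3)\setminus\{1,2\}$, but the extension step merges the two $\tau_1$-moves across the continuity point $1$ (since $1$ and $\tau_1(1)=2$ both lie in $A$) to produce $\tau_1|_{(0,2)}\in\joinextend{\Omega}$, while nothing merges on the image side; one computes $\dom(\joinextend{\Omega})=(0,2)\cup(2,3)\ni 1$ and $\im(\joinextend{\Omega})=(0,1)\cup(1,3)\not\ni 1$. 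The hypothesis $\dom(\Omega)=\im(\Omega)$ is a global equality of sets and does not guarantee that a given point appears in $\dom$ and in $\im$ \emph{via the same unrestricted move} $\gamma$, which is what the $\gamma$-by-$\gamma$ extension step actually propagates. The ingredient that does make the equality go through is~\eqref{axiom:inverse}: if $\Omega$ is closed under $\gamma|_D\mapsto\gamma^{-1}|_{\gamma(D)}$, then $C_{\gamma^{-1}}=\gamma(C_\gamma)$, the set in \autoref{rem:jextend-equiv-def} for $\gamma^{-1}$ is precisely the $\gamma$-image of the one for $\gamma$, so $\joinextend{\Omega}$ is again inverse-closed and $\dom=\im$ is immediate. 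In the paper's actual uses of this fact the ambient ensemble is a move semigroup (hence inverse-closed), which papers over the gap; as literally stated the lemma's hypothesis is too weak, and your instinct to flag the obstruction rather than wave it away was the right one.
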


\begin{remark}
  \label{rem:extend-infinite}
  If $\Omega^{\join/}$ is a joined ensemble with finite
  $\maxdom(\Omega^{\join/})$, then repeated application of
  \eqref{axiom:extend}, followed by applying \eqref{axiom:join}, suffices to obtain
  $\joinextend{\Omega^{\join/}}$.

  However, this is not true for arbitrary joined ensembles
  $\Omega^{\join/}$.  As an example, let $A=\R$ and consider
  $\Omega^{\join/}$ consisting of the restrictions of a move~$\gamma$ to all
  subintervals of $(-1,0)$ and $(\frac1{n+1}, \frac1{n})$ for $n\in\N$.
  (These maximal intervals are disjoint.)  Domains of moves of
  $\joinextend{\Omega^{\join/}}$ are all subintervals of $(-1,1)$.  The
  domains of moves of 
  $\extend{\Omega^{\join/}}$ are $(-1,0)$ and its subintervals and the
  infinite chain $(\frac1m, 1)$ for $m\in \N$ and some of its subintervals;
  the supremum of the chain, $(0,1)$ is not an element.
  Then the domains of maximal moves of $\join{\extend{\Omega^{\join/}}}$ are
  $(-1,0)$ and $(0,1)$.  It takes \emph{another round} of $\extend\relax$ to
  arrive at $\joinextend{\Omega^{\join/}}$.  
\end{remark}

We have an explicit description of the moves in the extended move ensemble
$\joinextend{\Omega}$, similar to \autoref{lemma:joined-ensemble-explicitly}
for $\join{\Omega}$.
\begin{remark}\label{rem:jextend-equiv-def}
  For a move ensemble $\Omega$ with $\dom(\Omega), \im(\Omega) \subseteq
  A$, where $A \subseteq \R$ is an open set, we have
\begin{equation}
\label{eq:jextend-equiv-def}
\begin{aligned}
  \joinextend{\Omega} = \bigl\{ \, \gamma|_D \bigst \, &\gamma \in \FullMoveGroup,\; \text{$D$ empty or open interval}, \\
  &D\subseteq \cl (C_\gamma) \cap A \cap \gamma^{-1}(A)
                    \,\bigr\}, 
 \end{aligned}
\end{equation}
where $C_\gamma := \bigcup \{\, I \st \gamma|_I \in \Omega \,\}$, which is a subset of~$A \cap \gamma^{-1}(A)$.
\end{remark}

\subsubsection{Domain extension and semigroups}

\begin{lemma}
\label{lemma:jextendsemi-closed}
 Let $\Gamma$ be a move semigroup with $\dom(\Gamma), \im(\Gamma) \subseteq
 A$, where $A \subseteq \R$ is an open set. Then $\joinextend{\Gamma}$ is a
 move semigroup that satisfies \eqref{axiom:joinextend}. 
 \tred{Should it say 'the smallest'?}
\end{lemma}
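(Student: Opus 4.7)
Since $\joinextend{\Gamma}$ satisfies \eqref{axiom:joinextend} by definition, the task reduces to verifying the semigroup axioms \eqref{axiom:composition} and \eqref{axiom:inverse}. I plan to work with the explicit description from \autoref{rem:jextend-equiv-def}, namely
\[
  \joinextend{\Gamma} = \bigl\{\, \gamma|_D \bigst D\subseteq \cl(C_\gamma)\cap A\cap \gamma^{-1}(A)\,\bigr\}, \quad
  C_\gamma = \bigcup\bigl\{\, I \bigst \gamma|_I\in\Gamma\,\bigr\}.
\]

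\emph{Closure under inverse.} Let $\gamma|_D\in\joinextend{\Gamma}$. I need to show $\gamma^{-1}|_{\gamma(D)}\in\joinextend{\Gamma}$, i.e., that $\gamma(D)\subseteq\cl(C_{\gamma^{-1}})\cap A\cap \gamma(A)$. The containments in $A$ and $\gamma(A)$ are immediate from $D\subseteq\gamma^{-1}(A)$ and $D\subseteq A$. The key observation is that because $\Gamma$ satisfies \eqref{axiom:inverse}, we have $\gamma|_I\in\Gamma$ iff $\gamma^{-1}|_{\gamma(I)}\in\Gamma$, so $C_{\gamma^{-1}}=\gamma(C_\gamma)$. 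Since $\gamma\in\FullMoveGroup$ is a homeomorphism of~$\R$, it commutes with closure, yielding $\cl(C_{\gamma^{-1}})=\gamma(\cl(C_\gamma))\supseteq \gamma(D)$.

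\emph{Closure under composition.} Given $\gamma_1|_{D_1},\gamma_2|_{D_2}\in\joinextend{\Gamma}$, set $\gamma=\gamma_2\circ\gamma_1$ and $D = D_1\cap \gamma_1^{-1}(D_2)$. If $D=\emptyset$, the composition is an empty move, which lies in $\joinextend{\Gamma}$ trivially. Otherwise, $D\subseteq D_1\subseteq A$ and $\gamma(D)\subseteq \gamma_2(D_2)\subseteq A$ give the containments in $A$ and $\gamma^{-1}(A)$. The main obstacle is showing $D\subseteq\cl(C_\gamma)$: closures of intersections are not intersections of closures in general, so we must exploit the extra structure. I plan an $\epsilon$-argument: fix $x\in D$ and $\epsilon>0$. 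Since $D$ is open and $D\subseteq\cl(C_{\gamma_1})$, there exists $x'\in D\cap C_{\gamma_1}$ with $|x'-x|<\epsilon/2$; fix $I_1\ni x'$ with $\gamma_1|_{I_1}\in\Gamma$. The set $\gamma_1(I_1)\cap D_2$ is an open neighborhood of $\gamma_1(x')\in D_2\subseteq\cl(C_{\gamma_2})$, so it contains a point $y\in C_{\gamma_2}$ with $|y-\gamma_1(x')|<\epsilon/2$; fix $I_2\ni y$ with $\gamma_2|_{I_2}\in\Gamma$. Then $\gamma_1^{-1}(y)\in I_1\cap \gamma_1^{-1}(I_2)$, and by \eqref{axiom:composition} for~$\Gamma$ we have $\gamma|_{I_1\cap \gamma_1^{-1}(I_2)}\in\Gamma$, so $\gamma_1^{-1}(y)\in C_\gamma$. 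Because $\gamma_1^{-1}$ is an isometry of~$\R$, $|\gamma_1^{-1}(y)-x'|=|y-\gamma_1(x')|<\epsilon/2$, hence $|\gamma_1^{-1}(y)-x|<\epsilon$. This proves $x\in\cl(C_\gamma)$.

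The main obstacle is the two-step approximation for composition described in the last paragraph; the order of choosing $x'$ first (inside the open set~$D$) and then $y$ (inside the open neighborhood $\gamma_1(I_1)\cap D_2$ of $\gamma_1(x')$) is crucial so that the resulting $\gamma_1^{-1}(y)$ lies in both $I_1$ and $\gamma_1^{-1}(I_2)$. Everything else is a bookkeeping verification of the three containments that define membership in $\joinextend{\Gamma}$ via \autoref{rem:jextend-equiv-def}.
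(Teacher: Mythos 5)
Your proof is correct and follows essentially the same approach as the paper's: both work directly from the explicit characterization $\joinextend{\Gamma}=\{\gamma|_D : D\subseteq\cl(C_\gamma)\cap A\cap\gamma^{-1}(A)\}$ of Remark~\ref{rem:jextend-equiv-def}, dispose of inverse closure via $C_{\gamma^{-1}}=\gamma(C_\gamma)$, and reduce composition closure to showing $D\subseteq\cl(C_\gamma)$. Where you differ is in rigor: the paper reduces to the claim that $x\in\intr(\cl(C_1))$ with $\gamma_1(x)\in\intr(\cl(C_2))$ forces $x\in\intr(\cl(C))$ and then asserts ``this holds since $\Gamma$ satisfies \eqref{axiom:composition}'' without elaboration, whereas your two-step $\epsilon$-approximation (choose $x'\in D\cap C_{\gamma_1}$ near $x$, then choose $y\in C_{\gamma_2}$ in the open set $\gamma_1(I_1)\cap D_2$ near $\gamma_1(x')$, and use that $\gamma_1$ is an isometry) actually carries out that verification. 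Your version is the more complete write-up of the same idea.
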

\begin{proof}
Since $\Gamma$ satisfies \eqref{axiom:inverse}, it is clear that $\joinextend{\Gamma}$  satisfies \eqref{axiom:inverse}. We now show that  $\joinextend{\Gamma}$ satisfies \eqref{axiom:composition}, too.

Let $\gamma_1|_{D_1}, \gamma_2|_{D_2} \in \joinextend{\Gamma}$.  
Let 
\[C_1 = C_{\gamma_1} = \bigcup \{\, I \st \gamma_1|_I \in \Gamma \,\} \; \text{ and } \; C_2 = C_{\gamma_2} = \bigcup \{\, I \st \gamma_2|_I \in \Gamma \,\}.\]
By equation \eqref{eq:jextend-equiv-def}, the open set $D_1$ and $D_2$ satisfy that
\[D_1\subseteq \cl (C_1) \cap A \cap \gamma_1^{-1}(A) \; \text{ and } \; D_2\subseteq \cl (C_2) \cap A \cap \gamma_2^{-1}(A).\]
Let $\gamma = \gamma^2 \circ \gamma^1$, $C= C_\gamma= \bigcup \{\, I \st
\gamma|_I \in \Gamma \,\}$ and let $D = \gamma_1^{-1}(D_2) \cap D_1$ be a non-empty open set. We will show that 
\begin{equation}
\label{eq:proof-jextendsemi-closed}
D \subseteq \cl (C) \cap A \cap \gamma^{-1}(A).
\end{equation}
 It then follows again from \eqref{eq:jextend-equiv-def} that $\gamma_2|_{D_2} \circ \gamma_1|_{D_1} = \gamma|_{D} \in  \joinextend{\Gamma}$, and hence $ \joinextend{\Gamma}$ is a move semigroup.
It suffices to show \eqref{eq:proof-jextendsemi-closed} for
\[D_1 = \intr\bigl(\cl (C_1) \cap A \cap \gamma_1^{-1}(A) \bigr) \; \text{ and } \; D_2 = \intr\bigl(\cl (C_2) \cap A \cap \gamma_2^{-1}(A)\bigr).\]
We have on the left hand side of \eqref{eq:proof-jextendsemi-closed}
\begin{align*}
D &=  \gamma_1^{-1}(D_2) \cap D_1  \\
   &= \intr\bigl( \gamma_1^{-1}\bigl(\cl(C_2)\bigr) \cap \gamma_1^{-1}(A) \cap \gamma^{-1}(A) \bigr) \cap 
   \intr\bigl( \cl(C_1) \cap A \cap \gamma_1^{-1}(A)\bigr) \\
   &= \intr\bigl(\cl(C_1)\bigr) \cap  \gamma_1^{-1} \bigl( \intr\bigl(\cl(C_2)\bigr) \bigr) \cap A \cap \gamma_1^{-1}(A) \cap \gamma^{-1}(A),
\end{align*} 
and on the right hand side  of \eqref{eq:proof-jextendsemi-closed} $ \cl (C) \cap A \cap \gamma^{-1}(A)$.
Thus, it suffices to prove that if $x \in \intr\bigl(\cl(C_1)\bigr)$ such that $\gamma_1(x) \in \intr\bigl(\cl(C_2)\bigr)$, then $x \in \intr\bigl(\cl(C)\bigr)$. This holds since $\Gamma$ satisfies \eqref{axiom:composition}.
\end{proof}

\subsubsection{Respecting extensions}

\begin{lemma}[Extend moves by continuity]
  \label{lem:extended-moves}
  Let $\theta$ be a function that respects a move ensemble $\Omega$ 
  \tblue{new}with $\dom(\Omega), \im(\Omega) \subseteq A$. 
  Then it respects the extended move ensemble $\joinextend{\Omega}$. 
\end{lemma}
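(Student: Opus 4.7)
The plan is to verify that the ensemble $\Omegaresp{\theta}$ of moves respected by $\theta$ (see \autoref{def:Omegaresp}) contains $\Omega$ and satisfies the axiom~\eqref{axiom:joinextend}; minimality of $\joinextend{\Omega}$ will then force $\joinextend{\Omega} \subseteq \Omegaresp{\theta}$, which is exactly the claim. Throughout I rely on the implicit hypothesis that $\theta$ is continuous on the continuity set~$A$.

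To check that $\Omegaresp{\theta}$ satisfies \eqref{axiom:joinextend}, fix $\gamma \in \FullMoveGroup$, an open interval~$D$, and a family $\{\gamma|_{D^i}\}_{i \in \mathfrak I} \subseteq \Omegaresp{\theta}$ with
\[ D \;\subseteq\; \cl\bigl(\textstyle\bigcup_{i \in \mathfrak I} D^i\bigr) \cap A \cap \gamma^{-1}(A). \]
Because $D, \gamma(D) \subseteq A$ and $\theta$ is continuous on~$A$, the defect function
\[ g(x) \;=\; \theta(\gamma(x)) - \chi(\gamma)\,\theta(x) \]
is continuous on an open set containing~$D$. By hypothesis, $g$ is constant equal to $c^\theta_{\gamma|_{D^i}}$ on each~$D^i$, so $g$ is locally constant on the open dense subset $U := D \cap \bigcup_{i} D^i$ of~$D$. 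The task reduces to showing that $g$ takes a single value on the connected interval~$D$; that value will then serve as the constant $c^\theta_{\gamma|_D}$ certifying $\gamma|_D \in \Omegaresp{\theta}$.

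I will establish constancy by propagation. First, handle adjacency: whenever two cover intervals $D^i, D^j$ share a common endpoint $m \in D$ (so automatically $m,\gamma(m) \in A$), continuity of~$g$ at~$m$ equates the two constants $c^\theta_{\gamma|_{D^i}}$ and $c^\theta_{\gamma|_{D^j}}$; this is exactly the two-interval case already recorded in \autoref{lem:extend-pair}. A similar argument handles overlapping $D^i, D^j$. Propagating this equality along chains of adjacent (or overlapping) cover intervals, all the $c^\theta_{\gamma|_{D^i}}$ lying in the same connected component of~$U$ agree. Density of~$U$ in the connected set~$D$ then lets one pass to the limit at each gap point $x_0 \in D \setminus U$: pick a sequence $y_k \to x_0$ with $y_k \in U$ and use continuity of~$g$ to get $g(x_0) = \lim_k g(y_k) = \lim_k c^\theta_{\gamma|_{D^{i_k}}}$, so the common value~$c$ extends across the gap.

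The main technical obstacle is the infinite case, where $\{D^i\}$ has components accumulating at points of $D \setminus U$: by \autoref{rem:extend-infinite}, finitely many rounds of two-interval extension do not suffice here. At such an accumulation point~$x_0$, the density hypothesis $D \subseteq \cl(\bigcup D^i)$ supplies approaching sequences $y_k \in U$ from every direction, and continuity of~$g$ at~$x_0$ forces all the corresponding limits of constants $c^\theta_{\gamma|_{D^{i_k}}}$ to coincide with the single value $g(x_0)$. This converts the continuity of~$\theta$ on~$A$ into the compatibility of every local constant across the cover, yielding $g\equiv c$ on~$D$ and hence $\gamma|_D \in \Omegaresp{\theta}$, as required.
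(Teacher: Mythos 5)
Your strategy---verifying that $\Omegaresp{\theta}$ satisfies \eqref{axiom:joinextend} by inspecting the defect function $g(x)=\theta(\gamma(x))-\chi(\gamma)\theta(x)$, which is continuous on $A\cap\gamma^{-1}(A)$ and constant on each $D^i$---is the paper's strategy (the paper uses the explicit description of $\joinextend{\Omega}$ from \autoref{rem:jextend-equiv-def} rather than the closure-property formulation, but the content is the same). However, the final step of your ``propagation'' paragraph does not actually close, and the same gap is hidden in the paper's one-sentence conclusion that $g$ ``is constant on the connected components of $\cl(C_\gamma)\cap A\cap\gamma^{-1}(A)$.''

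Continuity of $g$ on $D$ together with local constancy on a dense open subset $U\subsetneq D$ does \emph{not} force constancy on $D$. The Cantor function $c$ is continuous on $(0,1)$ and constant on every component of the dense open set $(0,1)\setminus K$ (with $K$ the middle-thirds Cantor set), yet non-constant. This is realizable in the present setup: take $A=(-1,2)$, $\gamma=\tau_1$, $\Omega=\{\,\tau_1|_I \st I\text{ a component of }(0,1)\setminus K\,\}$, and $\theta=0$ on $(-1,1]$, $\theta(x)=c(x-1)$ on $[1,2)$. Then $\theta$ is bounded continuous on $A$, $\dom(\Omega),\im(\Omega)\subseteq A$, and $\theta$ respects every move of $\Omega$; but $g(x)=\theta(x+1)-\theta(x)=c(x)$ is non-constant on $(0,1)$, so $\theta$ does not respect $\tau_1|_{(0,1)}\in\joinextend{\Omega}$. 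Your remark that continuity ``forces all the corresponding limits of constants $\dots$ to coincide with $g(x_0)$'' is correct but insufficient: it gives $c^\theta_{\gamma|_{D^{i_k}}}\to g(x_0)$, not equality of the constants. Propagation does succeed when the gap set $D\setminus U$ is finite, or accumulates only in the well-ordered pattern of \autoref{rem:extend-infinite}---which is what happens in the finitely presented, piecewise linear situations where the lemma is actually applied. But the statement in full generality needs an extra hypothesis (for instance, finitely many maximal moves per unrestricted $\gamma$) to rule out perfect gap sets, and that is precisely what both your argument and, as written, the paper's proof are missing.
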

\begin{proof}
  We use the characterization of $\joinextend{\Omega}$ from
  \autoref{rem:jextend-equiv-def}.  Let $\gamma\in\FullMoveGroup$ and let
  $C_\gamma \subseteq A\cap\gamma^{-1}(A)$ be as in
  \autoref{rem:jextend-equiv-def}.  The function
  $x\mapsto \theta(\gamma(x)) - \chi(\gamma)\theta(x)$ is constant on the
  connected components of $C_\gamma$ and it is continuous on
  $A\cap \gamma^{-1}(A)$. Then it is constant on the connected components
  of~$\cl(C_\gamma) \cap A\cap\gamma^{-1}(A)$.
\end{proof}

Applied to the simple case of \autoref{lem:extend-pair}, we have the following.
\begin{corollary}
  Suppose $\theta$ respects the moves $\gamma|_{(l,m)}, \gamma|_{(m,u)}$  with
$l < m < u$ and suppose $\theta$ is continuous at $m$,
  $\gamma(m)$.  Then $\theta$ respects $\gamma|_{(l,u)}$. \tred{when defining ''respect'', is $\theta$ continuous on $D$?}
\end{corollary}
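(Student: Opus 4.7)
The plan is a short direct limiting argument at the joining point $m$. Since $\theta$ respects each of $\gamma|_{(l,m)}$ and $\gamma|_{(m,u)}$, there exist constants $c_1,c_2\in\R$ with
\[
  \theta(\gamma(x))=\chi(\gamma)\theta(x)+c_i \qquad \text{on the corresponding subinterval.}
\]
I would first take arbitrary sequences $x_n^-\to m$ from the left (inside $(l,m)$) and $x_n^+\to m$ from the right (inside $(m,u)$). The affine map $\gamma$ is continuous, so $\gamma(x_n^\pm)\to\gamma(m)$; using continuity of $\theta$ at $m$ (for the terms $\theta(x_n^\pm)$) and at $\gamma(m)$ (for the terms $\theta(\gamma(x_n^\pm))$), passing to the limit in each of the two equations yields
\[
  c_1=\theta(\gamma(m))-\chi(\gamma)\theta(m)=c_2.
\]

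Setting $c:=c_1=c_2$, the identity $\theta(\gamma(x))=\chi(\gamma)\theta(x)+c$ then holds for every $x\in(l,m)\cup(m,u)$, and at the one remaining point $x=m$ it is exactly what the limit computation above establishes. Hence $\theta$ respects $\gamma|_{(l,u)}$ with constant $c$, which is the claim.

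There is essentially no obstacle to this argument; the only care needed is the bookkeeping of which continuity hypothesis is used where ($\theta$ at $m$ versus at $\gamma(m)$). As an alternative route, the corollary can be viewed as a specialization of \autoref{lem:extended-moves}: with $\Omega=\{\gamma|_{(l,m)},\gamma|_{(m,u)}\}$ and a continuity set $A$ containing $m$ and $\gamma(m)$, \autoref{lem:extend-pair} yields $\gamma|_{(l,u)}\in\joinextend{\Omega}$ and \autoref{lem:extended-moves} then yields that $\theta$ respects it. The direct limit argument above is cleaner, however, because it does not require $\theta$ to be continuous on an entire open neighborhood, only at the two points $m$ and $\gamma(m)$ that the statement actually hypothesizes.
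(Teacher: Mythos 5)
Your direct two‑sided limit argument is correct and complete. The paper, by contrast, derives this corollary as a one‑line consequence of the preceding \autoref{lem:extended-moves} (``Extend moves by continuity'') specialized via \autoref{lem:extend-pair}: one takes $\Omega=\{\gamma|_{(l,m)},\gamma|_{(m,u)}\}$, a continuity set $A$ containing $m$, $\gamma(m)$, and the domains and images, and notes $\gamma|_{(l,u)}\in\joinextend{\Omega}$. The two routes genuinely differ in what continuity they consume: the paper's machinery (and the proof of \autoref{lem:extended-moves}) works with a function that is continuous on the whole open set $A\cap\gamma^{-1}(A)$, whereas your matching‑constants argument uses only pointwise continuity of $\theta$ at the two points $m$ and $\gamma(m)$, which is exactly what the corollary's hypotheses literally provide. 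In that sense your proof is both more elementary and more faithful to the statement as written; the paper's route buys uniformity with the surrounding development (the \eqref{axiom:joinextend} closure machinery, needed anyway for the general theory), at the cost of implicitly assuming continuity on a neighborhood rather than at the two joining points --- an imprecision the authors themselves flag in the accompanying marginal remark about whether ``respects'' presupposes continuity on $D$.
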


\begin{remark}
  The assumption regarding continuity at both $m$ and $\gamma(m)$ cannot be
  removed, which explains why we use $A\cap \gamma^{-1}(A)$ in
  \eqref{axiom:joinextend}.
  We illustrate this by the following example.  Let
  $A = (0,2)\cup (2,3)$. Let $\gamma = \tau_1$ and
  $\Omega = \{ \gamma|_{(0,1)}, \gamma|_{(1,2)} \}$, so
  $\dom(\Omega) = (0,1)\cup(1,2) \subseteq A$ and
  $\im(\Omega) = (1,2)\cup(2,3) \subseteq A$.  Then $1 \in A$, but
  $\gamma(1) = 2 \notin A$. Define $\theta = 0$ on $A$ and $\theta(2) = 1$, so
  it is continuous at $1$ but not at $\gamma(1)=2.$  Then $\theta$ respects
  $\Omega$, but it does not respect the move $\gamma|_{(0,2)}$.
\end{remark}

\subsection{Closed move semigroups, the moves closure $\ctscl(\Omega)$}
\label{s:ctscl-definition}

Now all axioms that we have introduced above come together.

\begin{definition}
  A \emph{closed move semigroup} is a limits-closed extension-closed
  kaleidoscopic joined move
  semigroup, i.e., a move ensemble that satisfies all the following axioms:
   \eqref{axiom:composition}, \eqref{axiom:inverse}, \eqref{axiom:join}, \eqref{axiom:restrict}, \eqref{axiom:joinextend},    \eqref{axiom:limits}, and \eqref{axiom:translation-reflection}.
\end{definition}

\begin{definition}
  Let $\Omega$ be a move ensemble with $\dom(\Omega), \im(\Omega) \subseteq A$.  
  We define the \emph{closed move semigroup}
  $\ctscl(\Omega)$ generated by~$\Omega$ (or just \emph{moves closure} of
  $\Omega$) to be the smallest (by set inclusion) closed move semigroup
  containing~$\Omega$. 
\end{definition}

\begin{lemma}\label{lemma:smallest-set-is-intersection}
  Let $\posetL$ be the family of closed move semigroups containing $\Omega$.  Then
  $\ctscl(\Omega) = \bigcap \posetL = \bigcap_{\Omega' \in \posetL} \Omega'$.
\end{lemma}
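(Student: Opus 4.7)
The plan is the standard one for closure operators defined as a smallest set satisfying a collection of closure axioms: show that each axiom is preserved under arbitrary intersections, conclude that $\bigcap \posetL$ is itself a closed move semigroup containing~$\Omega$, and deduce that it must equal $\ctscl(\Omega)$ by minimality.

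First I would note that $\posetL$ is nonempty, since the full inverse semigroup $\FullMoveSemigroup$ trivially satisfies every axiom in the definition and contains $\Omega$. Set $\Omega^{*} := \bigcap_{\Omega'\in\posetL}\Omega'$; clearly $\Omega\subseteq\Omega^{*}\subseteq\Omega'$ for every $\Omega'\in\posetL$.

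Next I would verify that $\Omega^{*}$ satisfies each of the seven axioms \eqref{axiom:composition}, \eqref{axiom:inverse}, \eqref{axiom:join}, \eqref{axiom:restrict}, \eqref{axiom:joinextend}, \eqref{axiom:limits}, and \eqref{axiom:translation-reflection}. Each of these axioms has the ``Horn'' shape ``whenever certain moves (and associated data) lie in the ensemble, a further specified move lies in it.'' For example, for \eqref{axiom:composition}: if $\gamma_1|_{D_1}, \gamma_2|_{D_2}\in\Omega^{*}$, then both lie in every $\Omega'\in\posetL$, so $\gamma_2|_{D_2}\circ\gamma_1|_{D_1}\in\Omega'$ for every $\Omega'\in\posetL$, hence in $\Omega^{*}$. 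The same argument works verbatim for the other axioms, using that all the hypotheses of each axiom (a sequence of moves, a family of restrictions, a convergent sequence, etc.) transfer from $\Omega^{*}$ into each member $\Omega'$ of the intersection, and the conclusion then also holds in $\Omega'$ by assumption. Intersections respect such implications.

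Therefore $\Omega^{*}$ is a closed move semigroup containing $\Omega$, so $\Omega^{*}\in\posetL$. In particular $\ctscl(\Omega)\in\posetL$ as well (it exists by the same intersection argument, or by definition as the smallest element), so $\Omega^{*}\subseteq\ctscl(\Omega)$ by definition of $\Omega^{*}$ as the intersection. Conversely, $\Omega^{*}\in\posetL$ combined with the minimality property of $\ctscl(\Omega)$ gives $\ctscl(\Omega)\subseteq\Omega^{*}$. Equality follows. There is no real obstacle here; the only point that deserves care is the verification, axiom by axiom, that each closure condition really is preserved under arbitrary intersection, and among those the only mildly subtle one is \eqref{axiom:joinextend}, because its hypothesis mentions a family $\{\gamma|_{D^i}\}_{i\in\mathfrak I}$ of moves in the ensemble, but since each such move lies in $\Omega^{*}$ iff it lies in every $\Omega'$, the hypothesis transfers uniformly and the conclusion is again inherited from each $\Omega'$.
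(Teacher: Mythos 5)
Your proof is correct and follows essentially the same approach as the paper: show that each closure axiom has the Horn form ``if a prescribed subset $\Omega_1$ lies in the ensemble then so does a prescribed $\Omega_2$,'' observe that such conditions are preserved under arbitrary intersection, and conclude by minimality. The only addition you make beyond the paper's argument is the (useful, though elided in the paper) check that $\posetL$ is nonempty because $\FullMoveSemigroup$ itself is a closed move semigroup.
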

\tred{We actually use this argument (smallest=intersection) earlier already.}

\begin{proof}

  First of all, $\bigcap \posetL$ contains $\Omega$. 
  We show that $\bigcap \posetL$  is a closed move semigroup. 
  Note that each axiom is a closure property of a set~$\Omega'$ of the
  form: For all $(\Omega_1, \Omega_2) \in \mathbb X$, if $\Omega_1 \subseteq \Omega'$,
  then $\Omega_2 \subseteq \Omega'$.  
  Now if $\Omega_1 \subseteq \bigcap \posetL$, then
  $\Omega_1 \subseteq \Omega'$ for all $\Omega'\in \posetL$, and thus
  $\Omega_2 \subseteq \Omega'$ for all $\Omega'\in \posetL$.  This implies
  $\Omega_2 \subseteq \bigcap\posetL$.

  On the other hand, $\bigcap \posetL$ is contained in each of the
  ensembles $\Omega' \in \posetL$ and is therefore the smallest closed move
  semigroup containing~$\Omega$. 
\end{proof}

\begin{remark}
  In contrast to \autoref{lemma:joinsemi-is-semi} (regarding
  \eqref{axiom:join} and \eqref{axiom:restrict} and the axioms of an inverse semigroup), 
  we do not know whether $\ctscl(\Omega)$ can be obtained by
  applying a finite sequence of closures with respect to the individual
  axioms. 
\end{remark}

\begin{theorem}[Main theorem on the moves closure]
  \label{thm:cts-moves-closure}
  Suppose $\theta$ is bounded and continuous on $A$.  If $\theta$ respects a
  move ensemble $\Omega$ with $\dom(\Omega), \im(\Omega) \subseteq A$, then
  $\theta$ respects the moves closure $\ctscl(\Omega)$.
\end{theorem}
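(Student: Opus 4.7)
The plan is to show that the collection of moves $\gamma|_D$ with $D,\gamma(D)\subseteq A$ that are respected by $\theta$ is itself a closed move semigroup, and then invoke \autoref{lemma:smallest-set-is-intersection}. Concretely, define
\[
  \Omegaresp{\theta}_A = \bigl\{\, \gamma|_D \in \Omegaresp{\theta} \bigst D,\gamma(D)\subseteq A\,\bigr\}.
\]
By hypothesis, $\Omega \subseteq \Omegaresp{\theta}_A$, so once we know $\Omegaresp{\theta}_A$ is a closed move semigroup, it follows that $\ctscl(\Omega)\subseteq \Omegaresp{\theta}_A$, which is exactly the desired conclusion.

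To verify the axioms for $\Omegaresp{\theta}_A$, I would go through them one by one, drawing on results already established in the paper. First, closure under \eqref{axiom:composition}, \eqref{axiom:inverse}, \eqref{axiom:restrict}, and \eqref{axiom:join} follows immediately from the fact that $\Omegaresp{\theta}$ is a join-closed move semigroup (the corollary to \autoref{thm:additivity-equation-for-move}) together with the trivial observation that the conditions $D,\gamma(D)\subseteq A$ are preserved under restriction, composition, inverse, and joining of moves with common $\gamma$. Second, \eqref{axiom:translation-reflection} is a direct consequence of \autoref{lem:IL-Moves}: since $\theta$ is bounded, respecting $\tMovesOfGraph(D\times I)$, $\rMovesOfGraph(D\times I)$, and $\MovesOfGraph(D\times I)$ are all equivalent to $\theta$ being affine with a common slope on~$D$ and~$I$.

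Third, for \eqref{axiom:limits}, let $\gamma^i \to \gamma$ with $\gamma^i|_D \in \Omegaresp{\theta}_A$. In particular $D\subseteq A$ and $\gamma(D)\subseteq A$ (since $\gamma^i(D)\subseteq A$ and, if needed, we replace $D$ by a slightly smaller open subinterval and use \eqref{axiom:join}; alternatively, use that $A$ is open and pass to the limit). Since $\theta$ is continuous on $A\supseteq D\cup\gamma(D)$, \autoref{lem:limits-of-moves} applies and gives $\gamma|_D \in \Omegaresp{\theta}_A$. Fourth, \eqref{axiom:joinextend} is exactly the content of \autoref{lem:extended-moves}: since $\dom(\Omegaresp{\theta}_A), \im(\Omegaresp{\theta}_A)\subseteq A$ and $\theta$ is continuous on $A$, the extension $\joinextend{\Omegaresp{\theta}_A}$ is again respected by $\theta$ and its moves still have domain and image in $A$.

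Having verified all the axioms, we conclude that $\Omegaresp{\theta}_A$ is a closed move semigroup containing $\Omega$, so by \autoref{lemma:smallest-set-is-intersection} it contains $\ctscl(\Omega)$, which completes the proof. The only mildly delicate point is the limits step: one has to be a little careful that when $\gamma^i\to\gamma$, the image $\gamma(D)$ still lies in the open set~$A$, and that continuity of $\theta$ at the limit points is guaranteed. But since $A$ is open and the images $\gamma^i(D)\subseteq A$ converge uniformly on compact subsets of $D$, shrinking $D$ slightly and then rejoining via \eqref{axiom:join} handles this cleanly. No step requires genuinely new arguments beyond what is already in the excerpt; the theorem is essentially a packaging result for the preceding lemmas.
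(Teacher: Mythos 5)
Your proof is correct and follows essentially the same route as the paper's: you define the ensemble of moves respected by $\theta$ and having domain and image in $A$ (the paper writes this as $\Omegaresp{\theta|_A}$, which coincides with your $\Omegaresp{\theta}_A$ by \autoref{def:Omegaresp}), verify each of the defining axioms using the same lemmas (the corollary to \autoref{thm:additivity-equation-for-move}, \autoref{lem:IL-Moves}, \autoref{lem:limits-of-moves}, \autoref{lem:extended-moves}), and conclude by minimality via \autoref{lemma:smallest-set-is-intersection}. The one point you flag as ``mildly delicate'' --- that $\gamma(D)\subseteq A$ when $\gamma^i\to\gamma$ with $\gamma^i(D)\subseteq A$ --- is handled cleanly by noting that for each $\bar x\in D$ one has $\gamma(\bar x)\in\gamma^i(D)\subseteq A$ for $i$ large enough (since $D$ is open), as is already implicit in the proof of \autoref{lem:limits-of-moves}; your shrink-and-rejoin workaround also works but isn't needed.
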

\begin{proof}[Proof of \autoref{thm:cts-moves-closure}]
  Let $\theta|_A$ denote the restriction of $\theta$ to $A$.  We consider the
  ensemble $\Gamma = \Omegaresp{\theta|_A}$ of moves that $\theta|_A$ respects,
  introduced in \autoref{s:Omegaresp}.  By definition,
  $\dom(\Gamma), \im(\Gamma) \subseteq A$.
  Since, by assumption, $\theta$ respects~$\Omega$, we have
  $\Gamma \supseteq \Omega$.
  By \autoref{thm:additivity-equation-for-move}, $\Gamma$ is a
  join-closed move semigroup. By \autoref{lem:IL-Moves}, because $\theta|_A$
  is bounded, $\Gamma$ satisfies the axiom \eqref{axiom:translation-reflection}.
  Because $\theta|_A$ is continuous, we can apply 
  \autoref{lem:limits-of-moves} to all convergent sequences
  $\{\gamma^i|_D\}_{i\in\N}
  \subseteq \Gamma$, and thus $\Gamma$ satisfies the axiom
  \eqref{axiom:limits}. Finally, by 
  \autoref{lem:extended-moves}, it satisfies the axiom
  \eqref{axiom:joinextend}.
  Hence, $\Omegaresp{\theta}$ is a closed move semigroup. 
  By \autoref{lemma:smallest-set-is-intersection}, we conclude that $\theta$ respects $\ctscl(\Omega)$.
\end{proof}

\section{The initial additive move ensemble $\Omegainit$ of a subadditive
  function
}
\label{sec:construct_initial_moves}

\tblue{This is the first place in the paper (after introduction) where
  subadditivity and the additivity set comes into play.}

We will now apply the theory of the previous sections to compute the effective
perturbation spaces of minimal valid functions.  Let $\pi\colon\R\to\R$ be a
minimal valid function.  Recall from the introduction that $\pi$ is
nonnegative, $\Z$-periodic, and satisfies $\pi(0)=0$, $\pi(f)=1$.  Its key
property is subadditivity, which we express using the \emph{subadditivity
  slack} function
\begin{equation}
  \Delta\pi(x,y) = \pi(x)+\pi(y) -\pi(x+y)
\end{equation}
as $\Delta\pi(x,y)\geq 0$. Moreover, the symmetry condition
$\Delta\pi(x,f-x)=0$ holds for all $x$.  This is the characterization that
appeared in the introduction as~\eqref{eq:minimal}.

Since $\pi$ is $\Z$-periodic, we will work with its fundamental domain
$[0,1]$.  Let $A = A(\pi)$ be the maximal open subset of $(0,1)$ on which
$\pi$ is continuous.

\subsection{The initial move ensemble $\Omegainit$}
\label{s:definition_initial_moves_general}

We begin by defining an ensemble of \emph{initial moves} $\Omegainit =
\Omegainit(\pi)$ that consists of \emph{additive moves} and \emph{limit additive
  moves}, together with their inverses and the empty moves.  We define these
moves $\gamma|_D$ on 
domains $D$ that are open
intervals such that the domain $D$ and the image $\gamma(D)$ are subsets
of~$A$
.
\begin{definition}
\label{def:initial-moves}
  \begin{enumerate}[\rm(i)]
  \item 
  An \emph{additive move} is any translation $\tau_t|_D$, where $t \in (-1, 1)$
  and $D\subseteq A$ is an open interval such that $\tau_t(D) \subseteq A$ and
  \begin{subequations}\label{eq:additive-move-equation}
    \begin{align*}
      \Delta\pi(x, t) &= \pi(x) + \pi(t) - \pi(x+ t) = 0 && \forall x \in D \\
      \intertext{or any reflection $\rho_r|_D$, where $r \in (0,2)$, and
      $D\subseteq A$ is an open interval such that $\rho_r(D) \subseteq A$
      such that} 
      \Delta\pi(x, r-x) &= \pi(x) + \pi(r-x) - \pi(r) = 0 && \forall x \in D.
    \end{align*}
  \end{subequations}
\item A \emph{limit-additive move} is any translation $\tau_{\bar t}|_D$,
  where $\bar t \in (-1,1)$ and  $D \subseteq A$ is an open interval such that
  $\tau_{\bar t}(D) \subseteq A$ and
  \begin{subequations}\label{eq:limit-additive-move-equation}
    \begin{align*}
      \lim_{t \to \bar t^+} \Delta\pi(x, t) = 0 
      &\quad\text{or}\quad \lim_{t \to \bar t^-} \Delta\pi(x, t) = 0  && \forall x \in D\\
      \intertext{or any reflection $\rho_{\bar r}|_D$, where $\bar r \in
      (0,2)$, and $\rho_{\bar r}(D) \subseteq A$  such
      that}
      \lim_{r \to \bar r^+} \Delta\pi(x, r-x) = 0 
      &\quad\text{or}\quad \lim_{r \to \bar r^-} \Delta\pi(x, r-x) = 0 && \forall x \in D.
    \end{align*}
  \end{subequations}
\item An \emph{initial move} in $\Omegainit(\pi)$ is a move that is either additive or
  limit-additive, or an inverse of such a move, or an empty move.
\end{enumerate}
\end{definition}
\begin{remark}
  The property of the moves $\gamma|_D \in \Omegainit$ that the function~$\pi$
  is continuous on the domain~$D$ and image~$\gamma(D)$ will be preserved
  throughout. 
\end{remark}

\begin{remark}\label{rem:initial-moves-join-closed}
  The initial move ensemble $\Omegainit$ is join-closed.
  Therefore, by
  \autoref{lemma:joined-is-presented-by-max}, it is equal to the restriction
  closure of its maximal elements.
  Moreover, by definition, $\Omegainit$ satisfies~\eqref{axiom:inverse}.
  However, $\Omegainit$ in general is not a semigroup.
\end{remark}

The function~$\pi$ is affinely $\Omegainit$-equivariant (\autoref{s:space-equivariant}), i.e.,
it respects all moves in $\Omegainit$.

\subsection{Moves from connected open sets of additivities}

We now specialize our results from \autoref{sec:semigroup:open} regarding
connected open ensembles to the initial moves. 
We have the following corollary.
Recall from \autoref{s:cauchy-pexider} the projections $p_1(x,y) = x$,
$p_2(x,y) = y$, and $p_3(x,y) = x+y$  as functions  from $\R^2$ to $\R$. 
\begin{corollary}
\label{lem:squares}
Suppose $E \subseteq \R^2$ be a connected open set on which $\pi$ is additive,
i.e., $\Delta\pi(x,y) = 0$ for $(x,y) \in E$.  Let
$C = p_1(E) \cup p_2(E) \cup p_3(E)$ and assume that $C \subseteq A$.  Then
$$
\MovesOfGraph(C\times C) \subseteq \joinsemi{\Omegainit},
$$
so $C$ is a connected covered component of $\joinsemi{\Omegainit}$.
\end{corollary}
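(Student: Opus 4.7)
The plan is to extract from the additivity set $E$ three connected open subsets of the move graphs (one of reflections, two of translations), apply \autoref{cor:open-sets-of-moves} to each, and assemble the resulting rectangles of moves into $\MovesOfGraph(C\times C)$ by semigroup composition and join.

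The key observation is that each point $(x_0,y_0)\in E$, thanks to openness of $E$, sits in an open box $U\times V\subseteq E$ on which $\Delta\pi=0$. This single box simultaneously produces additive moves of three different types in $\Omegainit$: the reflection $\rho_{x_0+y_0}$ restricted to the $x$-projection of the slope-$-1$ line through $(x_0,y_0)$ (which lies in $U\times V$ by openness), the translations $\tau_y|_U$ for each $y\in V$, and the translations $\tau_x|_V$ for each $x\in U$. Writing $D_i:=p_i(E)$, which are non-empty open intervals contained in $C\subseteq A$, and patching these local pieces together globally gives
\begin{displaymath}
E\subseteq \graph_-(\Omegainit),\qquad O_+:=\{(x,x+y)\mid(x,y)\in E\}\subseteq \graph_+(\Omegainit),
\end{displaymath}
and analogously $O_+':=\{(y,x+y)\mid(x,y)\in E\}\subseteq \graph_+(\Omegainit)$. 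Each of $E$, $O_+$, $O_+'$ is the image of the connected open set $E$ under a linear automorphism of $\R^2$, hence itself open and connected, with projection pairs $(D_1,D_2)$, $(D_1,D_3)$, and $(D_2,D_3)$ respectively.

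Next I would apply \autoref{cor:open-sets-of-moves}(1) to $O_+$ and $O_+'$ to deduce
\begin{displaymath}
\tMovesOfGraph\bigl((D_1\cup D_3)\times(D_1\cup D_3)\bigr),\ \tMovesOfGraph\bigl((D_2\cup D_3)\times(D_2\cup D_3)\bigr)\subseteq \joinsemi{\Omegainit}.
\end{displaymath}
Since $D_3\neq\emptyset$, composing a translation from $D_i$ into $D_3$ with one from $D_3$ into $D_j$ yields translations between any two of the $D_i$, so $\tMovesOfGraph(C\times C)\subseteq \joinsemi{\Omegainit}$. Applying \autoref{cor:open-sets-of-moves}(2) to $E$ yields $\rMovesOfGraph((D_1\times D_2)\cup(D_2\times D_1))\subseteq \joinsemi{\Omegainit}$, and pre- and post-composing such a reflection with translations now in $\tMovesOfGraph(C\times C)$ produces reflections between any pair $D_i$ and $D_j$; hence $\rMovesOfGraph(C\times C)\subseteq \joinsemi{\Omegainit}$.

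The main technical point will be the final bookkeeping to lift these pointwise statements to an arbitrary move $\gamma|_D$ whose domain $D$ may span several of the $D_i$. For this I would cover $D$ by open neighborhoods $U_x\subseteq D$ with $U_x\subseteq D_{i(x)}$ and $\gamma(U_x)\subseteq D_{j(x)}$; the preceding analysis places each $\gamma|_{U_x}$ in $\joinsemi{\Omegainit}$, and the join axiom~\eqref{axiom:join} delivers $\gamma|_D=\gamma|_{\bigcup_x U_x}\in \joinsemi{\Omegainit}$. Since this applies to both translations and reflections, we obtain $\MovesOfGraph(C\times C)\subseteq \joinsemi{\Omegainit}$, which by definition exhibits $C$ as a connected covered component of $\joinsemi{\Omegainit}$.
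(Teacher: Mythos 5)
Your proof is correct, and it follows a genuinely different route from the paper's. The paper's proof establishes that $E\subseteq\graph_\pm\bigl(\joinsemi{\Omegainit}\bigr)$ — that is, through each additive point there pass \emph{both} a reflection in $\Omegainit$ (the slope-$-1$ segment) and a translation in the generated semigroup (obtained by composing a horizontal and a vertical segment, $(\tau_x|_{D_y})^{-1}\circ\tau_y|_{D_x}$). It then applies part~(3) of \autoref{thm:open-sets-of-moves} to $E$ once, obtaining the box over $p_1(E)\cup p_2(E)$, and finally invokes \autoref{lem:indirectly_covered_from_move} with the translations $\tau_y|_{D_x}$ to absorb $p_3(E)$. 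You instead construct three connected open subsets of move graphs — $E$ itself inside $\graph_-(\Omegainit)$ and the two sheared copies $O_+, O_+'$ inside $\graph_+(\Omegainit)$ — whose projection pairs are $(D_1,D_2)$, $(D_1,D_3)$, $(D_2,D_3)$, apply \autoref{cor:open-sets-of-moves} parts~(1) and~(2) separately, and bridge the three intervals by semigroup composition and the join axiom. Your approach treats the three projections symmetrically from the outset and avoids \autoref{lem:indirectly_covered_from_move} altogether, at the cost of invoking the open-sets machinery three times and doing a bit more explicit composition bookkeeping; the paper's approach is shorter because $\graph_\pm$ plus part~(3) handles translations and reflections in one pass and delegates the $p_3$-extension to an already-proved lemma. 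One small point worth making explicit in your write-up: to feed $O_+$ and $O_+'$ into the corollary you need $\tMovesOfGraph(O_+)\subseteq\joinsemi{\Omegainit}$, not merely $O_+\subseteq\graph_+(\Omegainit)$; this follows because $\Omegainit$ is join-closed (\autoref{rem:initial-moves-join-closed}), so a translation whose graph lies in $O_+$ is a join of translations already in $\Omegainit$. With that remark the argument is complete.
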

See \autoref{fig:triangles-and-squares-nonsampled} for an illustration.
\begin{remark}
  In \cite{hong-koeppe-zhou:software-paper}, the intervals
  $p_1(E), p_2(E), p_3(E)$ are referred to as \emph{directly covered
    intervals}.
\end{remark}

\begin{proof}[Proof of \autoref{lem:squares}]
Denote $\Gamma^{\join/} =  \joinsemi{\Omegainit}$. By \autoref{lemma:joinsemi-is-semi}, $ \Gamma^{\join/}= \semi{\Gamma^{\join/}}$.
We first show that $\graph_\pm(\Gamma^{\join/})$ contains $E$.  Let $(x,y) \in
E$. Since $E$ is open, there exists an open interval $D \ni x$ such that the
diagonal segment $\{\,(x', r-x') \st x' \in D\,\} \subseteq E$, where
$r=x+y$. By \autoref{def:initial-moves}, we have $\rho_r|_D \in \Omegainit$,
with $\rho_r|_D(x)=y$.
Thus, $(x,y) \in  \graph_-(\Gamma^{\join/})$. There exist open intervals $D_y \ni y$ and $D_x \ni x$ such that the vertical segment $\{x\} \times D_y$ and the horizontal segment $D_x \times \{y\}$ are contained in $E$. Again by \autoref{def:initial-moves}, we have $\tau_y|_{D_x}, \tau_x|_{D_y} \in \Omegainit$. Notice that \[ x \xmapsto{\tau_y|_{D_x}} (x+y) \xmapsto{(\tau_x|_{D_y})^{-1}} y.\] Thus, $(x,y) \in \graph_+(\Gamma^{\join/})$. We showed that $\graph_\pm(\Gamma^{\join/})$ contains $E$.
By \autoref{thm:open-sets-of-moves}-(3), $\MovesOfGraph((p_1(E) \cup p_2(E)) \times (p_1(E) \cup p_2(E))) \subseteq \Gamma^{\join/}$. 

For any point $x+y \in p_3(E)$, where $x \in p_1(E)$ and $y \in p_2(E)$, the
above translation move $\tau_y|_{D_x}$ satisfies that $\tau_y|_{D_x} \in
\Omegainit$ and $\tau_y|_{D_x}(x)= x+y$. By applying
\autoref{lem:indirectly_covered_from_move} to $\C = \{p_1(E)\cup p_2(E)\}$ and
all such moves  $\tau_y|_{D_x}$, we obtain that $\MovesOfGraph((p_1(E) \cup p_2(E)
\cup p_3(E))\times (p_1(E) \cup p_2(E) \cup p_3(E))) \subseteq \Gamma^{\join/}$.
\end{proof}

\section{Piecewise linear functions, polyhedral complexes, effective perturbations}
\label{s:piecewise}

We now specialize our theory to the important case of piecewise linear
functions.  We begin with the basic definitions and review some tools that
were developed in the previous papers \ifserialtitle of the present
series\else on this topic\fi.

\subsection{Continuous and discontinuous piecewise linear
  functions~$\pi$, complex $\P_{\texorpdfstring{B}{}}$}
\label{s:pwl-functions}

We begin by giving a definition of $\Z$-periodic piecewise linear
functions~$\pi\colon \R\to\R$ that are allowed to be discontinuous, following
\cite{koeppe-zhou:crazy-perturbation}.  \cite{hong-koeppe-zhou:software-paper}
discusses how these functions are represented in the software
\cite{cutgeneratingfunctionology:lastest-release}.

Let $0 =x_0 < x_1< \dots < x_{n-1} < x_n = 1$.
Denote by
\begin{equation}\label{eq:def-B}
    \B = \{\, x_0 + t,\, x_1 + t,\, \dots,\, x_{n-1}+t\st
    t\in\Z\,\}
  \end{equation}
the set of all breakpoints.
The 0-dimensional faces are defined to be the 
singletons, $\{ x \}$, $x\in B$,
and the 1-dimensional faces are the closed intervals,
$ [x_i+t, x_{i+1}+t]$, $i=0, \dots, {n-1}$, $t\in\Z$. 
The empty face, the 0-dimensional and the 1-dimensional faces form $\P =
\P_{\B}$, a locally finite
polyhedral
complex,
periodic modulo~$\Z$.
\begin{definition}
  We call a function $\pi\colon \R \to \R$
  \emph{piecewise linear} over $\P_B$ if for each face
  $I \in \P_B$, there is an affine linear function
  $\pi_I \colon \R \to \R$, $\pi_I(x) = c_I x + d_I$ such that
  $\pi(x) = \pi_I(x)$ for all $x \in\relint(I)$.
\end{definition}
 Under this definition, piecewise linear functions can be discontinuous
 .
Let $I = [a, b] \in \P_B$ be a 1-dimensional face. The function $\pi$ can be determined on
$\intr(I) = (a, b)$ by linear
  interpolation of the limits $\pi(a^+)=\lim_{x\to a, x>a} \pi(x)
  = \pi_I(a)$ and $\pi(b^-)=\lim_{x\to b, x<b} \pi(x) = \pi_I(b)$. 

\subsection{Two-dimensional polyhedral complex $\Delta\P$ and additive faces}
\label{s:polyhedral-complex}

\begin{figure}[tp]
\centering
\begin{minipage}{.47\textwidth}
\centering\hspace*{-1em}
\includegraphics[width=\linewidth]{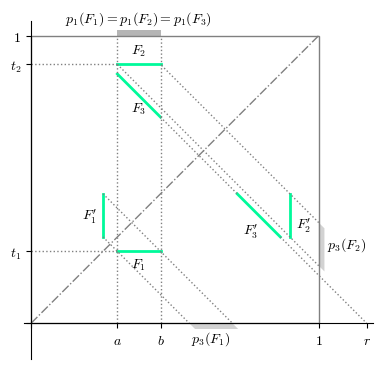}
\end{minipage}%
\begin{minipage}{.53\textwidth}
\centering
\vspace{1em}
\includegraphics[width=\linewidth]{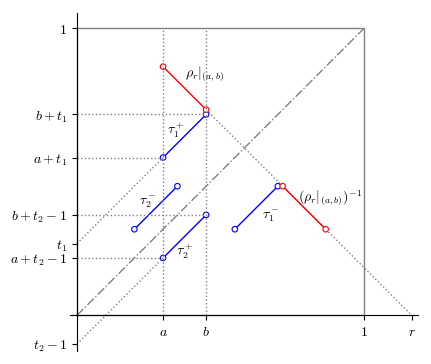}
\end{minipage}
\caption{Additive edges in $\Delta\P_B$ and the corresponding initial moves
  (additive moves and their inverses) in $\Omegainit$
}
\label{fig:moves-diagram}
\end{figure}

\newcommand{\FACETOMOVEROWBIG}[1]{%
  \mbox{$
    \vcenter{\hbox{\includegraphics[height=.45\linewidth]{graphics-for-algo-paper/#1-2d_diagram}}}
  \qquad
  \vcenter{\hbox{\includegraphics[height=.45\linewidth]{graphics-for-algo-paper/#1-completion-final}}}
  $}
}

\newcommand{\FACETOMOVEROW}[2][0]{
  \mbox{$
    \vcenter{\hbox{\includegraphics[height=.25\linewidth]{graphics-for-algo-paper/#2-2d_diagram}}}
  \rightsquigarrow
  \ifx#1\relax
  \else
  \vcenter{\hbox{\includegraphics[height=.2504\linewidth]{graphics-for-algo-paper/#2-completion-#1}}}
  \xmapstojoinsemi
  \fi
  \vcenter{\hbox{\includegraphics[height=.2506\linewidth]{graphics-for-algo-paper/#2-completion-final}}}
  $}
}
\begin{figure}
  \Huge \centering

  \mbox{$
    \vcenter{\hbox{\tikz[spy using outlines={circle}]{
          \node[anchor=south west,inner sep=0,outer sep=0] (g) at (0,0) {\includegraphics[height=.45\linewidth]{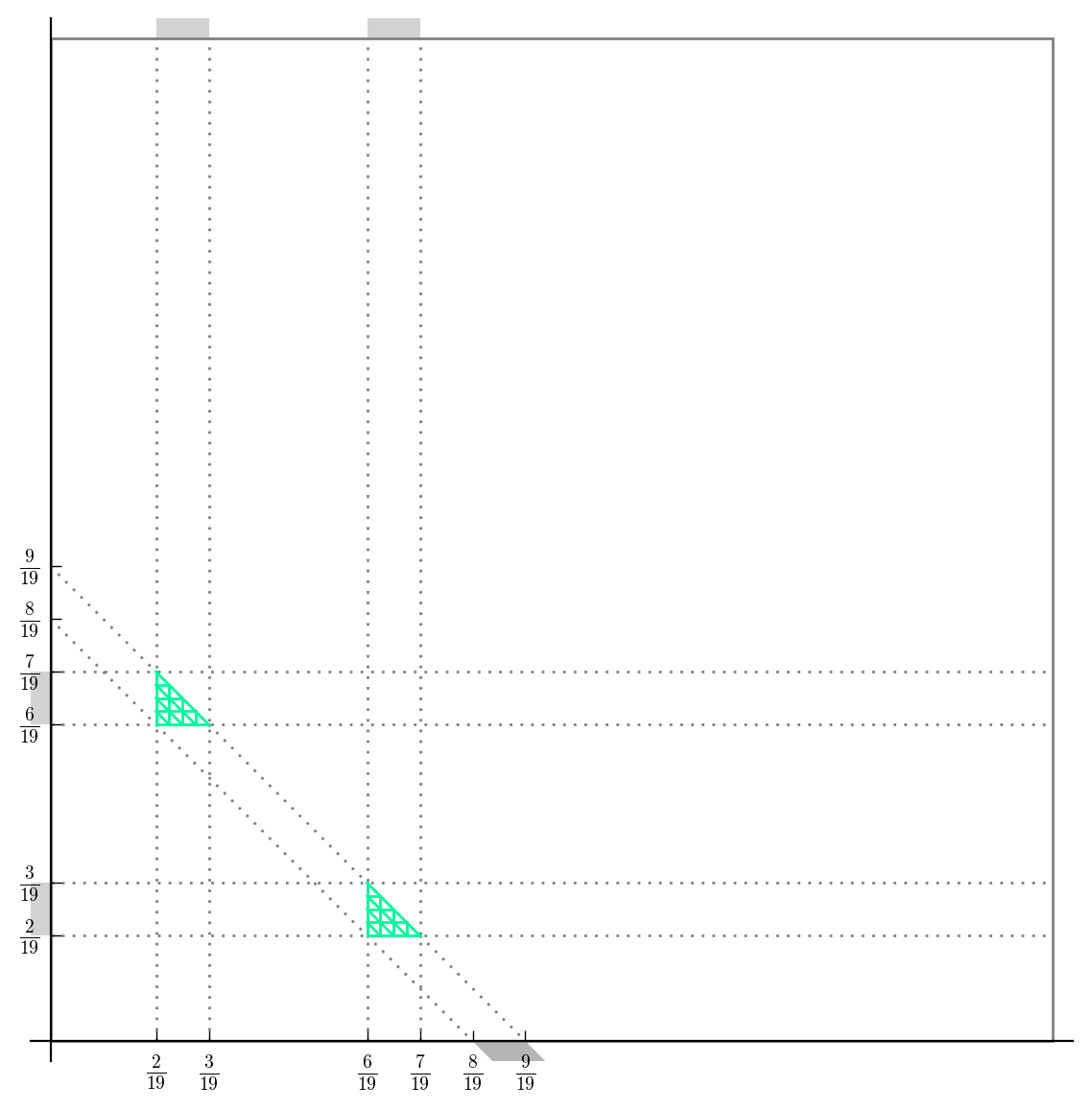}};
          \begin{scope}[x={(g.south east)},y={(g.north west)}]
            \coordinate (spytarget) at (0.18, 0.38);
            \coordinate (spyglass) at (0.7, 0.7);
            \spy [blue, draw, size=.22\linewidth, magnification = 6,
            connect spies] on (spytarget) in node at (spyglass);
          \end{scope}
        }%
      }%
    }%
    \qquad
    \vcenter{\hbox{\tikz[spy using outlines={circle}]{
          \node[anchor=south west,inner sep=0,outer sep=0] (g) at (0,0)
          {\includegraphics[height=.45\linewidth]{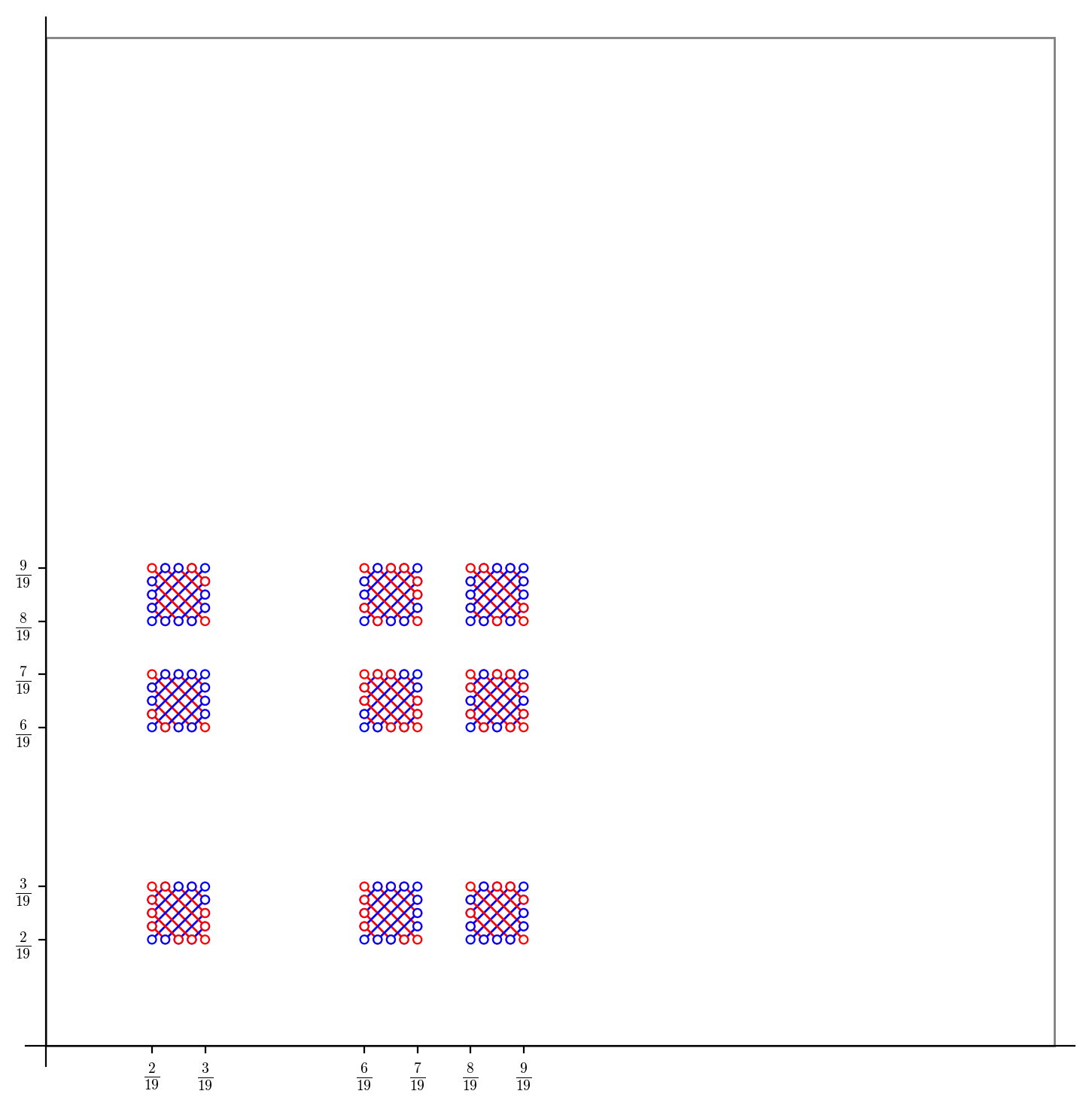}};
          \begin{scope}[x={(g.south east)},y={(g.north west)}]
            \coordinate (spytarget) at (0.455, 0.465);
            \coordinate (spyglass) at (0.7, 0.7);
            \spy [blue, draw, size=.22\linewidth, magnification = 6,
            connect spies] on (spytarget) in node at (spyglass);
          \end{scope}
        }
      }
    }
  $}
\caption[Additivities and initial moves.
  \emph{Left,} additivities sampled from two-dimensional additive faces
  of~$\Delta\P$.
  \emph{Right,} the move semigroup $\joinsemi{\Omega^0}$ generated by the initial
  moves]
  {Additivities and initial moves.
  \emph{Left,} additivities sampled from two-dimensional additive faces
  of~$\Delta\P$.
  \emph{Right,} the move semigroup $\joinsemi{\Omega^0}$ generated by the initial
  moves. The graphs
  $\graph_+(\Omega)$ (\emph{blue}) and $\graph_-(\Omega)$ (\emph{red}) are
  plotted on top of each other.
  For illustration purposes, only a finite set of additive moves is
  considered.
}
\label{fig:triangles-and-squares}
\end{figure}


\begin{figure}
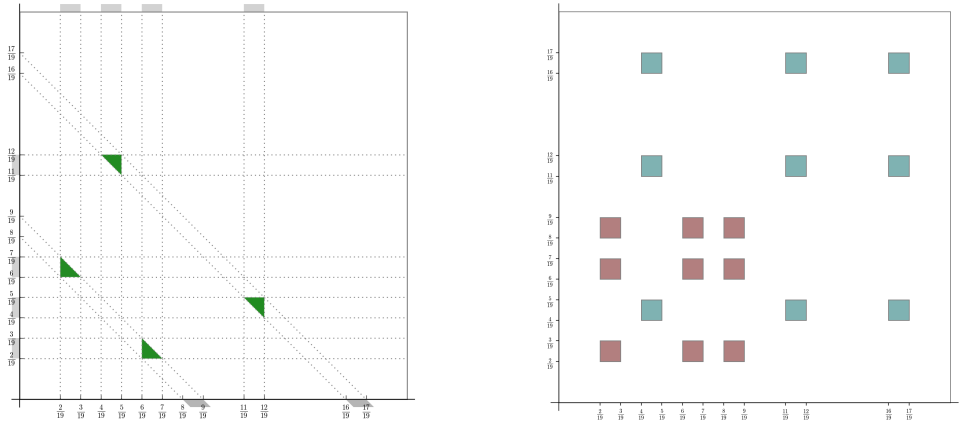

  \Huge\centering
  \FACETOMOVEROWBIG{two_triangles} \\
\caption{
  Additivities in $E(\pi)$ and the corresponding
  connected covered components of moves}
\label{fig:triangles-and-squares-nonsampled}
\end{figure}

For a piecewise linear function (see \autoref{s:pwl-functions} for our
notation), we now explain the structure of the initial moves.  We
will use the notion of the polyhedral complex $\Delta\P$ and its
\emph{additive faces} from \cite[section 4]{hong-koeppe-zhou:software-paper}.
$\Delta\P$ is a two-dimensional polyhedral complex, which expresses the
domains of linearity of the subadditivity slack $\Delta\pi(x,y)$ introduced in
\autoref{s:intro:subadditive}.
\begin{definition}
  The polyhedral complex $\Delta\P$ of $\R\times\R$ consists of the faces
  $$F(I,J,K) = \setcond{(x,y) \in \R \times \R}{x \in I,\, y \in J,\, x + y \in
    K},$$
  where $I, J, K \in \P$, so each of $I, J, K$ is either empty, a breakpoint
  of $\pi$, or a closed interval delimited by two consecutive breakpoints.
\end{definition}

In the continuous case, since the function $\pi$ is piecewise linear over
$\P$, we have that $\Delta \pi$ is affine linear over each (closed) face
$F \in \Delta P$.  We say that a face $F \in \Delta\P$ is \emph{additive} if
$\Delta\pi =0$ over all $F$.  If $\pi$ is subadditive, then the \emph{set of additivities}
\begin{equation}
  E(\pi) = \{\, (x,y) \st \Delta\pi(x,y) = 0 \,\} \label{eq:additivity-set}
\end{equation}
 is the union of all
additive faces $F \in \Delta\P$; see \cite[section~3.4]{bhk-IPCOext}.

For a discontinuous function $\pi$, the subadditivity slack $\Delta\pi$ is
affine linear only over the relative interior of each face~$F$.
For additivity, beside the subadditivity slack $\Delta\pi(x,y)$ at a point
$(x, y)$, we also consider its limits. 
\begin{definition}
  The limit value of $\Delta\pi$ at the point $(x,y)$ approaching from the relative interior of a face
  $F \in \Delta\P$ containing $(x,y)$ 
  is denoted by
  \[\Delta\pi_{F}(x,y) = \lim_{\substack{(u,v) \to (x,y)\\ (u,v) \in
        \relint(F)}} \Delta\pi(u,v).\]
\end{definition}
\begin{definition}\label{def:set-of-additivities-and-limit-additivities-of-a-face-F}
  Let $F \in \Delta\P$.  Define the \emph{set of additivities and limit-additivities
    approaching from the relative interior of~$F$} as
  \begin{equation}
    E_{F}(\pi) = \bigl\{\,(x,y)\in F \bigst \Delta\pi_{F}(x,y) \text{ exists, and }
    \Delta\pi_{F}(x,y) = 0\,\bigr\}.\label{eq:additivity-family}
  \end{equation}
\end{definition}

\begin{remark}\label{rem:limit-additivites-stratified} 
  The points $(x,y) \in E_F(\pi)$ that lie in $\relint(F)$ capture all additivities
  of~$\pi$, whereas those that lie on the relative boundary capture
  all limit-additivities. 
  The set $E(\pi)$ that we introduced in the continuous case can be
  partitioned as
  $$E(\pi) = \bigcup_{F\in\Delta\P} (E_F(\pi) \cap \relint(F)).$$
\end{remark}

\begin{lemma}\label{lemma:additive-face-discontinuous}
  Let $\pi$ be a subadditive function that is piecewise linear over~$\P$.  
  Let $F \in \Delta\P$.
  Let $(x_0, y_0) \in E_F(\pi) \subseteq F$ and let $E$ be the
  unique face of~$F$ containing $(x_0,y_0)$ in its relative interior. 
  Then $E \subseteq E_{F}(\pi)$. 
\end{lemma}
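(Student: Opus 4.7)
The plan is to exploit two facts: (a) $\Delta\pi$ is affine on $\relint(F)$, and (b) subadditivity gives $\Delta\pi \geq 0$. Together these force the limit function $\Delta\pi_F$ to be an affine nonnegative function on $F$, and an affine nonnegative function on a polyhedron that vanishes at a relative interior point of a face must vanish on that whole face.

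First I would verify (a). Write $F = F(I,J,K)$, so on $\relint(F)$ we have $x \in \relint(I)$, $y \in \relint(J)$, $x+y \in \relint(K)$. By the definition of $\pi$ piecewise linear over~$\P$, we get $\pi(x) = \pi_I(x)$, $\pi(y) = \pi_J(y)$, $\pi(x+y) = \pi_K(x+y)$ on $\relint(F)$, with $\pi_I, \pi_J, \pi_K$ affine. Hence $\Delta\pi(x,y) = \pi_I(x) + \pi_J(y) - \pi_K(x+y)$ is the restriction of an affine function $\widetilde{\Delta\pi}_F\colon \R^2 \to \R$ to $\relint(F)$. Since $F = \cl(\relint(F))$ (as $F$ is a closed polyhedron of positive dimension; the zero-dimensional case is trivial), the limit $\Delta\pi_F(x,y)$ exists for every $(x,y) \in F$ and equals $\widetilde{\Delta\pi}_F(x,y)$. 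So $\Delta\pi_F$ is affine on all of $F$.

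Next, subadditivity gives $\Delta\pi \geq 0$ on $\relint(F)$, and passing to the limit gives $\Delta\pi_F \geq 0$ on all of $F$, in particular on $E$. The restriction $\Delta\pi_F|_E$ is affine (restriction of an affine function to a face). By hypothesis $(x_0,y_0) \in \relint(E)$ satisfies $\Delta\pi_F(x_0,y_0) = 0$. An affine function that is nonnegative on a polyhedron~$E$ and attains $0$ at a relative interior point must vanish identically on~$E$: indeed, for any $(x,y) \in E$, one can find $(x',y') \in E$ such that $(x_0,y_0) = \lambda(x,y) + (1-\lambda)(x',y')$ for some $\lambda \in (0,1)$, and then $0 = \Delta\pi_F(x_0,y_0) = \lambda \Delta\pi_F(x,y) + (1-\lambda)\Delta\pi_F(x',y')$ with both summands $\geq 0$ forces $\Delta\pi_F(x,y) = 0$. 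Thus $E \subseteq E_F(\pi)$.

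There is essentially no obstacle; the only small point to handle carefully is the edge case where $F$ or $E$ is zero-dimensional, but then the statement is immediate since $E = \{(x_0,y_0)\}$ and the hypothesis is the conclusion. Everything else reduces to elementary polyhedral geometry combined with the piecewise linear structure over $\P$.
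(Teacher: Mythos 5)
The paper states this lemma without proof, so there is nothing to compare against. Your argument is correct and is the natural one: $\Delta\pi$ is affine on $\relint(F)$, so the limit $\Delta\pi_F$ exists on all of $F = \cl(\relint(F))$ and is affine; subadditivity makes it nonnegative; and an affine nonnegative function on a polytope that vanishes at a relative interior point of a face vanishes on that entire face. One step deserves a touch more care. The assertion "on $\relint(F)$ we have $x \in \relint(I)$, $y \in \relint(J)$, $x+y \in \relint(K)$" is \emph{not} literally true for an arbitrary representation $F = F(I,J,K)$: for example, with breakpoints $\{0,\tfrac12,1\}$, the set $F = F\bigl([0,\tfrac12],[0,\tfrac12],\{1\}\bigr) = \{(\tfrac12,\tfrac12)\}$ has $p_1(\relint F) = \{\tfrac12\} \not\subseteq \relint\bigl([0,\tfrac12]\bigr)$. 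You do carve out the zero-dimensional case as trivial, which covers this example; and one can verify that whenever $\dim F \geq 1$, the projection $p_i(\relint F)$ lands in $\relint$ of the corresponding defining interval (if $p_1(F)$ were an endpoint of a non-degenerate $I$, the other two constraints would force $F$ to degenerate to a point). But you present this as self-evident rather than argue it. Since the paper already records, in the sentence immediately before the lemma, that "the subadditivity slack $\Delta\pi$ is affine linear $\ldots$ over the relative interior of each face $F$," you could simply invoke that assertion and avoid re-deriving it, which would sidestep the issue entirely.
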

We make the following general definition, which is equivalent to the
one found
in~\cite{hong-koeppe-zhou:software-paper,koeppe-zhou:crazy-perturbation}. 
\begin{definition}
  In the situation of \autoref{lemma:additive-face-discontinuous},
  we say that the face $E$ is \emph{additive}.%
\end{definition}
Now the following lemma is clear from the definition.
\cite{hong-koeppe-zhou:software-paper} only states this fact for the case of
continuous~$\pi$.
\begin{lemma}\label{lemma:additive-is-subcomplex}
  Let $\pi$ be a subadditive function that is piecewise linear over~$\P$.
  Then the set of additive faces of~$\pi$ is a polyhedral subcomplex
  of~$\Delta\P$, i.e., it is closed under taking subfaces.
  In particular, each additive face is the convex hull of some additive
  vertices (zero-dimensional additive faces). 
\end{lemma}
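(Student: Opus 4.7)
The plan is to reduce both claims directly to \autoref{lemma:additive-face-discontinuous}, which asserts that once a point of $E_F(\pi)$ lies in the relative interior of some face $E$ of $F$, the \emph{entire} face $E$ is contained in $E_F(\pi)$.

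First I would prove closure under taking subfaces. Suppose $E$ is an additive face of $\Delta\P$, so by definition there exists $F\in\Delta\P$ having $E$ as a face together with a point $(x_0,y_0)\in E_F(\pi)$ with $(x_0,y_0)\in\relint(E)$. By \autoref{lemma:additive-face-discontinuous}, the whole face $E$ is contained in $E_F(\pi)$. Let $E'$ be a subface of $E$, and let $(x'_0,y'_0)\in\relint(E')$. Since $\Delta\P$ is a polyhedral complex, $E'$ is itself a face of $F$ (faces of faces are faces), and $(x'_0,y'_0)$ lies in $E'\subseteq E\subseteq E_F(\pi)$. Because $\Delta\P$ is a polyhedral complex, $E'$ is the unique face of $\Delta\P$ containing $(x'_0,y'_0)$ in its relative interior. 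Thus $E'$ satisfies the defining property of an additive face with the same ambient face~$F$, and so $E'$ is additive.

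Second, for the ``in particular'' statement, I would use the fact that every face $F(I,J,K)\in\Delta\P$ is a bounded polytope: the defining sets $I,J,K\in\P$ are either singletons or closed intervals between consecutive breakpoints, so $F(I,J,K)\subseteq I\times J$ is bounded. Hence any additive face $E$ is a bounded polytope and equals the convex hull of its vertices (its zero-dimensional faces in $\Delta\P$). By the first part, each such vertex is itself an additive face, i.e., an additive vertex. This yields the desired representation.

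I do not expect any real obstacle here; the only subtle point is the bookkeeping that $\Delta\P$ is indeed a polyhedral complex in which faces-of-faces are faces, which justifies passing from $E'$ being a face of $E$ to $E'$ being a face of $F$. All additivity/limit-additivity information needed has already been packaged into the set $E_F(\pi)$ and the preceding lemma, so no further analytic work on $\Delta\pi$ or its one-sided limits is required.
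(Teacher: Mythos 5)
The paper gives no explicit proof — it states only that the lemma "is clear from the definition," immediately after \autoref{lemma:additive-face-discontinuous} and the definition of additive face. Your argument fills in exactly what the paper omits: reducing closure under subfaces to \autoref{lemma:additive-face-discontinuous} by noting that once $E\subseteq E_F(\pi)$, any subface $E'$ of $E$ is itself a face of $F$ whose relative interior lies in $E_F(\pi)$, and deducing the "in particular" clause from boundedness of the faces $F(I,J,K)\subseteq I\times J$ (since $I,J\in\P$ are singletons or intervals between consecutive breakpoints). This is the intended argument and it is correct; the only point you flag as subtle — that faces of faces of $F$ are faces of $F$, hence of $\Delta\P$ — is the standard polyhedral-complex property the authors are implicitly invoking when they call it clear.
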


\medbreak

For a piecewise linear function $\pi$, a finite presentation of the initial
moves is easy to compute using the additive faces of the complex~$\Delta \P$.
For a detailed explanation of diagrams visualizing the additivities and
limit-additivities, we refer to \cite[sections
4.2--4.3]{hong-koeppe-zhou:software-paper}.  See \autoref{fig:moves-diagram}
for the moves from one-dimensional additive faces (edges) and
\autoref{fig:triangles-and-squares} and
\autoref{fig:triangles-and-squares-nonsampled} for the moves from two-dimensional additive
faces.  In the forthcoming paper~\cite{hildebrand-koeppe-zhou:algocomp-paper},
we will give a more detailed description how to compute the finite
presentation of the initial
moves.
\begin{remark}
  The zero-dimensional additive faces (i.e., additive vertices) of
  $\Delta\P_B$ do not give rise to moves
  (cf.~\autoref{rem:no-singleton-domains}).
  Instead they will be considered in \autoref{sec:perturbation_space} to
  determine a refinement of $\P_B$ for the decomposition of perturbations.
\end{remark}

\subsection{Effective perturbations $\tildepi$}

We recall the notion of effective perturbations from
\autoref{s:intro:effective-perturbations}.  An effective perturbation is a
function $\tilde{\pi} \colon \R \to \R$ for which there exists an $\epsilon>0$
such that $\pi^{\pm} = \pi \pm \epsilon\tilde{\pi}$ are minimal valid
functions.

\begin{remark}
\label{rk:effective-perturbation-bounded}
  Let $\pi$ be a minimal valid function for $R_f(\R,\Z)$.
  From \eqref{eq:minimal:nonneg}, \eqref{eq:minimal:01}, and
  \eqref{eq:minimal:symm} it follows that $0 \leq \pi \leq 1$, so $\pi$ is
  a bounded function.  Now if $\tilde{\pi}$ is an effective perturbation, 
  then $\pi^{\pm} = \pi \pm \epsilon\tilde{\pi}$ for some $\epsilon>0$, where
  also $0 \leq \pi^{\pm} \leq 1$, and so $\tilde{\pi}$ is a bounded function
  as well.
\end{remark}

We note that the space $\tildePi^{\pi}\RZ$ of effective perturbations,
introduced in \autoref{s:intro:effective-perturbations}, is a vector space.

\begin{lemma}\label{lemma:effective-perturbation-vector-space}
Let $\pi$ be a minimal valid function. The space $\tildePi^{\pi}\RZ$ of
effective perturbation functions
is a vector space, a
subspace of the space $\Bounded{\R}$ of bounded functions.
\end{lemma}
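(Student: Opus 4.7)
My plan is a straightforward check of the vector space axioms, leveraging the characterization~\eqref{eq:minimal} of minimal valid functions, which shows that the set of minimal valid functions is a convex subset of the space of functions $\R\to\R$.

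First, boundedness of every $\tilde\pi\in\tildePi^\pi$ has already been recorded in \autoref{rk:effective-perturbation-bounded}: from $0\le\pi\le 1$ and $0\le\pi\pm\epsilon\tilde\pi\le 1$ one reads off $|\tilde\pi|\le 1/\epsilon$. So $\tildePi^\pi\subseteq\Bounded{\R}$. The zero function lies in $\tildePi^\pi$ (any $\epsilon>0$ works, since $\pi\pm\epsilon\cdot 0=\pi$ is minimal by hypothesis).

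For closure under scalar multiplication, let $\tilde\pi\in\tildePi^\pi$ with witness $\epsilon>0$ and let $a\in\R$. If $a=0$, we are done by the previous sentence. Otherwise, set $\epsilon'=\epsilon/|a|>0$; then $\pi\pm\epsilon'(a\tilde\pi)=\pi\pm\mathrm{sign}(a)\,\epsilon\tilde\pi$, which coincides with one of $\pi\pm\epsilon\tilde\pi$ and is therefore minimal. Hence $a\tilde\pi\in\tildePi^\pi$.

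The only step requiring an idea is closure under addition. Let $\tilde\pi_1,\tilde\pi_2\in\tildePi^\pi$ with respective witnesses $\epsilon_1,\epsilon_2>0$, and set $\epsilon=\tfrac12\min(\epsilon_1,\epsilon_2)$. The point is that by Gomory and Johnson's characterization~\eqref{eq:minimal}, the set of minimal valid functions for $\Rf(\R,\Z)$ is defined by linear (in)equalities (nonnegativity, periodicity, the normalizations $\pi(0)=0$, $\pi(f)=1$, subadditivity, and the symmetry condition), so it is a \emph{convex} set. Since $2\epsilon\le\epsilon_j$ for $j=1,2$, both $\pi\pm 2\epsilon\tilde\pi_1$ and $\pi\pm 2\epsilon\tilde\pi_2$ are convex combinations of the minimal valid functions $\pi\pm\epsilon_j\tilde\pi_j$ and~$\pi$, hence minimal. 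Taking the midpoints
\begin{equation*}
  \pi\pm\epsilon(\tilde\pi_1+\tilde\pi_2)=\tfrac12\bigl(\pi\pm 2\epsilon\tilde\pi_1\bigr)+\tfrac12\bigl(\pi\pm 2\epsilon\tilde\pi_2\bigr)
\end{equation*}
produces minimal valid functions by convexity again, and therefore $\tilde\pi_1+\tilde\pi_2\in\tildePi^\pi$ with witness~$\epsilon$. This completes the verification that $\tildePi^\pi$ is a vector subspace of $\Bounded{\R}$; no step is a serious obstacle once the convexity of the set of minimal valid functions is invoked.
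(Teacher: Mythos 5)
Your proof is correct. The paper states this lemma without supplying a proof (it appears to be regarded as routine, with the definition imported from earlier work in the series), so there is no paper proof to compare against; but your argument via the convexity of the set of minimal valid functions — which follows from the fact that the Gomory--Johnson characterization~\eqref{eq:minimal} is given by linear equalities and inequalities in $\pi$ — is precisely the natural way to establish closure under addition, and the remaining checks (boundedness from \autoref{rk:effective-perturbation-bounded}, the zero function, and rescaling the witness $\epsilon$ for scalar multiples) are all carried out correctly.

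One minor observation: for closure under addition you could shorten the argument by taking $\epsilon=\min(\epsilon_1,\epsilon_2)$ and writing $\pi\pm\epsilon(\tilde\pi_1+\tilde\pi_2)$ directly as a convex combination $\tfrac12(\pi\pm\epsilon\tilde\pi_1)+\tfrac12(\pi\pm\epsilon\tilde\pi_2)$, where each $\pi\pm\epsilon\tilde\pi_j$ is itself minimal because it interpolates between $\pi$ and $\pi\pm\epsilon_j\tilde\pi_j$. Your two-step version with the extra factor of $2$ and the midpoint is equally valid — just slightly longer than necessary.
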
 

For the case of piecewise linear functions~$\pi$ that are continuous from at
least one side of the origin, we have the following regularity theorem for
effective perturbations. \tred{OK this is piecewise; should it be here or later?}

\begin{lemma}[{\cite[Lemma 6.4]{hong-koeppe-zhou:software-paper}}]
  \label{lemma:perturbation-lipschitz-continuous}
  Let $\pi$ be a piecewise linear minimal valid function that is continuous from
  the right at $0$ or continuous from the left at $1$. If $\pi$ is continuous on
  a proper interval $I \subseteq [0,1]$, then for any
  $\tilde{\pi} \in \tildePi^{\pi}\RZ$ we have that $\tilde{\pi}$ is
  Lipschitz continuous on the interval $I$.
\end{lemma}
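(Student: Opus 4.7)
The plan is to derive Lipschitz continuity of $\tilde\pi$ on $I$ from two a priori bounds that follow purely from the minimality hypotheses. The starting point is that, since $\pi^{\pm} = \pi \pm \epsilon \tilde\pi$ are minimal (hence subadditive) and $\pi$ is itself subadditive, adding and subtracting the subadditivity inequalities $\Delta\pi^{\pm}(x,y) \geq 0$ gives, for all $x,y \in \R$,
\[
|\Delta\tilde\pi(x,y)| = \left|\tilde\pi(x) + \tilde\pi(y) - \tilde\pi(x+y)\right| \leq \frac{1}{\epsilon}\Delta\pi(x,y).
\]
This key inequality says that the subadditivity slack of $\tilde\pi$ is pointwise controlled by that of $\pi$.

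First I would establish a pointwise bound on $\tilde\pi$ near $0$. Minimality forces $\pi^{\pm}(0) = 0$, hence $\tilde\pi(0) = 0$. Combined with $0 \leq \pi^{\pm}(t) = \pi(t) \pm \epsilon \tilde\pi(t) \leq 1$ (cf.~\autoref{rk:effective-perturbation-bounded}), this yields
\[
|\tilde\pi(t)| \leq \frac{\pi(t)}{\epsilon} \qquad \text{for all } t \in \R.
\]
Because $\pi$ is piecewise linear over $\P_B$ and continuous from the right at $0$ (the case of left-continuity at $1$ is symmetric, using periodicity and replacing $t$ by $-t$), on a small interval $(0,\delta)$ containing no breakpoints $\pi$ is affine with $\pi(t) = s_0^+ \, t$. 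This gives $|\tilde\pi(t)| \leq s_0^+\, t/\epsilon$ for $t \in (0,\delta)$.

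Next, combining the two bounds via
\[
|\tilde\pi(x+t) - \tilde\pi(x)| \leq |\Delta\tilde\pi(x,t)| + |\tilde\pi(t)| \leq \frac{\Delta\pi(x,t) + \pi(t)}{\epsilon},
\]
I would use the assumed continuity and piecewise linearity of $\pi$ on $I$: since $\bar I \subseteq [0,1]$ contains only finitely many breakpoints of $\P_B$, the function $\pi$ is Lipschitz on $I$ with some constant $L_\pi$. For $x, x+t \in I$ and $t \in (0,\delta)$, this bounds the right-hand side by $(2s_0^+ + L_\pi)\, t/\epsilon$, giving a local Lipschitz estimate uniform in $x$. For larger $t > 0$ with $x, x+t \in I$, I would subdivide $[x,x+t]$ into segments of length less than $\delta$ (which remain in the interval $I$) and chain the local estimates telescopically. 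The resulting uniform Lipschitz constant is $L_{\tilde\pi} = (2s_0^+ + L_\pi)/\epsilon$.

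The main conceptual input is the first inequality $|\Delta\tilde\pi| \leq \Delta\pi/\epsilon$; the remainder is bookkeeping using piecewise linearity. The one genuinely delicate point is the use of one-sided continuity of $\pi$ at $0$ (or~$1$) to guarantee $\pi(t) = O(t)$ as $t \to 0$ from the appropriate side; without this, $\pi(t)$ could jump away from zero, the pointwise bound on $|\tilde\pi(t)|$ would only give boundedness rather than a linear estimate, and the chain argument for the Lipschitz bound would collapse.
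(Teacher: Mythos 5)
The paper itself gives no proof of this lemma---it is imported by citation from Part V of the series (\cite[Lemma 6.4]{hong-koeppe-zhou:software-paper}, noted as a strengthening of \cite[Theorem 2]{dey1})---so there is no in-paper proof to compare against. Your argument is correct and uses the expected ingredients: the bound $|\Delta\tilde\pi|\le\Delta\pi/\epsilon$ from subadditivity of $\pi^{\pm}$, the pointwise estimate $|\tilde\pi(t)|\le\pi(t)/\epsilon$ from $\pi^{\pm}(0)=0$ and $0\le\pi^{\pm}\le 1$, the linear growth $\pi(t)=s_0^{+}t$ on a breakpoint-free interval just to the right of $0$ (enabled by the one-sided continuity hypothesis), the Lipschitz bound for $\pi$ on $I$ coming from piecewise linearity and continuity, and a telescoping/chaining step. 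You also correctly isolate the role of the one-sided continuity hypothesis, which is exactly what fails in the two-sided discontinuous situation of \autoref{ex:minimal_no_covered_interval}.
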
 
(This is a strengthening of \cite[Theorem 2]{dey1}.)

The purpose of the additive move ensemble is to infer properties of the
effective perturbation functions.  For additive moves $\gamma|_D$, it follows
from convexity that every effective perturbation $\tilde\pi$ respects
$\gamma|_D$. In the case of piecewise linear functions, this extends to
limit-additive moves.  The following lemma is shown by the proof of
\cite[Theorem
6.3]{hong-koeppe-zhou:software-paper}
, along with \cite[Footnote
13]{hong-koeppe-zhou:software-paper}
and also by \cite[Theorem
3.3]{koeppe-zhou:crazy-perturbation}
in the case where $\pi$ is two-sided discontinuous at the origin.
\begin{lemma}
\label{lemma:additivity-equation-for-initial-move}
Let $\pi$ be a piecewise linear minimal valid function for $R_f(\R,\Z)$. Let $\gamma|_D \in \Omegainit$ be an initial move, where $D \subseteq (0,1)$ is an open interval. 
Then $\pi$ respects $\gamma|_D$,
and every effective perturbation function $\tilde{\pi} \in \tildePi^{\pi}\RZ$ respects~$\gamma|_D$. 
\end{lemma}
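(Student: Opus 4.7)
The plan is to proceed by case analysis on the four types of initial move listed in \autoref{def:initial-moves}. Empty moves are trivially respected (the functional equation \eqref{eq:additivity-equation-for-move} is vacuous). An inverse $(\gamma|_D)^{-1} = \gamma^{-1}|_{\gamma(D)}$ is respected whenever $\gamma|_D$ is: since $\chi$ is a group character, $\chi(\gamma^{-1}) = \chi(\gamma)$, and solving $\theta(\gamma(x)) = \chi(\gamma)\,\theta(x) + c$ for $x = \gamma^{-1}(y)$ yields an equation of the form \eqref{eq:additivity-equation-for-move} for $\gamma^{-1}|_{\gamma(D)}$. So it suffices to handle additive and limit-additive moves.

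For $\pi$ respecting an additive move, the functional equation is essentially the definition: for a translation $\tau_t|_D$, the equation $\Delta\pi(x,t) = 0$ on $D$ rearranges to $\pi(\tau_t(x)) = \pi(x) + \pi(t) = \chi(\tau_t)\,\pi(x) + c$ with constant $c = \pi(t)$; for a reflection $\rho_r|_D$, the equation $\Delta\pi(x,r-x) = 0$ rearranges to $\pi(\rho_r(x)) = -\pi(x) + \pi(r) = \chi(\rho_r)\,\pi(x) + c$ with $c = \pi(r)$. For $\tilde\pi$ respecting an additive move, I would use the standard convexity/subadditivity argument: since $\pi^{\pm} = \pi \pm \epsilon\tilde\pi$ are minimal, hence subadditive, we have $\Delta\pi^{\pm}(x,y)\geq 0$ for all $(x,y)$; their sum equals $2\Delta\pi(x,y)$, so wherever $\Delta\pi(x,y)=0$, both non-negative summands vanish, forcing $\Delta\tilde\pi(x,y) = (1/\epsilon)(\Delta\pi^{+} - \Delta\pi)(x,y) = 0$. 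Applied to the defining additivity pairs of $\gamma|_D$, this yields \eqref{eq:additivity-equation-for-move} for $\tilde\pi$.

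For limit-additive moves, I would pass to one-sided limits. For a translation $\tau_{\bar t}|_D$ with, say, $\lim_{t\to \bar t^+}\Delta\pi(x,t) = 0$ for $x\in D$: piecewise linearity of $\pi$ guarantees that the one-sided limit $\pi(\bar t^+) := \lim_{t\to \bar t^+}\pi(t)$ exists, and since $\tau_{\bar t}(D) \subseteq A$ lies in the continuity set, $\pi(x+t) \to \pi(x+\bar t)$; taking the limit in $\Delta\pi(x,t)$ then gives $\pi(\tau_{\bar t}(x)) = \pi(x) + \pi(\bar t^+)$, i.e.\ \eqref{eq:additivity-equation-for-move} with $c = \pi(\bar t^+)$. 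Reflections are analogous. For $\tilde\pi$, the same subadditivity-in-the-limit argument applies: both $\Delta\pi^{\pm}(x,t)\geq 0$ with sum tending to $0$, so $\lim_{t\to\bar t^+}\Delta\pi^{\pm}(x,t) = 0$; subtracting and using Lipschitz continuity of $\tilde\pi$ on the continuity interval containing $\tau_{\bar t}(D)$ (\autoref{lemma:perturbation-lipschitz-continuous}) to ensure convergence of $\tilde\pi(x+t)\to \tilde\pi(x+\bar t)$, as well as existence of $\tilde\pi(\bar t^+) := \lim_{t\to \bar t^+}\tilde\pi(t)$ (using Lipschitz continuity on the relevant side of $\bar t$), yields \eqref{eq:additivity-equation-for-move} for $\tilde\pi$ with $c = \tilde\pi(\bar t^+)$.

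The main obstacle is the bookkeeping for one-sided limits in the discontinuous piecewise-linear setting: one must verify that the chosen approach direction $t\to\bar t^\pm$ is consistent with the side on which $\tilde\pi$ has a well-defined one-sided limit at $\bar t$, and analogously at points of $D$ or $\tau_{\bar t}(D)$ that lie at the boundary of a continuity interval. This is handled by invoking \autoref{lemma:perturbation-lipschitz-continuous} (together with the hypothesis that $\pi$ is one-sided continuous at the origin), mirroring the arguments in the proof of Theorem~6.3 of \cite{hong-koeppe-zhou:software-paper} and Theorem~3.3 of \cite{koeppe-zhou:crazy-perturbation}, which handle the one- and two-sided discontinuous cases respectively.
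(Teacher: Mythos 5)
The paper does not give a self-contained proof of this lemma; it instead cites the proofs of Theorem~6.3 (with Footnote~13) of \cite{hong-koeppe-zhou:software-paper} and, for the case where $\pi$ is two-sided discontinuous at the origin, Theorem~3.3 of \cite{koeppe-zhou:crazy-perturbation}. Your sketch fills in a concrete argument that mirrors the one-sided reference, and most of it is sound: empty moves and inverses are handled correctly, $\pi$ respecting additive and limit-additive moves is a direct consequence of the defining equations and piecewise linearity, and the convexity argument for $\tilde\pi$ respecting exact additive moves is exactly right.

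However, there is a genuine gap in the limit-additive case for $\tilde\pi$. The lemma as stated carries \emph{no} one-sided continuity hypothesis on $\pi$ — the paper explicitly includes the two-sided discontinuous case by citing \cite{koeppe-zhou:crazy-perturbation}. Your argument, though, invokes \autoref{lemma:perturbation-lipschitz-continuous} to guarantee that $\tilde\pi$ is continuous at points of $\gamma(D) \subseteq A$ and admits a one-sided limit $\tilde\pi(\bar t^+)$; that lemma requires $\pi$ to be right-continuous at $0$ or left-continuous at $1$. When $\pi$ is two-sided discontinuous at the origin, an effective perturbation can be a ``crazy'' function that has no one-sided limits at any point (see \autoref{ex:minimal_no_covered_interval}), so $\lim_{t\to\bar t^+}\Delta\tilde\pi(x,t)=0$ does not decompose into $\tilde\pi(x)+\tilde\pi(\bar t^+)-\tilde\pi(x+\bar t)=0$, and the constant $c=\tilde\pi(\bar t^+)$ is not even well defined. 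You do flag the extra hypothesis yourself (``together with the hypothesis that $\pi$ is one-sided continuous at the origin''), which shows you noticed the dependence, but you cannot simply add a hypothesis that the statement does not carry; the two-sided discontinuous case needs the separate argument from \cite[Theorem~3.3]{koeppe-zhou:crazy-perturbation} rather than the Lipschitz route.
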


\begin{corollary}
  \label{cor:pwl-moves-closure-respected}
  Let $\pi$ be a piecewise linear minimal valid function for $R_f(\R,\Z)$. 
  Then $\pi$  respects the moves closure $\clsemi{\Omegainit}$.
  If $\pi$ is is continuous from at least one side of the origin,
  then every effective perturbation function $\tilde{\pi} \in
  \tildePi^{\pi}\RZ$ also respects the moves closure $\clsemi{\Omegainit}$.
\end{corollary}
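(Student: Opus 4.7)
The plan is to invoke the main theorem on the moves closure (\autoref{thm:cts-moves-closure}) twice, once for $\pi$ itself and once for an arbitrary effective perturbation $\tilde\pi \in \tildePi^{\pi}\RZ$. Recall that \autoref{thm:cts-moves-closure} requires three ingredients for a function $\theta$: (i) $\theta$ is bounded, (ii) $\theta$ is continuous on the open set~$A$, and (iii) $\theta$ respects the move ensemble $\Omega$, whose moves have domain and image contained in~$A$. Here we take $\Omega = \Omegainit$ and $A = A(\pi)$, the maximal open subset of $(0,1)$ on which $\pi$ is continuous. By construction (see \autoref{def:initial-moves} and its ensuing remark), every $\gamma|_D \in \Omegainit$ satisfies $D,\gamma(D) \subseteq A$, so the hypothesis on domains and images is automatic.

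For $\pi$ itself: boundedness is \autoref{rk:effective-perturbation-bounded} ($0 \leq \pi \leq 1$); continuity on~$A$ is immediate from the definition of~$A$; and $\pi$ respects every initial move by \autoref{lemma:additivity-equation-for-initial-move}. Hence \autoref{thm:cts-moves-closure} yields that $\pi$ respects $\clsemi{\Omegainit}$.

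For an effective perturbation $\tilde\pi$: boundedness again follows from \autoref{rk:effective-perturbation-bounded} applied to $\pi^\pm = \pi \pm \epsilon\tilde\pi$. The assumption that $\pi$ is continuous from at least one side of the origin lets us invoke \autoref{lemma:perturbation-lipschitz-continuous}, which gives that $\tilde\pi$ is Lipschitz continuous on every proper interval of continuity of~$\pi$; since $A$ is a union of such open intervals, $\tilde\pi$ is continuous on~$A$. Finally, $\tilde\pi$ respects every initial move by the second assertion of \autoref{lemma:additivity-equation-for-initial-move}. Applying \autoref{thm:cts-moves-closure} again gives $\tilde\pi \in \Omegaresp{\clsemi{\Omegainit}}$.

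No real obstacle is expected: the corollary is essentially a straightforward specialization of \autoref{thm:cts-moves-closure} to the Gomory--Johnson setting, once the hypotheses (boundedness, continuity on~$A$, respect of the initial ensemble, containment of domains and images in~$A$) have been verified. The only subtlety is that the one-sided continuity assumption at the origin is genuinely needed for the perturbation statement, as it is the hypothesis that unlocks \autoref{lemma:perturbation-lipschitz-continuous} and hence continuity of $\tilde\pi$ on~$A$; without it, $\tilde\pi$ could fail to be continuous on some interval of~$A$ and \autoref{thm:cts-moves-closure} would not apply.
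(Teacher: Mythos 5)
Your proposal is correct and follows exactly the route the paper intends: it applies \autoref{thm:cts-moves-closure} with $\Omega=\Omegainit$ and $A=A(\pi)$, verifying boundedness via \autoref{rk:effective-perturbation-bounded}, continuity of $\tilde\pi$ on~$A$ via \autoref{lemma:perturbation-lipschitz-continuous} (which is where the one-sided continuity hypothesis is used), and respect of $\Omegainit$ via \autoref{lemma:additivity-equation-for-initial-move}. The paper's own proof is a one-line citation of these same four ingredients; your write-up simply makes the hypothesis-checking explicit.
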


\begin{proof}
This follows from \autoref{lemma:additivity-equation-for-initial-move}, \autoref{rk:effective-perturbation-bounded}, \autoref{lemma:perturbation-lipschitz-continuous} and \autoref{thm:cts-moves-closure}.
\end{proof}

\subsection{Closed move semigroup generated by $\Omegainit$, rational case}

We have the following theorem.
\begin{theorem}[Finite presentation of the moves closure, rational case]
  Let $\pi$ be a piecewise linear function whose breakpoints are rational,
  i.e., $B \subseteq G = \frac1q\Z$ for some $q\in \N$.  Then the moves
  closure $\clsemi{\Omegainit}$ has a finite presentation
  $(\Omega^{\red/}, \CC)$ in reduced form, where 
  (i) the endpoints of all domains and the values $t$ and
  $r$ of moves $\tau_t, \rho_r|_D \in \Omega^{\red/}$ lie in $G \cap [0,1]$,
  (ii) the endpoints of all maximal intervals of all $\Comp[i] \in \CC$ lie in $G \cap [0,1]$.
  \label{th:finite-presentation-closure-rational}
\end{theorem}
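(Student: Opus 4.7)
The plan is to track a ``$G$-rationality'' invariant through each of the closure axioms used to build $\clsemi{\Omegainit}$ from $\Omegainit$, and then to exploit the discreteness of $G$ inside the bounded parameter range $[0,1]$ to conclude finiteness of the reduced presentation. Concretely, I would call a move $\gamma|_D \in \FullMoveSemigroup$ \emph{$G$-rational} if the endpoints of~$D$ (in $[0,1] \cup \{0,1\}$, using periodicity) lie in $G$ and, writing $\gamma = \tau_t$ or $\gamma = \rho_r$, the parameter $t$ or $r$ lies in $G$; and I would call a connected covered component $G$-rational if the endpoints of each of its maximal open intervals lie in $G$. The theorem will follow once we show that (a) the initial data $\Omegainit$ admit a finite $G$-rational presentation, (b) the axiomatic closures preserve $G$-rationality, and (c) only finitely many $G$-rational maximal moves and components are possible within $[0,1]$.

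For (a), by \autoref{s:polyhedral-complex} the complex $\Delta\P_B$ has all vertices in $G \times G$, so by \autoref{lemma:additive-is-subcomplex} the additive subcomplex is also $G$-rational. The additive edges give rise via \autoref{fig:moves-diagram} to initial moves whose domain endpoints and parameter values lie in $G$; the two-dimensional additive faces, via \autoref{lem:squares}, produce connected covered components $C = p_1(E) \cup p_2(E) \cup p_3(E)$ whose maximal intervals have endpoints in $G$. For (b), I would check each axiom in turn. Composition is preserved: $(\tau_{t_2}|_{D_2}) \circ (\tau_{t_1}|_{D_1}) = \tau_{t_1+t_2}|_{D_1 \cap (D_2-t_1)}$ keeps the parameter in $G$ and the domain endpoints in $G$, and analogously for the other character combinations using $\rho_{r_1} \circ \rho_{r_2} = \tau_{r_1-r_2}$ etc.; inversion is trivially preserved. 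Restriction and join produce unions of intervals with endpoints in $G$; the kaleidoscopic axiom merely asserts that translations and reflections live or die together on rectangles, introducing no new parameters. For the limit axioms \eqref{axiom:limits} and \eqref{axiom:arblim}: since $G \cap [-1,1]$ and $G \cap [0,1]$ are finite, any convergent sequence of $G$-rational parameters (or of $G$-rational endpoints) is eventually constant, so the limit move is already present; in particular \autoref{thm:dense-boxes-in-joinlim} adds no new rectangles in the rational setting. Finally the domain extension \eqref{axiom:joinextend} only joins $G$-rational pieces across continuity points of~$\pi$; since the set $A$ is $(0,1)$ minus a subset of the breakpoints $B \subseteq G$, maximal open intervals arising from $\cl(\bigcup_i D^i) \cap A \cap \gamma^{-1}(A)$ still have endpoints in $G$.

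For (c), note that within the fundamental domain there are at most $O(q)$ possible endpoints for domains of moves and at most $O(q)$ possible parameter values $t,r \in G$; hence the set of $G$-rational maximal moves with domain in $[0,1]$ is finite, and so is the set of possible connected covered components whose maximal intervals have endpoints in~$G$. Combined with (b), this shows that the move ensemble $\Omega^{\mergeplus/}$ obtained by iterated closure from $(\Omegainit, \C_{\text{init}})$ stabilizes after finitely many steps and has a finite presentation by $G$-rational data. Applying the reduction procedure of \autoref{s:finite-presentation-normal-form}, which only removes and extends existing $G$-rational moves (\autoref{fig:reduce_moves_by_components}), yields the desired $(\Omega^{\red/}, \CC)$ in reduced form satisfying (i) and (ii).

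The main technical obstacle I expect is verifying cleanly that the extended ensemble axiom~\eqref{axiom:joinextend}, which as \autoref{rem:extend-infinite} notes may require multiple interleavings with \eqref{axiom:join} to converge, nevertheless terminates and preserves $G$-rationality. The termination in the rational setting is essentially combinatorial: each application merges $G$-rational open subintervals of $(0,1)$, of which there are only finitely many relative to the refinement by~$B$, so a standard descending-chain argument on the number of maximal open intervals of~$\dom$ and $\im$ suffices. A secondary subtlety is \autoref{lem:indirectly_covered_from_move}, which extends a component by $\gamma(D)$: here one must note that $D \subseteq C_i$ has endpoints in~$G$ and $\gamma$ has parameter in~$G$, so $\gamma(D)$ contributes only $G$-rational endpoints, keeping $\C$ within the $G$-rational universe throughout the completion.
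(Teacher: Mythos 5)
Your proposal follows the same strategy as the paper's own (very brief) proof sketch: track $G$-rationality as an invariant through a completion-type procedure that applies the closure axioms, and conclude termination from the finiteness of $G$-rational data within the fundamental domain. The paper defers all algorithmic details to the forthcoming companion paper, so your version is actually more explicit about which axioms must be checked and where technical subtleties (the iterated \eqref{axiom:joinextend} extension of \autoref{rem:extend-infinite}, the component extension of \autoref{lem:indirectly_covered_from_move}) arise; this is exactly what a fleshed-out version of the paper's sketch should do.

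One point where your reasoning is slightly misdirected: in arguing that the limit axioms \eqref{axiom:limits} and \eqref{axiom:arblim} contribute nothing new, you appeal to the fact that convergent sequences of $G$-rational parameters are eventually constant. That covers sequences drawn from restrictions of the finitely many maximal moves $\gamma|_D \in \Omega^{\red/}$, but a finitely presented kaleidoscopic joined ensemble also contains all of $\moves(C_i\times C_i)$, whose moves have arbitrary real parameters, so a convergent sequence supported inside such a box is not forced to be eventually constant. The correct observation (consistent with the paper's remark that the completion uses ``only the algebraic and order-theoretic axioms and \eqref{axiom:joinextend}'') is that a finitely presented joined move semigroup in reduced form is already closed under limits: a convergent sequence $\gamma^i|_D$ with fixed open $D$ either has a constant subsequence drawn from the finite set $\Omega^{\red/}$, or is eventually confined to a single box $C_j\times C_j$, and the limit stays within that box since $D$ and $\gamma^i(D)$ are open subsets of~$C_j$ and the maximal intervals of a reduced component cannot be adjacent across a continuity point (else \eqref{axiom:joinextend} would have merged them). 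This is the genuine content behind the claim that \autoref{thm:dense-boxes-in-joinlim} is vacuous in the finitely presented case; it is not simply a discreteness-of-$G$ argument. This is a minor repair, not a different approach, and the same gap is present in the paper's sketch.
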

\begin{proof}[Proof sketch]
  We can compute $\clsemi{\Omegainit}$ in finitely many steps using a
  completion-type algorithm that manipulates finite presentations, maintaining
  properties (i) and (ii), using only the algebraic and order-theoretic axioms
  and \eqref{axiom:joinextend}.  The initialization is provided by
  \autoref{cor:open-sets-of-moves}, noting that vertices of additive faces
  of~$\Delta\P$ lie in $G\times G$.  There are only finitely many finite
  presentations satisfying (i) and (ii); this implies the finiteness of the
  algorithm.
\end{proof}
We defer all details about such an
algorithm, as well as its generalization to non-rational input, to the
forthcoming paper~\cite{hildebrand-koeppe-zhou:algocomp-paper}.

Instead, in the next section, we assume that a finite presentation
$(\Omega^{\fin/}, \CC)$ of the moves closure $\clsemi{\Omegainit}$ is
given.  Using the finite presentation, we can give a description of the
space of effective perturbations.

\section{Perturbation space}
\label{sec:perturbation_space}

Let $\pi\colon\R\to\R$ be a minimal valid function.
In this section, we work with the following assumptions.
(We will mention them explicitly only in statements of main theorems.)

\subsection{Assumptions: Piecewise linear~$\pi$, one-sided continuous at~$0$, finitely presented
  moves closure~$\clsemi{\Omegainit}$}
\label{s:assumptions}

\begin{assumption}
  \label{assumption:one-sided-continuous}
  \label{assumption:piecewise}
  The minimal valid function $\pi$ is piecewise linear
  (\autoref{s:pwl-functions}) and continuous from at least one side of the
  origin. 
\end{assumption}

\begin{assumption}
  \label{assumption:minimal-B}
  The set $B$ is minimal, i.e., $\P_B$ is the coarsest polyhedral
  complex over which $\pi$ is piecewise linear.
\end{assumption}

Let $\Omegainit = \Omegainit(\pi)$ be the
initial additive move ensemble (\autoref{sec:construct_initial_moves})
of~$\pi$.

\begin{assumption}
  \label{assumption:moves-completion}
  The moves closure $\ctscl(\Omegainit)$ has a finite presentation
  $(\Omega, \C)$ in reduced form
  (\autoref{s:finite-presentation-normal-form}).  
\end{assumption}
Thus $\Omega$ has finitely many moves and $\C$ has finitely many
connected covered components
$\Comp[1], \Comp[2], \dots, \Comp[k]$, each of which is a
finite union of open intervals.  Each $\gamma|_D\in\Omega$ is maximal in the
restriction partial order of~$\ctscl(\Omegainit)$ and is not contained in
$\joinmoves(\C)$.  Figures~\ref{fig:one-sided-discontinuous-pi1}
(right), \ref{fig:equiv7_example_xyz_2-completion},  and \ref{fig:equiv7_minimal_2_covered_2_uncovered}
show examples of $\ctscl(\Omegainit)$ satisfying
\autoref{assumption:moves-completion}.  \tred{I am not assuming \emph{normal
    form}, which makes more assumptions regarding $\C$ (see
  \autoref{s:finite-presentation-normal-form}). Check if those are used
  implicitly anywhere.}

\tred{Is a more general statement possible?  That does not need ALL properties
  of 'moves closure'?  Let's isolate the assumptions that are really used in
  the proof.}

\subsection{Properties of the finitely presented moves closure}

Let $C := \Comp[1] \cup \Comp[2] \cup \dots \cup \Comp[k]$ denote the open set of
points in $(0,1)$ that are covered.  We will refer to the open set $U := (0,1) \setminus
\cl(C)$ as the set of points in $(0,1)$ that are \emph{uncovered}.
Let
\begin{equation}\label{eq:def-X}
X := \{0\} \cup \partial C \cup \{1\} = \{0\} \cup \partial U \cup \{1\}
\end{equation}
be the set of endpoints of all covered and uncovered intervals.
Thus we have the partition $[0,1] = C \cup X \cup U$.

\newcommand\FACETOCOMPLETIONROW[1]{\begin{minipage}{.49\textwidth}
\centering
\includegraphics[width=\linewidth]{graphics-for-algo-paper/#1-2d_diagram}
\end{minipage}
\begin{minipage}{.49\textwidth}
\centering
\includegraphics[width=\linewidth]{graphics-for-algo-paper/#1-completion-final}
\end{minipage}}

\begin{example}\label{ex:equiv7_example_1}
\begin{figure}[tp]
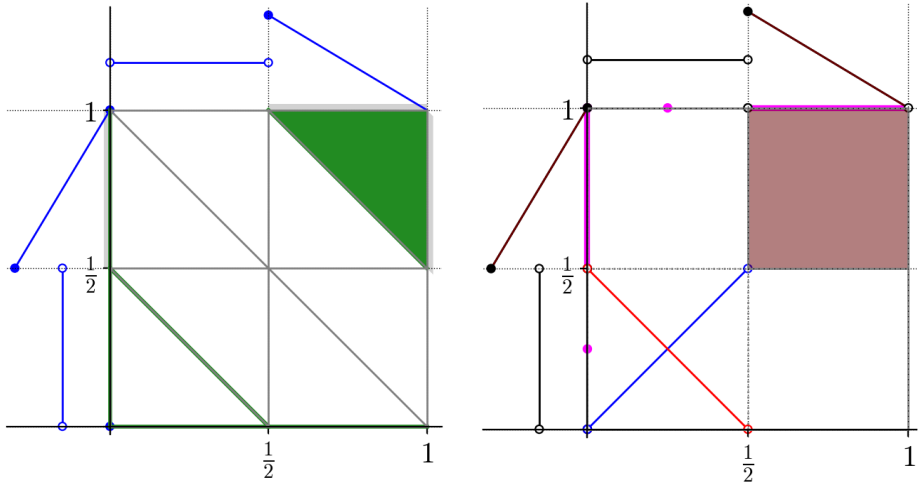

\centering
\
\FACETOCOMPLETIONROW{equiv7_example_1}
\caption[(\textit{Left}) Two-dimensional polyhedral complex $\Delta\P$ of the
  one-sided discontinuous minimal valid function $\pi =
  \sage{equiv7\underscore{}example\underscore{}1()}$ from
  \autoref{ex:equiv7_example_1}. (\textit{Right}) The graph of the moves closure $\ctscl(\Omegainit)$
  of~$\pi$]
  {(\textit{Left}) Two-dimensional polyhedral complex $\Delta\P$ of the
  one-sided discontinuous minimal valid function $\pi =
  \sage{equiv7\underscore{}example\underscore{}1()}$ from
  \autoref{ex:equiv7_example_1} (\emph{blue graph at the 
    left and top borders}), where the additive faces are \emph{colored in
    green}. (\textit{Right}) The graph of the moves closure $\ctscl(\Omegainit)$
  of~$\pi$.  It has a finite presentation by $\Omega = 
  \{ \tau_0|_{(0,1/2)}, \rho_{1/2}|_{(0, 1/2)} \}$ (\emph{blue and red
    line segments of slopes $\pm1$})
  and one (maximal) connected covered component $C = (\frac12,1)$ (the
  \emph{brown square} shows $C\times C$). 
  The set $C \cup X \cup Y \cup Z$ of covered
  points and refined breakpoints is marked in \emph{magenta on the left and top borders}.}
\label{fig:one-sided-discontinuous-pi1}
\end{figure}
  Consider the discontinuous minimal valid function for $f=\tfrac{1}{2}$, defined by
  \[
    \pi(x) =
    \begin{cases}
      0 & \text{if } x=0 \\
      \tfrac{1}{2} &\text{if } 0< x< \tfrac{1}{2}\\
      2(1-x) & \text{if } \tfrac{1}{2} \leq x < 1.
    \end{cases}
  \]
  It is provided by the software~\cite{cutgeneratingfunctionology:lastest-release} as $\pi=
  \sage{equiv7\underscore{}example\underscore{}1()}$.
  \autoref{fig:one-sided-discontinuous-pi1} shows the two-dimensional
  polyhedral complex $\Delta\P$ and the moves closure $\ctscl(\Omegainit)$.
  The interval $C = (\tfrac12, 1)$ is covered, $U = (0, \tfrac12)$ is
  uncovered. 
  We have $X = \{0, \tfrac12, 1\}$.
\end{example}

\begin{example}\label{ex:equiv7_example_xyz_2}
\begin{figure}[t]
  \centering
  \hspace*{-1cm}%
  \includegraphics[width=.8\textwidth]{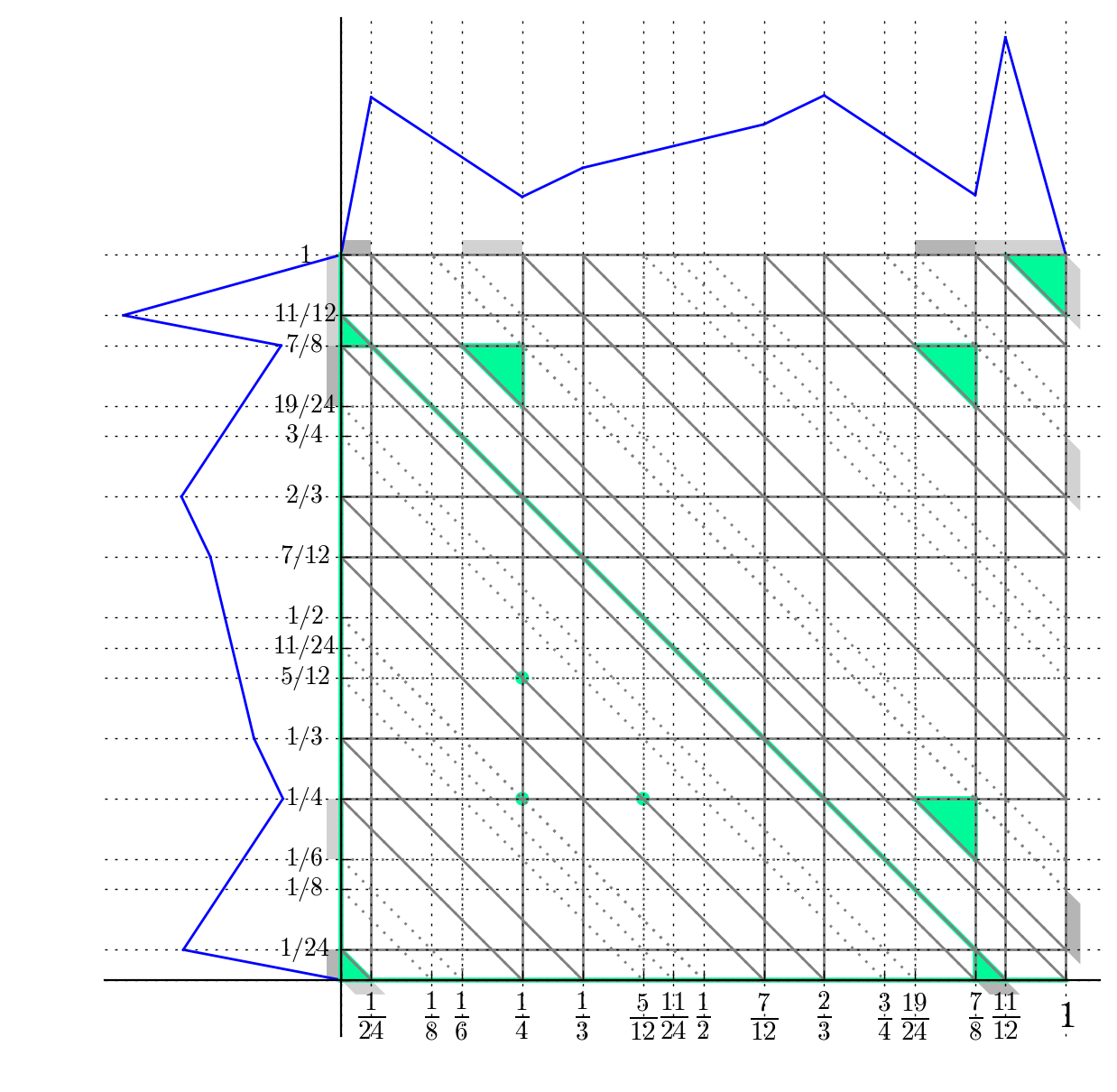}
  \caption[The function $\pi = \sage{equiv7\_example\_xyz\_2()}$ from
    \autoref{ex:equiv7_example_xyz_2}, its two-dimensional
    polyhedral complex $\Delta\P$, and the refined complex~$\Delta\T$]
    {The function $\pi = \sage{equiv7\_example\_xyz\_2()}$ from
    \autoref{ex:equiv7_example_xyz_2} (\emph{blue graph at the left and 
      top borders}) and its two-dimensional
    polyhedral complex $\Delta\P$ (\emph{solid gray lines}), where
    the additive faces are \emph{colored in green}.  The refined complex
    $\Delta\T$ is shown with \emph{dotted gray lines}.
  }
  \label{fig:equiv7_example_xyz_2}
\end{figure}
\begin{figure}[t]
  \centering
  \hspace*{-1cm}%
  \includegraphics[width=.8\linewidth]{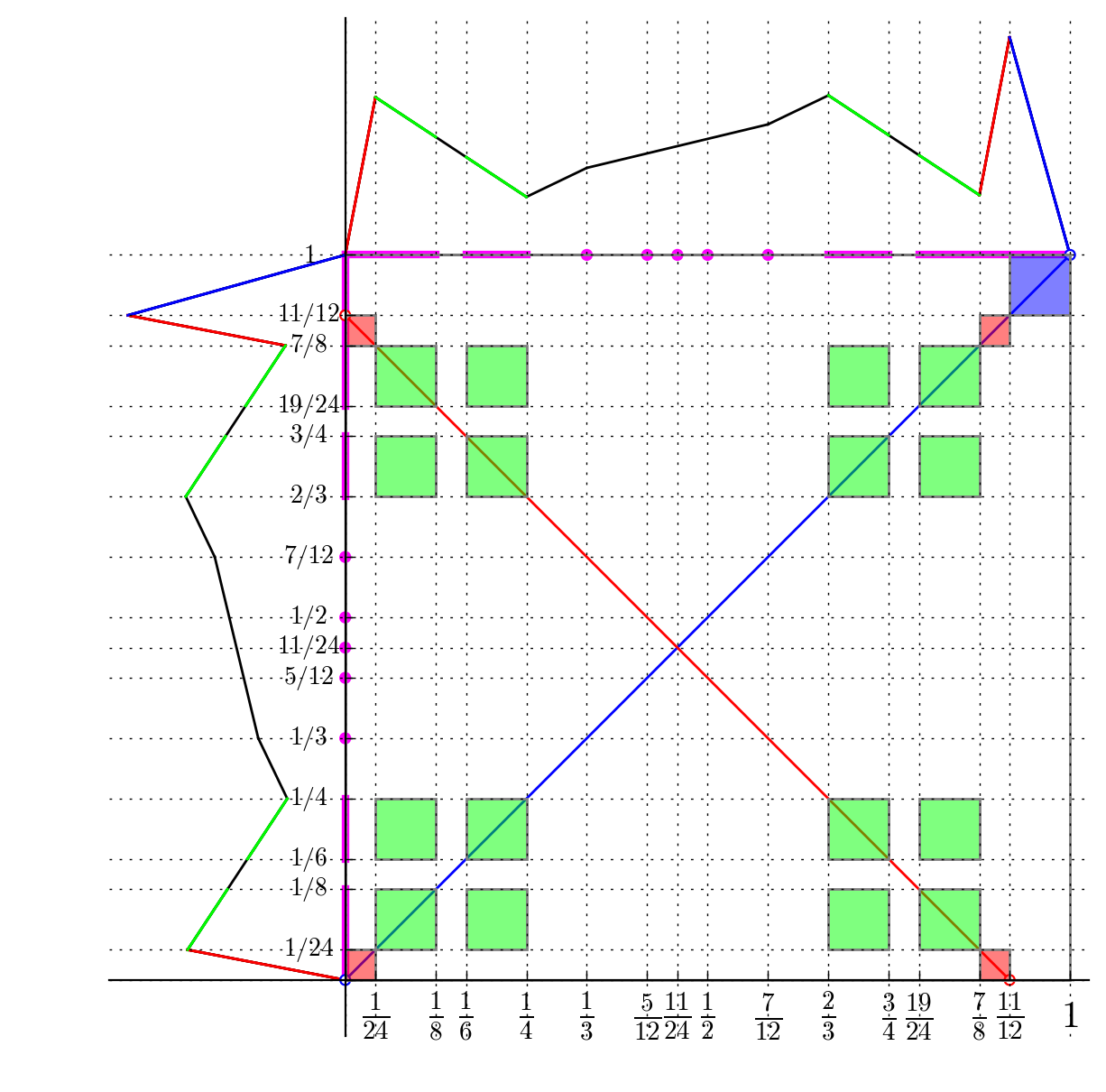}
  \caption[The function $\pi = \sage{equiv7\_example\_xyz\_2()}$ from
    \autoref{ex:equiv7_example_xyz_2} and the graph of the moves closure
    $\ctscl(\Omegainit)$ of~$\pi$] 
    {The function $\pi = \sage{equiv7\_example\_xyz\_2()}$ from
    \autoref{ex:equiv7_example_xyz_2} (\emph{colored graph at the left and 
      top borders}) and 
    the graph of the moves closure
    $\ctscl(\Omegainit)$ of $\pi$, as computed by the command \sage{igp.equiv7\_mode = True;
      igp.extremality\_test(igp.equiv7\_example\_xyz\_2(), True, show\_all\_perturbations=True)}.  It has a
    finite presentation by $\Omega = 
    \{ \tau_0|_{(0,1)}, \rho_{11/12}|_{(0, 11/12)} \}$ (\emph{blue and red
      line segments of slopes $\pm1$})
    and a set $ \C = \{\Comp[1], \Comp[2], \Comp[3]\}$ of (maximal) connected covered components
    $\Comp[1] = (\frac{11}{12}, 1)$ (the \emph{lavender square} shows $\Comp[1]\times\Comp[1]$),
    $\Comp[2] = (0, \frac1{24})\cup(\frac78,\frac{11}{12})$ (\emph{coral}),
    and $\Comp[3] = (\frac1{24}, \frac18) \cup (\frac16, \frac14) \cup
    (\frac23,\frac34) \cup (\frac{19}{24}, \frac78)$ (\emph{lime}).     
    The set $C\cup B' = C \cup X \cup Y \cup Z$ of covered
    points and refined breakpoints is marked in \emph{magenta on
      the left and top borders}. 
    }
    \label{fig:equiv7_example_xyz_2-completion}
\end{figure}
Consider the continuous minimal valid function $\pi$ that is provided as $\sage{equiv7\_\allowbreak{}example\_xyz\_2()}$ by
the software~\cite{cutgeneratingfunctionology:lastest-release}, shown in 
\autoref{fig:equiv7_example_xyz_2}. 
Figures~\ref{fig:equiv7_example_xyz_2}
and~\ref{fig:equiv7_example_xyz_2-completion} show the additive faces and the
moves closure.
We have $X = \{0, \frac1{24},
\frac18, \frac16, \frac14, \frac23, \frac34, \frac{19}{24}, \frac78, \frac{11}{12}, 1\}$.
\end{example}

\begin{example}\label{ex:equiv7_minimal_2_covered_2_uncovered}
\begin{figure}[t]
  \centering
  \includegraphics[width=\linewidth]{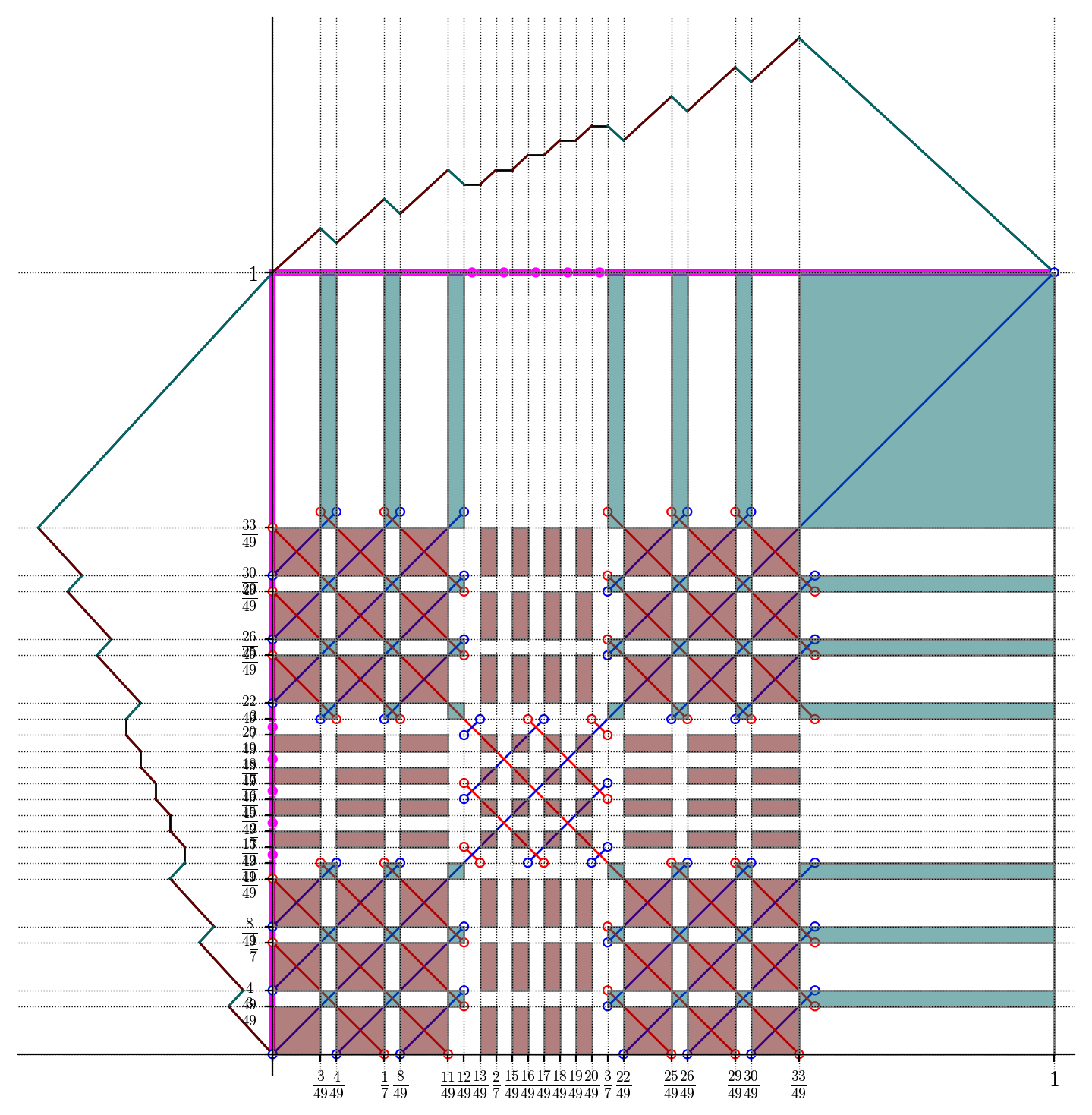}
  \caption[Moves closure $\clsemi{\Omegainit}$ for the function from
    \autoref{ex:equiv7_minimal_2_covered_2_uncovered}, $\pi =
    \sage{equiv7\_minimal\_2\_covered\_2\_uncovered()}$]
    {Moves closure $\clsemi{\Omegainit}$ for the function from
    \autoref{ex:equiv7_minimal_2_covered_2_uncovered}, $\pi =
    \sage{equiv7\_}\allowbreak\sage{minimal\_}\allowbreak\sage{2\_covered\_}\allowbreak\sage{2\_uncovered()}$.
    It has two connected covered components (\emph{cyan, brown rectangles}).}
  \label{fig:equiv7_minimal_2_covered_2_uncovered}
\end{figure}
Consider the minimal valid function $\pi$ that is provided as $
\sage{equiv7\_minimal\_2\_covered\_2\_uncovered()}$ by the
software~\cite{cutgeneratingfunctionology:lastest-release}; see
\autoref{fig:equiv7_minimal_2_covered_2_uncovered}. It has two
connected covered components.  The set of uncovered points is
$U = (\frac{12}{49}, \frac{13}{49}) \cup (\frac{14}{49}, \frac{15}{49}) \cup
\dots \cup (\frac{20}{49}, \frac{21}{49})$.
Thus we have
$X = \{0, \frac{12}{49}, \frac{13}{49}, \dots, \frac{20}{49}, \frac{21}{49},
1\}$.
\end{example}

Recall the two-dimensional polyhedral complex $\Delta\P_B$ and its additive
faces, introduced in \autoref{s:polyhedral-complex}.  Let
\begin{equation}\label{eq:def-V}
  V := \bigl\{\,p_i(x,y) \bigst (x,y) \text{
    additive vertex of }
  \Delta\P_B, \; i=1,2,3 \,\bigr\} \cap [0,1]
\end{equation}
be the set of $p_1, p_2$ and $p_3$ projections (within the fundamental domain)
of the zero-dimensional additive faces (i.e., additive vertices). 
Because $(x, 0)$ is an additive vertex of $\Delta\P_B$ for every $x \in B$,
the set~$V$ contains $B \cap [0,1]$.
By \autoref{rem:initial-moves-join-closed}, 
the initial move ensemble $\Omegainit$ is join-closed
.  \tblue{NEW:::}
We consider the ensemble $\Omegainit|_{U}$ of moves restricted
to $U$, as defined in \autoref{s:restrictions}. By
\autoref{lemma:restriction-of-restrict/join-is-restrict/join} it is also
join-closed and therefore, by \autoref{lemma:joined-is-presented-by-max}, has
a presentation by its maximal elements.
It follows from
\autoref{lemma:additive-is-subcomplex} that its
maximal elements have the following relation to the set~$V$.
\begin{lemma}\label{lemma:endpoints_max_initial}
  If $\gamma|_{(a,b)} \in \maxdom(\Omegainit|_{U})$, then the endpoints $a, b$ 
  lie in~$V \cap U$ or $\partial U$.
\end{lemma}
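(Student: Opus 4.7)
The plan is to use \autoref{lemma:restriction-of-maxdom-of-join-is-max} together with the face structure of $\Delta\P_B$ and the covering mechanism of \autoref{lem:squares}. Since $\Omegainit$ is join-closed (\autoref{rem:initial-moves-join-closed}), \autoref{lemma:restriction-of-maxdom-of-join-is-max} implies that $\gamma|_{(a,b)}$ arises as the restriction to the connected component $(a',b')$ of $U$ containing $(a,b)$ of some $\gamma|_{(\hat a, \hat b)} \in \maxdom(\Omegainit)$, so $a = \max(\hat a, a')$. If $a = a'$ then $a \in \partial U$ and there is nothing to prove; otherwise $a = \hat a \in U$, and I need to show $\hat a \in V$. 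The argument for~$b$ is analogous after swapping left/right roles.

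Assume then $a = \hat a \in U$. Let $p$ denote the test point $(\hat a, t)$ if $\gamma = \tau_t$, or $(\hat a, r - \hat a)$ if $\gamma = \rho_r$, and let $F^*$ be the unique face of $\Delta\P_B$ whose relative interior contains~$p$. I proceed by case analysis on the dimension and orientation of~$F^*$.

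If $F^*$ is two-dimensional, a neighborhood of~$p$ lies in $F^*$, and hence so does the short segment inside~$F^*$ traced out by the portion of the move adjacent to~$p$. Since $\Delta\pi$ is affine linear on~$F^*$ and vanishes along this segment (or along the face-indexed set $E_{F^*}(\pi)$ in the limit-additive case), the face~$F^*$ is additive. Applying \autoref{lem:squares} to $E = \intr(F^*)$, the set $C_{F^*} = p_1(\intr(F^*)) \cup p_2(\intr(F^*)) \cup p_3(\intr(F^*))$ is a connected covered component contained in~$A$, and $\hat a \in p_1(\intr(F^*))$, contradicting $\hat a \in U$.

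If $F^*$ is one-dimensional, three subcases arise. A vertical edge directly yields $\hat a \in B \subseteq V$ via the additive vertex $(\hat a, 0)$. An edge whose direction differs from that of the move (anti-diagonal for $\tau_t$; horizontal for $\rho_r$) produces an adjacent two-dimensional face~$F''$ on the side containing the move's test segment; the same argument shows $F''$ is additive, and \autoref{lem:squares} again contradicts $\hat a \in U$. The remaining subcase is an edge aligned with the move's direction (horizontal for $\tau_t$ with $t \in B$; anti-diagonal for $\rho_r$ with $r \in B$): affine linearity of $\Delta\pi$ on~$F^*$ spreads $\Delta\pi = 0$ throughout the edge, and since~$p$ lies in the relative interior of $F^*$ rather than at a vertex, neither $\hat a$ nor $\gamma(\hat a)$ is a breakpoint of~$\pi$, so~$\pi$ is continuous at both, and a slightly enlarged domain still defines an initial move, contradicting maximality. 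Finally, if $F^*$ is zero-dimensional, then~$p$ is an additive 0-dim face of $\Delta\P_B$ by \autoref{lemma:additive-is-subcomplex}, whence $\hat a = p_1(p) \in V$. The main technical delicacy is handling limit-additive moves uniformly with additive ones, which is achieved by working throughout with the face-indexed additivity sets $E_F(\pi)$ from \autoref{def:set-of-additivities-and-limit-additivities-of-a-face-F}; once that is set up, the case analysis is structurally identical.
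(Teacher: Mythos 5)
The paper does not give a proof for this lemma; it merely asserts that it ``follows from'' Lemma~\ref{lemma:additive-is-subcomplex}. Your case analysis on the face $F^*\in\Delta\P_B$ whose relative interior contains the test point $p$ is a sensible expansion of that one-liner, and the reduction via Lemma~\ref{lemma:restriction-of-maxdom-of-join-is-max} to a maximal $\gamma|_{(\hat a,\hat b)}\in\maxdom(\Omegainit)$ together with the handling of the $0$-dimensional, vertical-edge, and ``aligned-edge'' subcases are all sound. You correctly identify the limit-additive bookkeeping via $E_F(\pi)$ as a delicacy; that part is handled.

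There is, however, a gap you do not flag: in the two-dimensional subcase and in the ``mixed-direction edge'' subcase you invoke Lemma~\ref{lem:squares} on $E=\intr(F^*)$ (resp.\ $\intr(F'')$) and assert that $C_{F^*}=p_1(E)\cup p_2(E)\cup p_3(E)$ is ``contained in $A$.'' That hypothesis is essential to Lemma~\ref{lem:squares} but need not hold here. For a translation move $\tau_t|_{(\hat a,\hat b)}$, the definition in \autoref{def:initial-moves} only forces $(\hat a,\hat b)$ and $(\hat a,\hat b)+t$ into $A\subseteq(0,1)$; the translation amount $t$ ranges over $(-1,1)$. When $t<0$ the test point $p=(\hat a,t)$ lies below the $x$-axis, so $p_2(\intr(F^*))\subseteq(-1,0)$, which is disjoint from $A$. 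Lemma~\ref{lem:squares} therefore cannot be applied as stated, because its proof builds reflections $\rho_{x+y}|_D$ whose image is near $y\notin A$. (A symmetric obstruction occurs when $p_3$ exceeds $1$.) The two-dimensional case is easy to repair without Lemma~\ref{lem:squares}: there, $\hat a$, $t$ and $\hat a+t$ are all non-breakpoints, so $\pi$ is continuous at $\hat a$ and $\hat a+t$, and the affine-plus-nonnegative argument shows $\Delta\pi$ (or $\Delta\pi_{F^*}$) vanishes on a two-sided neighborhood of $p$ inside $\intr(F^*)$; this yields a slightly enlarged initial move, contradicting maximality directly. For the mixed-edge case a covering argument really is needed, but you should route it through translation moves $\tau_y|_{D_x}$ only (these need $x$ and $x+y$ in $A$, not $y$), then apply \autoref{cor:open-sets-of-moves}\,(1) to the open set $\{(x,x+y):(x,y)\in\intr(F'')\}$ to cover $p_1(\intr(F''))\cup p_3(\intr(F''))$ by translations, and finally invoke the \eqref{axiom:translation-reflection} axiom, which holds in $\clsemi{\Omegainit}$, to upgrade to $\MovesOfGraph$. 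This avoids the false claim $C_{F^*}\subseteq A$.
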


Next we define the set
\begin{equation}\label{eq:def-Y}
  Y := \Omega(V\cap U) = \{\,\gamma|_D(x) \st x \in V \cap U,\; x \in D \text{
    and }\gamma|_D \in \Omega\,\},
\end{equation}
the orbit of $V \cap U$ under $\Omega$, which is a finite set by \autoref{assumption:moves-completion}.  
\begin{remark}
  In terms of graphs of ensembles, we have
  \begin{equation}\label{eq:Y-via-graph}
    Y = \{\,y \mid \exists \, x \in V \cap U \text{ such that } (x, y) \in
    \graph(\Omega)\,\}.
  \end{equation}
\end{remark}

\begin{lemma}
 \label{lemma:subset-relation-Y-and-U} 
  We have (a) $V\cap U \subseteq Y$, and (b) $Y \subseteq U$.  
 \end{lemma}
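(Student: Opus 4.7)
The plan is to prove both containments using the structure of the reduced finite presentation $(\Omega, \C)$ of the moves closure $\clsemi{\Omegainit}$.

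For part~(a), I would use the identity translation move. Since $\Delta\pi(x, 0) = \pi(x) + \pi(0) - \pi(x) = 0$ holds identically (using $\pi(0) = 0$), the identity $\tau_0|_D$ is an additive move and hence lies in $\Omegainit$ for every open interval $D$ on which $\pi$ is continuous. Given $x \in V \cap U$, let $D$ be a maximal open interval containing $x$ with $\tau_0|_D \in \clsemi{\Omegainit}$. Since $D$ contains the uncovered point $x$, the domain $D$ is not contained in any covered component $C_i \in \C$, so $\tau_0|_D \notin \joinmoves(\C)$. Combined with the maximality of $D$, the reduced-form condition \eqref{axiom:reduced} forces $\tau_0|_D \in \Omega$. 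Since $\tau_0|_D(x) = x$ with $x \in V \cap U \cap D$, we conclude $x \in \Omega(V \cap U) = Y$.

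For part~(b), I would argue by contradiction, using the saturation of covered components in a closed move semigroup via \autoref{lem:indirectly_covered_from_move}. Suppose $y = \gamma|_D(x) \in Y$ with $\gamma|_D \in \Omega$, $x \in V \cap U \cap D$, but $y \notin U$. Then $y \in \cl(C)$, so every neighborhood of $y$ meets some covered component $C_i$. Since $x$ lies in the open set $D \cap U$, some open neighborhood $N_x \subseteq D \cap U$ exists. By continuity of the affine map $\gamma$, the image $\gamma(N_x)$ is a neighborhood of $y$, and after possibly shrinking we obtain an open subinterval $D' \subseteq N_x$ with $\gamma(D') \subseteq C_i$. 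Then $\gamma|_{D'} \in \clsemi{\Omegainit}$ by \eqref{axiom:restrict}, and its inverse $\gamma^{-1}|_{\gamma(D')}$ has domain contained in $C_i$. Applying \autoref{lem:indirectly_covered_from_move} to this inverse would extend $C_i$ by $D' \subseteq U$, contradicting the fact that $U$ is disjoint from all covered components of the saturated closed move semigroup $\clsemi{\Omegainit}$.

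The main technical obstacle concerns part~(a) in the case where $\pi$ is discontinuous at $x \in V \cap U$: then no identity move can have $x$ in its open domain, and one must instead realize $x$ as an image under a different move, for instance a limit-additive move arising from the additive vertex of $\Delta\P_B$ witnessing $x \in V$. In part~(b), the saturation step implicitly relies on the fact that the construction of $\C$ in $\clsemi{\Omegainit}$ has already absorbed every component-extending move, which is built into the definition of the closed move semigroup but deserves careful bookkeeping when invoking \autoref{lem:indirectly_covered_from_move}.
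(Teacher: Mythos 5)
Your proof takes essentially the same approach as the paper's: for part~(a), exploit the ubiquitous identity move $\tau_0|_D$ to put $x$ in its own orbit under $\Omega$; for part~(b), argue by contradiction via \autoref{lem:indirectly_covered_from_move} and the invariance of the covered set $C$ under the action of $\clsemi{\Omegainit}$. Your execution of part~(b) is slightly more topologically explicit than the paper's --- shrinking the domain so that $\gamma(D') \subseteq C_i$ cleanly handles the case $y \in \partial C$, whereas the paper asserts the invariance of $C$ and pushes $y \in \cl(C)$ back to $x \in \cl(C)$ in one line. Both versions ultimately rest on the maximality of the components in $\C$ in the reduced finite presentation.

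The discontinuity concern you raise for part~(a) is legitimate, but your proposed repair would not close it. Every move $\gamma|_{D'} \in \Omega \subseteq \clsemi{\Omegainit}$ has $D'$ and $\gamma(D')$ contained in the continuity set $A$, so $Y = \Omega(V \cap U) \subseteq A$. If $\pi$ were discontinuous at some $x \in V \cap U$, then $x \notin A$, and $x$ could not occur as the image of \emph{any} move in $\Omega$ --- not an identity move, not a limit-additive move, nothing --- so claim~(a) would fail outright. The paper's own proof does not address this either: it silently produces an open interval $D$ with $x \in D$ and $\tau_0|_D \in \Omegainit$, which presupposes $D \subseteq A$ and hence $x \in A$. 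The lemma therefore rests on an implicit structural fact --- that under \autoref{assumption:one-sided-continuous} no interior discontinuity point of $\pi$ can lie in the uncovered open set $U$ --- which you were right to flag, and which is better resolved by stating and proving that fact than by substituting a different kind of move.
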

\begin{proof}
(a) Let $x \in V\cap U$.  Since $\Delta\pi(x,y)=0$ for any $x \in \R$ when
$y=0$, we know that there is open interval $D$ with $x \in D \subseteq U$ such
that the idempotent $\tau_0|_D$ is in $\Omegainit$ and hence in $\restrict{\Omega}$ as well. Since $x=\tau_0|_D(x)$, we obtain that $x \in Y$.

(b) Suppose for the sake of contradiction that there is $y \in Y$ but $y \in \cl(C)$. 
We can write $y$ as $y=\gamma|_D(x)$ where $x \in V \cap U$, $x\in D$  and
$\gamma|_D \in \Omega$. Under  \autoref{assumption:moves-completion}, by
\autoref{lem:indirectly_covered_from_move} applied to $\C$ and
$\ctscl(\Omegainit)$, we have that $C$
is invariant under the action of moves from $ \ctscl(\Omegainit)$. Since the inverse move $(\gamma|_D)^{-1} \in \Omega \subseteq \ctscl(\Omegainit)$,
we obtain that  $x = (\gamma|_D)^{-1} (y) \in \cl(C)$. This contradicts $x \in U$.
 \end{proof}

\begin{example}[\autoref{ex:equiv7_minimal_2_covered_2_uncovered}, continued]
  In the example shown in \autoref{fig:equiv7_example_xyz_2-completion}, we
  have $V\cap U = \{\frac13, \frac5{12}, \frac12, \frac7{12}\}$.  This set is
  already closed under the action of $\Omega$, as
  $\rho_{11/12}(\frac13) = \frac7{12}$ and
  $\rho_{11/12}(\frac5{12}) = \frac12$. Thus $Y = V\cap U$ in the example.
\end{example}

We consider the ensembles $\Omega |_{U}$ and ${}_{U}|\Omega|_{U}$ of moves restricted and double-restricted to $U$, as defined in \autoref{s:restrictions}.
We have the following results.

\begin{lemma}
\label{lemma:omegau-uu-finite}
The move ensemble $\Omega |_{U}$
satisfies:
\begin{enumerate}[\rm(a)]
\item $\Omega |_{U} = {}_{U}|\Omega|_{U}$.
\item $\Omega |_{U}$ is a finite move ensemble.
\end{enumerate}
\end{lemma}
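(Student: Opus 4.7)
The plan is to derive both claims from \autoref{assumption:moves-completion} combined with the invariance of the covered set $C$ under the action of $\ctscl(\Omegainit)$, which is the same mechanism already used in \autoref{lemma:subset-relation-Y-and-U}(b).

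For part (a), the inclusion ${}_{U}|\Omega|_{U} \subseteq \Omega|_{U}$ is immediate from the definitions in \autoref{s:restrictions}, so it suffices to show that whenever $\gamma|_D \in \Omega|_{U}$ (that is, $D \subseteq U$), one also has $\gamma(D) \subseteq U$. I would argue by contradiction: suppose $\gamma(D) \not\subseteq U$. Since $\Omega \subseteq \ctscl(\Omegainit)$, all moves have domain and image in $A \subseteq (0,1)$, and $(0,1) = C \cup X \cup U$ with $X = \partial C$, so the open set $\gamma(D)$ must meet $\cl(C)$; because $C$ is open, an open set meeting $\cl(C)$ necessarily meets $C$ itself, so $\gamma(D) \cap \Comp[i] \neq \emptyset$ for some component $\Comp[i] \in \CC$. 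Pick an open interval $D' \subseteq \gamma(D) \cap \Comp[i]$. The inverse move $\gamma^{-1}|_{D'}$ lies in $\ctscl(\Omegainit)$ by \eqref{axiom:inverse} and \eqref{axiom:restrict}, and its domain $D'$ is contained in $\Comp[i]$. Applying \autoref{lem:indirectly_covered_from_move} to $\CC$ and $\ctscl(\Omegainit) \supseteq \moves(\CC)$ then shows that $\Comp[i] \cup \gamma^{-1}(D')$ is contained in a connected covered component of $\ctscl(\Omegainit)$, hence in $C$. But $\gamma^{-1}(D') \subseteq D \subseteq U$, contradicting the partition $[0,1] = C \cup X \cup U$.

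For part (b), the argument is a direct counting. By \autoref{assumption:moves-completion}, $\Omega$ is a finite ensemble and each $\Comp[i] \in \CC$ is a finite union of open intervals, so $\cl(C)$ is a finite union of closed intervals and $U = (0,1) \setminus \cl(C)$ is a finite disjoint union of open intervals, say $U = U_1 \sqcup \dots \sqcup U_N$. By the restriction definition in \autoref{s:restrictions}, $\Omega|_{U}$ consists of the nonempty restrictions $\gamma|_{D \cap U_j}$ for $\gamma|_D \in \Omega$ and $j \in \{1,\dots,N\}$, so $\Omega|_{U}$ has at most $N \cdot \#\Omega$ elements, which is finite.

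The main obstacle is part (a): one must rule out moves of the reduced presentation that straddle the boundary between $U$ and $C$. This rests on the invariance of $C$ under $\ctscl(\Omegainit)$, which in turn is a consequence of the extend-component-by-move principle of \autoref{lem:indirectly_covered_from_move}. Part (b) is then routine bookkeeping once both $\Omega$ and the set of connected components of $U$ are known to be finite.
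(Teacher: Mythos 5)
Your proof is correct and follows exactly the route the paper's own (very terse) proof indicates: the paper defers to \autoref{assumption:moves-completion} with a marginal note to invoke the extend-component-by-move lemma, which is precisely what you do via \autoref{lem:indirectly_covered_from_move} (and the invariance of $C$ under $\ctscl(\Omegainit)$, as used in \autoref{lemma:subset-relation-Y-and-U}) to rule out moves of $\Omega|_{U}$ whose image leaves~$U$. Part~(b) is, as you say, immediate bookkeeping from the finiteness of $\Omega$ and of the components of~$U$.
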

\begin{proof}
 It follows directly from
 \autoref{assumption:moves-completion}. 
 \tred{For (a), cite extend-component-by-move?}
\end{proof}

\begin{lemma}[\unboldmath Filtration of $\semi{\Omegainit|_U}$ by word length; maximal moves]
  \label{lemma:endpoints_maxsemi_omega0u}
  For $k\in \N$, 
  let 
  \begin{align*}
    \Omegainit|_U{}^k = \bigl\{\, \gamma^k|_{D^k} \circ \dots \circ \gamma^1|_{D^1} \bigst \,
    &\gamma^i|_{D^i} \in \Omegainit|_U \text{ for $1\leq i  \leq k$} 
           \,\bigr\}.
  \end{align*}  
  Then $\Omegainit|_U{}^1 \subseteq \Omegainit|_U{}^2 \subseteq \dots$ and
  $\semi{\Omegainit|_U} = \bigcup_{k\in\N} \Omegainit|_U{}^k$.  
  For each $k\in\N$, the ensemble $\Omegainit|_U{}^k$ satisfies
  \eqref{axiom:restrict} and has a presentation by the set
  $\maxdom(\Omegainit|_U{}^k)$ of its maximal elements, which is a finite set. For 
  $\gamma|_{(a,b)} \in \maxdom( \Omegainit|_U{}^k)$, we have $a, b,
  \gamma(a), \gamma(b) \in  X \cup Y$.
\end{lemma}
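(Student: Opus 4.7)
The strategy is to establish an auxiliary inverse-closure property of $\Omegainit|_U$, then derive (1)--(3), and finally prove (5) by induction, from which (4) and the finiteness part of (3) follow.

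\textbf{Inverse closure of $\Omegainit|_U$.} I would first show that $\Omegainit|_U = {}_U|\Omegainit|_U$: if $\gamma|_D \in \Omegainit \subseteq \ctscl(\Omegainit)$ with $D \subseteq U$, then $\gamma(D) \subseteq U$. Otherwise there is an open subinterval $D' \subseteq D$ with $\gamma(D') \subseteq \Comp[i]$ for some $\Comp[i] \in \C$; applying \autoref{lem:indirectly_covered_from_move} to $\gamma^{-1}|_{\gamma(D')} \in \ctscl(\Omegainit)$ would enlarge $\Comp[i]$ by $D' \subseteq U$, contradicting the maximality of the covered components in the reduced presentation. Combined with $\Omegainit$'s inverse closure (\autoref{rem:initial-moves-join-closed}), this implies that $\Omegainit|_U$ is inverse-closed.

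\textbf{Properties (1), (2), (3).} For (1), note that $\tau_0|_D \in \Omegainit|_U$ for every open interval $D \subseteq A \cap U$ (since $\Delta\pi(x,0) = 0$), so composing $g \in \Omegainit|_U{}^k$ with the identity on its domain gives the same move as an element of $\Omegainit|_U{}^{k+1}$. For (2), the inverse closure of $\Omegainit|_U$ together with \autoref{lemma:inv-semigroup-words} gives $\semi{\Omegainit|_U} = \bigcup_k \Omegainit|_U{}^k$. For (3), restriction of a composition amounts to restricting its innermost factor, so $\Omegainit|_U{}^k$ satisfies \eqref{axiom:restrict}; the presentation by $\maxdom(\Omegainit|_U{}^k)$ then follows once we establish the finiteness below.

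\textbf{Endpoint claim (5), by induction on~$k$.} I would prove the stronger statement that every $w \in \Omegainit|_U{}^k$ is a restriction of some move whose four corner endpoints (of domain and image) lie in $X \cup Y$. The base case $k=1$ uses \autoref{lemma:endpoints_max_initial} for the domain endpoints of maximal moves, which lie in $(V\cap U) \cup \partial U \subseteq Y \cup X$, and then applies the same lemma to $(\gamma|_{(a,b)})^{-1}$ (also maximal in $\Omegainit|_U$, since the restriction partial order commutes with inversion) for the image endpoints. For the inductive step, decompose $w = h \circ g$ with $h \in \Omegainit|_U$ and $g \in \Omegainit|_U{}^k$, and extend to $h^+, g^+$ with corner endpoints in $X \cup Y$ by hypothesis. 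The domain endpoints of $h^+ \circ g^+$ come from $\{a_{g^+}, b_{g^+}, (g^+)^{-1}(a_{h^+}), (g^+)^{-1}(b_{h^+})\}$; the first two lie in $X \cup Y$ by induction. For a binding case such as $a_{h^+}$ in the image of $g^+$, which is contained in $U$: since $X \cap U = \emptyset$, we get $a_{h^+} \in Y$, so $a_{h^+} = \gamma'(x)$ with $x \in V\cap U$ and $\gamma'|_{D'} \in \Omega$. Then $(g^+)^{-1}(a_{h^+}) = ((g^+)^{-1} \circ \gamma')(x)$, where $(g^+)^{-1} \circ \gamma' \in \ctscl(\Omegainit) = \joinmoves(\Omega, \C)$ has a local piece at~$x$ that must come from $\Omega$ (not from $\moves(\C)$, since $x \in V \cap U$ is disjoint from $C$), yielding $(g^+)^{-1}(a_{h^+}) \in \Omega(V\cap U) = Y$. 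Image endpoints are handled symmetrically via $w^{-1} = g^{-1} \circ h^{-1} \in \Omegainit|_U{}^{k+1}$.

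\textbf{Finiteness (4) and main obstacle.} Finally, (4) follows because each maximal move is determined by its four corner endpoints together with a character $\pm 1$, all drawn from the finite set $X \cup Y$, so $\maxdom(\Omegainit|_U{}^k)$ is finite. The main obstacle is the endpoint-tracking step in the inductive argument for~(5): controlling the binding endpoint $(g^+)^{-1}(a_{h^+})$ requires the subtle observation that, because $V \cap U$ is disjoint from~$C$, any local piece of a move of $\ctscl(\Omegainit)$ acting at a point of $V \cap U$ must come from the finite ensemble $\Omega$ rather than from $\moves(\C)$; this is precisely what links the abstract endpoint $(g^+)^{-1}(a_{h^+})$ back to the orbit $\Omega(V\cap U) = Y$.
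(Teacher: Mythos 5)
Your proof is correct and follows essentially the same approach as the paper's, including the crucial observation that the composite $(g^+)^{-1}\circ\gamma'\in\ctscl(\Omegainit) = \joinmoves(\Omega,\C)$ must act near a point of $V\cap U$ via a restriction of a move from $\Omega$ (not from $\moves(\C)$, since $U\cap C=\emptyset$); this is precisely the mechanism the paper uses to conclude $x\in Y$ at the end of its induction.

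One point you should make explicit: your strengthened inductive statement should require the witness $g^+$ to lie in $\UOOUk$---this is what guarantees $\im(g^+)\subseteq U$ (so that a binding endpoint $a_{h^+}\in\im(g^+)$ forces $a_{h^+}\in Y$ via $X\cap U=\emptyset$) and guarantees $(g^+)^{-1}\in\ctscl(\Omegainit)$, both of which your argument uses. Such a $g^+$ does exist because $\UOOU$ is join-closed (\autoref{lemma:joined-is-presented-by-max}), so replacing each factor of a composition by its maximal extension keeps the extended word inside $\UOOUk$. As phrased (``some move'' with no location constraint), the inductive step does not close.

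Minor presentation differences from the paper, none affecting correctness: you derive finiteness of $\maxdom(\UOOUk)$ from the endpoint claim, while the paper first gets finiteness of $\maxdom(\UOOU)$ directly from the piecewise-linear structure (via \autoref{rem:limit-additivites-stratified} and \autoref{lem:squares}) and then bounds $\maxdom(\UOOUk)\subseteq\maxdom(\UOOU)^k$; both orders work. Your inverse-closure argument via \autoref{lem:indirectly_covered_from_move} and maximality of the components is a useful explicit alternative to the paper's terse appeal to \autoref{lemma:omegau-uu-finite}\,(a). You also decompose $w=h\circ g$ with the single factor outermost, whereas the paper uses $\gamma^2\circ\gamma^1$ with the single factor innermost---both are valid given \eqref{axiom:inverse} and associativity.
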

\begin{proof}
Because $\Omegainit$ satisfies \eqref{axiom:inverse},
\eqref{axiom:join}, and \eqref{axiom:restrict} by
\ref{rem:initial-moves-join-closed}, so does its double restriction ${}_U|\Omegainit|_U$ to the
uncovered set~$U$.  
By \autoref{lemma:omegau-uu-finite}-(a), we have  $\im(\Omegainit|_U) \subseteq \im(\Omega|_U) = \im({}_U|\Omega|_U) \subseteq U$, hence $\Omegainit|_U =  {}_U|\Omegainit|_U$. 
\tblue{This is the new version:}%
Recall that $\UOOU$ is join-closed and therefore has a presentation
  by its maximal elements.  It follows from the 
  definition of $\Omegainit$, \autoref{assumption:piecewise} with
  \autoref{rem:limit-additivites-stratified}, and \autoref{lem:squares} that
  $\maxdom(\UOOU)$ is finite.

  Let
  $\maxdom(\UOOU)^k = \{\, \gamma^k|_{\hat D^k} \circ \dots \circ
  \gamma^1|_{\hat D^1} \st \gamma^i|_{\hat D^i} \in \maxdom(\UOOU) \,\}$,
  a finite set.  Then $ \maxdom(\UOOUk) \subseteq \maxdom(\UOOU)^k$ is finite,
  and every element of $\UOOUk$ is the restriction of an element
  of~$\maxdom(\UOOUk)$.
  \begin{doublestaruseless}
    Indeed, let $\gamma|_D \in \UOOUk$, so
    $\gamma|_D = \gamma^k|_{D^k} \circ \dots \circ \gamma^1|_{D^1}$, where
    $\gamma^i|_{D^i} \in \UOOU$.  Replacing $\gamma^1|_{D^1}$ by a restriction
    of it, we can obtain any restriction of~$\gamma|_D$, so $\UOOUk$
    satisfies~\eqref{axiom:restrict}.  On the other hand, replacing the
    factors by maximal elements,
    $\gamma^i|_{D^i} \subseteq \gamma^i|_{\hat D^i} \in \maxdom(\UOOU)$, we
    obtain
    $\gamma|_D \subseteq \gamma^k|_{\hat D^k} \circ \dots \circ
    \gamma^1|_{\hat D^1} \in \maxdom(\UOOU)^k$.%
\end{doublestaruseless}
  The chain of inclusions
  $\Omegainit|_U{}^1 \subseteq \Omegainit|_U{}^2 \subseteq \dots$ holds because
  the idempotents $\tau_0|_D$ for intervals $D \subseteq U$ are elements of
  $\Omegainit|_U$.

  Last, we prove the claim regarding the endpoints; we actually
  prove the slightly stronger claim  $a, b, \gamma(a), \gamma(b) \in  \partial
  U \cup Y$ by induction by word length~$k$. Since each $\UOOUk$ satisfies
  \eqref{axiom:inverse}, it suffices to prove $a,b\in \partial U\cup Y$.
  For $k=1$,
  let $\gamma|_{(a,b)} \in \maxdom(\UOOU^1) = \maxdom(\UOOU)$.
  \tblue{This is the new version:}%
  Then, by
  \autoref{lemma:endpoints_max_initial}, each of the endpoints $a$, $b$ lies
  in~$V\cap U \subseteq Y$, or it lies in~$\partial U$. 
  Now we proceed by induction. 
  Take $\gamma^1|_{(a,b)} \in \maxdom(\UOOU)$ and $\gamma^2|_{(c,d)} \in
  \maxdom(\UOOU^{k-1})$, so $a,b,c,d \in \partial U\cup Y$.  
  Then, by \eqref{eq:composition}, $\gamma^2|_{(c,d)} \circ
  \gamma^1|_{(a,b)}$ has domain $(a,b) \cap (\gamma^1)^{-1}((c,d))$. 
  If the domain is nonempty, let $x$ be an endpoint of it.
  If $x = a, b$, nothing is to show, so assume $x \in (a,b)$ and $x =
  (\gamma^1)^{-1}(y)$, where $y = c$ or $y = d$, so $y \in \partial U\cup Y$. 
  But $y = \gamma^1|_{(a,b)}(x) \in U$, so $y \in Y$.  Then it follows 
  that
  also $x \in Y$. 
\end{proof}

By \autoref{assumption:moves-completion}, all elements of $\Omega$ are maximal moves
of the moves closure $\clsemi{\Omegainit}$.  Therefore, by
\autoref{lemma:restriction-of-maxdom-of-join-is-max}, all elements of $\Omega
|_{U}$ are maximal moves of $\clsemi{\Omegainit}|_{U}$. \smallbreak

After these preliminaries, we are able to state the main theorem.   
\begin{theorem}[Structure and generation theorem for finitely presented moves
  closures]
\label{thm:endpoints-of-maximal-domain-move}
Under \autoref{assumption:moves-completion}, we have
\begin{enumerate}[\rm(a)]
\item $\clsemi{\Omegainit} = \joinextend{\,\clsemi{\Omegainit|_U} \cup\clsemi{\Omegainit|_C}\,}$.
\item $\Omega|_U =  \maxdom(\joinextend{\semi{\Omegainit|_U}})$.
\item $a, b, \gamma(a), \gamma(b) \in X \cup Y$ for any $\gamma|_{(a,b)} \in \Omega |_{U}$.
\end{enumerate}
\end{theorem}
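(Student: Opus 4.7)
My approach would prove the three parts in the order (b), (c), (a), relying on two preliminary facts. \textbf{Invariance of~$U$:} any $\gamma|_D\in\clsemi{\Omegainit}$ with $D\subseteq U$ satisfies $\gamma(D)\subseteq U$; otherwise a point $y\in\gamma(D)\cap\Comp[i]$ together with the inverse move in the closure would, via \autoref{lem:indirectly_covered_from_move}, enlarge the component $\Comp[i]\in\C$ into~$U$, contradicting the reduced form of $(\Omega,\C)$. \textbf{Key identity:}
\begin{equation*}
  \clsemi{\Omegainit}|_U = \joinextend{\semi{\Omegainit|_U}}. \tag{$\star$}
\end{equation*}
The inclusion $\supseteq$ follows by monotonicity, invariance, \autoref{lemma:joinsemi-is-semi}, and \autoref{lemma:jextendsemi-closed}. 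For $\subseteq$, the two axioms of a closed move semigroup not already present in $\joinextend{\semi{\cdot}}$, namely \eqref{axiom:limits} and \eqref{axiom:translation-reflection}, contribute nothing within~$U$: a sequence of distinct moves $\gamma^i|_D\to\gamma|_D$ in $\clsemi{\Omegainit}|_U$ would, by \autoref{thm:dense-boxes-in-joinlim}, force $\MovesOfGraph((D\cup\gamma(D))\times(D\cup\gamma(D)))\subseteq\clsemi{\Omegainit}$, making $D\cup\gamma(D)\subseteq U$ a covered interval and contradicting $U\cap C=\emptyset$; an analogous use of \autoref{cor:open-sets-of-moves} rules out nontrivial applications of \eqref{axiom:translation-reflection}.

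\textbf{Part (b).} Because $\moves(\C)|_U=\emptyset$ by invariance together with $C\cap U=\emptyset$, the reduced-form decomposition $\Omega=\maxdom(\clsemi{\Omegainit})\setminus\joinmoves(\C)$ combined with \autoref{lemma:restriction-of-maxdom-of-join-is-max} gives $\Omega|_U=\maxdom(\clsemi{\Omegainit}|_U)$; together with~($\star$) this proves~(b).

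\textbf{Part (c).} By (b), a move $\gamma|_{(a,b)}\in\Omega|_U$ is maximal in $\joinextend{\semi{\Omegainit|_U}}$, so by the explicit characterization~\eqref{eq:jextend-equiv-def} each endpoint arises either as an endpoint of an interval of a move in $\semi{\Omegainit|_U}$ (in $X\cup Y$ by \autoref{lemma:endpoints_maxsemi_omega0u}) or as a point of $\partial A\cup\gamma^{-1}(\partial A)$ blocking the extension. Since $\partial A\subseteq B\cup\{0,1\}$, $B\cap U\subseteq V\cap U\subseteq Y$ (using that $(x,0)$ is an additive vertex for every $x\in B$), and $\partial U\cup\{0,1\}\subseteq X$, the blocking points of $\partial A\cap\cl(U)$ also lie in $X\cup Y$; since the inverse move $\gamma^{-1}|_{(\gamma(a),\gamma(b))}$ is likewise maximal in $\Omega|_U$, the same reasoning applied to it yields $\gamma(a),\gamma(b)\in X\cup Y$.

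\textbf{Part (a).} The inclusion $\supseteq$ is immediate by monotonicity and the fact that $\clsemi{\Omegainit}$ itself satisfies \eqref{axiom:joinextend}. For $\subseteq$, given $\gamma|_D\in\clsemi{\Omegainit}$, partition $D$ into its maximal open subintervals $D_j$ contained entirely in $U$ or entirely in $C$; by invariance, each $\gamma|_{D_j}$ lies in $\clsemi{\Omegainit}|_U=\clsemi{\Omegainit|_U}$ (by~($\star$)) or in the $C$-analogue $\clsemi{\Omegainit}|_C=\clsemi{\Omegainit|_C}$ (similar but simpler, since on $C$ the 2D additive faces via \autoref{lem:squares} already supply the full box structure captured by $\moves(\C)$). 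The axiom \eqref{axiom:joinextend} applied to the family $\{\gamma|_{D_j}\}$ then reconstructs $\gamma|_D$, using $D\subseteq\cl\bigl(\bigcup_j D_j\bigr)\cap A\cap \gamma^{-1}(A)$ (the containments in $A$ and $\gamma^{-1}(A)$ being preserved by all closure axioms, as one verifies inductively). The principal obstacle is the rigorous discharge of~($\star$), specifically making precise the contradiction via \autoref{thm:dense-boxes-in-joinlim} that forbids genuinely new limit moves inside~$U$; once ($\star$) is in hand, the $C$-analogue and the endpoint-tracking in~(c) are comparatively routine.
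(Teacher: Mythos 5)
Your proposal reverses the paper's order — you want (b), (c), (a) with the key identity $(\star)\colon\,\clsemi{\Omegainit}|_U = \joinextend{\semi{\Omegainit|_U}}$ doing all the work — whereas the paper proves (a) first and derives (b), (c) from it. This is a genuinely different architecture, and the difference matters: the gap you yourself flag as the ``principal obstacle'' is real and is precisely what the paper's ordering is designed to avoid.

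The specific trouble with your proof of the $\subseteq$ direction of $(\star)$ is that observing ``\eqref{axiom:limits} and \eqref{axiom:translation-reflection} contribute nothing within $U$'' is not a containment argument. The object $\clsemi{\Omegainit}$ is a smallest set defined globally; nothing in that definition says the closure process decomposes along the $U$/$C$ partition, and the restriction of a closure need not equal the closure of the restriction. To make your argument rigorous you would have to either run a transfinite induction over the construction of $\clsemi{\Omegainit}$ (tracking how each axiom interacts with $U$ and $C$), or else — what the paper actually does — bypass the problem by exhibiting a candidate set, showing it is a closed move semigroup containing $\Omegainit$, and invoking minimality. Concretely, the paper proves (a) by showing $\Omega' := \joinextend{\,\clsemi{\Omegainit|_U}\cup\clsemi{\Omegainit|_C}\,}$ is closed under all seven axioms (using \autoref{lemma:jextendsemi-closed} for the semigroup and extension axioms, \autoref{thm:dense-boxes-in-joinlim} for \eqref{axiom:limits}, and the containment $\clsemi{\Omegainit|_C}=\moves(\C)$ for \eqref{axiom:translation-reflection}), whence $\clsemi{\Omegainit}\subseteq\Omega'$ by minimality. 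Restricting (a) to $U$ immediately gives $\clsemi{\Omegainit}|_U = \clsemi{\Omegainit|_U} = \restrict{\Omega|_U}$; the remaining half of your $(\star)$, namely $\clsemi{\Omegainit|_U}=\joinextend{\semi{\Omegainit|_U}}$, then comes for free from the \emph{finiteness} of $\Omega|_U$ (\autoref{lemma:omegau-uu-finite}), since a subset of a finite move ensemble trivially satisfies \eqref{axiom:limits} and \eqref{axiom:translation-reflection}. So the finiteness of the presentation (\autoref{assumption:moves-completion}) is the load-bearing fact, not the ``no new limits inside $U$'' obstruction argument.

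There is also a smaller issue in your part (c). The explicit description~\eqref{eq:jextend-equiv-def} gives $D\subseteq\cl(C_\gamma)\cap A\cap\gamma^{-1}(A)$, and the endpoints of $\cl(C_\gamma)$ are in general \emph{accumulation points} of endpoints of domains of moves in $\semi{\Omegainit|_U}$, not endpoints of such domains themselves. Your sentence ``each endpoint arises either as an endpoint of an interval of a move in $\semi{\Omegainit|_U}$ \ldots or as a point of $\partial A\cup\gamma^{-1}(\partial A)$'' misses this intermediate case. The paper handles it with a diagonal sequence $x^k_k\to x$ drawn from \autoref{lemma:endpoints_maxsemi_omega0u} and then invokes the finiteness of $X\cup Y$ (a discrete set) to conclude $x\in X\cup Y$; that finiteness step is essential and should be made explicit.
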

We emphasize that the theorem does \emph{not} depend on an algorithm to
compute the moves closure.
\begin{proof}
\emph{Part (a).}
Let $\Omega' $ denote the right hand side of the equation in part~(a).
Clearly, 
$\Omegainit \subseteq \Omega' \subseteq \clsemi{\Omegainit}$.
We now show that $\Omega'$ is a closed move semigroup.
By \autoref{lemma:omegau-uu-finite}-(a), we have that
\begin{align*}
\clsemi{\Omegainit|_U} &\subseteq \restrict{\Omega|_U} \subseteq \MovesOfGraph(U \times U); \\
\clsemi{\Omegainit|_C} &= \moves (\CC) \subseteq \MovesOfGraph(C \times C),
\end{align*}
where the open sets $U$ and $C$ are disjoint.
Thus, we have that  $\clsemi{\Omegainit|_U} \cup \clsemi{\Omegainit|_C}$ is a 
move semigroup, under \autoref{assumption:moves-completion}.
It follows from  \autoref{lemma:jextendsemi-closed} that $\Omega'$ is a move semigroup that satisfies \eqref{axiom:joinextend}.
Note that for any open intervals $D$ and $I$ such that 
$\MovesOfGraph(D\times I) \subseteq \clsemi{\Omegainit}$, we have $\MovesOfGraph(D\times I) \subseteq \clsemi{\Omegainit|_C}$. 
Therefore, $\Omega'$ also satisfies
\eqref{axiom:translation-reflection}. Moreover, \eqref{axiom:limits} holds by \autoref{thm:dense-boxes-in-joinlim}.
We conclude that $\Omega'$ is a closed move semigroup. Hence, part (a) holds.\medbreak

\emph{Part (b).}
 By restricting the moves ensembles on both sides of the equation in part (a) to domain $U$, we obtain that
\begin{equation}
\label{eq:omega_u_clsemi}
\restrict{\Omega|_U}  = \clsemi{\Omegainit}|_U = \clsemi{\Omegainit|_U}
\end{equation}
Next, we show that 
\begin{equation}
\label{eq:omega_u_no_lim}
\clsemi{\Omegainit|_U} = \joinextend{\semi{\Omegainit|_U}}.
\end{equation}
It follows from \autoref{lemma:jextendsemi-closed} that $\joinextend{\semi{\Omegainit|_U}}$ is a move semigroup that satisfies \eqref{axiom:joinextend} (and also \eqref{axiom:join} and \eqref{axiom:restrict}).
Since
\begin{equation}
\label{eq:jextendsemi-subset-clsemi}
\joinextend{\semi{\Omegainit|_U}} \subseteq \clsemi{\Omegainit |_{U}} = \restrict{\Omega|_U},
\end{equation}
where the equality follows from \eqref{eq:omega_u_clsemi}, and $\Omega |_{U}$ is a finite move ensemble by \autoref{lemma:omegau-uu-finite}-(b), 
we obtain that the move semigroup $\joinextend{\semi{\Omegainit|_U}}$ also satisfies \eqref{axiom:translation-reflection} and \eqref{axiom:limits}.  
Therefore, $\joinextend{\semi{\Omegainit|_U}}$ is a closed move semigroup which contains $\Omegainit|_U$. 
Since $\clsemi{\Omegainit |_{U}}$ is the smallest closed move semigroup containing $\Omegainit|_U$, we have
$$\clsemi{\Omegainit |_{U}}  \subseteq \joinextend{\semi{\Omegainit|_U}}.$$
Together with \eqref{eq:jextendsemi-subset-clsemi}, we conclude that \eqref{eq:omega_u_no_lim} holds.
Since $\Omega$ has only maximal moves, \eqref{eq:omega_u_clsemi} and \eqref{eq:omega_u_no_lim} imply the equation in part (b).
\medbreak

\emph{Part (c).}  Let $\gamma|_{(a,b)} \in \Omega
|_{U}$. By symmetry, it suffices to show that $a, b \in X \cup Y$. Consider $x=a$ or $x=b$.
Part (b) implies that \[\Omega|_U =  \maxdom(\joinextend{\joinsemi{\Omegainit|_U}}).\]
Together with \eqref{eq:jextend-equiv-def}, we know
that $x$ is the limit of a sequence $\{x^j\}_{j \in \N}$, where $x^j$ is an endpoint of the
domain $D^j$ of a move $\gamma|_{D^j} \in \maxdom(\joinsemi{\Omegainit|_U})$.
By \autoref{lemma:endpoints_maxsemi_omega0u} and \autoref{lemma:joined-ensemble-explicitly}, 
for any $j \in \N$,
we have that $D^j$ is a maximal subinterval of 
$\bigcup \{\,D \st \gamma|_D \in \bigcup_{k\in\N} \maxdom(\UOOUk)\,\}$.
Thus for every $j\in \N$, there exists a sequence $\{x^j_k\}_{k \in \N}$ such that 
each $x^j_k$ is an endpoint of the domain of a move  $\gamma|_{D^j_k} \in \maxdom(\UOOUk)$,
and $x^j_k \to x^j$ as $k \to \infty$.
We obtain that $x^k_k \to x$ as $k \to \infty$,
where each $x^k_k \in X\cup Y$ by  \autoref{lemma:endpoints_maxsemi_omega0u}.
Since $X \cup Y$ is a finite discrete set under \autoref{assumption:moves-completion}, we obtain that $x \in X \cup Y$.
\end{proof}

\subsection{Refined breakpoints $B'$, complex $\T$}

In addition to the finite sets $X$ and $Y$, we define 
\begin{equation} \label{eq:def-Z}
  Z  := \{\, x \st  x \in U,\; x = \rho|_D(x) \text{ for some reflection move
  } \rho|_D \in \Omega \,\},
\end{equation}
the set of \emph{uncovered character conflicts}.
\begin{remark}
In terms of $\graph_+$ and $\graph_-$ notations, the set 
$Z$ is the set of projections of the intersection of the translation and
reflection moves graphs restricted to the uncovered intervals, 
$Z = \{\,x \mid  x \in U, \; (x, x) \in \graph_\pm(\Omega)\,\}$.  
\end{remark}

\begin{example}[\autoref{ex:equiv7_minimal_2_covered_2_uncovered}, continued]
  In the example shown in \autoref{fig:equiv7_example_xyz_2-completion}, we
  have $Z = \{ \frac{11}{24} \}$.
\end{example}

\begin{theorem}
\label{thm:xyz-invariant-under-moves}
Under \autoref{assumption:moves-completion}, the sets $X$, $Y$, and $Z$ are
closed under the action of all moves from $\ctscl(\Omegainit)$.
\end{theorem}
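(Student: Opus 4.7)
The plan is to exploit the finite presentation $\ctscl(\Omegainit)=\joinmoves(\Omega,\C)=\join{\Omega\cup\moves(\C)}$ granted by \autoref{assumption:moves-completion}. Combined with \autoref{lemma:joined-ensemble-explicitly}, this supplies the key working fact to be used in all three invariance arguments: whenever $\gamma|_D\in\ctscl(\Omegainit)$ and $x\in D$, there is an open interval $I\ni x$ with $\gamma|_I\in\Omega\cup\moves(\C)$. I will also rely on two preliminary invariance statements. First, $C$ is invariant under $\ctscl(\Omegainit)$; this is exactly the fact already invoked in the proof of \autoref{lemma:subset-relation-Y-and-U} via \autoref{lem:indirectly_covered_from_move}. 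Applying the same fact to the inverse moves $\gamma^{-1}\in\ctscl(\Omegainit)$ then shows that $U$ is invariant as well.

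For $X$, fix $x\in X\cap D$ and note that $D\subseteq A$ forces $x\neq 0,1$, so $x\in\partial U\cap A$. Every neighborhood of $x$ inside $D$ meets both $C$ and $U$, so continuity of the affine map $\gamma$ together with the invariances of $C$ and $U$ gives $\gamma(x)\in\cl(C)\cap\cl(U)=\partial U$; since $\gamma(x)\in A\subseteq(0,1)$, this yields $\gamma(x)\in\partial U\subseteq X$.

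For $Y$, pick $y\in Y$ and write $y=\gamma(x)$ with $x\in V\cap U$ and $\gamma|_D\in\Omega$, and let $\gamma'|_{D'}\in\ctscl(\Omegainit)$ with $y\in D'$. Then $\gamma'\circ\gamma\in\ctscl(\Omegainit)$ has $x$ in its domain, so the key fact produces an open interval $I\ni x$ with $(\gamma'\circ\gamma)|_I\in\Omega\cup\moves(\C)$. Because $x\in U$ while every move in $\moves(\C)$ has its entire domain inside the covered set~$C$, this composition actually belongs to $\Omega$. Hence $\gamma'(y)=(\gamma'\circ\gamma)(x)\in\Omega(V\cap U)=Y$.

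For $Z$, fix $x\in Z$ with witness reflection $\rho|_{D_0}\in\Omega$ satisfying $\rho(x)=x$, and fix $\gamma'|_{D'}\in\ctscl(\Omegainit)$ with $x\in D'$. Form the conjugate $\rho^{*}:=\gamma'\circ\rho\circ(\gamma')^{-1}$, which is a reflection by the character calculation $\chi(\gamma')\cdot(-1)\cdot\chi(\gamma')^{-1}=-1$. The semigroup axioms place $\rho^{*}$ in $\ctscl(\Omegainit)$; a short computation of the composed domain via \eqref{eq:composition} confirms that $\gamma'(x)$ lies in the domain of $\rho^{*}$ and that $\rho^{*}(\gamma'(x))=\gamma'(x)$. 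Invariance of $U$ gives $\gamma'(x)\in U$, so the key fact, together with the same covered-versus-uncovered disjointness as in the $Y$-case, forces $\rho^{*}|_I\in\Omega$ for some open interval $I\ni\gamma'(x)$; this shows $\gamma'(x)\in Z$. The main step requiring genuine care is this last case, since one must manipulate the conjugate move's domain precisely to see that $\gamma'(x)$ is a fixed point of an honest reflection in $\Omega$, not merely of something in $\ctscl(\Omegainit)$; once this is done, all three invariance statements reduce to the uniform observation that a point of $U$ cannot lie in the domain of any move from $\moves(\C)$.
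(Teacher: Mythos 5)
Your proposal is correct and follows essentially the same route as the paper's proof: it uses the invariance of $C$ (hence of $U$) under $\ctscl(\Omegainit)$, a boundary/topology argument for $X$, a composition argument for $Y$, and a conjugation $\gamma'\circ\rho\circ(\gamma')^{-1}$ for $Z$. The only substantive difference is that you make explicit, via the presentation $\ctscl(\Omegainit)=\join{\Omega\cup\moves(\C)}$ and \autoref{lemma:joined-ensemble-explicitly}, why the relevant composition (resp.\ conjugate) lands in $\Omega$ rather than in $\moves(\C)$ — namely that a point of $U$ cannot lie in the domain of any move of $\moves(\C)$ — whereas the paper asserts membership in $\restrict{\Omega}$ without spelling out this justification.
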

\begin{proof}
Let $\gamma|_D$ be a move in $\ctscl(\Omegainit)$. 

Let $x \in X$ such that $x \in D$. Since $C$ is invariant under the action of all moves from $\ctscl(\Omegainit)$, we have that $\gamma|_D(x) \in X$. 

Let $y \in Y$ such that $y \in D$. There exist $x \in V\cap U$ and $\gamma'|_{D'} \in \Omega$ such that $\gamma'|_{D'}(x) = y$. We have $\gamma|_D(y) =\gamma|_D\circ \gamma'|_{D'}(x)$, where $\gamma|_D\circ \gamma'|_{D'} \in \restrict{\Omega}$. Therefore, $\gamma|_D(y) \in Y$. 

Let $z \in Z$ such that $z \in D$. By definition, $z \in U$ and $z = \rho|_{D'}(z)$ for some reflection move $\rho|_{D'} \in \Omega$. Let $z' = \gamma|_D(z)$. We have that $z' \in U$ and $z' = \gamma|_D \circ \rho|_{D'} \circ (\gamma|_D)^{-1} (z')$, where $\gamma|_D \circ \rho|_{D'} \circ (\gamma|_D)^{-1} \in \restrict{\Omega}$. Therefore,  $z'=\gamma|_D(z) \in Z$.
\end{proof}

Under \autoref{assumption:moves-completion}, the sets $X$, $Y$, $Z$ are
finite. 
We then define~$B'$, which is a finite set of points under \autoref{assumption:moves-completion}, a
\emph{refined set of breakpoints},
\begin{equation}
  B' := (X \cup Y \cup Z) + \Z.\label{eq:def-Bprime}
\end{equation}
By \autoref{assumption:minimal-B}, $B \cap C = \emptyset$ and
thus $B \subseteq B'$.  Hence, the polyhedral complex 
\begin{equation}
  \T := \P_{B'},\label{eq:def-T}
\end{equation}
is a refinement of $\P_B$, so our function $\pi$ is piecewise linear over $\T$. The
following result shows that each of the $p_1, p_2$ and $p_3$ projections of any
additive vertex of the two-dimensional polyhedral complex $\Delta \T$ is
either in~$B'$ or covered by $C$. 

\begin{theorem}[Breakpoint stabilization theorem]
\label{lemma:projections-of-vertDeltaT}
Let $(x,y)$ be an additive vertex of  $\Delta\T$. 
Let $z = x+y$. Then, $x, y, z \in B' \cup (C+\Z)$.
\end{theorem}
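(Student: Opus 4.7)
The plan is to classify the additive vertex $(x,y)$ by the face of the coarser complex $\Delta\P_B$ that contains it in its relative interior, and then exploit the invariance properties of $B'$ under the moves closure together with \autoref{lem:squares}. Let $F'\in\Delta\P_B$ be the unique face with $(x,y)\in\relint F'$. The first step is to show that $F'$ is an additive face of $\Delta\P_B$. Since $(x,y)$ is an additive vertex of $\Delta\T$, there is some $\tilde F\in\Delta\T$ with $(x,y)\in\tilde F$ and $\Delta\pi_{\tilde F}(x,y)=0$. Because $\T$ refines $\P_B$, the relative interior of $\tilde F$ sits inside the relative interior of some $F''\in\Delta\P_B$ with $F'\subseteq F''$, and the two one-sided limits of $\Delta\pi$ agree. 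Hence $(x,y)\in E_{F''}(\pi)$, and \autoref{lemma:additive-face-discontinuous} delivers $F'\subseteq E_{F''}(\pi)$, so $F'$ is additive.

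The second step is a case analysis on $\dim F'$. If $\dim F'=0$, then $(x,y)$ is itself an additive vertex of $\Delta\P_B$, so $x,y,z\in V$, and I would verify $V\subseteq B'\cup(C+\Z)$ directly from the partition $[0,1]=U\sqcup X\sqcup C$: by definition $V\cap U\subseteq Y\subseteq B'$, $V\cap X\subseteq B'$, and $V\cap C\subseteq C+\Z$. If $\dim F'=2$, then $F''=F'$, so no limit is needed and $\Delta\pi(x,y)=0$ is a genuine equality; affine linearity of $\Delta\pi$ on $\relint F'$ together with subadditivity forces $\Delta\pi\equiv 0$ on $\relint F'$. Since each $p_i(F')$ lies between consecutive breakpoints of $B$, $\pi$ is continuous on the projections, so \autoref{lem:squares} applies and $p_1(\relint F')\cup p_2(\relint F')\cup p_3(\relint F')$ is contained in a single connected covered component $C_j\in\C$; taking closures yields $x,y,z\in\cl(C_j)\subseteq(C+\Z)\cup X\subseteq(C+\Z)\cup B'$.

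In the remaining case $\dim F'=1$, I would argue by symmetry and assume $F'$ is horizontal, $F'=F(I,\{y_0\},K)$ with $I,K$ one-dimensional and $y_0\in B$; then $y=y_0\in B\subseteq B'$ automatically. Additivity of $F'$ (possibly as a limit from an adjacent 2-dimensional face) produces an (initial or limit-)additive translation $\tau_{y_0}|_D\in\Omegainit$ on an open interval $D$ containing $x$ with $z=\tau_{y_0}(x)$. Because $(x,y_0)$ is a vertex of $\Delta\T$, exactly one of the following holds: \textbf{(a)} $x,z\in B'$, in which case we are done; \textbf{(b)} $z\in B'$ but $x\notin B'$; or \textbf{(d)} $x\in B'$ but $z\notin B'$. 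In Type (d), $x\in X\cup Y\cup Z=B'\cap[0,1]$, so \autoref{thm:xyz-invariant-under-moves} applied to $\tau_{y_0}|_D\in\clsemi{\Omegainit}$ forces $z\in X\cup Y\cup Z\subseteq B'$, contradicting $z\notin B'$; Type (b) is ruled out symmetrically by applying the theorem to $\tau_{y_0}^{-1}\in\clsemi{\Omegainit}$.

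The main obstacle I anticipate is the verification of the first step in the discontinuous case, namely that limit-additivity at $(x,y)$ relative to some face $\tilde F$ of the refined complex $\Delta\T$ descends to limit-additivity relative to a face $F''$ of the coarser complex $\Delta\P_B$ containing $F'$. This requires carefully matching the one-sided limits across the refinement boundaries introduced by $B'\setminus B$, so that $\Delta\pi_{\tilde F}=\Delta\pi_{F''}$ on $\relint\tilde F$; once this is established, \autoref{lemma:additive-face-discontinuous} does the heavy lifting.
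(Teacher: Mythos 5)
Your proposal is correct and follows essentially the same approach as the paper's proof: a case analysis on $\dim F'$ (where $F'\in\Delta\P_B$ is the carrier of $(x,y)$), using Lemma~\ref{lemma:subset-relation-Y-and-U} in the $0$-dimensional case, Theorem~\ref{thm:xyz-invariant-under-moves} together with ``at least two of $x,y,z$ lie in $B'$'' in the $1$-dimensional case, and Lemma~\ref{lem:squares} in the $2$-dimensional case. Your phrasing of the edge case as a proof by contradiction is equivalent to the paper's direct WLOG argument, and the ``first step'' you flag as a potential obstacle (that the carrier $F'$ is additive) is indeed glossed over in the paper but resolves exactly as you sketch, since $\relint\tilde F\subseteq\relint F''$ for the minimal face $F''\in\Delta\P_B$ containing $\tilde F$, so $\Delta\pi_{\tilde F}$ and $\Delta\pi_{F''}$ agree at $(x,y)$.
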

\begin{proof}
Let $F$ be the unique face of $\Delta\P_B$ such that $(x,y) \in \relint(F)$. 
Since $(x,y)$ is an additive vertex of  $\Delta\T$,
and $\Delta\pi$ is non-negative and affine linear over $F$, we have that $F$ is an additive face of $\Delta\P_B$. 
Consider $t = x, y$ or $z$. By $\Z$-periodicity, we can assume $t \in [0,1]$.
To show that $t\in (B'\cap[0,1]) \cup C$, we distinguish three cases, as follows.
We recall that $B'\cap[0,1] = X \cup Y \cup Z$ and $U = (0,1) \setminus \cl(C)$. 

Assume that $F$ is a zero-dimensional additive face of $\Delta\P_B$. Then, $(x,y)$ is an additive vertex of $\Delta\P_B$, and thus $t \in V$. If $t \in \cl(C)$, then $t \in X \cup C \subseteq B' \cup C$. Otherwise, $t \in V \cap U$. Since $V\cap U \subseteq Y$ by \autoref{lemma:subset-relation-Y-and-U}, we obtain that $ t \in Y \subseteq B'$.

Assume that $F$ is a one-dimensional additive face (say, a horizontal additive edge) of $\Delta\P_B$.
Then, $y \in B  \subseteq B'$ and the move $\tau_y|_D$ with $x \in D :=
\intr(p_1(F))$ is in $\Omegainit$. Since $(x,y)$ is a vertex of $\Delta\T$, at
least two of $x, y, z$ are in $B'$, and hence at least one of $x$ and $z$ is
in $B'$. Without loss of generality, we assume that $x \in B'$. By
\autoref{thm:xyz-invariant-under-moves}, $z = \tau_y|_D(x) \in B'$ as
well. We showed that $x, y, z \in B'$ in this case. We omit the proof of the
cases where $F$ is a vertical or diagonal additive edge of $\Delta\P_B$, which
are similar to the above proof. 

Assume that $F$ is a two-dimensional additive face of $\Delta\P_B$. Then, by
\autoref{lem:squares}, we have $t \in C$.
\end{proof}

\begin{remark}
  \autoref{lemma:projections-of-vertDeltaT} is key to our grid-free theory.
  In the grid case of \cite{basu-hildebrand-koeppe:equivariant}, where
  $B = \tfrac{1}{q} \Z$, the projections $p_1\colon (x,y)\mapsto x$,
  $p_2\colon (x,y)\mapsto y$, and $p_3\colon (x,y)\mapsto x+y$ map all
  vertices of $\Delta \P_B$ back to the set~$B$. We have stabilization of
  breakpoints due to unimodularity.  Going to higher dimension (minimal valid
  functions of several variables), the piecewise linear functions defined on a
  standard triangulation of $\R^2$ studied in
  \cite{bhk-IPCOext,basu-hildebrand-koeppe:equivariant-general-2dim} also
  stabilize.  However, the non-existence of triangulations with stabilization
  for~$\R^k$, $k\geq 3$ \cite{hildebrand:group-subdivisions} blocks the path
  for further generalizations of the approach of
  \cite{basu-hildebrand-koeppe:equivariant,bhk-IPCOext,basu-hildebrand-koeppe:equivariant-general-2dim}.
  Our \autoref{lemma:projections-of-vertDeltaT} depends on more detailed data
  of the function than the group~$G$ generated by~$B$.  This ``dynamic''
  stabilization result could pave the way to generalizations to higher dimension.
\end{remark}

\subsection{Connected uncovered components $\texorpdfstring{\Uncomp[i]}{U\unichar{"1D62}}$}
\label{s:connected-uncovered-components}

Define
\begin{equation}
U' := U \setminus B'.\label{eq:def-Uprime}
\end{equation}
The interval $[0,1]$ is partitioned into the set $C$ of covered points, the
set $U'$ of uncovered points and the set $B'\cap[0,1]$ of breakpoints of $\T$.  
We consider the ensemble $\Omega |_{U'}$ of maximal moves restricted to $U'$ as defined in \autoref{s:restrictions}.
\autoref{lemma:omegau-uu-finite} and Theorems \ref{thm:endpoints-of-maximal-domain-move} and \ref{thm:xyz-invariant-under-moves} imply the following corollary.
\begin{corollary}
\label{cor:maximal-moves-set}
Under Assumptions~\ref{assumption:minimal-B} and~\ref{assumption:moves-completion}, 
the move ensemble $\Omega |_{U'}$
satisfies that:
\begin{enumerate}[\rm(a)]
\item $\Omega |_{U'} = {}_{U'}|\Omega|_{U'}$.
\item $\Omega |_{U'}$ is a finite move ensemble.
\item For any $\gamma|_D \in \Omega |_{U'}$, $\cl(D)$ and $\cl(\gamma(D))$ are faces of $\T$.
\end{enumerate}
\end{corollary}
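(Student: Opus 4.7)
The plan is to derive each of (a), (b), (c) directly from the stated results, with only one genuinely delicate observation about how the endpoints in $B'$ interact with the components of~$U'$.

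For part~(a), I would start from the analogous equality $\Omega|_U = {}_U|\Omega|_U$ in \autoref{lemma:omegau-uu-finite}(a). Since $U' \subseteq U$ and $\Omega|_{U'}$ is obtained from $\Omega|_U$ by further restriction of domains, it suffices to show that any $\gamma|_D \in \Omega|_{U'}$ satisfies $\gamma(D) \subseteq U'$, i.e., $\gamma(D) \cap B' = \emptyset$. The key is that moves in $\ctscl(\Omegainit)$ are partial bijections, and both $\gamma$ and $\gamma^{-1}$ lie in $\restrict{\Omega} \subseteq \ctscl(\Omegainit)$. By \autoref{thm:xyz-invariant-under-moves}, the set $X\cup Y\cup Z$ and hence $B' = (X\cup Y\cup Z)+\Z$ is invariant under every such move. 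Applying invariance to $\gamma^{-1}$ shows that any $y\in \gamma(D)\cap B'$ would come from $\gamma^{-1}(y)\in D\cap B' = \emptyset$, a contradiction.

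For part~(b), under \autoref{assumption:moves-completion} the sets $X,Y,Z$ are finite, so $B'\cap[0,1]$ is finite and $U' = U\setminus B'$ is a finite disjoint union of open intervals. Since $\Omega|_U$ is finite by \autoref{lemma:omegau-uu-finite}(b), the further restriction to the finitely many components of $U'$ yields only finitely many moves.

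For part~(c), let $\gamma|_D \in \Omega|_{U'}$. Then $D = D_0 \cap R'$ for some $\gamma|_{D_0} \in \Omega$ (with $D_0 \subseteq U$, by \autoref{lemma:omegau-uu-finite}(a)) and some component $R'$ of $U'$. By \autoref{thm:endpoints-of-maximal-domain-move}(c) the endpoints of $D_0$ lie in $X\cup Y\subseteq B'$. The components of $U'$ are precisely the maximal open subintervals of $U$ between consecutive points of $B'$, so both endpoints of $R'$ also belong to $B'$. The crucial observation, which is the only subtle point in the argument, is that no point of $B'$ can lie strictly between the two consecutive boundary points of $R'$; hence the endpoints of $D_0$, which are in $B'$, cannot lie in $R'$. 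This forces $D_0 \cap R'$ to be either empty or all of $R'$. Consequently $\cl(D) = \cl(R')$ is a 1-dimensional face of~$\T$. By part~(a), $\gamma(D)\subseteq U'$ and the endpoints of $\gamma(D)$ lie in $B'$ by \autoref{thm:xyz-invariant-under-moves}, so by the same argument applied to $\gamma(D)$, its closure is also a face of~$\T$. The main obstacle is purely bookkeeping — ensuring that the combined invariance of $B'$ and the endpoint result \ref{thm:endpoints-of-maximal-domain-move}(c) really force $D_0\cap R'\in\{\emptyset, R'\}$; no further machinery is needed.
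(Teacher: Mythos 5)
Your overall route matches the paper's — the corollary is indeed a consequence of Lemma~\ref{lemma:omegau-uu-finite}, \autoref{thm:endpoints-of-maximal-domain-move}, and \autoref{thm:xyz-invariant-under-moves} — and your arguments for parts~(a) and~(b) are sound. Part~(c), however, rests on a false intermediate claim. You write $D = D_0\cap R'$ with $\gamma|_{D_0}\in\Omega$ and assert that ``$D_0\subseteq U$, by \autoref{lemma:omegau-uu-finite}(a)'' so that the endpoints of $D_0$ lie in $B'$. But \autoref{lemma:omegau-uu-finite}(a) is a statement about $\Omega|_U$, not about $\Omega$ itself: in a reduced finite presentation, a maximal move $\gamma|_{D_0}\in\Omega$ is explicitly allowed to have a domain that pokes into one or more covered components $C_i$ (cf.\ \autoref{fig:reduce_moves_by_components}(a)), in which case an endpoint of $D_0$ sits in the interior of some $C_i$ and is \emph{not} in $B'$. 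Likewise, \autoref{thm:endpoints-of-maximal-domain-move}(c) constrains the endpoints of moves in $\Omega|_U$, not of moves in $\Omega$.

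The fix keeps your argument but shifts it one restriction deeper: since $U'\subseteq U$, one has $\Omega|_{U'}=(\Omega|_U)|_{U'}$, so you should write $D = (a,b)\cap R'$ with $\gamma|_{(a,b)}\in\Omega|_U$, where $R$ is the component of $U$ containing $R'$ and $(a,b)=D_0\cap R$. Now \autoref{thm:endpoints-of-maximal-domain-move}(c) gives $a,b,\gamma(a),\gamma(b)\in X\cup Y\subseteq B'$, and your ``crucial observation'' — that $(a,b)\cap R'\in\{\emptyset,R'\}$ because $a,b\in B'$ while $R'\cap B'=\emptyset$ — goes through verbatim, giving $\cl(D)=\cl(R')$ as a face of $\T$. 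The same repair is needed for the image: you invoke \autoref{thm:xyz-invariant-under-moves} to conclude that the endpoints of $\gamma(D)$ lie in $B'$, but the endpoints $\alpha,\beta$ of $D=R'$ are boundary points, hence not in the open domain of $\gamma|_D$, so that theorem does not apply as stated. Instead, either an endpoint of $R'$ equals an endpoint of $(a,b)$ (then use \autoref{thm:endpoints-of-maximal-domain-move}(c)), or it lies strictly inside $(a,b)$, a domain of a move in $\ctscl(\Omegainit)$, where \autoref{thm:xyz-invariant-under-moves} does apply. With this adjustment the proof closes correctly.
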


We partition the set of uncovered points $U'$ into the (maximal)
\emph{connected uncovered components} $\{\Uncomp[1], \dots, \Uncomp[l]\}$, as
follows.\footnote{This extends the terminology of
  \cite{basu-hildebrand-koeppe:equivariant} where connected components are
  grid-based.} A connected uncovered component $\Uncomp[i]$ ($1 \leq i \leq
l$) is a maximal subset of~$U'$ that is the disjoint union of all the uncovered intervals $I_1, I_2, \dots,  I_p\subseteq U'$ such that any pair of intervals $I_j$ and $I_k$ ($1 \leq j, k \leq p$)  are connected by a maximal move $\gamma|_{I_k} \in \Omega |_{U'}$ with domain $I_k$ and image $I_j = \gamma(I_k)$. 
\begin{remark}
\label{rk:maximal-moves-union}
The set $\Omega |_{U'}$ only has moves $\gamma|_D$ whose domain~$D$ and image~$\gamma(D)$ are both contained in the same $\Uncomp[i]$, for $i =1, 2,\dots, l$.
\end{remark}
Since the function $\pi$ is piecewise linear over $\T$ and it respects $\Omega |_{U'}$, we have that $\pi$ is affine linear with the same slope on the maximal intervals $I_1, I_2, \dots,  I_p$ of the same connected uncovered component $\Uncomp[i]$.
Since an effective perturbation  $\tilde{\pi} \in \tildePi^{\pi}\RZ$ also respects $\Omega |_{U'}$, it takes the same shape on the uncovered intervals $I_1, I_2, \dots,  I_p \subseteq  \Uncomp[i]$.
We pick $D \in \{I_1, I_2, \dots,  I_p\}$ arbitrarily as the \emph{fundamental domain}, and write $I_j = \gamma_j(D)$ where $\gamma_j|_{D} \in \Omega |_{U'}$ for $j=1, 2, \dots, p$. Then, the connected uncovered component $\Uncomp[i] \subseteq U'$ can be written as $\Uncomp[i]  = \bigcup\gamma_j(D)$.

\subsection{Finite-dimensional and equivariant perturbation subspaces}

Under \autoref{assumption:one-sided-continuous}, we define the following
spaces.

\begin{definition}
  Define the \emph{space of finite-dimensional perturbations} that are piecewise linear over $\T$: 
  \begin{equation}
  \label{eq:finite-dim-perturbations}
  \tildePi^{\pi}_{\T}\RZ := \bigl\{\, \tilde\pi \in \tildePi^{\pi}\RZ \bigst
  \tilde\pi \text{ is piecewise linear over } {\T}\,\bigr\}.
   \end{equation}
\end{definition}
Thus, functions in $\tildePi^{\pi}_{\T}\RZ$ are allowed to be discontinuous. 
\begin{definition}
  Define the \emph{space of equivariant perturbations} that vanish on the vertices of $\T$:
\[ \tildePi^{\pi}_{\mathrm{zero}(\T)}\RZ := \biggl\{\,\tilde\pi \in \tildePi^{\pi}\RZ \biggst  \tilde\pi(t)=\lim_{\substack{x \to t\\ x < t}} \tilde\pi(t) = \lim_{\substack{x \to t\\ x > t}} \tilde\pi(t) = 0,\;
  \forall  t \in \verts(\T) \,\biggr\}.\]
\end{definition}
We will show in \autoref{thm:equiv-perturbation-characterization} that all
functions in~$\tildePi^{\pi}_{\mathrm{zero}(\T)}\RZ$ are Lipschitz
continuous.
We will also show that the space is equivariant under the action
of~$\clsemi{\Omegainit}$, in the sense of \autoref{s:space-equivariant}.  This
will justify the name.

\begin{remark}
   In \autoref{lemma:effective-perturbation-vector-space} we showed that 
   the space $\tildePi^{\pi}\RZ$ of effective perturbations is a vector
   space. 
   The space $\tildePi^{\pi}_{\T}\RZ$ of finite-dimensional
   perturbations and the space  $\tildePi^{\pi}_{\mathrm{zero}(\T)}\RZ$
   of equivariant perturbations are vector subspaces of it. 
 \end{remark}
 
\begin{remark}
The vector spaces $\tildePi^{\pi}_{\T}\RZ$ and $\tildePi^{\pi}_{\mathrm{zero}(\T)}\RZ$ should not be confounded with the vector spaces $\bar\Pi^{E}_{\T}\RZ$ and $\bar\Pi^{E}_{\mathrm{zero}(\T)}\RZ$ with prescribed additivities $E  = \{\,(x,y)\st \Delta\pi(x,y)=0\,\}$, used in \cite[Lemma 3.14]{igp_survey}, where the function $\pi$ is assumed to be continuous piecewise linear over $\T$ with $\verts(\T)=\frac{1}{q}\Z$, $q \in \N$. 
\end{remark}

\subsection{Finite-dimensional linear algebra \texorpdfstring{for $\tilde \Pi^{\pi}_\T\RZ$}{}}
Let $\tilde\pi_\T \in \tildePi^{\pi}_{\T}\RZ$ be a finite-dimensional
perturbation.  Note that $\tilde\pi_\T$ is a piecewise linear function, and it
is uniquely determined by its values $\tilde\pi_\T(x)$ and limits
$\tilde\pi_\T(x^-) :=  \lim_{t \to x, t < x} \tilde\pi_\T(t)$,
$\tilde\pi_\T(x^+) :=  \lim_{t \to x, t >x} \tilde\pi_\T(t)$ at the
breakpoints $x \in B' + \Z = \verts(\T)$. 
\begin{lemma}
\label{lemma:finite-dim-system}
A function $\tilde\pi_\T \colon \R \to \R$ is a finite-dimensional perturbation, $\tilde\pi_\T \in \tildePi^{\pi}_{\T}\RZ$,  if and only if $\tilde\pi_\T$ is piecewise linear over $\T$ and satisfies the following conditions. 
\begin{enumerate}[\rm(i)]
\item $\tilde \pi_\T(0) = 0$ and $\tilde \pi_\T(f) = 0$;
\item $\tilde \pi_\T(x) = \tilde \pi_\T(x+t)$ for all $x \in \R,\, t \in \Z$;
\item For any additive vertex $(x,y)$ of $\Delta\T$ and any face $F \in \Delta\T$ such that $(x, y) \in F$, $\Delta\pi_F(x,y)=0$ implies $\Delta(\tilde\pi_\T )_F(x,y)=0$.
\end{enumerate}
\end{lemma}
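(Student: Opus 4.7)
The plan is to prove the equivalence in both directions, exploiting the finite-dimensional polyhedral structure that $\T$ imposes on $\Delta\pi$ and $\Delta\tilde\pi_\T$.

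For the forward direction ($\Rightarrow$), suppose $\tilde\pi_\T \in \tildePi^{\pi}_{\T}\RZ$, so $\pi^\pm = \pi \pm \epsilon\tilde\pi_\T$ are minimal valid for some $\epsilon > 0$. Conditions (i) and (ii) follow by subtracting $\pi$ from $\pi^\pm$, using $\pi^\pm(0) = \pi(0) = 0$, $\pi^\pm(f) = \pi(f) = 1$, and the $\Z$-periodicity of $\pi$ and $\pi^\pm$. For (iii), let $(x,y)$ be an additive vertex of $\Delta\T$ lying in a face $F \in \Delta\T$ with $\Delta\pi_F(x,y) = 0$. The subadditivity slack $\Delta\pi^\pm_F$ is the limit of the affine function $\Delta\pi^\pm$ on $\relint(F)$, and subadditivity of $\pi^\pm$ gives $\Delta\pi^\pm_F(x,y) \geq 0$. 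Since $\Delta\pi^\pm_F(x,y) = \pm\epsilon\,\Delta(\tilde\pi_\T)_F(x,y)$, both signs being non-negative forces $\Delta(\tilde\pi_\T)_F(x,y) = 0$.

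For the backward direction ($\Leftarrow$), I plan to exhibit a small $\epsilon > 0$ such that $\pi^\pm = \pi \pm \epsilon\tilde\pi_\T$ satisfies every condition of~\eqref{eq:minimal}. Periodicity as well as $\pi^\pm(0) = 0$ and $\pi^\pm(f) = 1$ are immediate from (i) and (ii). For subadditivity, I proceed face by face on $\Delta\T$: by $\Z$-periodicity only finitely many faces $F$ need to be checked, and on each face $F$, $\Delta\pi$ and $\Delta\tilde\pi_\T$ are affine in the appropriate face-limit sense, so $\Delta\pi^\pm_F$ is determined by its values at the vertices of~$F$. At each vertex $v$ of $F$, either $\Delta\pi_F(v) > 0$ (and then $\epsilon$ small enough relative to $\|\Delta\tilde\pi_\T\|_\infty$ keeps $\Delta\pi^\pm_F(v) > 0$), or $\Delta\pi_F(v) = 0$ (and then (iii) yields $\Delta(\tilde\pi_\T)_F(v) = 0$, so $\Delta\pi^\pm_F(v) = 0$). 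Taking the minimum of the finitely many admissible $\epsilon$'s gives a uniform $\epsilon > 0$ with $\Delta\pi^\pm \geq 0$ throughout.

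The symmetry condition reduces to $\Delta\tilde\pi_\T(x, f-x) = 0$ for all $x \in \R$. The map $x \mapsto \Delta\tilde\pi_\T(x, f-x)$ is piecewise affine with finitely many breakpoints per period (those $x_0$ with $x_0 \in B' + \Z$ or $f-x_0 \in B' + \Z$); at each such $x_0$, the vertex $(x_0, f-x_0)$ of $\Delta\T$ is additive by the symmetry of $\pi$, so (iii) applied to each adjacent face forces both one-sided limits to vanish, and piecewise affinity then gives the function identically $0$. Finally, non-negativity of $\pi^\pm$ is obtained as follows: $\pi^\pm$ is bounded (since $0 \leq \pi \leq 1$ by \autoref{rk:effective-perturbation-bounded} and $\tilde\pi_\T$ is bounded as a $\Z$-periodic piecewise linear function), periodic, and subadditive, with $\pi^\pm(0) = 0$; if $\pi^\pm(x_0) < 0$ for some $x_0$, iterated subadditivity would yield $\pi^\pm(n x_0) \leq n\pi^\pm(x_0) \to -\infty$, contradicting boundedness via periodicity.

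The main obstacle is the non-negativity step in the backward direction: condition (iii) is a priori only a constraint at the finitely many additive vertices and does not directly bound $\pi^\pm$ away from~$0$ at points where $\pi$ itself vanishes. This obstacle is resolved by deriving non-negativity as an automatic consequence of boundedness, periodicity, subadditivity, and $\pi^\pm(0) = 0$, so that conditions (i)–(iii) indeed suffice.
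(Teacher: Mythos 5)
Your proof takes a genuinely different and more self-contained route than the paper's. The paper handles the backward direction by first showing that (iii) implies the ostensibly stronger property (iii'): for any face $F \in \Delta\T$ and any $(x,y) \in F$, $\Delta\pi_F(x,y)=0$ implies $\Delta(\tilde\pi_\T)_F(x,y)=0$. (This uses that $\Delta\pi_F$ is affine and nonnegative on~$F$, so its zero set is a subface of~$F$ whose vertices are additive vertices of $\Delta\T$, to which (iii) applies, and then affineness of $\Delta(\tilde\pi_\T)_F$ propagates the vanishing from the vertices to the whole subface.) It then invokes an external result, Theorem~3.1 of \cite{koeppe-zhou:discontinuous-facets}, to conclude that any $\T$-piecewise-linear periodic function satisfying (i) and (iii') lies in $\tildePi^{\pi}\RZ$. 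You instead verify the conditions of~\eqref{eq:minimal} for $\pi^\pm$ directly: periodicity and the normalizations from (i)--(ii), subadditivity by a face-by-face vertex check on $\Delta\T$ together with a uniform $\epsilon$, and nonnegativity as an automatic consequence of subadditivity, periodicity, boundedness, and $\pi^\pm(0)=0$. This is more elementary and pedagogically transparent, at the cost of re-deriving part of the cited theorem. Your forward direction is the standard argument that effective perturbations inherit face-limit additivities (cf.\ \cite[Lemma~6.1]{hong-koeppe-zhou:software-paper}), matching the paper in substance.

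There is, however, a gap in your symmetry step. You assert that at each breakpoint $x_0$ of $x \mapsto \Delta\tilde\pi_\T(x, f-x)$, the point $(x_0, f-x_0)$ is an \emph{additive vertex} of $\Delta\T$, so that (iii) may be applied to the faces incident to it. But $(x_0, f-x_0)$ is a vertex of $\Delta\T$ only if at least two of the three projections $x_0$, $f-x_0$, $f$ lie in $B'$, and $f$ need not lie in $B'$ (for instance, when a neighborhood of $f$ is covered). In that case $(x_0, f-x_0)$ lies in the relative interior of an \emph{edge} $E$ of $\Delta\T$, not a vertex, and (iii) does not apply to it directly. The repair is precisely the (iii) $\Rightarrow$ (iii') step from the paper: by \autoref{lemma:additive-face-discontinuous}, $E$ is an additive edge (since the antidiagonal gives $\Delta\pi_F(x_0, f-x_0)=0$ for every $F \supseteq E$), its endpoints are additive vertices of $\Delta\T$, (iii) applied at those endpoints makes $\Delta(\tilde\pi_\T)_F$ vanish there, and affineness of $\Delta(\tilde\pi_\T)_F$ along $E$ then gives the vanishing of the one-sided limits and of the value $\Delta\tilde\pi_\T(x_0,f-x_0)$ itself. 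With this intermediate step made explicit, your proof goes through.
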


Before we give the proof, we define another space
$\bar{\Pi}^{E_{\bullet}(\pi,\T)}\RZ$, following
\cite{koeppe-zhou:discontinuous-facets}. 
Recall from
\autoref{s:polyhedral-complex} the
family of
sets $E_F(\pi)$, indexed by faces $F$ of a polyhedral complex, 
which capture the set of additivities and limit-additivities of~$\pi$. 
Here we use this family with the
refined polyhedral complex~$\Delta\T$, considering $\pi$ as a piecewise linear
function on~$\T$.
\begin{definition}
  For a family $E_{\bullet} = \{ E_F \}_{F \in \Delta\T}$,
  define the \emph{space of perturbation functions with prescribed
    additivities and limit-additivities} $E_{\bullet}$,
  \begin{displaymath}
    \bar{\Pi}^{E_{\bullet}}\RZ = \left\{\bar{\pi} \colon \R \to \R \, \Bigg| \;
      \begin{array}{@{}r@{\;}c@{\;}l@{\quad}l@{}}
        \bar{\pi}(0) =
        \bar{\pi}(f) &=& 0 \\
        \Delta\bar{\pi}_F(x, y) &=& 0 & \text{ for } (x,y) \in E_F, \ F \in \Delta\T\\
        \bar{\pi}(x+t) &=& \bar{\pi}(x) & \text{ for } x \in \R,\, t \in \Z
      \end{array} \;\right\}.
  \end{displaymath}
\end{definition}

\begin{proof}[Proof of \autoref{lemma:finite-dim-system}]
We consider $\pi$ as piecewise linear over $\T$, which is a refinement of $\P_B$. 
Let $\tilde\pi_\T \in \tildePi^{\pi}_{\T}\RZ$.  Then by definition, $\tilde\pi_\T $ is also piecewise linear over $\T$. 
Since $\tilde\pi_\T \in \tildePi^{\pi}\RZ$, we have that $\tilde\pi_\T \in \bar\Pi^{E_{\bullet}}\RZ$, where
$E_{\bullet} = E_{\bullet}(\pi, \T)$ is the family of sets $E_F(\pi)$, indexed by $F\in \Delta\T$. 
Namely, $\tilde\pi_\T$ satisfies the conditions (i), (ii) and
\begin{enumerate}
\item[(iii')] For any face $F \in \Delta\T$ and any $(x, y)\in F$, if $\Delta\pi_F(x,y)=0$ then $\Delta(\tilde\pi_\T )_F(x,y)=0$.
\end{enumerate}  
The condition (iii') clearly implies (iii). Thus, we proved the ``only if'' direction.
Now let $\tilde\pi_\T$ be a piecewise linear function over $\T$ that satisfies (i)--(iii). 
Notice that function $\pi$ is subadditive and also piecewise linear over $\T$. Hence, the condition (iii) implies (iii'). We obtain that $\tilde\pi_\T \in \bar\Pi^{E_{\bullet}}\RZ$, where $E_{\bullet} = E_{\bullet}(\pi, \T)$. It then follows from \cite[Theorem 3.1]{koeppe-zhou:discontinuous-facets} that $\tilde \pi_\T \in \tilde \Pi^{\pi}\RZ$. Therefore, $\tilde\pi_\T \in \tildePi^{\pi}_{\T}\RZ$, we proved the ``if'' direction.
\end{proof}

Assume that $B' = \{x'_0=0, x'_1, \dots, x'_{n'-1}, x'_n=1\}$
and we identify
$\tilde\pi_\T(x)$ and $\tilde\pi_\T(x+t)$ for all $t\in
 \Z$. \autoref{lemma:finite-dim-system} shows that $\bigl(\tilde\pi_\T(x'_0\mbox{}^{-}),\, \tilde\pi_\T(x'_0),\,\allowbreak \tilde\pi_\T(x'_0\mbox{}^+),\,\allowbreak \tilde\pi_\T(x'_1\mbox{}^-),\, \allowbreak\dots,\allowbreak\, \tilde\pi_\T(x'_{n'-1}\mbox{}^-),\,\allowbreak \tilde\pi_\T(x'_{n'-1}),\,\allowbreak \tilde\pi_\T(x'_{n'-1}\mbox{}^+)\bigr)$ is a solution to the finite-dimen\-sional linear system defined by (i) and (iii). 
The interpolation of such a solution gives an effective perturbation function $\tilde\pi_\T \in \tilde \Pi^{\pi}_\T\RZ$.  We know that
$(0, 0,  \dots, 0)$ is a trivial solution. 
If a nontrivial solution exists, then its interpolation $ \tilde\pi_\T \not\equiv 0$, implying that the function $\pi$ is not extreme. 

\begin{remark}\label{rem:finite-dim-system-with-slope-variables}
In fact, one can reduce the number of variables in the above linear system of
equations to solve, by considering the connected components, as
follows. 
\autoref{cor:pwl-moves-closure-respected} and \eqref{eq:finite-dim-perturbations}
imply that
$\tilde\pi_\T$ is affine linear with the same slope over all the intervals
from a connected covered component $\Comp[i]$ ($i=1, 2, \dots, k$) or from a
connected uncovered component $\Uncomp[i]$ ($i=1, 2, \dots, l$). Let $\tilde s_1^c,
\dots, \tilde s_k^c$ and $\tilde s_1^u, \dots, \tilde s_l^u$ denote the corresponding slope
variables.  

In the discontinuous case, 
by 
\autoref{lemma:perturbation-lipschitz-continuous}%
, using \autoref{assumption:one-sided-continuous}, the perturbation
$\tilde\pi_\T$ can only be discontinuous at the points where $\pi$ is
discontinuous. Let the variables $\tilde d_i$ ($i=1, 2, \dots, m)$ denote the changes
of the value of $\tilde\pi_\T$ at the $m$ discontinuity points of $\pi$. 
In other words, the variables $\tilde d_i$ denote jumps $\tilde\pi_\T(x) - \tilde\pi_\T(x^-)$ when $\pi$ is discontinuous at $x$ on the left, or $\tilde\pi_\T(x^+)-\tilde\pi_\T(x)$ when $\pi$ is discontinuous at
$x$ on the right. 

Then, for any fixed $x \in \R$, the value $\tilde\pi_\T(x)$ is uniquely determined by the slope variables $\tilde s_i^c$ ($i = 1, 2, \dots, k$), $\tilde s_i^u$ ($i = 1, 2, \dots, l$) and the jump variables $\tilde d_i$ ($i = 1, 2,\dots, m$). These $k+l+m \leq 3n'$ variables satisfy the system of linear equations given by \autoref{lemma:finite-dim-system}, where $(0,0,\dots, 0)$ is a trivial solution. See \cite[Example 7.2]{hong-koeppe-zhou:software-paper}
for a concrete example.
\end{remark}

\subsection{Equivariant perturbation space $\texorpdfstring{\tildePi^{\pi}_{\textrm{zero}(\T)}}{}$}
\label{s:equivariant-perturbation-space}

Let $\tilde \pi_{\mathrm{zero}(\T)} \in \tilde \Pi^{\pi}_{\textrm{zero}(\T)}\RZ$ be an equivariant  perturbation of $\pi$. 
By \autoref{cor:affine-on-components} (or \autoref{cor:affine-on-component-finite-presentation}) and \autoref{cor:pwl-moves-closure-respected},
$\tilde \pi_{\mathrm{zero}(\T)}$ is affine linear on all
covered intervals. By definition,
$\tilde \pi_{\mathrm{zero}(\T)}(t) = \tilde
\pi_{\mathrm{zero}(\T)}(t^-)=\tilde \pi_{\mathrm{zero}(\T)}(t^+)=0$ for every
$t \in \verts(\T)$, and $\partial C \subseteq \verts(\T)$.  Therefore, $\tilde
\pi_{\mathrm{zero}(\T)}$ is zero on $\cl(C)$. If the set of uncovered points
$U' = \emptyset$, then $\tilde \pi_{\mathrm{zero}(\T)} \equiv 0$. Otherwise,
recall from \autoref{s:connected-uncovered-components} that $U'$ is partitioned into connected uncovered components $\Uncomp[1], \Uncomp[2],\dots, \Uncomp[l]$. 
The following theorem gives the characterization of the projection of a perturbation $\tilde \pi_{\mathrm{zero}(\T)}$ onto the space of functions with support contained in a connected uncovered component $\Uncomp[i]$.
\tred{Introduce name for the spaces.  Makes it easier to talk about them in examples.}
\begin{theorem}[Characterization of the equivariant perturbations 
  supported on an uncovered component]
\label{thm:equiv-perturbation-characterization}
Suppose that Assumptions~\ref{assumption:one-sided-continuous},\ref{assumption:minimal-B} and~\ref{assumption:moves-completion} hold.
Let $\Uncomp[i]  = \bigcup\gamma_j(D)$ be a connected uncovered component, where $D$ is the fundamental domain for $\Uncomp[i]$ and $\gamma_j|_{D} \in \Omega |_{U'}$ ($j=1, \dots, p$). Let $\tilde\pi_i \colon \R \to \R$  be a $\Z$-periodic function such that $\tilde\pi_i(x) =0$ for $x \not\in \Uncomp[i]$. Then $\tilde\pi_i \in \tilde \Pi^{\pi}_{\mathrm{zero}(\T)}\RZ$ if and only if
\begin{enumerate}[\rm(i)]
\item $\tilde\pi_i$ is Lipschitz continuous on $D$; \tred{Easier to understand
  perhaps: say ... on $\cl(D)$.}
\item $\tilde\pi_i(x)=\tilde\pi_i(x^-)=\tilde\pi_i(x^+)=0$ for $x \in \partial D$; 
\item $\tilde\pi_i(x) = \chi(\gamma_j) \tilde\pi_i(\gamma_j (x))$ for $x \in D$, $j=1, \dots, p$. 
\end{enumerate}
\end{theorem}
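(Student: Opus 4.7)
The forward direction will follow quickly from results already in the paper; the substantive work lies in the backward direction, where I must show that a function satisfying (i)--(iii) and extended by zero yields a genuine effective perturbation of $\pi$.

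\textbf{Forward direction ($\Rightarrow$).} Assume $\tilde\pi_i \in \tildePi^{\pi}_{\mathrm{zero}(\T)}\RZ$. For (i), observe that $D$ is an open subinterval of $U\subseteq A$, so $\pi$ is continuous on $D$, and \autoref{lemma:perturbation-lipschitz-continuous} gives Lipschitz continuity of $\tilde\pi_i$ on $D$. For (ii), I will check that $\partial D\subseteq B'=\verts(\T)$ because $D$ is a maximal subinterval of $U'=U\setminus B'$; the vanishing at $\partial D$ then follows from the definition of $\tildePi^{\pi}_{\mathrm{zero}(\T)}\RZ$. For (iii), \autoref{cor:pwl-moves-closure-respected} says $\tilde\pi_i$ respects every move of $\clsemi{\Omegainit}$, in particular $\gamma_j|_D\in\Omega|_{U'}$. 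Hence there is a constant $c_j$ with $\tilde\pi_i(\gamma_j(x))=\chi(\gamma_j)\tilde\pi_i(x)+c_j$ for $x\in D$. Letting $x$ tend to $\partial D$, both sides vanish by (i) and (ii) (the endpoints of $\gamma_j(D)$ also lie in $B'$ by \autoref{thm:xyz-invariant-under-moves}), forcing $c_j=0$.

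\textbf{Backward direction ($\Leftarrow$).} Given a $\Z$-periodic $\tilde\pi_i$ supported in $\Uncomp[i]$ and satisfying (i)--(iii), I will produce $\epsilon>0$ so that $\pi^\pm=\pi\pm\epsilon\tilde\pi_i$ are both minimal valid. The conditions $\pi^\pm(0)=0$, $\pi^\pm(f)=1$ are immediate since $\{0,f\}\subseteq B\subseteq B'$, where (ii) and the extension by zero force $\tilde\pi_i$ to vanish. Periodicity is assumed. Boundedness of $\tilde\pi_i$ on $\cl(D)$ (from (i)+(ii)) combined with the equivariant extension via the finitely many $\gamma_j$ in (iii) and the extension by zero yields uniform boundedness on $\R$; nonnegativity of $\pi^\pm$ and strict subadditivity inequalities at points where $\Delta\pi$ is bounded away from zero then follow for sufficiently small $\epsilon$ by the usual compactness argument on the fundamental domain using piecewise linearity of $\pi$ over $\T$.

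\textbf{Main technical step.} The crux is $\Delta\tilde\pi_i(x,y)=0$ whenever $\Delta\pi(x,y)=0$, together with the analogous limit-additivity and symmetry equations. I will do case analysis on the additive face $F\in\Delta\P_B$ whose relative interior contains $(x,y)$. If $\dim F=0$, \autoref{lemma:projections-of-vertDeltaT} puts $x,y,x+y\in B'\cup(C+\Z)$, and on both sets $\tilde\pi_i=0$. If $\dim F=2$, \autoref{lem:squares} places $x,y,x+y\in C+\Z$, again giving $\tilde\pi_i=0$ at all three points. If $\dim F=1$, say $p_2(F)=\{y_0\}$ with $y_0\in B\subseteq B'$, the move $\tau_{y_0}|_{p_1(F)}$ lies in $\Omegainit$; since $\tilde\pi_i(y_0)=0$, the equation collapses to $\tilde\pi_i(x+y_0)=\tilde\pi_i(x)$. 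By invariance of $C$ and of the components under $\clsemi{\Omegainit}$, the cases $x\in C$ or $x\in\Uncomp[j]$, $j\neq i$, are trivial. For $x\in\Uncomp[i]$, write $x=\gamma_j(x')$ with $x'\in D$; using \autoref{thm:endpoints-of-maximal-domain-move}(b) the restriction $\tau_{y_0}|_{{}_{U}|p_1(F)|_{U}}$ lies in $\joinextend{\semi{\Omegainit|_U}}$, and condition (iii) chained through the inverse semigroup structure propagates respect through compositions and domain extensions (using continuity from (i) and vanishing from (ii)) to yield the required equality. The symmetry condition $\Delta\pi(x,f-x)=0$ is handled identically via the initial reflection move $\rho_f$.

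\textbf{The hard part} will be the careful bookkeeping in the one-dimensional-face case: tracking how the additive move may send a domain point through $C$, $\Uncomp[i]$, other $\Uncomp[j]$, and the boundary set $B'$, and showing the respect constant remains zero across these transitions. The essential device is \autoref{thm:endpoints-of-maximal-domain-move}(b), which identifies $\Omega|_{U'}$ with the maximal moves of $\joinextend{\semi{\Omegainit|_U}}$, so that the three given axioms (i)--(iii) suffice to certify respect of all moves in $\clsemi{\Omegainit}$, combined with the boundary invariance of $B'=X\cup Y\cup Z$ from \autoref{thm:xyz-invariant-under-moves}.
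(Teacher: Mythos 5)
Your forward direction is correct and matches the paper's; the observation that the respect constant $c_j$ for $\gamma_j|_D$ must vanish because one can send $x\to\partial D$ and invoke (i) and (ii) is a nice way to make explicit what the paper leaves implicit.

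The backward direction, however, diverges from the paper in a way that opens a genuine gap. The paper does \emph{not} attempt to verify directly that $\pi\pm\epsilon\tilde\pi_i$ is minimal valid for small $\epsilon$; it shows $\tilde\pi_i\in\bar\Pi^{E_\bullet(\pi,\T)}\RZ$ (i.e., that $\tilde\pi_i$ satisfies every additivity \emph{and limit-additivity} of $\pi$ over the refined complex $\Delta\T$) and then invokes Theorem~3.1 of \cite{koeppe-zhou:discontinuous-facets} to conclude $\tilde\pi_i\in\tildePi^\pi\RZ$. That cited result is precisely the nontrivial step you dismiss as ``the usual compactness argument on the fundamental domain.'' It is not usual and not elementary here: $\Delta\pi$ is \emph{not} bounded away from zero on its support --- it goes to zero linearly at the boundary of every additive face --- and since $\tilde\pi_i$ is only Lipschitz, not piecewise linear over $\T$, one must show that $\Delta\tilde\pi_i$ also vanishes at a controlled (linear) rate on those same boundaries in order to find a uniform $\epsilon$. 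Making that work requires exactly the family of limit-additivities $E_F(\pi)$ on the \emph{relative boundaries} of faces, and it is further complicated by the one-sided discontinuity of $\pi$ (the subadditivity slack $\Delta\pi$ is itself discontinuous across faces). Reproving that result is a substantial undertaking, and your sketch of it is not adequate; you should either carry out that argument in detail or cite the theorem as the paper does.

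A smaller point: you run the case analysis over $\Delta\P_B$ rather than over the refined complex $\Delta\T$. In the one-dimensional-face case the projected interval $p_1(F)$ of a $\P_B$-edge can straddle several $\T$-breakpoints and several pieces of $C$ and $U'$, so the bookkeeping you flag as ``the hard part'' becomes considerably harder than it needs to be. The paper works with $\Delta\T$, so the breakpoint stabilization theorem (\autoref{lemma:projections-of-vertDeltaT}) guarantees that each additive edge of $\Delta\T$ projects to a \emph{single} face of $\T$ that is either entirely in $C$, entirely in some $\Uncomp[j]$, or entirely in $B'$; the corresponding move is then already a member of $\Omega|_{U'}$ in the uncovered case, and (iii) applies after the composition argument you outline. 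You would save yourself the transition-tracking you anticipate by adopting that refinement from the start.
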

\begin{proof}
Let $\tilde\pi_i \in \tilde \Pi^{\pi}_{\textrm{zero}(\T)}\RZ$. Since $\pi$ is continuous on $D$,  by 
\autoref{lemma:perturbation-lipschitz-continuous}, $\tilde \pi_i$ is Lipschitz continuous on $\R$. Hence, the condition (i) holds. The condition (ii) is clearly satisfied, as $\tilde\pi_i(x)=\tilde\pi_i(x^-)=\tilde\pi_i(x^+)=0$ for each $x \in \verts(\T)$. Since  $\tilde\pi_i$ respects $\Omega |_{U'}$, the condition (iii) also holds.

Conversely, let $\tilde\pi_i \colon \R \to \R$  be a $\Z$-periodic function such that $\tilde\pi_i(x) =0$ for $x \not\in \Uncomp[i]$ and the conditions (i)--(iii) hold. 
It follows from (ii) that $\tilde\pi_i(x)=\tilde\pi_i(x^-)=\tilde\pi_i(x^+)=0$ for $x \in \partial \Uncomp[i]$. Since $\tilde\pi_i(x) =0$ for $x \not\in \Uncomp[i]$, we have 
\begin{equation}
\label{eq:equiv-perturb-vanish-outside-uncovered}
\tilde\pi_i(x)=\tilde\pi_i(x^-)=\tilde\pi_i(x^+)=0 \text{ for } x \in  [0,1] \setminus \Uncomp[i] \supseteq B' \cup C.
\end{equation}
We claim that  $\tilde\pi_i$ satisfies all the additivities (including the limits) that $\pi$ has. Indeed, let $F$ be a face of $\Delta\T$ and let $(x,y) \in F$ such that $\Delta\pi_F(x, y)=0$. We show that $(\Delta\tilde\pi_i)_F (x,y)=0$ by distinguishing the following three cases. 

(a) If $(x,y)$ is an additive vertex of $\Delta\T$, then by \autoref{lemma:projections-of-vertDeltaT} and \eqref{eq:equiv-perturb-vanish-outside-uncovered}, we have $(\Delta\tilde\pi_i)_F(x,y)=0$. 

(b) If $(x, y)$ is contained in the relative interior of an edge $F'$ of
$\Delta\T$, then $F' \subseteq F$ and $F'$ is an additive face of
$\Delta\T$. Consider the move $\gamma|_{D'}$ associated with $F'$. We have
either $D'$ and $\gamma(D') \subseteq (0,1) \setminus \Uncomp[i]$, or $D'$ and
$\gamma(D') \subseteq \Uncomp[i]$. In the former case, the claim holds because
of \eqref{eq:equiv-perturb-vanish-outside-uncovered}; and in the latter case,
the structure of $\Delta\T$ (\autoref{lemma:projections-of-vertDeltaT})
implies that $\gamma|_{D'} \in \Omega |_{U'}$, and thus
$(\Delta\tilde\pi_i)_F(x,y)=0$ by the condition (iii).

(c) If $(x, y)$ is contained in the relative interior of a two-dimensional face $F'$ of $\Delta\T$, then $F' = F$ is a two-dimensional additive face of $\Delta\T$. We have $x, y, (x+y) \bmod 1 \in C$, hence the claim follows from \eqref{eq:equiv-perturb-vanish-outside-uncovered}.  

We showed that $\tilde\pi_i \in \bar\Pi^{E_{\bullet}}\RZ$, where $E_{\bullet}
= E_{\bullet}(\pi, \T)$ is the family from the proof of \autoref{lemma:finite-dim-system}. Then, \cite[Theorem 3.1]{koeppe-zhou:discontinuous-facets} implies that $\tilde\pi_i \in \tilde \Pi^{\pi}\RZ$. Therefore, $\tilde\pi_i \in \tilde \Pi^{\pi}_{\textrm{zero}(\T)}\RZ$.
\end{proof}

For $i = 1,\dots,l$, denote the space of functions $\tilde\pi_i$ as in the
theorem by $\tildePi^\pi_{U_i}$.  It is independent of the choice of
fundamental domain.

\begin{theorem}[Direct sum decomposition of equivariant perturbations by
  uncovered components]
  \label{th:equiv-perturbation-decomposition}
  We have the direct sum decomposition $\tilde
  \Pi^{\pi}_{\mathrm{zero}(\T)}\RZ = \tildePi^\pi_{U_1} \oplus \dots \oplus
  \tildePi^\pi_{U_l}$, i.e., 
  if $\tilde \pi
  \in \tilde \Pi^{\pi}_{\mathrm{zero}(\T)}\RZ$, 
  then it has a unique decomposition
  \begin{math}
    \tilde{\pi}
    = \tilde{\pi}_1 +\tilde{\pi}_2 +\dots +\tilde{\pi}_l
  \end{math}
  such that $\tilde\pi_i \in \tildePi^\pi_{U_i}$ for $i = 1, \dots, l$.
\end{theorem}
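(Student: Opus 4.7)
The plan is to define the decomposition by restricting the support to each connected uncovered component, then verify the hypotheses of \autoref{thm:equiv-perturbation-characterization} for each piece, and finally read off uniqueness from disjointness.

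First I would establish where equivariant perturbations can be nonzero. As discussed in \autoref{s:equivariant-perturbation-space}, any $\tilde\pi \in \tilde\Pi^{\pi}_{\mathrm{zero}(\T)}\RZ$ vanishes on $\cl(C)$ (because $\tilde\pi$ is affine linear on each covered interval with value zero at the endpoints, which lie in $\verts(\T)$). By definition $\tilde\pi$ also vanishes at each point of $\verts(\T) = B' + \Z$, together with both one-sided limits. Since $[0,1] = \cl(C) \cup (B' \cap [0,1]) \cup U'$ and $U' = \bigsqcup_{i=1}^{l} \Uncomp[i]$ is a disjoint union, it follows that $\tilde\pi$ is supported (modulo $\Z$) on $\bigsqcup_{i} \Uncomp[i]$.

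Next, for $i = 1, \ldots, l$, define $\tilde\pi_i \colon \R \to \R$ to be the $\Z$-periodic function that agrees with $\tilde\pi$ on $\Uncomp[i] + \Z$ and is zero elsewhere. The equation $\tilde\pi = \tilde\pi_1 + \cdots + \tilde\pi_l$ then follows directly from the disjointness of the $\Uncomp[i]$ and the vanishing result of the previous paragraph (the values at points of $B' + \Z$ on all sides are zero, so it does not matter how we assign those points). I would then verify conditions (i)--(iii) of \autoref{thm:equiv-perturbation-characterization} for each $\tilde\pi_i$ with $D$ a fundamental domain of $\Uncomp[i]$: condition (i) (Lipschitz continuity on $D$) follows from \autoref{lemma:perturbation-lipschitz-continuous} applied to~$\tilde\pi$ on the interval $D$ where $\pi$ is continuous, since $\tilde\pi_i = \tilde\pi$ on $D$; condition (ii) holds because the endpoints of $D$ lie in $B' \subseteq \verts(\T)$, where $\tilde\pi$ and its one-sided limits vanish; condition (iii) follows because $\tilde\pi$ respects $\Omega|_{U'}$ (by \autoref{cor:pwl-moves-closure-respected}) and both $x$ and $\gamma_j(x)$ lie in $\Uncomp[i]$ for $\gamma_j|_D \in \Omega|_{U'}$, on which $\tilde\pi_i$ coincides with $\tilde\pi$.

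Finally, uniqueness of the decomposition is immediate: if $\tilde\pi_1 + \cdots + \tilde\pi_l = 0$ with $\tilde\pi_i \in \tildePi^{\pi}_{U_i}$, then for any $x \in \Uncomp[j]$ the pairwise disjointness of the components forces $\tilde\pi_i(x) = 0$ for $i \neq j$, hence $\tilde\pi_j(x) = 0$; outside $\bigsqcup_i \Uncomp[i]$ every $\tilde\pi_j$ is identically zero by definition. This gives the direct sum decomposition. The entire argument is essentially bookkeeping built on \autoref{thm:equiv-perturbation-characterization}; the only subtlety to double-check is that the one-sided limits of $\tilde\pi_i$ at the boundary points $\partial \Uncomp[i] \subseteq B'$ vanish, which follows from the Lipschitz continuity of $\tilde\pi$ on the closure of each uncovered interval together with the pointwise vanishing of $\tilde\pi$ on $\verts(\T)$ from both sides.
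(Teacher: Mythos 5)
Your proposal is correct and follows essentially the same route as the paper: define each $\tilde\pi_i$ by restricting $\tilde\pi$ to the component $\Uncomp[i]$, verify that each piece satisfies the conditions of \autoref{thm:equiv-perturbation-characterization}, and read off uniqueness from the disjointness of the components. The paper's own proof is terser, but your added detail (why $\tilde\pi$ vanishes on $\cl(C)$ and $B'$, the Lipschitz argument for condition (i), and the explicit uniqueness step) is exactly the bookkeeping the paper leaves implicit.
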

\begin{proof}
Let $\tilde \pi \in \tilde \Pi^{\pi}_{\textrm{zero}(\T)}\RZ$. 
For $i = 1, 2, \dots, l$, define $\tilde\pi_i : \R \to \R$,  
$\tilde\pi_i (x) = \tilde \pi(x)$ if $x \in \Uncomp[i]$ and $\tilde\pi_i (x) =0$ otherwise.
Then 
\begin{equation}
  \tilde{\pi} = \tilde{\pi}_1 +\tilde{\pi}_2 +\dots +\tilde{\pi}_l, 
\end{equation}
where each $\tilde{\pi}_i$ ($i = 1, 2, \dots, l$) satisfies the conditions in \autoref{thm:equiv-perturbation-characterization}.
\end{proof}
Each of the component functions $\tilde\pi_i$ ($i = 1, 2, \dots, l$)
is supported on
the connected uncovered component $\Uncomp[i]$ and is obtained by 
choosing an arbitrary Lipschitz continuous template on the fundamental
domain~$D_i$, then by extending equivariantly to the other intervals through the moves in
$\Omega|_{U'}$.
 
\subsection{Decomposition theorem for effective perturbations}
The following perturbation decomposition theorem, a generalization of \cite[Lemma 3.14]{igp_survey} without assuming $\pi$ is continuous and $\verts(\T) = \frac{1}{q}\Z$, shows that the effective perturbation space $\tilde \Pi^{\pi}\RZ$ is the direct sum of the finite-dimensional perturbation space $\tildePi^{\pi}_{\T}\RZ$ and the equivariant perturbation space $\tildePi^{\pi}_{\mathrm{zero}(\T)}\RZ$.

\begin{theorem}[Perturbation decomposition theorem]
\label{thm:decomposition-perturbation}
Under Assumptions~\ref{assumption:one-sided-continuous},\ref{assumption:minimal-B} and~\ref{assumption:moves-completion},
for every effective perturbation $\tilde \pi \in \tilde \Pi^{\pi}\RZ$, there exist a unique finite-dimensional perturbation $\tilde\pi_\T \in \tilde \Pi^{\pi}_\T\RZ$ and a unique equivariant perturbation $\tilde  \pi_{\mathrm{zero}(\T)} \in \tilde \Pi^{\pi}_{\mathrm{zero}(\T)}\RZ$ such that 
\[ \tilde \pi = \tilde \pi_\T + \tilde \pi_{\mathrm{zero}(\T)}. \]
\end{theorem}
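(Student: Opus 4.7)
The plan is to define $\tilde\pi_\T$ as the unique piecewise linear function over $\T$ that agrees with $\tilde\pi$ at the breakpoints of~$\T$ and at their one-sided limits, and then to set $\tilde\pi_{\mathrm{zero}(\T)} := \tilde\pi - \tilde\pi_\T$ and verify membership in the two respective subspaces. Concretely, for each $x \in B' = \verts(\T)\cap[0,1)$, record the triple $\bigl(\tilde\pi(x^-), \tilde\pi(x), \tilde\pi(x^+)\bigr)$, and extend $\Z$-periodically; one-sided limits of $\tilde\pi$ exist because $\tilde\pi$ is bounded (\autoref{rk:effective-perturbation-bounded}) and is Lipschitz continuous on every maximal interval of~$A$ by \autoref{lemma:perturbation-lipschitz-continuous}, combined with \autoref{assumption:one-sided-continuous}. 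Interpolating linearly between consecutive one-sided values yields a well-defined $\Z$-periodic function $\tilde\pi_\T$ that is piecewise linear over~$\T$, with $\tilde\pi_\T(0)=\tilde\pi(0)=0$ and $\tilde\pi_\T(f)=\tilde\pi(f)=0$.

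Next I would verify $\tilde\pi_\T \in \tildePi^\pi_\T\RZ$ by checking conditions (i)--(iii) of \autoref{lemma:finite-dim-system}. Conditions (i) and (ii) hold by construction. For (iii), let $(x,y)$ be an additive vertex of $\Delta\T$ and let $F \in \Delta\T$ with $(x,y)\in F$ and $\Delta\pi_F(x,y)=0$. By \autoref{lemma:projections-of-vertDeltaT}, each of $x,y,z:=x+y$ lies in $B' \cup (C+\Z)$. On $C$, both $\pi$ and $\tilde\pi$ are affine on each component (\autoref{cor:pwl-moves-closure-respected} applied with \autoref{cor:affine-on-component-finite-presentation}), so their restrictions to $C$ are already piecewise linear over~$\T$; consequently, the appropriate one-sided values of $\tilde\pi_\T$ and $\tilde\pi$ at points of $C$ coincide. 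At points in $B'$, $\tilde\pi_\T$ matches $\tilde\pi$ by construction, together with the right one-sided limits in the sense dictated by the face~$F$. Since $\tilde\pi \in \tildePi^\pi\RZ$ gives $\Delta(\tilde\pi)_F(x,y)=0$ (convexity of the additivity relation transferred to limits, as used in the proof of \autoref{lemma:additivity-equation-for-initial-move}), we obtain $\Delta(\tilde\pi_\T)_F(x,y)=0$, establishing (iii).

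Then I would set $\tilde\pi_{\mathrm{zero}(\T)} := \tilde\pi - \tilde\pi_\T$ and verify it belongs to $\tildePi^\pi_{\mathrm{zero}(\T)}\RZ$. Since $\tildePi^\pi\RZ$ is a vector space (\autoref{lemma:effective-perturbation-vector-space}) and both $\tilde\pi, \tilde\pi_\T \in \tildePi^\pi\RZ$, the difference lies in $\tildePi^\pi\RZ$. By construction, for every $t\in \verts(\T)$ we have $\tilde\pi_\T(t)=\tilde\pi(t)$, $\tilde\pi_\T(t^-)=\tilde\pi(t^-)$ and $\tilde\pi_\T(t^+)=\tilde\pi(t^+)$, so $\tilde\pi_{\mathrm{zero}(\T)}$ and its two one-sided limits vanish at every vertex of~$\T$, which is precisely the defining condition of the equivariant perturbation space.

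Finally, uniqueness: if $\tilde\pi = \tilde\pi_\T^{(1)} + \tilde\pi_{\mathrm{zero}(\T)}^{(1)} = \tilde\pi_\T^{(2)} + \tilde\pi_{\mathrm{zero}(\T)}^{(2)}$ with $\tilde\pi_\T^{(j)} \in \tildePi^\pi_\T\RZ$ and $\tilde\pi_{\mathrm{zero}(\T)}^{(j)} \in \tildePi^\pi_{\mathrm{zero}(\T)}\RZ$, then $g := \tilde\pi_\T^{(1)} - \tilde\pi_\T^{(2)}$ is piecewise linear over~$\T$, while $-g = \tilde\pi_{\mathrm{zero}(\T)}^{(2)} - \tilde\pi_{\mathrm{zero}(\T)}^{(1)}$ vanishes with both one-sided limits at every vertex of~$\T$. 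A function that is piecewise linear over~$\T$ and vanishes together with its one-sided limits at every vertex of $\T$ is identically zero, so $g \equiv 0$, proving uniqueness. The main technical obstacle in this plan is step~(iii) of \autoref{lemma:finite-dim-system}: handling additive vertices of $\Delta\T$ whose coordinates straddle the boundary between the covered set~$C$ and the refined breakpoints in~$B'$, where one must carefully match the one-sided behavior of $\tilde\pi_\T$ to that of $\tilde\pi$ as dictated by the face~$F$.
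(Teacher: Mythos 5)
Your proposal is correct and takes essentially the same route as the paper: define $\tilde\pi_\T$ as the unique $\T$-piecewise linear function matching $\tilde\pi$ and its one-sided limits at $\verts(\T)$, verify condition (iii) of \autoref{lemma:finite-dim-system} via \autoref{lemma:projections-of-vertDeltaT}, set $\tilde\pi_{\mathrm{zero}(\T)}=\tilde\pi-\tilde\pi_\T$, and invoke the vector-space property. The only differences are cosmetic: the paper cites a corollary from a previous paper for the existence of the one-sided limits where you rederive it from \autoref{lemma:perturbation-lipschitz-continuous}, and you spell out the "$t\in C$" subcase of the limit matching (via affineness of $\tilde\pi$ on covered intervals) which the paper leaves implicit.
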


\begin{proof}
Let $\tilde \pi \in \tilde \Pi^{\pi}\RZ$ be an effective perturbation. By \cite[Corollary 6.5]{hong-koeppe-zhou:software-paper}
, the limits $\tilde\pi(t^-)$ and $\tilde\pi(t^+)$ exist for every $t \in \verts(\T)$. Let $\tilde\pi_\T$ be the unique piecewise linear function over $\T$ such that $\tilde\pi_\T(t)=\tilde\pi(t)$, $\tilde\pi_\T(t^-)=\tilde\pi(t^-)$ and $\tilde\pi_\T(t^+)=\tilde\pi(t^+)$ for every $t \in \verts(\T)$.  Define $\tilde  \pi_{\mathrm{zero}(\T)} = \tilde \pi - \tilde \pi_\T$. Note that $\tilde\pi_\T$ is the unique piecewise linear function over $\T$ such that $\tilde  \pi_{\mathrm{zero}(\T)}(t)=\tilde  \pi_{\mathrm{zero}(\T)}(t^-)=\tilde  \pi_{\mathrm{zero}(\T)}(t^+)=0$ for every $t \in \verts(\T)$. It is left to show that $\tilde \pi_\T, \tilde \pi_{\mathrm{zero}(\T)} \in \tilde \Pi^{\pi}\RZ$.   

We first show that $\tilde \pi_\T \in \tilde \Pi^{\pi}\RZ$, by applying \autoref{lemma:finite-dim-system}. It suffices to show that $\tilde\pi_\T$ satisfies condition (iii) of \autoref{lemma:finite-dim-system}. Let $(x,y)$ be an additive vertex of a face $F \in \Delta\T$ with $\Delta\pi_F(x,y)=0$. By \cite[Lemma 6.1]{hong-koeppe-zhou:software-paper}
, $\Delta\pi_F(x,y)=0$ implies that $\Delta\tilde\pi_F(x,y)=0$. Since $(x,y)$ is an additive vertex of $\Delta\T$, \autoref{lemma:projections-of-vertDeltaT} implies that $x, y, z \in B' \cup C$, where $z=(x+y)\bmod 1$. We have $\tilde\pi_\T(t)=\tilde\pi(t)$, $\tilde\pi_\T(t^-)=\tilde\pi(t^-)$ and $\tilde\pi_\T(t^+)=\tilde\pi(t^+)$ for $t = x, y$ or~$z$, and hence $\Delta(\tilde\pi_\T )_F(x,y)=\Delta\tilde\pi_F(x,y)=0$.  Therefore, $\tilde \pi_\T \in \tilde \Pi^{\pi}\RZ$.

Since the vector space $\tilde \Pi^{\pi}\RZ$ contains both $\tilde \pi$ and $\tilde \pi_\T$, we obtain that $\tilde \pi_{\mathrm{zero}(\T)} = \tilde \pi - \tilde \pi_\T \in \tilde \Pi^{\pi}\RZ$. 
\end{proof}

\begin{example}[\autoref{ex:equiv7_example_1}, continued]
\begin{figure}[tp]
\centering
\begin{minipage}{.32\textwidth}
\centering
\includegraphics[width=\linewidth]{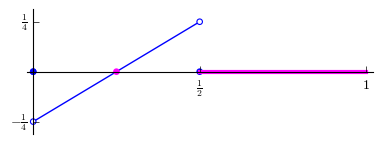}
\end{minipage}
\begin{minipage}{.32\textwidth}
\centering
\includegraphics[width=\linewidth]{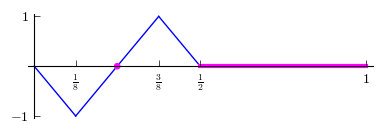}
\end{minipage}
\begin{minipage}{.32\textwidth}
\centering
\includegraphics[width=\linewidth]{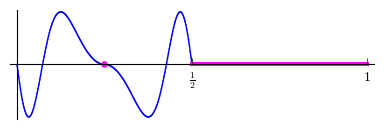}
\end{minipage}
\caption{(\textit{Left}) Finite-dimensional perturbation $\tilde\pi_\T$ of
  $\pi = \sage{equiv7\underscore{}example\underscore{}1()}$ from
  \autoref{ex:equiv7_example_1}. 
 (\textit{Middle--right}) Examples of equivariant perturbations
  $\tilde \pi_{\mathrm{zero}(\T)}$ of $\pi$. \tred{Code that generates these
  figures?}
}
\label{fig:one-sided-discontinuous-perturb}
\end{figure}
For the function in
\autoref{fig:one-sided-discontinuous-pi1}, $\pi = \sage{equiv7\underscore{}example\underscore{}1()}$,
the refined polyhedral complex $\T$ has vertices $B' = \{0, \tfrac14,
\tfrac12, 1\}$. The finite-dimensional perturbation space $\tilde
\Pi^{\pi}_\T\RZ$ has dimension $1$ and is spanned by the basic perturbation
\[
 \tilde\pi_\T(x) = 
  \begin{cases} 
   0 & \text{if } x=0 \\
   x-\tfrac{1}{4} &\text{if } 0< x< \tfrac{1}{2}\\
   0       & \text{if } \tfrac{1}{2} \leq x < 1,
  \end{cases}
\]
see \autoref{fig:one-sided-discontinuous-perturb} (left).
The two intervals $I_1 = (0, \tfrac14)$ and $I_2 = (\tfrac14,
\tfrac12)$ are uncovered, and they are connected through the move $\rho_{f}|_{(0,1/2)}$
in $\Omega$. 
Because there is only one connected uncovered component, 
the equivariant perturbation space $\tildePi^{\pi}_{\textrm{zero}(\T)}$
consists of all Lipschitz continuous functions $\tilde
\pi_{\mathrm{zero}(\T)}$ satisfying that $\tilde \pi_{\mathrm{zero}(\T)}(x) =
0$ for $x \in C \cup B'$ and that $\tilde \pi_{\mathrm{zero}(\T)}(x) = -\tilde
\pi_{\mathrm{zero}(\T)}(f-x)$ for $x \in U'$. See
\autoref{fig:one-sided-discontinuous-perturb} (middle, right) for examples of
such functions.
\end{example}

\begin{example}[\autoref{ex:equiv7_example_xyz_2}, continued]\label{ex:equiv7_example_xyz_2-continued}
  \begin{figure}[tp]
    \centering
    \includegraphics[width=0.8\linewidth]{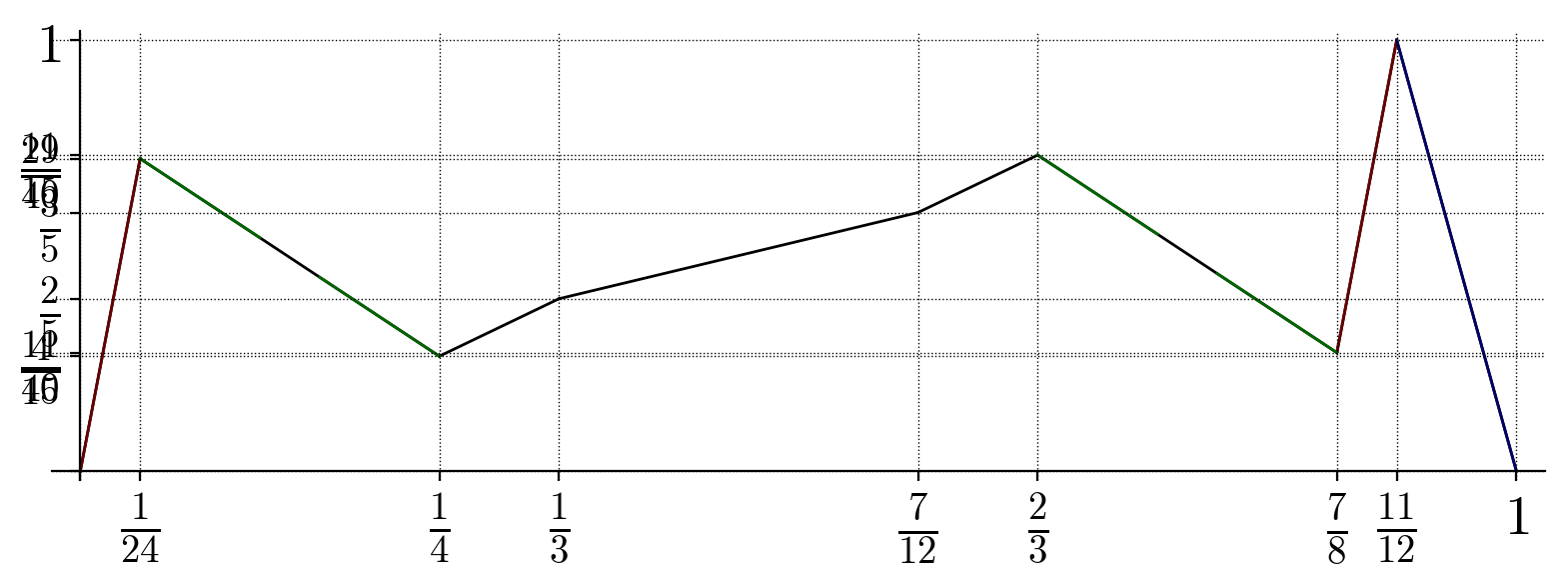}\par
    \includegraphics[width=0.8\linewidth]{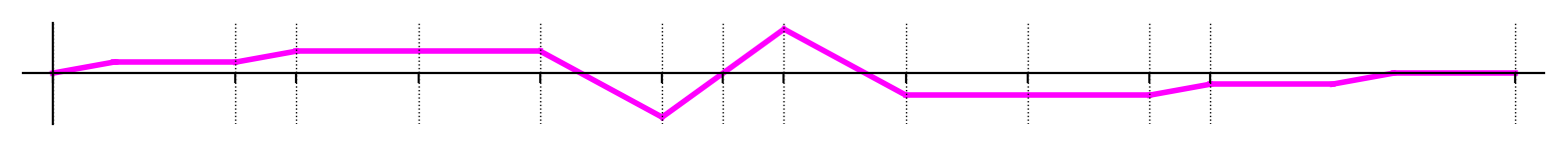}\par
    \includegraphics[width=0.8\linewidth]{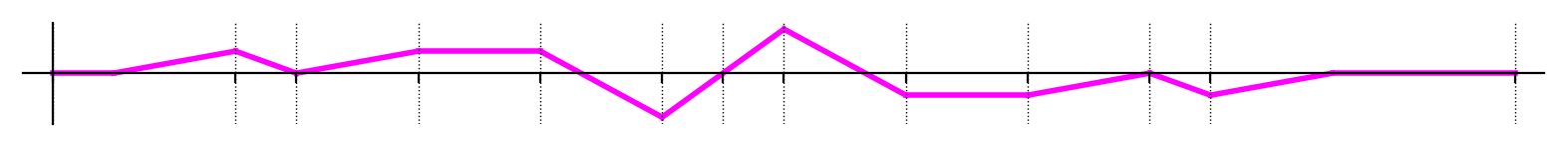}\par
    \includegraphics[width=0.8\linewidth]{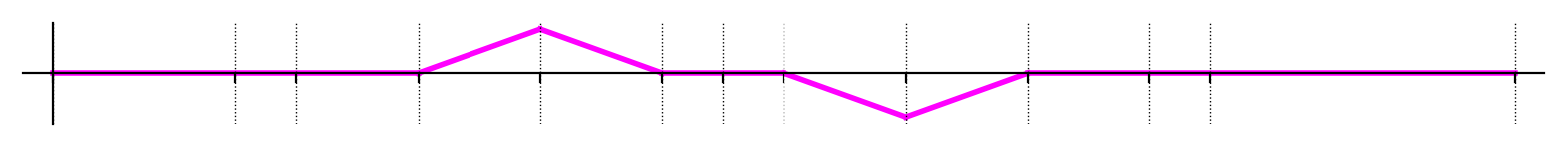}
    \includegraphics[width=0.8\linewidth]{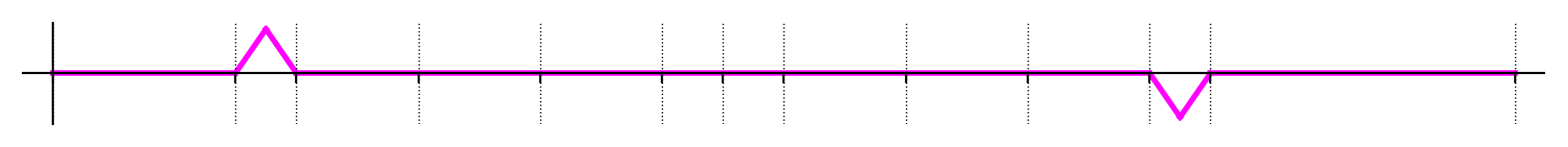}\par
    \includegraphics[width=0.8\linewidth]{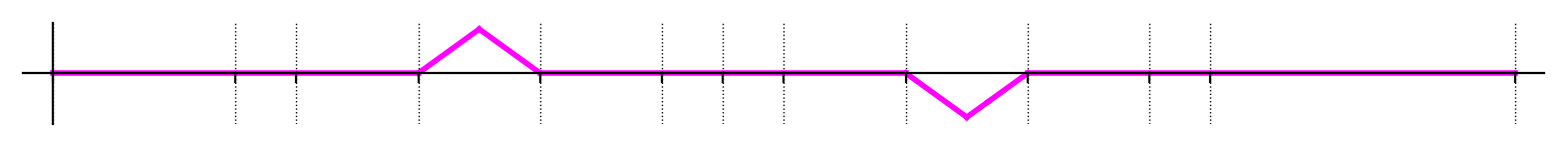}
    \includegraphics[width=0.8\linewidth]{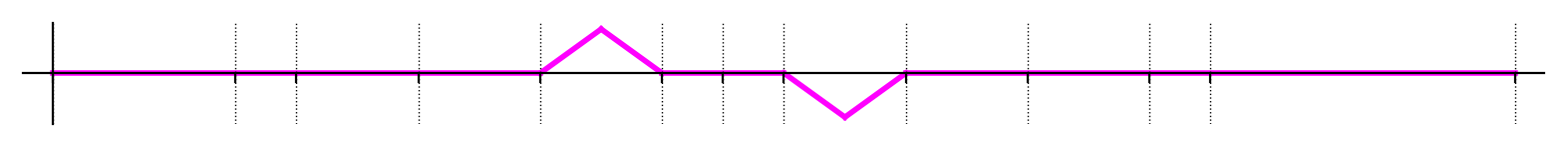}\par
    \includegraphics[width=0.8\linewidth]{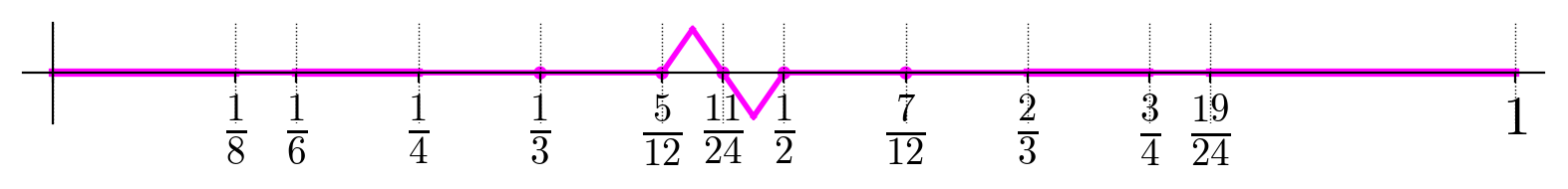}
    \caption{Decomposition of the space of effective perturbations for the function from
      \autoref{ex:equiv7_example_xyz_2}/\ref{ex:equiv7_example_xyz_2-continued}, 
      $\pi = \sage{equiv7\_example\_xyz\_2()}$. 
      (a) The function~$\pi_2$. (b--d) basis of the space~$\tildePi_\T$ of finite-dimensional
      perturbations. (e--h) representatives of the equivariant perturbation
      spaces $\tilde \Pi^\pi_{U_i}$ for the 4 connected uncovered components $U_i$.
    }
    \label{fig:equiv7_example_xyz_2-perturbations}
  \end{figure}
  \autoref{fig:equiv7_example_xyz_2-perturbations} illustrates
  the decomposition of the space of effective perturbations. 
\end{example}

\begin{example}[\autoref{ex:equiv7_minimal_2_covered_2_uncovered}, continued]\label{ex:equiv7_minimal_2_covered_2_uncovered-continued}
  \begin{figure}[tp]
    \centering
    \includegraphics[width=\linewidth]{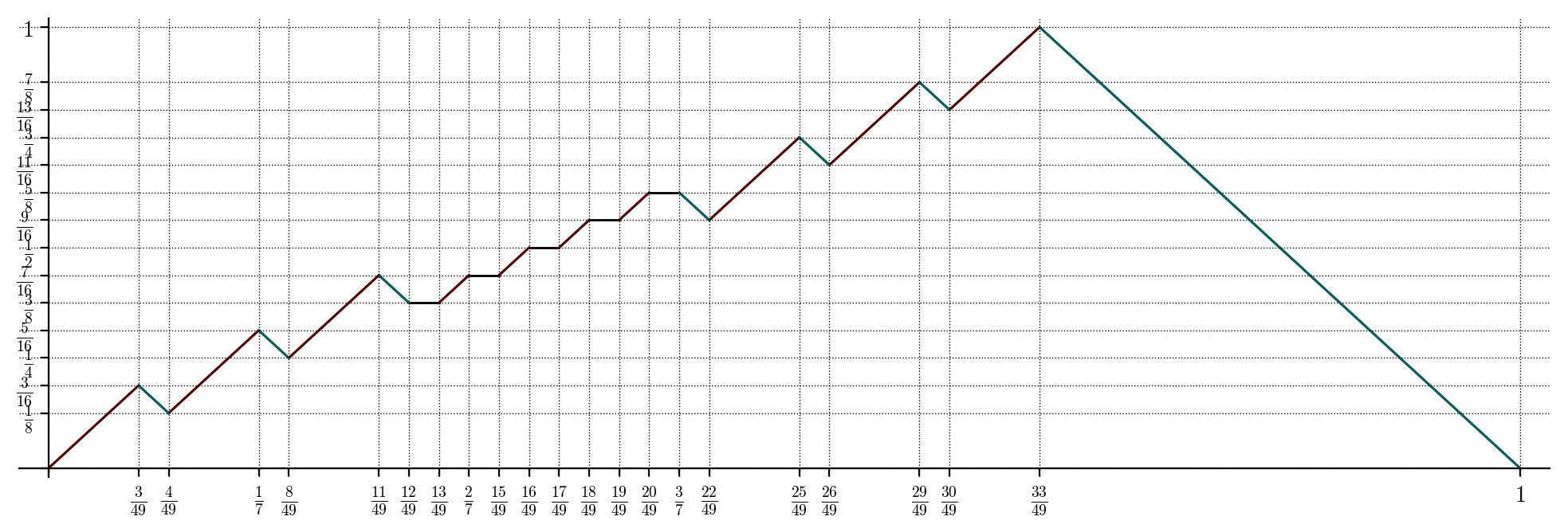}\par
    \includegraphics[width=\linewidth]{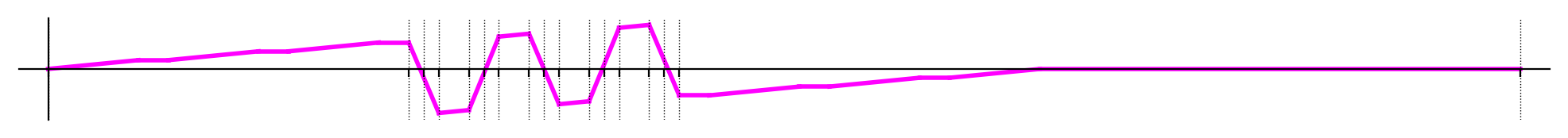}\par
    \includegraphics[width=\linewidth]{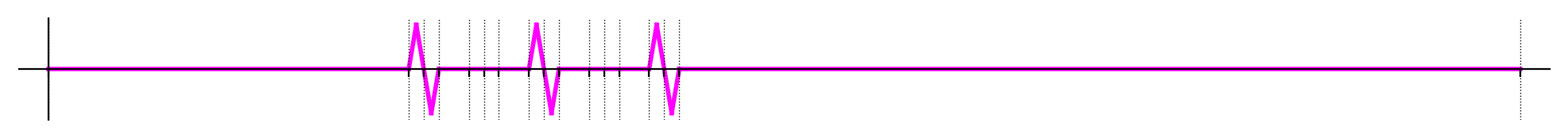}\par
    \includegraphics[width=\linewidth]{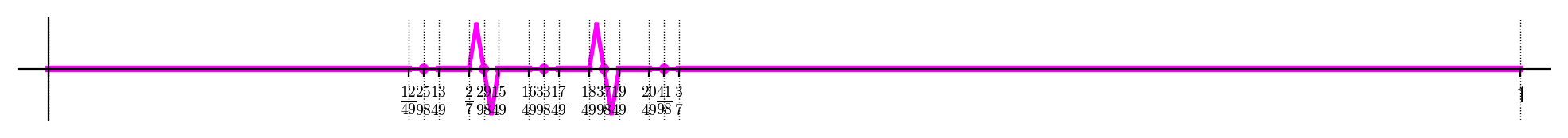}
    \caption{Decomposition of the space of effective perturbations for the function from
      \autoref{ex:equiv7_minimal_2_covered_2_uncovered}/\ref{ex:equiv7_minimal_2_covered_2_uncovered-continued}, 
      $\pi = \sage{equiv7\_minimal\_2\_covered\_2\_uncovered()}$. 
      (a) The function~$\pi$. (b) finite-dimensional perturbation
      $\tilde\pi_\T$. (c), (d) examples of equivariant perturbations
      $\tilde\pi_1, \tilde \pi_2$ from the direct summands. \tred{Use notation
        for spaces here}
    }
    \label{fig:equiv7_minimal_2_covered_2_uncovered-perturbations}
  \end{figure}
  \autoref{fig:equiv7_minimal_2_covered_2_uncovered-perturbations} illustrates
  the decomposition of the space of effective perturbations. 
\end{example}

\subsection{Relation of the moves closure to the semigroup
  $\Omegaresp{\tildePi^{\texorpdfstring{\pi}{}}}$ of respected moves}
\label{s:ctscl-almost-respected-moves}
In this section, still under the assumptions from \autoref{s:assumptions},
we establish the relation between $\ctscl(\Omegainit)$ and two other
move semigroups:
\begin{enumerate}[\rm(a)]
\item the semigroup $\Omegaresp{\tildePi^\pi}$ of moves respected by all
  effective perturbation functions~$\tilde\pi$,
\item the semigroup
  $\Omegaresp{\{\pi\} \cup \tildePi^\pi\RZ} = \Omegaresp{\pi +
    \tildePi^\pi\RZ} $ of moves respected by $\pi$ and its perturbations.
\end{enumerate}

We already know from \autoref{cor:pwl-moves-closure-respected} that
\begin{equation}\label{eq:move-semigroup-resp-inclusions}
  \ctscl(\Omegainit) \subseteq \Omegaresp{\pi +
    \tildePi^\pi\RZ} \subseteq \Omegaresp{\tildePi^\pi\RZ}.
\end{equation}
In the case of an extreme function~$\pi$, the space $\tildePi^\pi\RZ$ of
effective perturbations is trivial; and thus,
$\Omegaresp{\tildePi^\pi} = \FullMoveSemigroup$.

More generally, whenever a function~$\theta$ is affine on intervals
$D_1, D_2, \dots, D_k$ with the same slope, then
$\MovesOfGraph((D_1\cup\dots\cup D_k)\times (D_1\cup\dots\cup D_k)) \subseteq \Omegaresp{\theta}$.  Thus, we have
the following:
\begin{lemma}\label{lemma:posterior-moves-for-trivial-finitedim}
  Suppose the space $\tildePi^{\pi}_{\T}\RZ$ of finite-dimensional
  perturbations is trivial.
  \begin{enumerate}[\rm(a)]
  \item Let $C$ be the set of covered points. Then $\MovesOfGraph(C\times C)
    \subseteq \Omegaresp{\tildePi^\pi\RZ}$.  
  \item Let $D_1, \dots, D_k \subseteq C$ be covered intervals on which
    $\pi$~is affine with the same slope. Then $\MovesOfGraph((D_1\cup\dots\cup D_k)\times
    (D_1\cup\dots\cup D_k)) \subseteq \Omegaresp{\pi + \tildePi^\pi\RZ}$.
  \end{enumerate}
\end{lemma}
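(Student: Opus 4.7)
The plan is to derive both parts from three ingredients already available in the paper: the decomposition theorem (\autoref{thm:decomposition-perturbation}), the structural description of equivariant perturbations (\autoref{thm:equiv-perturbation-characterization} together with the discussion opening \autoref{s:equivariant-perturbation-space}), and the Cauchy--Pexider-based \autoref{lem:IL-Moves}. Because the finite-dimensional summand is assumed trivial, the perturbation space collapses to purely equivariant perturbations, which vanish on the closure of the covered set; from this, (a) and (b) each reduce to a one-line check.

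First I would invoke \autoref{thm:decomposition-perturbation}: since $\tildePi^{\pi}_{\T}\RZ = \{0\}$ by hypothesis, every $\tilde\pi \in \tildePi^{\pi}\RZ$ coincides with its equivariant summand, i.e., $\tildePi^{\pi}\RZ = \tildePi^{\pi}_{\mathrm{zero}(\T)}\RZ$. Next I would recall from the opening paragraph of \autoref{s:equivariant-perturbation-space} that every $\tilde\pi \in \tildePi^{\pi}_{\mathrm{zero}(\T)}\RZ$ is zero on~$\cl(C)$: it is affine on each interval of a connected covered component with a common slope (by \autoref{cor:affine-on-component-finite-presentation} and \autoref{cor:pwl-moves-closure-respected}) and vanishes together with its one-sided limits on $\partial C \subseteq \verts(\T)$, forcing $\tilde\pi \equiv 0$ on $\cl(C)$.

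For part (a), given any $\gamma|_D \in \MovesOfGraph(C\times C)$ and any $\tilde\pi \in \tildePi^{\pi}\RZ$, both $\tilde\pi|_D$ and $\tilde\pi|_{\gamma(D)}$ are identically zero. Then \eqref{eq:additivity-equation-for-move} holds with $c^{\tilde\pi}_{\gamma|_D} = 0$, so $\tilde\pi$ respects $\gamma|_D$. Taking the intersection over all $\tilde\pi \in \tildePi^{\pi}\RZ$ (as in \autoref{def:Omegaresp}) yields $\MovesOfGraph(C\times C) \subseteq \Omegaresp{\tildePi^{\pi}\RZ}$.

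For part (b), fix $\tilde\pi \in \tildePi^{\pi}\RZ$. On each $D_i \subseteq C$ we have $\tilde\pi \equiv 0$, so $\pi + \tilde\pi$ agrees with~$\pi$ on $D_1 \cup \dots \cup D_k$; by hypothesis this is an affine function with a common slope across the $D_i$'s. For any pair $(i,j)$, \autoref{lem:IL-Moves} (implication from condition~(\ref{item:IL-Moves:affine}) to condition~(3)) then gives $\MovesOfGraph(D_i \times D_j) \subseteq \Omegaresp{\pi+\tilde\pi}$. Since the graph of any move in $\MovesOfGraph\bigl((D_1\cup\dots\cup D_k)\times (D_1\cup\dots\cup D_k)\bigr)$ is a connected line segment and the $D_i$ are disjoint open intervals, that graph lies in some $D_i \times D_j$, and the containment follows. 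Intersecting over $\tilde\pi$ (using $\Omegaresp{\pi + \tildePi^{\pi}\RZ} = \bigcap_{\tilde\pi} \Omegaresp{\pi+\tilde\pi}$) completes the proof.

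I do not anticipate a genuine obstacle: the whole argument is a bookkeeping exercise that assembles \autoref{thm:decomposition-perturbation}, the ``vanishing on $\cl(C)$'' property of the equivariant summand, and \autoref{lem:IL-Moves}. The only point requiring care is to make sure that ``affine with a common slope on each $D_i$'' is precisely the hypothesis of \autoref{lem:IL-Moves}\,(\ref{item:IL-Moves:affine}) and that the disjointness of the $D_i$ lets us reduce an arbitrary move in $\MovesOfGraph((D_1\cup\dots\cup D_k)^2)$ to a move in some $\MovesOfGraph(D_i\times D_j)$; both are immediate from the definitions in \autoref{s:graphs-of-ensembles}.
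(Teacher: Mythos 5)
Your proof is correct and matches the approach the paper intends; the paper leaves this lemma without a displayed proof, relying on the observation stated immediately before it, and your argument simply fleshes that out by assembling \autoref{thm:decomposition-perturbation} (triviality of $\tildePi^{\pi}_{\T}\RZ$ forces $\tildePi^{\pi}\RZ = \tildePi^{\pi}_{\mathrm{zero}(\T)}\RZ$), the vanishing of equivariant perturbations on $\cl(C)$ from \autoref{s:equivariant-perturbation-space}, and the $(4)\Rightarrow(3)$ direction of \autoref{lem:IL-Moves}. One small imprecision in part~(b): you assert that the $D_i$ are disjoint, which the hypotheses do not grant; this is harmless because you may pass to the connected components of $D_1\cup\dots\cup D_k$, which are open intervals on which $\pi+\tilde\pi$ is still affine with the common slope (overlapping affine pieces with equal slope agree on the overlap), after which the reduction of any move's graph to a single product $D_i\times D_j$ goes through as you describe.
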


\begin{example}\label{ex:equiv7_example_post_3}
  \begin{figure}[tp]
    \centering
    \includegraphics[width=\linewidth]{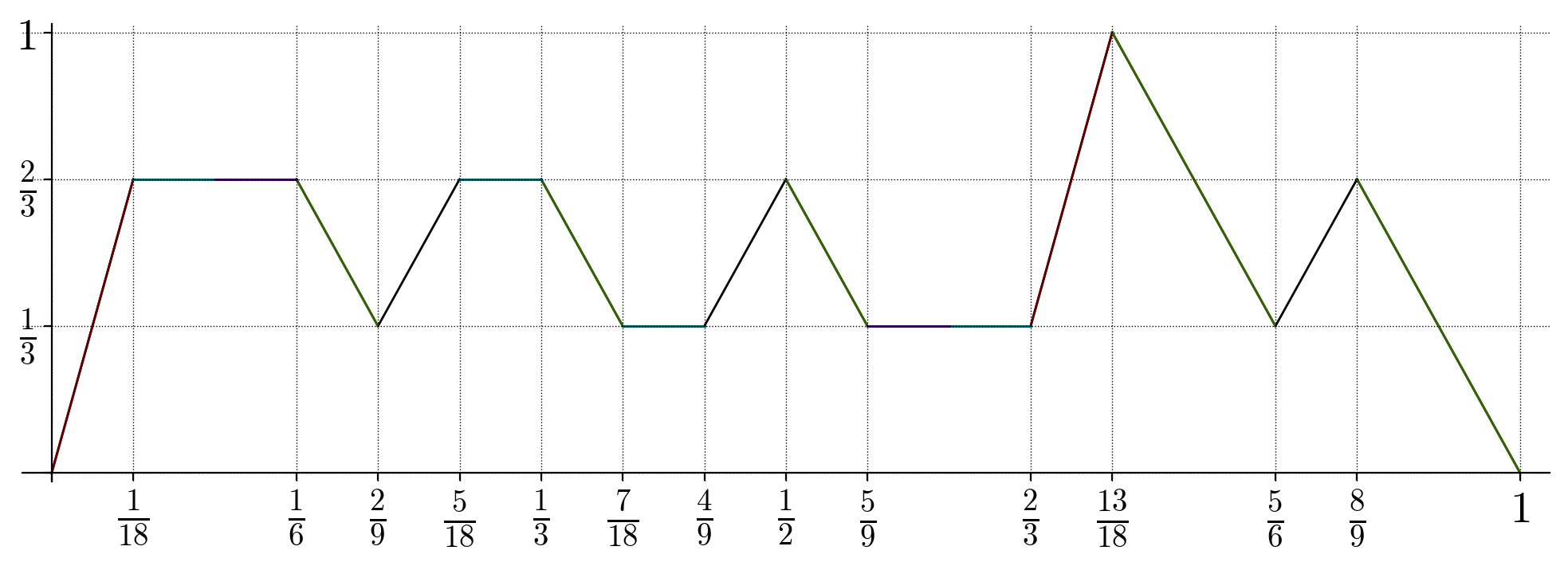}\par
    \caption{Function 
      $\pi = \sage{equiv7\_example\_post\_3()}$ from
      \autoref{ex:equiv7_example_post_3}.
    }
    \label{fig:equiv7_example_post_3-perturbations}
  \end{figure}
  Consider the function
  $\pi = \sage{equiv7\_example\_post\_3()}$, shown  in
  \autoref{fig:equiv7_example_post_3-perturbations}.  It has 4 connected
  covered components (colored slopes in the figure) and 2 connected uncovered
  components $U_1 = (\frac29, \frac5{18}) \cup (\frac49, \frac12)$ and $U_2 =
  (\frac56, \frac{31}{36}) \cup (\frac{31}{36}, \frac89)$. 
  Its finite-dimensional perturbation space is trivial.
  \begin{enumerate}[\rm(a)]
  \item From~\autoref{lemma:posterior-moves-for-trivial-finitedim}\,(a) we see
    that $\MovesOfGraph(C\times C) \subseteq \Omegaresp{\tildePi^\pi\RZ}$.  
  \item For the smaller semigroup $\Omegaresp{\pi + \tildePi^\pi\RZ}$, we
    observe that the function $\pi$ is affine with slope $0$ on the intervals
    $D_1 = (\frac1{18}, \frac2{18})$ and $D_2 = (\frac2{18}, \frac3{18})$,
    which belong to separate connected covered components (\emph{cyan} and
    \emph{lavender}).  Because the finite-dimensional perturbation space is
    trivial, all functions in $\pi + \tildePi^\pi\RZ$ take the same slope on
    $D_1$ and $D_2$, and hence from \autoref{lemma:posterior-moves-for-trivial-finitedim}\,(b) we have
    $\MovesOfGraph((D_1\cup D_2)\times (D_1\cup D_2)) \subseteq \Omegaresp{\pi +
      \tildePi^\pi\RZ}$.
    By continuity, we also have
    $\MovesOfGraph((\frac1{18}, \frac3{18})\times (\frac1{18}, \frac3{18}))
    \subseteq \Omegaresp{\pi + 
      \tildePi^\pi\RZ}$.
  \end{enumerate}
\end{example}
\begin{remark}
  Suppose the finite-dimensional perturbation space has a positive dimension.
  Recall its description using slope variables $\tilde s_i^c$ (for the
  connected covered components $C_i$) and $\tilde s_i^u$ (for the connected
  uncovered components $U_i$) from
  \autoref{rem:finite-dim-system-with-slope-variables}.  Whenever for some
  $i$, $j$, we have that $\tilde s_i^c = \tilde s_j^c$ holds for all
  solutions, then
  $\MovesOfGraph((C_i\cup C_j)\times (C_i\cup C_j)) \subseteq \Omegaresp{\tildePi^\pi\RZ}$.  
  A similar
  statement holds for $\Omegaresp{\pi + \tildePi^\pi\RZ}$.
\end{remark}

Consider these move ensembles restricted to the set $U$ of uncovered points in $(0,1)$.
We have the following theorem.

\begin{theorem}
\label{thm:ctscl-almost-respected-moves}
Under Assumptions~\ref{assumption:one-sided-continuous},
\ref{assumption:minimal-B}, and~\ref{assumption:moves-completion}, we have
that 
\begin{equation}
  \label{eq:ctscl-almost-respected-moves}
    \clsemi{\Omegainit}|_U = \Omegaresp{\pi +
    \tildePi^\pi\RZ}|_U = \Omegaresp{\tildePi^\pi\RZ}|_U.
\end{equation}
where $U$ is the set of uncovered points in $(0,1)$.
\end{theorem}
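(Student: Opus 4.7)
The chain of inclusions $\clsemi{\Omegainit}|_U \subseteq \Omegaresp{\pi+\tildePi^\pi\RZ}|_U \subseteq \Omegaresp{\tildePi^\pi\RZ}|_U$ is immediate from \eqref{eq:move-semigroup-resp-inclusions} (restricted to~$U$). Hence it suffices to establish the reverse outer inclusion
\[
\Omegaresp{\tildePi^\pi\RZ}|_U \subseteq \clsemi{\Omegainit}|_U,
\]
which by \autoref{thm:endpoints-of-maximal-domain-move}(b) amounts to showing that a move $\gamma|_D$ with $D,\gamma(D)\subseteq U$ respected by every effective perturbation is a restriction of some element of $\Omega|_U$. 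Since $\clsemi{\Omegainit}|_U$ inherits \eqref{axiom:join} from $\clsemi{\Omegainit}$, it is enough to produce, for every $x_0\in D$, an open interval $D_0\ni x_0$ with $\gamma|_{D_0}\in\clsemi{\Omegainit}|_U$.

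The key tool is the direct-sum decomposition $\tildePi^\pi\RZ = \tildePi^\pi_\T \oplus \bigoplus_{i=1}^l \tildePi^\pi_{U_i}$ from Theorems~\ref{thm:decomposition-perturbation} and~\ref{th:equiv-perturbation-decomposition}, together with the characterization in \autoref{thm:equiv-perturbation-characterization} describing $\tildePi^\pi_{U_i}$ as the space of Lipschitz functions $\tilde\pi_i|_{D_i}$ on the fundamental domain~$D_i$ vanishing on~$\partial D_i$, extended equivariantly to $U_i$ through the moves $\gamma_j|_{D_i}\in\Omega|_{U'}$. Fix $x_0\in D\cap U'$ and shrink $D_0$ so that $D_0\subseteq I_{k_1}=\gamma_{k_1}(D_i)\subseteq U_i$ and $\gamma(D_0)\subseteq I_{k_2}=\gamma_{k_2}(D_j)\subseteq U_j$. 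If $i\ne j$, choose $\tilde\pi_i\in\tildePi^\pi_{U_i}$ whose profile on $\gamma_{k_1}^{-1}(D_0)\subseteq D_i$ is non-constant; since $\tilde\pi_i\equiv 0$ on $U_j\supseteq \gamma(D_0)$, the equation $\tilde\pi_i(\gamma(x))=\chi(\gamma)\tilde\pi_i(x)+c$ forces $\tilde\pi_i$ to be constant on $D_0$, a contradiction. Hence $i=j$, and substituting the equivariance of $\tilde\pi_i$ on $I_{k_1}$ and $I_{k_2}$ rewrites the equation as
\[
\tilde\pi_i(\phi(u)) = \chi(\phi)\tilde\pi_i(u) + c',
\qquad \phi := \gamma_{k_2}^{-1}\circ\gamma\circ\gamma_{k_1},\quad u\in\gamma_{k_1}^{-1}(D_0)\subseteq D_i.
\]
A rigidity argument—using generic Lipschitz bumps supported in small subintervals of $D_i$—shows that this can hold for every admissible $\tilde\pi_i|_{D_i}$ (with $c'$ allowed to depend on $\tilde\pi_i$) only when $\phi=\tau_0|_{\gamma_{k_1}^{-1}(D_0)}$. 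Consequently $\gamma|_{D_0} = (\gamma_{k_2}\circ\gamma_{k_1}^{-1})|_{D_0}\in\restrict{\semi{\Omega|_{U'}}}\subseteq\clsemi{\Omegainit}|_U$.

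Applying \eqref{axiom:join} over all $x_0\in D\cap U'$ yields $\gamma|_{D\cap U'}\in\clsemi{\Omegainit}|_U$. Since $D\setminus(D\cap U') = D\cap(Y\cup Z)$ is a finite set by \autoref{assumption:moves-completion}, the final step is to extend $\gamma$ across each of these isolated points. This is accomplished by \eqref{axiom:joinextend} at those points of $Y\cup Z$ that lie in the continuity set $A$ of~$\pi$ and whose image under $\gamma$ also lies in~$A$; at remaining exceptional points one argues directly that both one-sided analyses identify the same unrestricted bijection $\gamma_{k_2}\circ\gamma_{k_1}^{-1}$, so the two sides glue inside~$\Omega|_U$. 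The main obstacle is precisely the rigidity step: I have to show that the infinite-dimensional family of admissible Lipschitz profiles $\tilde\pi_i|_{D_i}$ is rich enough to detect \emph{every} non-trivial move $\phi$, both translational (where one exploits that $u\mapsto\tilde\pi_i(u+t)-\tilde\pi_i(u)$ is generically non-constant for $t\ne 0$) and reflectional (where $u\mapsto\tilde\pi_i(u)+\tilde\pi_i(r-u)$ imposes a linear constraint violated by generic bumps). Everything else is bookkeeping using the axioms gathered in \autoref{tab:axioms}.
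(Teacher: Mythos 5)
Your plan follows essentially the same route as the paper: reduce to a local statement on a fundamental domain $I$ of a connected uncovered component $\Uncomp[i]=\bigcup_j\gamma_j(I)$, then produce an effective perturbation that distinguishes a candidate move from the $\gamma_j$'s. You correctly identify the crux as a rigidity claim and honestly flag it as unproven; that is a genuine gap. The paper closes it with an explicit construction rather than an appeal to ``generic bumps.'' Given a move $\gamma|_D$ with $D\subseteq I$ and unrestricted $\gamma\neq\gamma_j$ for all $j=1,\dots,p$, one first shrinks to an open subinterval $D'\subseteq D$ with $\gamma(D')\cap\gamma_j(D')=\emptyset$ for every $j$; this is possible because, for each $j$, the two graphs are either parallel and distinct lines of slope $\pm 1$ (shrink $D'$) or cross at a single point (take $D'$ on one side of its abscissa), and there are only finitely many $j$. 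One then defines $\tilde\pi$ to be non-constant, Lipschitz, and vanishing at $\partial D'$; extends it equivariantly via the $\gamma_j$'s to $\bigcup_j\gamma_j(D')$; and sets it to zero elsewhere. By \autoref{thm:equiv-perturbation-characterization}, $\tilde\pi\in\tildePi^\pi_{U_i}\subseteq\tildePi^\pi\RZ$. Since $\tilde\pi\equiv 0$ on $\gamma(D')$ but is non-constant on $D'$, no constant $c$ can satisfy $\tilde\pi(\gamma(x))=\chi(\gamma)\tilde\pi(x)+c$ on $D'$, so $\gamma|_{D'}\notin\Omegaresp{\tilde\pi}$ and hence $\gamma|_D\notin\Omegaresp{\tildePi^\pi\RZ}$. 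Arguing in this contrapositive form also removes the need for the patching you attempt across $Y\cup Z$ via \eqref{axiom:joinextend}, which as sketched would require more justification than you give.
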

\begin{proof}
  We use the notations of the present section. By \eqref{eq:move-semigroup-resp-inclusions}, it suffices
  to show that if the domain of a move $\gamma|_D\in
  \Omegaresp{\tildePi^\pi}$ is contained in~$U$, then $\gamma|_D \in \clsemi{\Omegainit}$.

  Recall that we can write an arbitrary connected uncovered component
  $\Uncomp[i]$ in the form of $\Uncomp[i] = \bigcup_{j=1}^p \gamma_j(I)$,
  where $I$ is the fundamental domain for $\Uncomp[i]$, $\gamma_j|_{I} \in \clsemi{\Omegainit}$,
  and the open intervals $\gamma_j(I)$ are disjoint.
As $\clsemi{\Omegainit}$ is join-closed and extension-closed,
by taking sub-moves, it suffices to show that 
if a move $\gamma|_D$ satisfies that $D \subseteq I$ and the unrestricted move $\gamma \neq \gamma_j$ for all $j=1,\dots, p$, then
$\gamma|_D\not\in \Omegaresp{\tildePi^\pi}$.

Consider a move $\gamma|_D$ where  $D \subseteq I$ and $\gamma \neq \gamma_j$ for all $j=1,\dots, p$. There exists an open interval $D' \subseteq D$ such that $\gamma(D') \cap \gamma_j(D') = \emptyset$ for all  $j=1,\dots, p$. We can construct a perturbation $\tilde{\pi}$ such that 
\begin{enumerate}[(i)]
\item $\tilde\pi$ is non-zero and Lipschitz continuous on $D'$;
\item  $\tilde\pi(x)=\tilde\pi(x^-)=\tilde\pi(x^+)=0$ for $x \in \partial D'$;
\item  $\tilde\pi(x) = \chi(\gamma_j) \tilde\pi(\gamma_j (x))$ for $x \in D'$, $j=1, \dots, p$; and 
\item $\tilde\pi(x)=0$ for $x \not\in \bigcup_{j=1}^p \gamma_j(D')$. 
\end{enumerate}
Since $\tilde\pi|_{D'} \not\equiv 0$ but $\tilde\pi|_{\gamma(D')} \equiv 0$,
we have that $\gamma|_{D'} \not\in \Omegaresp{\tilde{\pi}}$, and hence $\gamma|_{D} \not\in \Omegaresp{\tilde{\pi}}$.
By \autoref{thm:equiv-perturbation-characterization}, $\tilde{\pi} \in \tildePi^{\pi}_{\mathrm{zero}(\T)} \subseteq  \tildePi^{\pi}$.
Therefore, $\gamma|_D\not\in \Omegaresp{\tildePi^\pi}$.
We conclude that \eqref{eq:ctscl-almost-respected-moves} holds.
\end{proof}

\bigbreak

\section{Conclusion}
\label{s:conclusion}

\subsection{Forthcoming computational companion paper}
In the forthcoming paper \cite{hildebrand-koeppe-zhou:algocomp-paper},
\ifserialtitle part VIII of the series, \fi
we will describe a method to compute the moves
closure~$\ctscl(\Omegainit)$ for a large class of piecewise linear minimal valid
functions, including all functions with rational breakpoints, for which the
moves closure has a finite presentation.  The decomposition of the
perturbation space in \autoref{sec:perturbation_space} is already
algorithmic. Thus we will obtain a grid-free extremality test.

\subsection{Limits of the approach of this paper}
We now discuss the limitations to the equivariant perturbation theory\ifserialtitle
developed in our series of papers\fi.

For two-sided discontinuous functions, the decomposition of the perturbation
spaces breaks down.  
\autoref{thm:equiv-perturbation-characterization} and \autoref{thm:decomposition-perturbation} do not hold when the function $\pi$ is discontinuous from both sides of the origin, as the following example shows.
\begin{example}\label{ex:minimal_no_covered_interval}
\begin{figure}[tp]
\centering
\begin{minipage}{.49\textwidth}
\centering
\includegraphics[width=.8\linewidth]{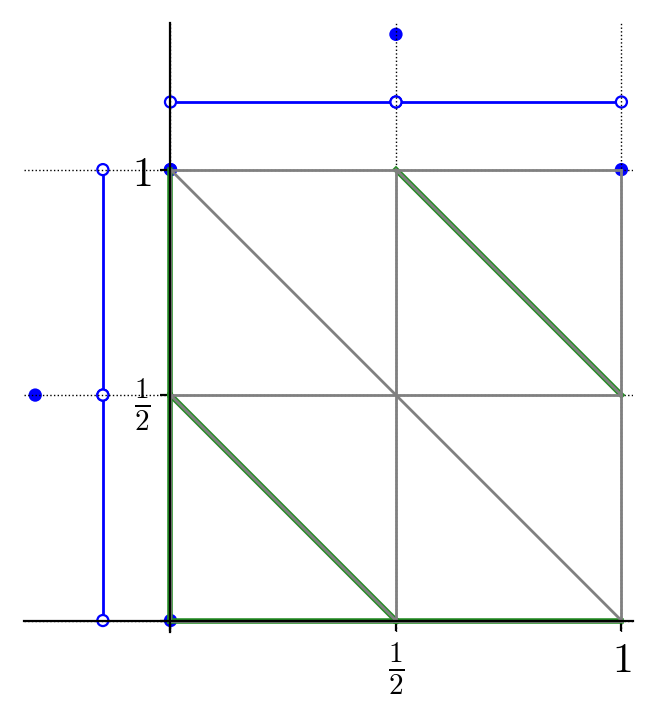}
\end{minipage}
\begin{minipage}{.49\textwidth}
\centering
\includegraphics[width=.8\linewidth]{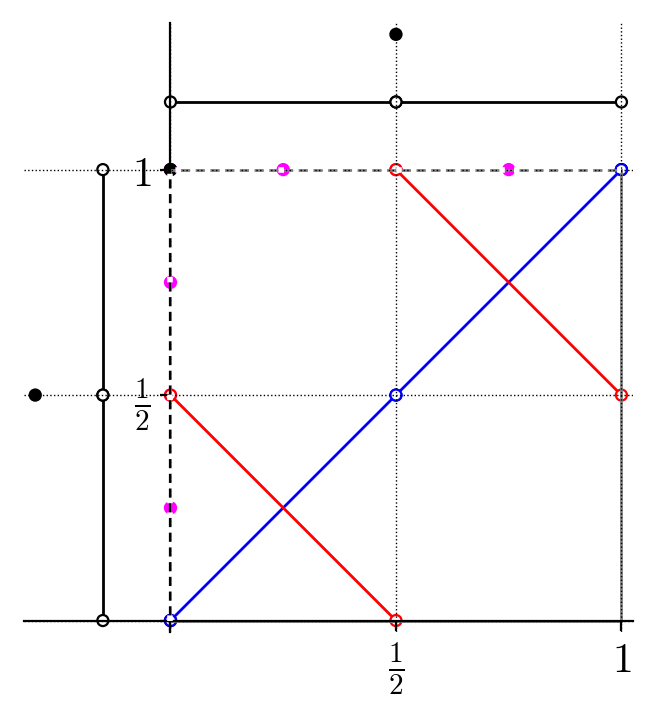}
\end{minipage}
\caption[(\textit{Left}) Two-dimensional polyhedral complex $\Delta\P$ of a
  two-sided discontinuous minimal valid function $\pi =
  \sage{minimal\underscore{}no\underscore{}covered\underscore{}interval()}$
  from \autoref{ex:minimal_no_covered_interval}. (\textit{Right}) The graph of
  the move ensemble $\clsemi{\Omegainit}$ of $\pi$]
  {(\textit{Left}) Two-dimensional polyhedral complex $\Delta\P$ of a
  two-sided discontinuous minimal valid function $\pi =
  \sage{minimal\underscore{}no\underscore{}covered\underscore{}interval()}$
  (\emph{blue graph at the left and top borders}) from \autoref{ex:minimal_no_covered_interval}, where the additive faces are colored in \emph{green}. (\textit{Right}) The graph of the move ensemble $\clsemi{\Omegainit}$ of $\pi$, where the set $C \cup B' = [0,1) \setminus U'$ of covered points and breakpoints  are marked in \emph{magenta on the left and top borders}.}
\label{fig:two-sided-discontinuous-pi2}
\end{figure}

Consider the minimal valid function $\pi=$ \sage{minimal\underscore{}no\underscore{}covered\underscore{}interval()}  with $f=\tfrac{1}{2}$, defined by
\[
 \pi(x) = 
  \begin{cases} 
   0 & \text{if } x=0 \\
   \tfrac{1}{2} &\text{if } 0< x< \tfrac{1}{2} \text{ or } \tfrac{1}{2} < x < 1\\
   1      & \text{if } x=\tfrac12,
  \end{cases}
\]
which is discontinuous from both sides of the origin.

Observe from \autoref{fig:two-sided-discontinuous-pi2} 
that $C = \emptyset$, $B' = \{0, \tfrac14, \tfrac12, \tfrac34, 1\}$ and the
connected uncovered components are $\Uncomp[1] = (0, \tfrac14) \cup
(\tfrac14, \tfrac12)$ and $\Uncomp[2] = (\tfrac12, \tfrac34) \cup  (\tfrac34,
1)$, where the two intervals in either $\Uncomp[1]$ or $\Uncomp[2]$ are
connected through the move $\rho_{f}|_{(0,f)}$ or $\rho_{f}|_{(f,1)}$ in
$\clsemi{\Omegainit}$. Any bounded $\Z$-periodic 
function $\tilde\pi$ satisfying that $\tilde\pi(x)=0$ for $x \in \B'$ and
$\tilde\pi(x) = \tilde\pi(\rho_{f}(x))$ for $x \in [0,1)$ is an effective
perturbation of $\pi$.  
For example, define a $\Z$-periodic function $\tilde\pi =$ \sage{equiv7\underscore{}example\underscore{}2\underscore{}crazy\underscore{}perturbation()} by
\[
 \tilde\pi(x)= 
      \begin{cases} 
        1& \text{if } x \in (0, \frac{1}{4}) \text{ such that } x  \in G\text{, or} \\
        & \text{if } x \in (\frac{1}{4}, \frac{1}{2}) \text{ such that } x - \frac{1}{4}\in G; \\
        -1 & \text{if } x \in (0, \frac{1}{4})\text{ such that } x - \frac{1}{4}\in G\text{, or} \\
        & \text{if } x \in (\frac{1}{4}, \frac{1}{2}) \text{ such that } x - \frac{1}{2} \in G; \\
        0 & \text{otherwise},
      \end{cases}
\] 
where the group $G =\langle 1, \sqrt{2}\rangle_{\Z}$ is dense in $\R$.
Then $\tilde{\pi}$ is an effective perturbation of $\pi$, since both $\pi
\pm \epsilon\tilde{\pi}$ are minimal valid functions for $0< \epsilon \leq
\frac{1}{6}$.\footnote{This positive $\epsilon$ is verified by calling
  \sage{find\underscore{}epsilon\underscore{}for\underscore{}crazy\underscore{}perturbation($\pi$,
    $\tilde{\pi}$)}} Observe that $\tilde{\pi}$ is a highly discontinuous function, which does not have a limit at any point in $(0, \frac{1}{2})$.
Thus, without \autoref{assumption:one-sided-continuous}, an equivariant perturbation is not necessarily Lipschitz continuous; and the limits of an effective perturbation at the breakpoints might not exist. For this reason, the decomposition of perturbations does not make sense when the function $\pi$ is discontinuous from both sides of the origin.
\end{example}

Note that in
\cite{koeppe-zhou:crazy-perturbation}, though an algorithm was presented that
checks the effectiveness of a given perturbation function $\tilde\pi$, and a
perturbation was constructed for an example function, it was left as an open
question how to construct perturbations in general.  This is still open.

We conjecture that the equivariant perturbation theory also breaks down for
the case of non--piecewise linear functions, such as the fractal functions
presented in \cite{bccz08222222} and \cite{bcdsp:arbitrary-slopes}.  In
particular we note that (1) the finite system of equations describing the
space of finite-dimensional perturbations would be replaced by a system of
functional equations, for which we have no lemmas available; (2) we expect
that the decomposition theorem no longer holds.

\clearpage
\bookmarksetup{italic}
\section*{Acknowledgments}
The authors wish to thank Chun Yu Hong, who worked on a first implementation
of a grid-free procedure as an undergraduate researcher at UC Davis in 2013;
experiments based on his code have helped the
authors to develop the full theory described in the present paper.
The authors also wish to thank Quentin Louveaux and Reuben La Haye for
valuable discussions.

\bibliographystyle{../amsabbrvurl}
\bibliography{../bib/open-optimization-bibliography/cutgeneratingfunctionology,../bib/MLFCB-minus-oo-cgf}

\end{document}